\documentclass{amsart}
\usepackage{amssymb}
\usepackage{mathtools}
\usepackage{hyperref}
\usepackage{longtable}
\hypersetup{
  hidelinks,
  pdftitle={An Explicit Surjectivity Threshold for Digit Sums of Primes},
  pdfauthor={Jens Lehmann},
  pdfkeywords={digit sums of primes, circle method, exponential sums, explicit constants, OEIS A070027}
}
\title[An Explicit Surjectivity Threshold for Digit Sums of Primes]{An Explicit Surjectivity Threshold\\ for Digit Sums of Primes}

\author[J. Lehmann]{Jens Lehmann}
\address{Technische Universit\"at Dresden, Dresden, Germany}
\email{jens.lehmann@tu-dresden.de}
\thanks{The author is affiliated with TU Dresden and Amazon; the present work was done outside of Amazon.}
\thanks{Accompanying code: \url{https://github.com/JensLehmann/Prime-Digit-Sums}, release tag
\texttt{v1.0-mcom-submission}, commit
\texttt{9f9903a862b83ad3b1c2fbb961af1ecf7857d214}.}

\subjclass[2020]{Primary 11A63; Secondary 11N05, 11L20, 11Y60}
\keywords{Digit sums of primes, circle method, exponential sums, explicit constants, iterated digit-sum primes, OEIS A070027}

\date{June 2, 2026}

\newtheorem{theorem}{Theorem}
\newtheorem{lemma}{Lemma}
\newtheorem{proposition}{Proposition}
\newtheorem{corollary}{Corollary}
\theoremstyle{definition}
\newtheorem{definition}{Definition}
\newtheorem{remark}{Remark}

\newcommand{\Major}{\mathfrak{M}}
\newcommand{\Minor}{\mathfrak{m}}
\newcommand{\e}{\mathrm{e}}
\DeclareMathOperator{\erf}{erf}
\DeclareMathOperator{\erfc}{erfc}

\begin{document}

\begin{abstract}
Let $s(n)=s_{10}(n)$ be the decimal sum-of-digits map. Building on the
circle-method framework of Drmota--Mauduit--Rivat for digital restrictions on
primes, we make the constants explicit at the points needed to obtain an
effective surjectivity statement for digit sums of primes.  We exhibit an
explicit integer $M<1.78\times10^{32}$ such that every integer $m\ge M$ with
$\gcd(m,9)=1$ occurs as $s(p)$ for at least one prime~$p$.  We also prove an
explicit lower bound
\[
A_m(10^{2m/9})\ge C_{\mathrm q}(m)\,10^{2m/9}/m^{3/2},
\]
where $C_{\mathrm q}(m)$ is explicit, positive above the sufficient threshold,
and bounded away from $0$ along each admissible residue class.

Existence of a non-numerical threshold follows from the DMR asymptotic theory
and was noted by Harman; to the best of our knowledge, this is the first
published explicit numerical threshold for this surjectivity statement.  The
proof combines explicit major-arc estimates, a fully explicit replacement for
DMR's implicit prime exponential-sum input, and constant-tracked Type-II
minor-arc estimates.

As an application, we prove the infinitude of OEIS A070027, the primes whose
iterated digit-sum chain remains prime until reaching a one-digit prime.  We
also record related effective consequences for additive primes and digit-sum
additive decompositions.
\end{abstract}

\maketitle

\section{Introduction and main results}\label{sec:intro}

Let $s(n)=s_{10}(n)$ denote the sum of the base-$10$ digits of~$n$. A classical theme in analytic number
theory is that the primes are equidistributed with respect to many digital statistics, and in
particular that the digit sum of primes behaves, after normalization, like a Gaussian random variable.
The decisive breakthrough in this direction is the circle-method framework of
Drmota--Mauduit--Rivat~\cite{DMRcomp2}, which yields an asymptotic formula for primes with prescribed
digit sum in a broad central range.

A qualitative consequence, explicitly noted in the literature (for instance by Harman~\cite{harman2012counting}), is that for all
\emph{sufficiently large} integers $m$ satisfying the unavoidable congruence condition $\gcd(m,9)=1$,
there exists a prime $p$ with $s(p)=m$. The primary goal of the present paper is to make this statement
\emph{effective}, i.e.\ to produce a concrete threshold $M$ that is provably valid and to provide a proof
in which every constant can be audited and, in principle, improved.

For $x\ge 2$ and integer $m\ge 1$ define
\[
A_m(x):=\#\{p\le x:\ p\ \text{prime and}\ s(p)=m\}.
\]
Our main result is an explicit lower bound on $m$ ensuring $A_m(\infty)\ge 1$.

\begin{theorem}[Explicit surjectivity threshold, base $10$]\label{thm:main}
Set $q:=10$ and define
\[
\begin{aligned}
M&:=\left\lceil \frac{9}{2\log q}\,Y_\ast+\frac92\right\rceil,\\
Y_\ast&:=\max\!\bigl\{
Y_{41},Y_{42},Y_{43}^\ast,Y_{45},Y_{47},
Y_{\mathrm{maj}},Y_{\mathrm{min}},\log x_{\mathrm{AP}}\bigr\},
\end{aligned}
\]
where all thresholds are introduced in~\S\ref{sec:derivation_m}.  Specifically,
$Y_{43}^\ast$ comes from Proposition~\ref{prop:CF-replacement} via
Corollary~\ref{cor:Lstar-upper-bound}, while $Y_{\mathrm{maj}}$ and $Y_{\mathrm{min}}$
come from Proposition~\ref{prop:maj} and Lemma~\ref{lem:Cmin}.  The auxiliary thresholds
$Y_{45}$ and $Y_{41},Y_{42},Y_{47}$ are defined in
equations~\eqref{eq:Y43-Y45-def} and~\eqref{eq:Y41-Y42-Y47-def}, respectively.
Then for every integer $m\ge M$ with $\gcd(m,9)=1$ there exists a prime $p$ such that $s(p)=m$;
equivalently, $A_m(\infty)\ge 1$ for all such~$m$.

Quantitatively, at the working parameters $(q,\eta,\nu)=(10,0.0545,0.2859)$,
\[
M\ \le\ 177843590339381623423137292296355\ <\ 1.78\times 10^{32}.
\]
These working parameters are chosen to balance the three competing thresholds in the chain
$Y_\ast=\max(Y_{41},Y_{42},Y_{43}^\ast,Y_{45},Y_{47},Y_{\mathrm{maj}},Y_{\mathrm{min}},\log x_{\mathrm{AP}})$: at the
certificate level, $Y_{43}^\ast\le 9.10\times 10^{31}$ (Corollary~\ref{cor:Lstar-upper-bound}),
$Y_{\mathrm{min}}\le 8.00\times 10^{31}$, and $Y_{\mathrm{maj}}\le 7.90\times 10^{31}$
are all of the same order of magnitude; the supplementary script's bisection further reports
$Y_{43,\mathrm{bisect}}^\ast\approx 6.27\times 10^{31}$, at which $Y_{\mathrm{min}}$ becomes
binding numerically.  We choose
$c_{43}^\ast=\tfrac{1}{200}\cdot(c_4/32)\min\{1,(\nu-2\eta)/(\eta+1/2)\}$ in
Proposition~\ref{prop:CF-replacement}; the $0.5\%$ safety margin keeps the absorption threshold
$Y_{47}$ below the major/minor scale while preserving a convenient gap
$a=2\theta(\nu/2-\eta)-c_{43}^\ast$.  Any value $M'$ at
this magnitude (or larger) likewise satisfies the conclusion.

In fact (see Corollary~\ref{cor:quantitative}) we prove the quantitative refinement
$A_m(10^{2m/9})\ge C_{\mathrm{q}}(m)\,10^{2m/9}/m^{3/2}$, where $C_{\mathrm{q}}(m)$ is explicit
and has a positive limit along each admissible residue class modulo $9$.
\end{theorem}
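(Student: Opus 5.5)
The plan is to run the Drmota--Mauduit--Rivat circle method at the scale $x=10^{L}$, with every constant tracked, and to reduce the theorem to a single sign inequality. Given an admissible $m$ with $\gcd(m,9)=1$, choose $L$ with $m\approx\frac{9}{2}L$, i.e.\ $m$ close to the mean digit sum of integers below $10^L$; this is where the relation $M=\lceil \frac{9}{2\log q}Y_\ast+\frac92\rceil$ with $Y=\log x=L\log 10$ comes from. By orthogonality,
\[
A_m(x)=\int_0^1 \sum_{p\le x}\e(\alpha(s(p)-m))\,d\alpha ,
\]
and I split $[0,1]$ into a major-arc neighbourhood $\Major$ of the rationals $a/9$ (the digit-sum function mod $q-1$ detects $n \bmod 9$) and the complementary minor arcs $\Minor$. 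On $\Major$ I would invoke Proposition~\ref{prop:maj}. It uses the explicit prime number theorem in progressions modulo $9$, valid for $x\ge x_{\mathrm{AP}}$, together with the explicit local Gaussian approximation for $\sum_{n<10^L}\e(\alpha s(n))$. The output should be a main term of size $c\,\frac{x}{L^{3/2}}\cdot\frac{9}{\varphi(9)}\cdot(\text{Gaussian density at } m)$, with an error term that is explicitly smaller once $Y\ge Y_{\mathrm{maj}}$. The congruence condition $\gcd(m,9)=1$ enters exactly here: it makes the singular series at $a/9$ nonvanishing, in fact positive.

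On $\Minor$ I would bound $\sum_{p\le x}\e(\alpha s(p))$ by Vaughan's identity (or Heath-Brown's), splitting into Type~I and Type~II sums. For the Type~II sums I would follow DMR: apply van der Corput and carry truncation to $\lambda$ digits, leading to discrete Fourier transforms $F_\lambda(h,\alpha)$ of the digit function. The needed $L^1$ and $L^\infty$ bounds on $F_\lambda$ are exactly where DMR use an implicit prime exponential-sum input. I would replace that input by Proposition~\ref{prop:CF-replacement}, whose consequence Corollary~\ref{cor:Lstar-upper-bound} gives a saving $x^{-c_{43}^\ast}$ once $Y\ge Y_{43}^\ast$. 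The resulting minor-arc bound takes the form $x^{1-a}$ with $a=2\theta(\nu/2-\eta)-c_{43}^\ast>0$, where $\eta,\nu$ set the arc width and the digit-truncation level. Lemma~\ref{lem:Cmin} packages the explicit constant, valid for $Y\ge Y_{\mathrm{min}}$. The auxiliary conditions $Y_{41},Y_{42},Y_{45},Y_{47}$ would be checked at this stage; they cover parameter admissibility (integrality of the digit-truncation levels, the ranges in Vaughan's identity) and the absorption of logarithmic losses into the power saving.

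Finally I would combine the two estimates. For $Y\ge Y_\ast$ the main term $\gg x L^{-3/2}$ exceeds the sum of the major-arc error and $x^{1-a}$, so $A_m(10^{2m/9})\ge C_{\mathrm q}(m)\,10^{2m/9}/m^{3/2}>0$. This is Corollary~\ref{cor:quantitative}, and it yields the surjectivity claim for every $m\ge M$. The numerical bound $M<1.78\times10^{32}$ then follows by evaluating each threshold at $(q,\eta,\nu)=(10,0.0545,0.2859)$. The largest is $Y_{43}^\ast\le 9.10\times10^{31}$, and substituting it gives $\frac{9}{2\log 10}\cdot 9.10\times10^{31}\approx1.78\times10^{32}$. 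I expect the main obstacle to be the minor arcs, specifically making the prime exponential-sum replacement fully explicit. The savings there are tiny powers of $x$, of order $c_4/32$ times a small ratio, so every logarithmic or constant factor must be absorbed explicitly. That absorption is what forces $Y$ up to about $10^{31}$, and it is the reason for the $0.5\%$ margin in the choice of $c_{43}^\ast$.
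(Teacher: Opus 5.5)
Your skeleton matches the paper: Fourier inversion using the mod-$9$ structure, Proposition~\ref{prop:maj} on the major arcs, Lemma~\ref{lem:Cmin} on the minor arcs, and $L=\lfloor 2m/9\rfloor$. The minor-arc step as you describe it would fail, however, because you have put the major-arc ingredients there. Lemma~\ref{lem:DMR43} treats prime sums $\sum_{p\le x}e(Ap/Q)$ at rational phases, and Proposition~\ref{prop:CF-replacement} rests on it. Both are \emph{major-arc} tools. They give digit equidistribution of primes and hence the moment comparison of Lemma~\ref{lem:c4-explicit}. The conclusion is $|\varphi_2(t)-\varphi_3(t)|\le |t|e^{-c_{43}^\ast L^\nu}$, valid only for $|t|\le CL^\eta$, i.e.\ only for $|\alpha|\le(\log x)^{\eta-1/2}$. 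They say nothing about $\sum_{p\le x}e(\alpha s(p))$ on $\Minor$, and the saving $e^{-c_{43}^\ast L^\nu}$ is not a power $x^{-c_{43}^\ast}$. Likewise, the $L^1$/$L^\infty$ bounds for $F_\lambda$ in the Type~II analysis involve no primes at all. They come from the digit product formula~\eqref{eq:MR-F-product} (Lemmas~\ref{lem:MR-gamma-self}, \ref{lem:MR-G-first-self}, \ref{lem:MR-F-weighted-self}). The constants to be made explicit there are the Mauduit--Rivat structural ones ($K_{10}$, the Type-I constant, $C_\tau$), not DMR's Lemma~4.3. Finally, $a=2\theta(\nu/2-\eta)-c_{43}^\ast$ is the leading-coefficient gap in the permanence argument for (C13b) in Corollary~\ref{cor:Lstar-upper-bound}; it is not a minor-arc exponent.

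A bound $x^{1-a}$ with fixed $a>0$ is also neither available nor expected uniformly on $\Minor$. By the central-limit behaviour of $s(p)$, near the inner edge $|\alpha|\approx(\log x)^{\eta-1/2}$ the sum should be about $x\exp(-c(\log x)^{2\eta})=x^{1-o(1)}$. What the paper proves instead is the $\alpha$-dependent bound $|S_m(\alpha;x)|\le C_{\mathrm{min}}(\log x)^3x^{1-c_1\|9\alpha\|^2}$. This comes from the Type~I/II chain (Lemmas~\ref{lem:DMR31-self}, \ref{lem:DMR32-self}, Proposition~\ref{prop:typeII-explicit}), together with the bridge Lemma~\ref{lem:minor-bridge} below $x_0(\alpha)$. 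Lemma~\ref{lem:minor-fortieth} then uses $\|9\alpha\|\ge 9(\log x)^{\eta-1/2}$ on $\Minor$ to obtain only the sub-power saving $\exp(-81c_1(\log x)^{2\eta})$. Requiring this to beat $\tfrac{C_{\mathrm{min}}}{9}(\log x)^{9/2}$ relative to the main term $\asymp\pi(x;9,m)/\sqrt{L}$ is what produces $Y_{\mathrm{min}}\approx 8\times10^{31}$. Your combination step must compare against this integrated bound, not against $x^{1-a}$.

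Your account of Proposition~\ref{prop:maj} (PNT in progressions plus a Gaussian approximation for $\sum_{n<10^L}e(\alpha s(n))$) also omits the integer-to-prime transfer. That transfer is exactly where Lemmas~\ref{lem:DMR43}, \ref{lem:c4-explicit} and Proposition~\ref{prop:CF-replacement} enter. Correspondingly, $Y_{41},Y_{42},Y_{45},Y_{47}$ are conditions of that major-arc chain, not Vaughan-range conditions:
\begin{itemize}
\item $Y_{41}$: $L_0\ge L/2$;
\item $Y_{42}$: $|s|\le 1/10$;
\item $Y_{45}$: validity of the moment comparison;
\item $Y_{47}$: absorption of $e^{-c_{43}^\ast L^\nu}$.
\end{itemize}
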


\begin{remark}[What is new]\label{rem:new}
The novelty is not the existence of a threshold (which is implicit in the DMR asymptotic theory),
but rather the production of an effective one with the numerical inequalities made explicit.
Achieving this requires unrolling a chain of estimates with explicit constants, including a complete
replacement for an implicit constant occurring in the DMR major-arc analysis (their prime
exponential-sum input).  The working constants were selected by a computational search to balance
the dominant sufficient thresholds; no claim of global optimality is made.  The proof is structured
so that improved constants from future research can be substituted mechanically.

The size of $M$ is governed by three thresholds $Y_{43}^\ast,Y_{\mathrm{maj}},Y_{\mathrm{min}}$.
At these working parameters their empirical values equilibrate within a factor ${<}1.3$
(and their rigorous upper bounds within ${<}1.2$); the limiting
input within this Fourier framework is the extracted digit-discrepancy rate
$c_4=\log 10/6$ from Lemma~\ref{lem:c4-explicit}.  See Remark~\ref{rem:limits} for the explicit accounting and an
analysis of what would be required to move below this barrier.
\end{remark}

\begin{remark}[Limits of the present method]\label{rem:limits}
The bound $M<1.78\times10^{32}$ obtained here is governed in every binding step by the
extracted decimal digit-discrepancy rate $c_4=\log 10/6$ from
Lemma~\ref{lem:c4-explicit}, which appears explicitly in each of the three near-binding
thresholds $Y_{43}^\ast$ (CF replacement), $Y_{\mathrm{min}}$ (minor arc), and $Y_{\mathrm{maj}}$
(major arc).  Reducing $M$ substantially below the present $10^{32}$ scale within this framework
requires either a sharper rate $c_4'$ (the present framework's $M$ scales roughly as
$\exp(\mathrm{const}/c_4')$ after re-optimizing the auxiliary parameters) or a non-Fourier route
that bypasses the digit-discrepancy machinery.

The alternatives considered are Berry--Esseen smoothing back to a CF bound and
bandlimited majorants replacing the Fourier integral.  Both either degrade the rate or
yield only a bounded multiplicative gain in $M$, without overcoming the $c_4$-driven barrier; see
Appendix~\ref{sec:limits-detailed}.

The modular construction in this paper is written so that any sharper $c_4$ (or any of its
derivatives $c_1$, $K_0$, $C_{\mathrm{maj}}$) feeds directly into a smaller $M$ without
re-deriving the framework.
\end{remark}

\medskip

A second motivation is the study of primes whose \emph{iterated} digit sum remains prime.
Write $s^{(0)}(n)=n$ and $s^{(k+1)}(n)=s(s^{(k)}(n))$. Since iterating $s$ strictly decreases $n$ once $n\ge 10$,
the chain $n, s(n), s^{(2)}(n),\ldots$ eventually reaches a one-digit fixed point in $\{0,1,\dots,9\}$.

\begin{definition}\label{def:idsprime}
A prime $p$ is called an \emph{iterated digit-sum prime} if there exists an index $J$ such that
$s^{(J)}(p)\in\{2,3,5,7\}$, $J$ is the first index at which the digit-sum chain reaches a one-digit
prime, and every iterate $s^{(j)}(p)$ is prime for $0\le j\le J$.
Equivalently, all nontrivial iterates are prime until the chain reaches a one-digit prime.
\end{definition}

\begin{theorem}[Infinitude of A070027]\label{thm:a070027}
There exist infinitely many iterated digit-sum primes in the sense of Definition~\ref{def:idsprime}.
In particular, the OEIS sequence A070027 is infinite.  A quantitative refinement
(Corollary~\ref{cor:quantitative-A070027}) gives an explicit lower bound on the number of such
primes above an explicit scale.
\end{theorem}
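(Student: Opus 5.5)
The idea is to build iterated digit-sum primes from the top of the chain downward, using Theorem~\ref{thm:main} at each step. Suppose $m\ge M$ is itself an iterated digit-sum prime; a prime is coprime to $9$ once it exceeds $3$. Theorem~\ref{thm:main} then gives a prime $p$ with $s(p)=m$. The chain of $p$ is $p, m, s(m), \dots$, so every iterate of $p$ is prime up to the first one-digit prime of the chain of $m$. There is one point to check: the first iterate landing in $\{2,3,5,7\}$ must not come earlier. The one-digit iterates of $p$ are exactly those of $m$, because $p>m\ge M\ge 10$. Hence $p$ is an iterated digit-sum prime in the sense of Definition~\ref{def:idsprime}, and $p>m$. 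So it suffices to produce a single iterated digit-sum prime $p_0\ge M$. After that, set $p_{k+1}$ to be any prime with $s(p_{k+1})=p_k$; this gives an infinite strictly increasing sequence.

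The main obstacle is this seed. A direct search is hopeless, since $M\approx1.78\times10^{32}$ and a prime with digit sum near $M$ has on the order of $10^{31}$ digits. I would bridge the range $[10,M)$ by a bootstrapping ladder with an effective small-range input. First I would use the quantitative form (Corollary~\ref{cor:quantitative}), or better a version of the circle-method estimate valid whenever $m$ is merely large enough relative to the digit length. Alternatively, I would verify by computation that every prime $m$ in a finite window $[10,N]$ with $m\ne 3$ is hit by some prime. A computer check can realistically confirm, for example, that every prime $m\le 10^4$ other than $3$ is a digit sum of some prime, by exhibiting explicit primes. This does not reach $M$, so computation alone cannot close the gap.

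The cleaner route is to note that the ladder only needs one valid chain climbing past $M$, not surjectivity below $M$. Start from a small known member such as $11$ or $29$. At each rung, find a prime whose digit sum is the previous rung; this can be done constructively. For a target digit sum $t$, take numbers made of digits summing to $t$, for instance many $1$'s with a few adjusted digits, and certify primality of one of them. This is feasible while $t$ is modest, since the candidate has roughly $t/9$ to $t$ digits. Each rung multiplies the size roughly exponentially: a prime with digit sum $t$ is at least about $10^{t/9}$. So two or three rungs starting from a prime near $10^3$ already exceed $M$ in value. The difficulty is that only the first rung or two can be certified by explicit primality proofs. The rung whose value crosses $M$ must come from an existence argument, not an explicit witness. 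For that rung I would apply the quantitative lower bound $A_m(10^{2m/9})\ge C_{\mathrm q}(m)10^{2m/9}/m^{3/2}$ in its ``sufficient-threshold'' form, if it applies for the moderate $m$ reached, or else a Maynard-type or Drmota--Mauduit--Rivat result with explicit constants in a shorter range.

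If that middle rung cannot be certified, I would fall back on the structure of Corollary~\ref{cor:quantitative-A070027}. I expect the paper proves it by showing directly that some prime $m_0\ge M$ is itself an iterated digit-sum prime, using a counting argument over primes with prime digit sum near scale $M$. The counting argument would combine the lower bound for $A_m$ with the prime number theorem in the relevant range of $m$. I expect this seed step to be the genuine difficulty; the inductive lifting above it is routine.
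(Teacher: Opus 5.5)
Your lifting step is correct and is exactly the paper's argument: Theorem~\ref{thm:main} applied at $m=q_k$, combined with Lemmas~\ref{lem:lift-chain} and~\ref{lem:growth}. The gap is that the proposal never supplies the seed, and your diagnosis of why the seed is hard rests on a misreading of what is needed.

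The induction only requires one prime $p_0\in\mathcal A$ whose \emph{value} satisfies $p_0\ge M$, i.e.\ a prime with at least $33$ decimal digits. Nothing requires $s(p_0)$, or the digit sum of any rung, to be near $M$. So your estimate that a prime with digit sum near $M$ has $\sim10^{31}$ digits is irrelevant, and so is surjectivity below $M$.

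Your own ladder contradicts your conclusion. A prime with digit sum $t\approx 600$ has at least $67$ digits, so it already exceeds $M$. By your own feasibility criterion it can be found and certified explicitly. The claim that ``the rung whose value crosses $M$ must come from an existence argument'' is therefore false.

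The substitutes you propose for that rung are also not available. Theorem~\ref{thm:main} and Corollary~\ref{cor:quantitative} are proved only for $m\ge M$; below that, the thresholds $Y_{\mathrm{min}}$, $Y_{\mathrm{maj}}$, $Y_{43}^\ast$ are not met. No explicit Maynard- or DMR-type result for moderate $m$ is available either. Your guess about Corollary~\ref{cor:quantitative-A070027} is backwards as well: that corollary is derived \emph{from} an explicit seed, not used to produce one.

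The missing idea is simply to exhibit one explicitly certified large prime with a short prime chain. The paper takes $q_0=43917\cdot 2^{430}+1$, a $135$-digit Proth number. The check $7^{(q_0-1)/2}\equiv-1\pmod{q_0}$ proves it prime by Proth's theorem, and $q_0\to601\to7$ with $601$ and $7$ prime. Hence $q_0\in\mathcal A$ and $q_0>10^{134}>M$. Any certified prime of at least $33$ digits whose digit sum is $11$, $29$, $101$, $601,\dots$ would serve equally well. With such a seed in place, your induction completes the proof.
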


\begin{remark}[Mechanism of the proof]\label{rem:lift}
Theorem~\ref{thm:a070027} is obtained by a lifting argument.
Once one explicit ``seed'' prime $p_0\ge M$ is exhibited for which the chain
$p_0, s(p_0), s^{(2)}(p_0),\ldots$ consists of primes down to a one-digit prime, Theorem~\ref{thm:main}
guarantees the existence of a prime $p_1$ with $s(p_1)=p_0$.
Then $p_1$ inherits the same terminal prime chain, and iterating this lifting produces infinitely many
distinct primes with the desired property.
\end{remark}

\medskip

\noindent\textbf{Organization.}
Section~\ref{sec:notation} fixes notation and records basic congruences.
Section~\ref{sec:inputs} restates the DMR circle-method inputs in a form suited for explicit constant
tracking and isolates the prime exponential-sum input requiring a fully explicit substitute.
Sections~\ref{sec:maj},~\ref{sec:fourier}, and~\ref{sec:derivation_m} assemble these inputs to prove
Theorem~\ref{thm:main}; \S\ref{sec:derivation_m} also derives the quantitative refinement
(Corollary~\ref{cor:quantitative}).
Section~\ref{sec:infinite_prime_digit_sums} derives Theorem~\ref{thm:a070027} (and its quantitative form,
Corollary~\ref{cor:quantitative-A070027}) by the lifting argument sketched above.
Section~\ref{sec:further_applications} develops further consequences: an effective upper bound on the
smallest prime with a given digit sum (\S\ref{subsec:smallest-prime-bound}), an effective infinitude of
additive primes (\S\ref{subsec:additive-primes}), a stratification of the iterated digit-sum primes by
their terminal one-digit prime (\S\ref{subsec:terminal-stratification}), a digit-sum Goldbach-type
result (\S\ref{subsec:digit-sum-goldbach}), and a lower bound on the largest prime with prescribed
digit sum (\S\ref{subsec:largest-prime}).

\section{Notation and basic facts}\label{sec:notation}

We write $e(t):=\exp(2\pi i t)$ and $\log$ denotes the natural logarithm. For a real number $\alpha$, let
$\|\alpha\|_{\mathbb R/\mathbb Z}:=\min_{n\in \mathbb Z}|\alpha-n|$ denote the distance to the nearest integer.
We write $\gcd(a,b)$ for the greatest common divisor of integers $a,b$.

We recall the standard congruence obstruction for digit sums.

\begin{lemma}[Digit sum modulo $9$]\label{lem:mod9}
For all integers $n \ge 0$, one has $s(n)\equiv n\pmod 9$.
\end{lemma}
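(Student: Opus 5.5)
The plan is to write $n$ in base $10$ and reduce modulo $9$ using the single congruence $10\equiv 1\pmod 9$. Let $n=\sum_{j=0}^{k} a_j 10^j$ with digits $a_j\in\{0,1,\dots,9\}$, so that by definition $s(n)=\sum_{j=0}^{k}a_j$. For $n=0$ we take the empty expansion (or the single digit $0$), and both sides vanish, so that case needs no separate argument.

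First I will show by induction on $j$ that $10^j\equiv 1\pmod 9$ for every $j\ge 0$. The case $j=0$ is trivial. For the inductive step, $10^{j+1}=10\cdot 10^j\equiv 1\cdot 1\pmod 9$. Equivalently, one can note that $10^j-1=9\cdot(1+10+\dots+10^{j-1})$.

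Next I will multiply the congruence $10^j\equiv 1\pmod 9$ by $a_j$ and sum over $j$, using that congruences modulo $9$ are preserved under finite sums and multiplication by integers. This gives
\[
n-s(n)=\sum_{j=0}^{k}a_j\,(10^j-1)\equiv 0 \pmod 9 ,
\]
which is exactly the claim.

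No step here is a real obstacle; the only points needing care are that the digit expansion is finite and that the case $n=0$ is covered. I will also record the consequence used later. Since $\gcd(p,9)=1$ for every prime $p\neq 3$, the lemma gives $\gcd(s(p),9)=1$ for such primes, while $s(3)=3$. This is the reason the hypothesis $\gcd(m,9)=1$ appears in Theorem~\ref{thm:main}.
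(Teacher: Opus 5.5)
Your proof is correct and follows the same route as the paper: expand $n$ in base $10$, use $10^j\equiv 1\pmod 9$, and sum over the digits to obtain $n\equiv s(n)\pmod 9$. Your remark about the admissibility condition also matches the paper's Remark~\ref{rem:admissible}.
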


\begin{proof}
Write the base-$10$ expansion $n=\sum_{j=0}^{D-1} d_j 10^j$ with digits
$d_j\in\{0,1,\dots,9\}$. Since $10\equiv 1\pmod 9$, we have $10^j\equiv 1\pmod 9$
for all $j$, and hence
\[
n \equiv \sum_{j=0}^{D-1} d_j = s(n)\pmod 9.\qedhere
\]
\end{proof}

\begin{remark}[The admissibility condition $\gcd(m,9)=1$]\label{rem:admissible}
If $p>3$ is prime then $p\not\equiv 0\pmod 3$, hence $s(p)\not\equiv 0\pmod 3$ by
Lemma~\ref{lem:mod9}. Since $9$ has only the prime factor $3$, this is equivalent to
$\gcd(s(p),9)=1$ for all primes $p>3$. Thus the condition $\gcd(m,9)=1$ in
Theorem~\ref{thm:main} is necessary (up to the isolated prime $p=3$).
\end{remark}

\subsection*{Summary of key constants}

For reader and referee convenience we collect the principal explicit constants of the proof and
indicate where each is introduced and which threshold it feeds.  Values are at the working parameters
$(q,\eta,\nu)=(10,0.0545,0.2859)$.

\begin{center}
\small
\renewcommand{\arraystretch}{1.15}
\begin{tabular}{@{}p{0.19\linewidth}p{0.24\linewidth}p{0.28\linewidth}p{0.20\linewidth}@{}}
Constant & Value & Defined in & Feeds into \\
\hline
$c_4$ & $\log 10/6 \approx 0.384$ & Lemma~\ref{lem:c4-explicit} & $c_{43}^\ast$, $\theta$ \\
$c_{43}^\ast$ & $\approx 1.91\times 10^{-5}$ ($0.5\%$ of max) & Prop.~\ref{prop:CF-replacement} & $Y_{43}^\ast$, $Y_{47}$, hence $M$ \\
$\theta$ & $c_4/(16(\eta+1/2))\approx 0.04326$ & Prop.~\ref{prop:CF-replacement} & $D(L)$, $L_\ast^{(i)}$ \\
$C_{\mathrm{DMR}}$ & $102$ & Lemma~\ref{lem:DMR43} & $Y_{45}$, $x_{\mathrm{maj}}$ \\
$K_{10}$ & $\le 10^6$ (rigorous; envelope $10^{9}$) & Lemma~\ref{lem:S2-input}, Step~1 & $C_{\mathrm{II}}$, $C_{\mathrm{min}}$ \\
$C_{\tau}$ & $10^3$ & Lemma~\ref{lem:tau-input} & carry propagation in Type II \\
$C_{\mathrm{II}}$ & $<64000$ & Prop.~\ref{prop:typeII-explicit} & $C_{\mathrm{min}}$, $Y_{\mathrm{min}}$ \\
$c_1$ & $0.00150628870\ldots$ & Lemma~\ref{lem:c1} & $Y_{\mathrm{min}}$ \\
$C_{\mathrm{min}}$ & $4\times 10^{12}$ & Lemma~\ref{lem:Cmin} & $Y_{\mathrm{min}}$ \\
$C_{\mathrm{maj}}$ & $510$ (proof gives $<461$) & Cor.~\ref{cor:maj-numeric} & major-arc error bound \\
$C_{\mathrm{q}}(m)$ & $>0.108$ for $m\ge10^{100}$ & Lemma~\ref{lem:Cq-finite} & quantitative refinement \\
$M$ & $<1.78\times10^{32}$ & Theorem~\ref{thm:main} & ---
\end{tabular}
\end{center}

The value of $M$ is governed jointly by $Y_{43}^\ast$, $Y_{\mathrm{maj}}$, and $Y_{\mathrm{min}}$
(all near-binding at the optimized parameters; see Remark~\ref{rem:new}, Remark~\ref{rem:limits},
and \S\ref{sec:derivation_m}).

\subsection*{Dependency map for Theorem~\ref{thm:main}}

The following map gives the proof dependencies at the level used by the final numerical certificate.
The detailed source-equation audit for imported estimates is deferred to
Appendix~\ref{app:import-audit}.

\begin{center}
\small
\renewcommand{\arraystretch}{1.18}
\begin{tabular}{@{}p{0.43\linewidth}p{0.08\linewidth}p{0.40\linewidth}@{}}
IK/Vaughan prime exponential sums & $\Longrightarrow$ & Lemma~\ref{lem:DMR43} ($C_{\mathrm{DMR}}=102$) \\
Lemma~\ref{lem:DMR43} with Fej\'er smoothing & $\Longrightarrow$ &
Lemma~\ref{lem:c4-explicit}, Proposition~\ref{prop:CF-replacement}, and $Y_{43}^{\ast}$ \\
DMR/MR Type-I--II inputs & $\Longrightarrow$ &
Lemmas~\ref{lem:DMR31-self}, \ref{lem:DMR32-self}, \ref{lem:typeI-explicit},
and~\ref{lem:S2-input} \\
Type-I--II transfer to primes & $\Longrightarrow$ & Lemma~\ref{lem:Cmin} and $Y_{\mathrm{min}}$ \\
Major-arc Gaussian approximation & $\Longrightarrow$ & Proposition~\ref{prop:maj}, Corollary~\ref{cor:maj-numeric}, and $Y_{\mathrm{maj}}$ \\
$Y_{41},Y_{42},Y_{43}^{\ast},Y_{45},Y_{47},Y_{\mathrm{maj}},Y_{\mathrm{min}},x_{\mathrm{AP}}$ & $\Longrightarrow$ &
$Y_{\ast}$ and the explicit threshold $M$ \\
Theorem~\ref{thm:main} with digit-sum lifting & $\Longrightarrow$ &
Theorem~\ref{thm:a070027} and the applications in \S\ref{sec:further_applications}
\end{tabular}
\end{center}

\section{The DMR framework and explicit inputs}\label{sec:inputs}

Our proof follows the circle-method strategy developed by Drmota--Mauduit--Rivat~\cite{DMRcomp2}
for primes subject to digital restrictions.
Very briefly, one expresses $A_m(x)$ by Fourier inversion in the digit-sum variable and studies
exponential sums of the shape
\[
\sum_{p\le x} e\!\big(\alpha\, s(p)\big),
\]
splitting the integral over $\alpha\in\mathbb R/\mathbb Z$ into \emph{major arcs} (where $\alpha$ is
well approximated by rationals with small denominator) and \emph{minor arcs} (the complementary set).

In this framework, the minor-arc analysis supplies uniform cancellation, while the major arcs yield a
Gaussian main term reflecting the central-limit behavior of $s(p)$. For the qualitative theory, it is
enough that the relevant implied constants are finite. To make Theorem~\ref{thm:main} effective, however,
we must track constants through the major-arc approximation and choose parameters so that the total error
is at most a fixed fraction of the main term (in our explicit choice, at most $1/4$ of the main term).

Several steps in~\cite{DMRcomp2} are stated with implied constants that are not made numerical.
For effectivity, the non-numerical inputs needed by the proof are used only through explicit
numerical forms: most importantly the prime exponential-sum estimate used on the major arcs
(DMR Lemma~4.3), and the MR10 Type-I/Type-II estimates used on the minor arcs.  The following
table records exactly where each imported statement is used and where its constant is certified.
The next lemma is the explicit substitute for the relevant DMR prime exponential sum estimate.

\paragraph{Imported vs.\ re-proved results.}
To delimit the new contribution cleanly, the following table lists each external result we
invoke, the explicit form used in this paper, and the location where it enters the proof.

\begingroup
\renewcommand{\arraystretch}{1.20}
\setlength{\tabcolsep}{4pt}
\small
\begin{longtable}{p{0.35\linewidth} p{0.27\linewidth} p{0.30\linewidth}}
\textbf{External result} & \textbf{Explicit form used here} & \textbf{Where it enters / certified by} \\
\hline
\endfirsthead
\textbf{External result} & \textbf{Explicit form used here} & \textbf{Where it enters / certified by} \\
\hline
\endhead
DMR Lemma~3.1 (Vaughan-type combinatorial identity)~\cite[(10)--(11)]{DMRcomp2}
& Lemma~\ref{lem:DMR31-self}; transfer constant $20$ at $q=10$
& Proof of Lemma~\ref{lem:Cmin-local}, Step~2. \\
DMR Lemma~3.2 (Type-II reduction)~\cite[(12)--(14)]{DMRcomp2}
& Lemma~\ref{lem:DMR32-self}; prefactor $6$
& Proof of Lemma~\ref{lem:Cmin-local}, Step~1. \\
DMR Proposition~3.1 (Type-I estimate)~\cite[(15)--(16)]{DMRcomp2}
& Lemma~\ref{lem:typeI-explicit}; constant $10^8$ at $q=10$
& One hypothesis in Lemma~\ref{lem:DMR31-self}, used in Lemma~\ref{lem:Cmin-local}. \\
DMR Lemma~3.3 (vdC)~\cite[(18)--(22)]{DMRcomp2}
& Lemma~\ref{lem:MR-vdc-self}; prefactor tracked in Prop.~\ref{prop:typeII-explicit}
& Proof of Prop.~\ref{prop:typeII-explicit}. \\
DMR Lemma~3.4 (carry-propagation count)~\cite[(20)--(21)]{DMRcomp2}
& $E\le C_\tau(\mu+\nu)(\log q)q^{\mu+\nu-\rho}$
& Lemma~\ref{lem:tau-input}; entering Prop.~\ref{prop:typeII-explicit}. \\
BBR explicit divisor-problem remainder~\cite[Thm.~1.2]{BerkaneBordellesRamare2012}
& $\sum_{X-Y<n\le X}\tau(n)\le C_\tau Y\log X$
& Lemma~\ref{lem:tau-input}. \\
DMR Lemma~4.3 (prime exponential sum at $a/Q$)~\cite[Lemma 4.3]{DMRcomp2}
& $C_{\mathrm{DMR}}=102$
& Lemma~\ref{lem:DMR43}; via IK Thm.~13.6. \\
MR10 Lemma~4 (vdC)~\cite[Lemme 4]{MR10}
& Lemma~\ref{lem:MR-vdc-self}
& Proof of Lemma~\ref{lem:S2-input}. \\
MR10 Lemma~6 (sum of $1/\sin$)~\cite[(26)]{MR10}
& Lemma~\ref{lem:MR-sin-sum-self}; coefficients $(1,1,2/\pi)$
& Proof of Lemma~\ref{lem:S2-input}, Step~1, especially~\eqref{eq:K10-trig}. \\
MR10 Lemma~20 ($\gamma_q\in[1/2,1)$)~\cite[(105)]{MR10}
& Lemma~\ref{lem:MR-gamma-self}
& Proof of Lemma~\ref{lem:S2-input}, Step~2. \\
MR10 Lemma~21 ($F_\lambda$ Fourier bound)~\cite[(106)]{MR10}
& Lemma~\ref{lem:MR-F-weighted-self}; prefactor $1/\sin(\pi/q)$
& Proof of Lemma~\ref{lem:S2-input}, Step~2. \\
MR10 eq.~(64) ($S_2$ structural estimate)~\cite[(64)]{MR10}
& Explicit structural reduction, with $K_{10}\le 10^6$
& Lemmas~\ref{lem:MR-S2-G-self}, \ref{lem:MR-struct-self}, and~\ref{lem:S2-input}. \\
IK Theorem~13.6 (effective IK bound)~\cite[Thm.~13.6]{IK04}
& Explicit form in Lemma~\ref{lem:IK136-explicit}
& Lemma~\ref{lem:DMR43} (via Lemmas~\ref{lem:IK1345-explicit}--\ref{lem:IK1348-explicit}). \\
Kuipers--Niederreiter Erd\H{o}s--Tur\'an inequality~\cite[Thm.~2.5]{KN74}
& $(6, 4/\pi)$ for interval discrepancy
& Lemma~\ref{lem:c4-explicit}, Step~0 (Erd\H{o}s--Tur\'an step). \\
Bennett--Martin--O'Bryant--Rechnitzer~\cite{bennett2018explicit}
& $\pi(x;9,m)\ge x/(6\log x)$ for $\gcd(m,9)=1$, $x\ge x_{\mathrm{AP}}$
& Lemma~\ref{lem:pi9-lb}. \\
Marchal--Arbel (strict sub-Gaussianity)~\cite{MarchalArbel}
& Analog only; Lemma~\ref{lem:uniform-subgaussian} is proved directly
& Lemma~\ref{lem:uniform-subgaussian} (proved here directly). \\
\end{longtable}
\endgroup

\noindent
The MR10 $S_2$ input is used through the explicit reduction
Lemmas~\ref{lem:MR-S2-G-self}, \ref{lem:MR-G-first-self}, \ref{lem:MR-struct-self}, and
\ref{lem:S2-input}; Lemma~\ref{lem:insensitivity} records the remaining robustness margin.

\smallskip
The qualitative threshold $x_0(\alpha)$ in~\cite[Lemma~3.2]{DMRcomp2} is alpha-dependent and not
uniformly bounded as $(q-1)\alpha$ approaches an integer.  Lemma~\ref{lem:minor-bridge} isolates
the direct bridge below $x_0(\alpha)$, Lemma~\ref{lem:Cmin-local} applies the Type-I/Type-II
machinery only for $x\ge x_0(\alpha)$, and Lemma~\ref{lem:Cmin} performs the residue-class
projection.  Thus no uniform numerical value of $x_0$ enters the final threshold.  The cumulative
implicit constant $K_{10}$ in MR10's S$_2$
derivation enters only via the dominated $C_{\mathrm{min}}$ pathway; by
Lemma~\ref{lem:insensitivity} this is robust to any $K_{10}\le 10^{9}$.

\paragraph{Proof roadmap.}
The rest of the proof is organized as follows.  Section~\ref{sec:inputs} certifies the analytic
inputs: first the rational-prime exponential sum needed on major arcs, then the minor-arc chain
Type-I/Type-II $\Rightarrow$ primes in residue classes.  Section~\ref{sec:maj} proves the
major-arc Gaussian approximation at integral decimal lengths.  Section~\ref{sec:fourier}
combines major and minor arcs into a positivity criterion for $A_m(10^L)$.
Section~\ref{sec:derivation_m} chooses the integer length $L=\lfloor 2m/9\rfloor$, checks all
threshold inequalities, and converts them into the stated bound $M<1.78\times10^{32}$.  The remaining
sections derive applications from Theorem~\ref{thm:main}; they do not feed back into the proof of
the threshold.

\paragraph{Auxiliary thresholds.}  The thresholds $x_{41}, x_{42}, x_{45}, x_{47},
x_{\mathrm{maj}}, x_{\mathrm{min}}$ (and the corresponding $Y$-thresholds) are defined below
by explicit sufficient inequalities rather than by closed-form constants.  The dominated ones
($Y_{41},Y_{42},Y_{45},Y_{47}$) are recorded with unoptimized constructions without affecting
$M$.  Lemma~\ref{lem:insensitivity} formalizes the insensitivity to the minor-arc cumulative
constant $K_{10}$ up to $10^{9}$.

\subsection{An explicit prime exponential sum for rational phases (DMR Lemma 4.3)}\label{subsec:explicit-lemma43}

The proof of Theorem~\ref{thm:main} requires an \emph{effective} major-arc approximation in the
Drmota--Mauduit--Rivat framework. The relevant major-arc prime exponential-sum input in
\cite{DMRcomp2} (Lemma~4.3) is stated with an implicit constant, and it ultimately controls the
major-arc error terms.
In this subsection we give a complete proof of an explicit, constant-tracked substitute for that result,
following the method of~\cite[Ch.~13]{IK04} but making every numerical dependence fully effective.

We begin with a standard bound for finite geometric progressions, which will be used repeatedly to control
short additive characters.

\begin{lemma}[Geometric sum bound]\label{lem:geom-sum}
For any real $\beta$ and any integer $N\ge 1$,
\[
\left|\sum_{n=1}^{N} e(\beta n)\right|
\le
\min\!\left(N,\ \frac{1}{2\|\beta\|_{\mathbb R/\mathbb Z}}\right),
\]
with the convention that $1/(2\|\beta\|_{\mathbb R/\mathbb Z})=+\infty$ if $\beta\in\mathbb Z$.
\end{lemma}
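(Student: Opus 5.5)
The bound $N$ follows from the triangle inequality, since each term has modulus $1$. For the other bound, if $\beta\in\mathbb Z$ the convention makes it vacuous. So I will assume $\beta\notin\mathbb Z$, so that $e(\beta)\neq 1$, and sum the geometric progression:
\[
\sum_{n=1}^{N} e(\beta n)=e(\beta)\,\frac{e(\beta N)-1}{e(\beta)-1}.
\]
Taking absolute values, the numerator is at most $2$. The key identity is $|e(t)-1|=|e(t/2)-e(-t/2)|=2|\sin(\pi t)|$. Applied to the denominator, it gives
\[
\left|\sum_{n=1}^{N} e(\beta n)\right|\le\frac{|\sin(\pi\beta N)|}{|\sin(\pi\beta)|}\le\frac{1}{|\sin(\pi\beta)|}.
\]

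The next step replaces $|\sin(\pi\beta)|$ by the distance to the nearest integer. By periodicity and symmetry, $|\sin(\pi\beta)|=\sin(\pi\|\beta\|_{\mathbb R/\mathbb Z})$ with $\|\beta\|_{\mathbb R/\mathbb Z}\in(0,1/2]$. Jordan's inequality $\sin(\pi u)\ge 2u$ for $u\in[0,1/2]$ holds by concavity of $\sin(\pi u)$ on that interval, since the chord joins $(0,0)$ and $(1/2,1)$. Together these give
\[
\left|\sum_{n=1}^{N} e(\beta n)\right|\le\frac{1}{2\|\beta\|_{\mathbb R/\mathbb Z}}.
\]
Combining this with the trivial bound $N$ yields the minimum.

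The argument has no real obstacle. The one point needing care is the constant: one has to use the sharp chord bound $\sin(\pi u)\ge 2u$ rather than a cruder estimate, so that the factor is exactly $1/2$ as stated.
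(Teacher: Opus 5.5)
Your proposal is correct and follows essentially the same route as the paper: the trivial bound $N$, the geometric-series/sine form giving $|\sin(\pi\beta N)|/|\sin(\pi\beta)|\le 1/|\sin(\pi\beta)|$, and then $\sin(\pi\delta)\ge 2\delta$ for $\delta=\|\beta\|_{\mathbb R/\mathbb Z}\in(0,1/2]$ by concavity (the chord from $(0,0)$ to $(1/2,1)$). Nothing is missing.
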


\begin{proof}
The triangle inequality gives $\left|\sum_{n=1}^{N} e(\beta n)\right|\le N$.

If $\beta\in\mathbb Z$ then $\sum_{n=1}^{N} e(\beta n)=N$, so the claimed bound holds.
Assume henceforth that $\beta\notin\mathbb Z$. Using the geometric-series identity and the
standard sine form, we have
\[
\sum_{n=1}^{N} e(\beta n)
=
e\!\left(\frac{\beta(N+1)}{2}\right)\frac{\sin(\pi N\beta)}{\sin(\pi\beta)}.
\]
Therefore
\[
\left|\sum_{n=1}^{N} e(\beta n)\right|
\le \frac{1}{|\sin(\pi\beta)|}.
\]
Let $\delta:=\|\beta\|_{\mathbb R/\mathbb Z}\in(0,1/2]$. Since $|\sin(\pi\beta)|=\sin(\pi\delta)$
and $\sin$ is concave on $[0,\pi/2]$, the graph of $\sin$ lies above the chord joining
$(0,0)$ and $(\pi/2,1)$, hence $\sin(\pi\delta)\ge 2\delta$. It follows that
\[
\left|\sum_{n=1}^{N} e(\beta n)\right|\le \frac{1}{2\delta}
=\frac{1}{2\|\beta\|_{\mathbb R/\mathbb Z}}.
\]
Combining with the bound $\le N$ proves the lemma.
\end{proof}

\medskip

The next two lemmas are explicit versions of the estimates (13.45)--(13.46) in
\cite[Lemma~13.7]{IK04}. We only need them for $\alpha=a/Q$ rational with $(a,Q)=1$.

\begin{lemma}[Explicit (13.45)]\label{lem:IK1345-explicit}
Let $\alpha=a/Q$ with integers $Q\ge 2$ and $(a,Q)=1$. Let $M,N\ge 1$ be integers.
Then
\[
\sum_{|m|\le M}
\left|
\sum_{1\le n\le N} e(\alpha m n)
\right|
\le
2\bigl(M+N+MN/Q+Q\bigr)\,\log(2Q).
\]
\end{lemma}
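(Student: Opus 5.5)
The plan is to bound each inner sum by Lemma~\ref{lem:geom-sum} and then sum over $m$ by exploiting the fact that $m\mapsto am \bmod Q$ permutes the residues modulo $Q$. By Lemma~\ref{lem:geom-sum} with $\beta=am/Q$, the inner sum is at most $\min\bigl(N,\,1/(2\|am/Q\|_{\mathbb R/\mathbb Z})\bigr)$. This depends only on the residue of $m$ modulo $Q$ and is invariant under $m\mapsto -m$. The term $m=0$ contributes exactly $N$, and the terms with $m\ne 0$ contribute twice the sum over $1\le m\le M$.

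For $1\le m\le M$ we separate two kinds of $m$. If $Q\mid m$, then $am/Q\in\mathbb Z$ and we use the bound $N$. There are $\lfloor M/Q\rfloor\le M/Q$ such $m$, so together with the symmetric negative values they contribute at most $2MN/Q$.

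If $Q\nmid m$, we use the bound $1/(2\|am/Q\|)$. We cover $[1,M]$ by at most $\lceil M/Q\rceil\le M/Q+1$ blocks of $Q$ consecutive integers. Since $(a,Q)=1$, within each block the residues $am\bmod Q$ are distinct. The nonzero residues in a block therefore contribute at most
\[
\sum_{r=1}^{Q-1}\frac{Q}{2\min(r,Q-r)}\le Q\sum_{k=1}^{\lfloor Q/2\rfloor}\frac1k\le Q\bigl(1+\log(Q/2)\bigr)\le Q\log(2Q).
\]
The last inequality holds because $1\le\log 4$. A partial final block is dominated by this same full-residue sum. Hence the nonzero-residue terms for $1\le m\le M$ total at most $(M/Q+1)\,Q\log(2Q)=(M+Q)\log(2Q)$, and at most twice that after including negative $m$.

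Collecting the pieces gives
\[
\sum_{|m|\le M}\Bigl|\sum_{n\le N}e(\alpha mn)\Bigr|\le N+\frac{2MN}{Q}+2(M+Q)\log(2Q).
\]
This is at most $2(M+N+MN/Q+Q)\log(2Q)$ because $\log(2Q)\ge\log 4>1$, so that $N\le 2N\log(2Q)$ and $2MN/Q\le 2(MN/Q)\log(2Q)$.

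The only delicate point is the count of $m$ with $Q\mid m$. Charging the $N$-bound once per block instead would produce $N(M/Q+1)$ and an extra $3N$ term. That is too large when $Q=2$, where $2\log 4<3$. Using the exact count $\lfloor M/Q\rfloor$ avoids this, and the harmonic-sum estimate $1+\log(Q/2)\le\log(2Q)$ needs no further care.
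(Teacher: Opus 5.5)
Your proof is correct and follows essentially the paper's route: the geometric-sum bound, blocks of $Q$ consecutive $m$ on which $am \bmod Q$ runs through distinct residues, and the harmonic estimate $QH_{\lfloor Q/2\rfloor}\le Q\log(2Q)$. The only difference is bookkeeping. The paper blocks the whole interval $\{-M,\dots,M\}$ at once, using at most $1+2M/Q$ blocks each charged $N+Q\log(2Q)$, which gives the same $N$-contribution $N(1+2M/Q)$ without counting multiples of $Q$ separately. So the ``delicate point'' you flag does not really arise; even charging $N$ once per block on each half would close, since $M\ge 1$ leaves enough slack in the $MN/Q$ term when $Q=2$.
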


\begin{proof}
Write $\|\cdot\|:=\|\cdot\|_{\mathbb R/\mathbb Z}$. By Lemma~\ref{lem:geom-sum},
\[
\left|\sum_{1\le n\le N} e(\alpha m n)\right|
\le \min\!\left(N,\ \frac{1}{2\|\alpha m\|}\right).
\]
Thus it suffices to bound
\[
S:=\sum_{|m|\le M}\min\!\left(N,\ \frac{1}{2\|\alpha m\|}\right).
\]

Partition the finite interval $\{-M,-M+1,\dots,M\}$ into consecutive blocks of length $Q$, starting
at $-M$ and allowing the last block to be truncated.  The number of such blocks satisfies
\[
B\le \left\lceil\frac{2M+1}{Q}\right\rceil \le 1+\frac{2M}{Q}.
\]

Fix a complete block. Since $(a,Q)=1$, the residues $am\bmod Q$ run through a complete
system modulo $Q$ as $m$ runs through the block, hence the multiset of distances
$\|\alpha m\|=\|am/Q\|$ agrees with $\{\|r/Q\|: r=0,1,\dots,Q-1\}$.
For $r\neq 0$, $\|r/Q\|=\min(r,Q-r)/Q$, so
\[
\frac{1}{2\|r/Q\|}=\frac{Q}{2\min(r,Q-r)}.
\]
Therefore, for a complete block,
\begin{align*}
\sum_{m\ \text{in block}} \min\!\left(N,\ \frac{1}{2\|\alpha m\|}\right)
&\le
N + \sum_{r=1}^{Q-1}\frac{Q}{2\min(r,Q-r)} \\
&\le
N + Q\sum_{u=1}^{\lfloor Q/2\rfloor}\frac{1}{u}
=
N + QH_{\lfloor Q/2\rfloor},
\end{align*}
since each value $u=\min(r,Q-r)$ occurs at most twice. Using
$H_{\lfloor Q/2\rfloor}\le 1+\log(Q/2)\le \log(2Q)$, this is at most $N+Q\log(2Q)$.
The same bound holds for truncated blocks.

Hence
\[
S\le B\,(N+Q\log(2Q))
\le \left(1+\frac{2M}{Q}\right)(N+Q\log(2Q)).
\]
Let $L:=\log(2Q)\ge 1$. Expanding and using $L\ge 1$ gives
\[
\left(1+\frac{2M}{Q}\right)(N+QL)
= N+QL+\frac{2MN}{Q}+2ML
\le 2L\bigl(M+N+MN/Q+Q\bigr),
\]
which proves the lemma.
\end{proof}

\begin{lemma}[Explicit (13.46)]\label{lem:IK1346-explicit}
Let $\alpha=a/Q$ with integers $Q\ge 2$ and $(a,Q)=1$. Let $M\ge 1$ and $x\ge 2$.
Then
\[
\sum_{1\le m\le M}\left|\sum_{\substack{n\ge 1\\ mn\le x}} e(\alpha mn)\right|
\le
\bigl(M+x/Q+Q\bigr)\,\log(2Qx).
\]
\end{lemma}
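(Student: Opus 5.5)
The plan is to mirror the block argument of Lemma~\ref{lem:IK1345-explicit}. The one new feature is that the length of the inner sum now depends on $m$. For each $m\ge1$ the inner sum runs over $1\le n\le N_m:=\lfloor x/m\rfloor$, and it is empty when $m>x$. So we may replace $M$ by $M':=\min(M,\lfloor x\rfloor)$; this only decreases the right-hand side's relevant quantities and lets us assume $M'\le x$. By Lemma~\ref{lem:geom-sum}, each inner sum is bounded by $\min\bigl(x/m,\ 1/(2\|am/Q\|)\bigr)$. We bound these terms differently according to whether $Q\mid m$, using the first entry of the $\min$ when $Q\mid m$ and the second otherwise.

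\emph{Terms with $Q\nmid m$.} Partition $\{1,\dots,M'\}$ into consecutive blocks of length $Q$, the last one possibly truncated. There are at most $\lceil M'/Q\rceil\le 1+M'/Q$ blocks. Since $(a,Q)=1$, inside a block the nonzero residues $am\bmod Q$ are distinct. Hence, exactly as in the proof of Lemma~\ref{lem:IK1345-explicit}, the contribution of one block is at most
\[
\sum_{r=1}^{Q-1}\frac{Q}{2\min(r,Q-r)}\le QH_{\lfloor Q/2\rfloor}\le Q\log(2Q).
\]
Summing over blocks, these terms contribute at most $(1+M'/Q)\,Q\log(2Q)=(Q+M')\log(2Q)$.

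\emph{Terms with $Q\mid m$.} Write $m=jQ$ with $1\le j\le M'/Q$; these contribute at most
\[
\sum_{j\le M'/Q}\frac{x}{jQ}.
\]
If $M'<Q$ this sum is empty. Otherwise it is at most
\[
\frac{x}{Q}\Bigl(1+\log\frac{M'}{Q}\Bigr)\le \frac{x}{Q}\log\frac{ex}{Q},
\]
using $M'\le x$. Since $Q\ge2$ we have $e/Q\le 2Q$, so $\log(ex/Q)\le\log(2Qx)$.

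\emph{Combining.} Since $x\ge 2$, we have $\log(2Q)\le\log(2Qx)$. Adding the two contributions gives at most $(M'+x/Q+Q)\log(2Qx)\le (M+x/Q+Q)\log(2Qx)$, as claimed.

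The only delicate point is the treatment of the multiples of $Q$. A naive bound $x/m\le x$ for each such term would give a factor $M/Q$ rather than a logarithm. The harmonic sum over $j$, together with the truncation $M'\le x$, is what keeps this contribution inside the single factor $\log(2Qx)$.
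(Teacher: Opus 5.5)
Your proposal is correct and follows essentially the same argument as the paper. Both truncate to $\min(M,\lfloor x\rfloor)$ and bound the multiples of $Q$ by the harmonic sum $(x/Q)H_{\lfloor M_0/Q\rfloor}\le (x/Q)\log(ex/Q)$. Both then handle the non-multiples block by block, with at most $Q\log(2Q)$ per block of length $Q$ and at most $1+M_0/Q$ blocks.
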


\begin{proof}
Let $M_0:=\min(M,\lfloor x\rfloor)$. For $m>x$ the inner sum is empty, so it suffices
to sum over $1\le m\le M_0$. For each such $m$ set $N_m:=\lfloor x/m\rfloor\ge 1$, so that
\[
\sum_{\substack{n\ge 1\\ mn\le x}} e(\alpha mn)=\sum_{1\le n\le N_m} e(\alpha mn).
\]
By Lemma~\ref{lem:geom-sum},
\[
\left|\sum_{1\le n\le N_m} e(\alpha mn)\right|
\le \min\!\left(N_m,\ \frac{1}{2\|\alpha m\|_{\mathbb R/\mathbb Z}}\right)
\le \min\!\left(\frac{x}{m},\ \frac{1}{2\|\alpha m\|_{\mathbb R/\mathbb Z}}\right),
\]
where the second term is interpreted as $+\infty$ when $\|\alpha m\|_{\mathbb R/\mathbb Z}=0$.
Hence it suffices to bound
\[
T:=\sum_{1\le m\le M_0}\min\!\left(\frac{x}{m},\ \frac{1}{2\|\alpha m\|_{\mathbb R/\mathbb Z}}\right).
\]

We split $m$ into multiples of $Q$ and non-multiples.

\medskip\noindent
\emph{(i) Multiples of $Q$.}
Write $m=Qr$. Then $\alpha m=ar\in\mathbb Z$, so the inner exponential sum has size
$N_m\le x/m$. Therefore
\[
\sum_{\substack{m\le M_0\\ Q\mid m}}\min\!\left(\frac{x}{m},\ \frac{1}{2\|\alpha m\|}\right)
\le \sum_{r\le M_0/Q}\frac{x}{Qr}
= \frac{x}{Q}H_{\lfloor M_0/Q\rfloor},
\]
where $H_k=\sum_{1\le r\le k}1/r$ and we set $H_0:=0$.
For $k\ge 1$ we have $H_k\le 1+\log k$, hence
\[
H_{\lfloor M_0/Q\rfloor}\le 1+\log(x/Q)=\log(e x/Q)\le \log(2Qx),
\]
since $Q\ge 2$ implies $e/Q\le 2Q$. Thus the contribution of $Q\mid m$ is
$\le (x/Q)\log(2Qx)$.

\medskip\noindent
\emph{(ii) Non-multiples of $Q$.}
Partition $\{1,\dots,M_0\}$ into consecutive blocks of length $Q$.
In each complete block $\{jQ+1,\dots,(j+1)Q\}$, the residues $am\bmod Q$ run through a complete
system modulo $Q$, so for $Q\nmid m$ the multiset of distances
$\{\|\alpha m\|_{\mathbb R/\mathbb Z}:m\ \text{in block},\,Q\nmid m\}$
equals $\{\|r/Q\|_{\mathbb R/\mathbb Z}:1\le r\le Q-1\}$.
Therefore, on any block (complete or not),
\begin{align*}
\sum_{\substack{m\ \text{in block}\\ Q\nmid m}}
\min\!\left(\frac{x}{m},\ \frac{1}{2\|\alpha m\|_{\mathbb R/\mathbb Z}}\right)
&\le
\sum_{r=1}^{Q-1}\frac{1}{2\|r/Q\|_{\mathbb R/\mathbb Z}} \\
&=
\sum_{r=1}^{Q-1}\frac{Q}{2\min(r,Q-r)}
\le Q\sum_{u=1}^{\lfloor Q/2\rfloor}\frac{1}{u} \\
&=QH_{\lfloor Q/2\rfloor}
\le Q\log(2Q)
\le Q\log(2Qx),
\end{align*}
since $x\ge 2$.
There are at most $\lceil M_0/Q\rceil\le 1+M_0/Q$ blocks, so the total contribution from
$Q\nmid m$ is at most $(M_0+Q)\log(2Qx)\le (M+Q)\log(2Qx)$.

Combining (i) and (ii) gives $T\le (M+x/Q+Q)\log(2Qx)$, proving the lemma.
\end{proof}

\medskip

Next is an explicit version of the bilinear estimate (13.48) from~\cite[Lemma~13.8]{IK04}.

\begin{lemma}[Explicit (13.48)]\label{lem:IK1348-explicit}
Let $\alpha=a/Q$ with integers $2\le Q\le x$ and $(a,Q)=1$. Let $x\ge 3$ and $M,N\ge 1$.
For any complex sequences $(u_m),(v_n)$ with $|u_m|\le 1$ and $|v_n|\le 1$,
\[
\left|
\sum_{\substack{mn\le x\\ m>M,\ n>N}}
u_m v_n\,e(\alpha mn)
\right|
\le
10\,
\Bigl(\frac{x}{M}+\frac{x}{N}+\frac{x}{Q}+Q\Bigr)^{1/2}
x^{1/2}(\log x)^2.
\]
\end{lemma}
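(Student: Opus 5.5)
We follow the standard proof of \cite[Lemma~13.8]{IK04}: dyadic decomposition in $m$, Cauchy--Schwarz to remove $u_m$, expansion of the square, and Lemma~\ref{lem:IK1345-explicit} for the resulting geometric sums. The job is to track every numerical factor.

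\emph{Step 1 (dyadic split).} Since $mn\le x$, $m>M$ and $n>N$, the range of $m$ is $M<m\le x/N$. Split it into dyadic blocks $B<m\le 2B$ with $B=2^kM$. There are at most $\log(x/(MN))/\log 2+1\le \log x/\log 2+1$ such blocks, which is at most $2\log x$ for $x\ge3$. Call the sum over one block $S_B$.

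\emph{Step 2 (Cauchy--Schwarz).} Apply Cauchy--Schwarz in $m$ and use $|u_m|\le1$:
\[
|S_B|^2\le B\sum_{B<m\le 2B}\Bigl|\sum_{\substack{N<n\le x/m}} v_n e(\alpha mn)\Bigr|^2 .
\]
Expand the square to get pairs $(n_1,n_2)$ with $N<n_i\le x/B$. Interchange the order of summation, so that the inner sum over $m$ becomes a geometric sum over an interval in $m$: it runs over $B<m\le\min(2B,x/n_1,x/n_2)$, with phase $e(\alpha m(n_1-n_2))$. Bound $|v_{n_1}v_{n_2}|\le1$. Then group by $h=n_1-n_2$, with $|h|\le x/B$; each $h$ occurs at most $x/B$ times. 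A geometric sum over an interval of length at most $B$ is bounded by the quantity in Lemma~\ref{lem:geom-sum}, since shifting the starting point only changes the phase. This gives
\[
|S_B|^2\le B\cdot\frac{x}{B}\sum_{|h|\le x/B}\min\Bigl(B,\frac{1}{2\|\alpha h\|}\Bigr).
\]
The remaining sum has the form of $S$ in the proof of Lemma~\ref{lem:IK1345-explicit}, with $M\to x/B$ and $N\to B$. Hence it is at most $2(x/B+B+x/Q+Q)\log(2Q)$. We have $\log(2Q)\le\log(2x)\le 2\log x$ for $x\ge3$, because $Q\le x$. We also have $B\le x/N$ and $x/B<x/M$. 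Therefore
\[
|S_B|\le 2\Bigl(\frac{x}{M}+\frac{x}{N}+\frac{x}{Q}+Q\Bigr)^{1/2}x^{1/2}(\log x)^{1/2}.
\]

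\emph{Step 3 (summing blocks).} Multiplying by the $\le 2\log x$ blocks gives the constant $4$ and the power $(\log x)^{3/2}$. Since $\log x\ge1$, this power is at most $(\log x)^2$, which leaves ample room below the stated $10$.

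The main obstacle is Step 2. One must check that truncating $m\le x/n_i$ only shortens the $m$-interval, so Lemma~\ref{lem:geom-sum} still applies. One must also handle edge cases: empty ranges, and $x/B<1$, where $h=0$ is the only term and contributes $B\cdot x/B$, consistent with the bound. A cruder treatment of the block count or of $\log(2Q)$ still fits within the factor $10$ together with the extra $(\log x)^{1/2}$ slack.
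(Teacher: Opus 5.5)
Your proposal is correct and is essentially the paper's proof with the roles of $m$ and $n$ interchanged: the paper splits $n>N$ dyadically and applies Cauchy--Schwarz over $n$ with inner coefficients $\gamma_m=u_m\mathbf 1_{m>M}$, then groups by $d=m_1-m_2$ and reuses the min-sum bound from the proof of Lemma~\ref{lem:IK1345-explicit}, just as you do. Two small numerical slips need correcting, and both fixes stay below the constant $10$ with your $(\log x)^{3/2}\le(\log x)^2$ slack: first, $(B,2B]$ may contain up to $B+1\le 2B$ integers, which affects both the Cauchy--Schwarz factor and the geometric-sum length; second, $\log x/\log 2+1\le 2\log x$ fails for $3\le x<6$, although nonempty blocks need $2^kM<x/2$, so there are at most $\log_2 x$ of them.
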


\begin{proof}
We follow the standard reduction in~\cite[(13.47)--(13.48)]{IK04}.
Decompose $n>N$ dyadically into intervals $(N_j,2N_j]$ with $N_1=N$ and $N_j=2^{j-1}N$.
We may stop once $N_j>x$, and if $N_j>x/M$ then the constraint $mn\le x$ with $m>M$ forces
$T_j=0$. Hence the number of relevant intervals satisfies
\[
J\le 1+\log_2 x\le \log x+\log_2 x\le \Bigl(1+\frac{1}{\log 2}\Bigr)\log x<\frac52\log x
\qquad (x\ge 3),
\]
using $\log x\ge 1$ for $x\ge 3$ and $1/\log 2<3/2$.

For each $j$ define
\[
T_j:=\sum_{\substack{N_j<n\le 2N_j\\ mn\le x,\ m>M}}
u_m v_n\,e(\alpha mn).
\]
By the triangle inequality,
\[
\left|\sum_{\substack{mn\le x\\ m>M,\ n>N}}u_m v_n e(\alpha mn)\right|
\le \sum_{j=1}^J |T_j|.
\]

Fix $j$ and define $\gamma_m:=u_m$ for $m>M$ and $\gamma_m:=0$ otherwise, so $|\gamma_m|\le 1$.
Then
\[
|T_j|
\le \sum_{N_j<n\le 2N_j}\left|\sum_{m\le x/n}\gamma_m e(\alpha mn)\right|
=:B(x;N_j).
\]
By Cauchy--Schwarz,
\[
B(x;N_j)^2
\le (2N_j)\sum_{N_j<n\le 2N_j}\left|\sum_{m\le x/n}\gamma_m e(\alpha mn)\right|^2.
\]
Expanding the square and swapping summations gives
\begin{align*}
\sum_{N_j<n\le 2N_j}\left|\sum_{m\le x/n}\gamma_m e(\alpha mn)\right|^2
&=
\sum_{m_1,m_2\ge 1}\gamma_{m_1}\overline{\gamma_{m_2}}
\sum_{\substack{N_j<n\le 2N_j\\ n\le x/\max(m_1,m_2)}} e\!\big(\alpha n(m_1-m_2)\big).
\end{align*}
Since $|\gamma_{m_1}\overline{\gamma_{m_2}}|\le 1$ and $m_1,m_2\le x/N_j$ whenever the inner sum is nonempty,
the triangle inequality and the shifted-interval form of Lemma~\ref{lem:geom-sum} (obtained by
factoring out the initial phase) imply
\[
\sum_{N_j<n\le 2N_j}\left|\sum_{m\le x/n}\gamma_m e(\alpha mn)\right|^2
\le
\sum_{m_1,m_2\le x/N_j}\min\!\left(2N_j,\ \frac{1}{2\|\alpha(m_1-m_2)\|}\right).
\]
Writing $d=m_1-m_2$ and observing that for each fixed $d$ the number of pairs $(m_1,m_2)$ with
$m_1,m_2\le x/N_j$ and $m_1-m_2=d$ is at most $\lfloor x/N_j\rfloor$, we obtain
\[
\sum_{N_j<n\le 2N_j}\left|\sum_{m\le x/n}\gamma_m e(\alpha mn)\right|^2
\le
\frac{x}{N_j}\sum_{|d|\le x/N_j}\min\!\left(2N_j,\ \frac{1}{2\|\alpha d\|}\right).
\]
Therefore
\[
B(x;N_j)^2
\le
2x\sum_{|d|\le x/N_j}\min\!\left(2N_j,\ \frac{1}{2\|\alpha d\|}\right).
\]

The proof of Lemma~\ref{lem:IK1345-explicit} bounds these min-sums, and with $U=\lfloor x/N_j\rfloor$,
$V=2N_j$ it gives
\[
\sum_{|d|\le x/N_j}\min\!\left(2N_j,\ \frac{1}{2\|\alpha d\|}\right)
\le
2\left(\frac{x}{N_j}+2N_j+\frac{2x}{Q}+Q\right)\log(2Q).
\]
Hence
\[
B(x;N_j)^2
\le
4x\left(\frac{x}{N_j}+2N_j+\frac{2x}{Q}+Q\right)\log(2Q).
\]
Since $Q\le x$, we have $\log(2Q)\le \log(2x)\le 2\log x$ for $x\ge 3$, so
\[
B(x;N_j)^2\le
8x\left(\frac{x}{N_j}+2N_j+\frac{2x}{Q}+Q\right)\log x.
\]
Taking square roots and using $\sqrt{\log x}\le \log x$ (valid for $x\ge 3$) yields
\[
B(x;N_j)\le
2\sqrt2\,x^{1/2}\left(\frac{x}{N_j}+2N_j+\frac{2x}{Q}+Q\right)^{1/2}\log x.
\]

Moreover, if $T_j\neq 0$ then $N\le N_j\le x/M$, so
\[
\frac{x}{N_j}\le \frac{x}{N},\qquad N_j\le \frac{x}{M},
\]
and therefore
\[
\frac{x}{N_j}+2N_j+\frac{2x}{Q}+Q
\le
\frac{x}{N}+2\frac{x}{M}+\frac{2x}{Q}+Q
\le
2\left(\frac{x}{M}+\frac{x}{N}+\frac{x}{Q}+Q\right).
\]
Thus
\[
B(x;N_j)\le
4\left(\frac{x}{M}+\frac{x}{N}+\frac{x}{Q}+Q\right)^{1/2}x^{1/2}\log x.
\]
Finally, summing over $j$ and using $J<\frac52\log x$ gives the claimed bound.
\end{proof}

\medskip

We now derive an explicit-constant version of~\cite[Theorem~13.6]{IK04} in the special (rational) case $\alpha=A/Q$.

\begin{lemma}[Explicit IK Theorem~13.6 for $\alpha=A/Q$]\label{lem:IK136-explicit}
Let $\alpha=A/Q$ with integers $Q\ge 2$ and $(A,Q)=1$. For all $x\ge 100$ with $Q\le x$,
\[
\left|\sum_{n\le x}\Lambda(n)e(\alpha n)\right|
\le
23\Bigl(Q^{1/2}x^{1/2}+Q^{-1/2}x+x^{4/5}\Bigr)(\log x)^3.
\]
\end{lemma}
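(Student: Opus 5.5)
The plan follows the proof of~\cite[Theorem~13.6]{IK04}, using Vaughan's identity with parameters $U=V=x^{2/5}$ and feeding the three pieces into Lemmas~\ref{lem:IK1345-explicit}, \ref{lem:IK1346-explicit} and~\ref{lem:IK1348-explicit}.

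For $n>V$, Vaughan's identity gives
\[
\Lambda(n)=\sum_{\substack{bc=n\\ b\le U}}\mu(b)\log c-\sum_{\substack{bcd=n\\ b\le U,\,c\le V}}\mu(b)\Lambda(c)+\sum_{\substack{bc=n\\ b>U,\,c>V}}\mu(b)\Bigl(\sum_{\substack{d\mid c\\ d>V}}\Lambda(d)\Bigr).
\]
Write correspondingly $\sum_{n\le x}\Lambda(n)e(\alpha n)=S_0+S_1-S_2+S_3$, where $S_0=\sum_{n\le V}\Lambda(n)e(\alpha n)$. Then $|S_0|\le\psi(V)\le 1.04\,V$ by an explicit Chebyshev bound (Rosser--Schoenfeld), which is trivially absorbed into the $x^{4/5}(\log x)^3$ term.

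\textbf{Type-I sums.} For $S_1$, I would remove the $\log c$ weight by partial summation, writing $\log c=\int_1^c dt/t$. This gives
\[
|S_1|\le \log x\,\max_{w\le x}\sum_{b\le U}\Bigl|\sum_{c\le w/b}e(\alpha bc)\Bigr|,
\]
which I would bound by Lemma~\ref{lem:IK1346-explicit} as $(U+x/Q+Q)\log(2Qx)\log x$. For $S_2$, I would group $m=bc\le UV$ with coefficient $a_m=\sum_{bc=m,\,b\le U,\,c\le V}\mu(b)\Lambda(c)$, noting $|a_m|\le\log m$. The same lemma, with the weight pulled out as $\log(UV)$, gives $(UV+x/Q+Q)\log(2Qx)\log x$. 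Here $\log(2Qx)\le 3\log x$ is used since $Q\le x$.

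\textbf{Type-II sum.} In $S_3$ the coefficients are $\mu(b)$ and $\Lambda^\ast(c)=\sum_{d\mid c,\,d>V}\Lambda(d)\le\log c\le\log x$. I would normalize to $v_c=\Lambda^\ast(c)/\log x$ and apply Lemma~\ref{lem:IK1348-explicit} with $M=U$, $N=V$. This gives
\[
|S_3|\le 10\log x\,\bigl(x/U+x/V+x/Q+Q\bigr)^{1/2}x^{1/2}(\log x)^2.
\]
Since $x/U=x^{3/5}$, we get $(x\cdot x^{3/5})^{1/2}=x^{4/5}$, and $(x/Q)^{1/2}x^{1/2}=xQ^{-1/2}$ and $Q^{1/2}x^{1/2}$ appear directly. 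Taking $\sqrt{a+b+c+d}\le\sqrt a+\dots$, the Type-II piece is at most $10(2x^{4/5}+xQ^{-1/2}+Q^{1/2}x^{1/2})(\log x)^3$.

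\textbf{Collecting constants.} The Type-I terms are $O\bigl((x^{4/5}+x/Q+Q)(\log x)^2\bigr)$ with explicit small constants. To put everything in the target shape I would use $x/Q\le xQ^{-1/2}$ and $Q\le Q^{1/2}x^{1/2}$, both valid since $Q\le x$.

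\textbf{Main obstacle.} The hard step is making the summed constant at most $23$ at the $x^{4/5}$ term. The Type-II part alone contributes $20$ (from the two terms $x/U$ and $x/V$), and the Type-I parts contribute roughly $3+3$ divided by $\log x$, plus $1.04/(\log x)^3$. For $x\ge100$ we have $\log x\ge 4.6$, so these remainders total about $1.3+0.01$, leaving margin below $23$.

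I expect the bookkeeping of $\log(2Qx)$ against $\log x$, and the partial summation step for $S_1$, to be where care is needed. If $23$ is exceeded, I would first try an unbalanced choice such as $U=V=x^{2/5}$ but with the $x/U+x/V$ terms combined before taking square roots, i.e.\ using $\sqrt{2x^{3/5}}$ instead of $2\sqrt{x^{3/5}}$. This reduces the Type-II contribution from $20$ to $10\sqrt2\approx14.2$ and gives ample room.
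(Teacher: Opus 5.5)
Your proposal is correct and follows essentially the same route as the paper: Vaughan's identity at level $x^{2/5}$, the Type-I sums via Lemma~\ref{lem:IK1346-explicit}, and the Type-II sum via Lemma~\ref{lem:IK1348-explicit} after normalizing $\sum_{d\mid c,\,d>V}\Lambda(d)\le\log x$. Your budget of $20+6/\log x+\ldots<23$ closes just as the paper's $6+16+1$ does; the paper gets $16$ by putting $x/M+x/N$ under a single square root, which is your fallback.

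One small fix is needed in the $S_1$ step. The display $|S_1|\le\log x\max_{w}\sum_{b}|\sum_{c\le w/b}e(\alpha bc)|$ is not what the swap $\log c=\int_1^c dt/t$ produces: the cut $c>t$ is common to all $b$, and there is also the cut $bc>V$. However, bounding each inner interval sum by $\min(x/b,1/(2\|\alpha b\|))$ and summing as in the proof of Lemma~\ref{lem:IK1346-explicit} still gives your stated $(U+x/Q+Q)\log(2Qx)\log x$.
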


\begin{proof}
We follow the proof strategy of~\cite[Theorem~13.6]{IK04}, based on Vaughan's
identity~\cite{Vaughan1980} (see also~\cite[(13.39)]{IK04}). Set
\[
M=N:=\lfloor x^{2/5}\rfloor,
\qquad\text{so that}\qquad
MN\le x^{4/5}.
\]
Since $x\ge100$, $x^{2/5}\ge100^{2/5}>6$, hence
\[
M=N\ge x^{2/5}-1\ge\frac56 x^{2/5},
\qquad
\frac{x}{M},\frac{x}{N}\le \frac65 x^{3/5}.
\]
Write
\[
S(\alpha,x):=\sum_{n\le x}\Lambda(n)e(\alpha n).
\]
Vaughan's identity~\cite[(13.39)]{IK04} decomposes $S(\alpha,x)$ into three structured sums
$S_1,S_2,S_3$ (in the notation of \cite[\S 13.4]{IK04}) plus a boundary term
$E$ from the truncation $n\le N$, with $|E|\le N\log x$.
The boundary term satisfies
\[
|E|\le N\log x\le x^{4/5}(\log x)^3
\qquad (x\ge 100),
\]
since with $N=\lfloor x^{2/5}\rfloor\le x^{2/5}$ one has $N\log x\le x^{2/5}\log x$, and multiplying
by $x^{2/5}(\log x)^2\ge 1$ for $x\ge 3$ yields $N\log x\le x^{4/5}(\log x)^3$.

For the two type-I sums $S_1,S_2$, the coefficients produced by Vaughan's identity are bounded in
absolute value by $\log x$, and the remaining oscillation is in additive characters of the form
$e(\alpha mn)$. Thus, by the triangle inequality and Lemma~\ref{lem:IK1346-explicit},
\[
|S_1|+|S_2|
\le
2(\log x)\sum_{m\le MN}\left|\sum_{\substack{n\ge 1\\ mn\le x}} e(\alpha mn)\right|
\le
2(\log x)\,(MN+x/Q+Q)\,\log(2Qx).
\]
Since $Q\le x$ and $x\ge 3$, we have $\log(2Qx)\le \log(2x^2)\le 3\log x$, hence
\[
|S_1|+|S_2|
\le
6\,(MN+x/Q+Q)\,(\log x)^2
\le
6\,(x^{4/5}+x/Q+Q)\,(\log x)^3.
\]
Using $x/Q\le xQ^{-1/2}$ and $Q\le Q^{1/2}x^{1/2}$ (since $Q\le x$) gives
\[
|S_1|+|S_2|
\le
6\Bigl(Q^{1/2}x^{1/2}+Q^{-1/2}x+x^{4/5}\Bigr)(\log x)^3.
\]

For the type-II sum $S_3$, Vaughan's identity~\cite[(13.39)]{IK04} expresses it as a bilinear form
$\sum_{m>M}\mu(m)\sum_{k>N} c(k)\,e(\alpha mk)$, with the constraint $mk\le x$ implicit,
with $|\mu(m)|\le1$ and combined coefficient
$c(k)=\sum_{\substack{n\mid k,\ n\ge N}}\Lambda(n)$, exactly as in the proof of \cite[Thm.~13.6,
p.~347]{IK04}.  Crucially $c(k)$ is bounded by a \emph{single} logarithm, not a divisor function: by
the standard identity $\sum_{n\mid k}\Lambda(n)=\log k$, which follows immediately from
$k=\prod_{p^a\parallel k}p^a$ and $\sum_{j=1}^a\log p=a\log p$,
\[
|c(k)|\ \le\ \sum_{n\mid k}\Lambda(n)=\log k\ \le\ \log x.
\]
This is the key device that distinguishes IK's Theorem~13.6 from their Theorem~13.9 (the
M\"obius-weighted analogue, whose coefficients are bounded only by $\tau(k)$ and which therefore
incurs an extra factor of $\log x$).  Extracting the factor $\log x$ and applying
Lemma~\ref{lem:IK1348-explicit} (whose coefficients $\mu(m)$ and $c(k)/\log x$ are then bounded by
$1$), with the constant $10$, we obtain
\[
|S_3|
\le
(\log x)\cdot
10\Bigl(\frac{x}{M}+\frac{x}{N}+\frac{x}{Q}+Q\Bigr)^{1/2}x^{1/2}(\log x)^2.
\]
	With the displayed floor estimate for $M=N=\lfloor x^{2/5}\rfloor$,
	\[
	|S_3|
	\le
	10\Bigl(\frac{12}{5}x^{3/5}+\frac{x}{Q}+Q\Bigr)^{1/2}x^{1/2}(\log x)^3.
	\]
	Using $\sqrt{u+v+w}\le \sqrt u+\sqrt v+\sqrt w$ and $\sqrt{12/5}<8/5$, we get
	\[
	\Bigl(\frac{12}{5}x^{3/5}+\frac{x}{Q}+Q\Bigr)^{1/2}x^{1/2}
	\le
	\frac85\,x^{4/5}+xQ^{-1/2}+Q^{1/2}x^{1/2},
\]
and therefore
\[
|S_3|
\le
	16\,x^{4/5}(\log x)^3
+10\Bigl(Q^{1/2}x^{1/2}+Q^{-1/2}x\Bigr)(\log x)^3.
\]

Combining the bounds for $S_1,S_2,S_3$ and $E$, we find that the coefficient of
	$x^{4/5}(\log x)^3$ is at most $6+16+1=23$, while the coefficients of
$Q^{1/2}x^{1/2}(\log x)^3$ and $Q^{-1/2}x(\log x)^3$ are at most $6+10=16$.
Hence
\[
|S(\alpha,x)|
\le
	23\Bigl(Q^{1/2}x^{1/2}+Q^{-1/2}x+x^{4/5}\Bigr)(\log x)^3,
\]
as claimed.
\end{proof}

\medskip

We now pass from the $\Lambda$-sum to the prime sum by partial summation, with explicit constants.

\begin{lemma}[Explicit partial summation to primes]\label{lem:partial-to-primes}
Let $\alpha=A/Q$ with integers $Q\ge 2$ and $(A,Q)=1$, and assume $x\ge 100$ and $Q\le x$.
Then
\[
\left|\sum_{p\le x} e(\alpha p)\right|
\le
34\Bigl(Q^{1/2}x^{1/2}+Q^{-1/2}x+x^{4/5}\Bigr)(\log x)^2.
\]
\end{lemma}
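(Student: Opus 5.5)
The approach is to pass from $\Lambda$ to primes in two stages: first remove prime powers, then remove the weight $\log p$ by Abel summation, with Lemma~\ref{lem:IK136-explicit} supplying the bound for every partial sum.

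\emph{Stage 1 (prime powers).} Write $\vartheta(\alpha,t):=\sum_{p\le t}(\log p)e(\alpha p)$. Then
\[
\Bigl|\sum_{n\le t}\Lambda(n)e(\alpha n)-\vartheta(\alpha,t)\Bigr|\le \psi(t)-\vartheta(t).
\]
An explicit bound for $\psi(t)-\vartheta(t)$ suffices. The crude estimate
\[
\psi(t)-\vartheta(t)\le \sum_{k\ge2}\vartheta(t^{1/k})\le t^{1/2}\log t\cdot\log_2 t\ll t^{1/2}(\log t)^2
\]
is enough, and so is a Rosser--Schoenfeld bound $\psi(t)-\vartheta(t)<1.43\,t^{1/2}$. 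Either is at most $t^{1/2}(\log t)^3\le Q^{1/2}t^{1/2}(\log t)^3$. Hence for $100\le t\le x$ with $Q\le t$, the bound $|\vartheta(\alpha,t)|\le 24\,F(t)$ holds, where
\[
F(t):=\bigl(Q^{1/2}t^{1/2}+Q^{-1/2}t+t^{4/5}\bigr)(\log t)^3.
\]

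\emph{Stage 2 (Abel summation).} Next apply Abel summation:
\[
\sum_{p\le x}e(\alpha p)=\frac{\vartheta(\alpha,x)}{\log x}+\int_2^x\frac{\vartheta(\alpha,t)}{t(\log t)^2}\,dt .
\]
The boundary term contributes $24\,(Q^{1/2}x^{1/2}+Q^{-1/2}x+x^{4/5})(\log x)^2$.

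Split the integral at $t_0:=\max(100,Q)$. For $t<t_0$, Lemma~\ref{lem:IK136-explicit} is unavailable (either $t<100$ or $Q>t$), so use the trivial bound $|\vartheta(\alpha,t)|\le\vartheta(t)\le 1.02\,t$. That piece is then at most $\int_2^{t_0}1.02\,dt/(\log t)^2$, which is $\ll Q/\log Q+O(1)$. Since $Q\le Q^{1/2}x^{1/2}$, this is absorbed into a small multiple of $Q^{1/2}x^{1/2}(\log x)^2$; the $O(1)$ from $t\le100$ is absorbed into $x^{4/5}(\log x)^2$ because $x\ge100$.

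For $t\ge t_0$, insert $|\vartheta(\alpha,t)|\le 24F(t)$. This gives
\[
24\int_{t_0}^x\bigl(Q^{1/2}t^{-1/2}+Q^{-1/2}+t^{-1/5}\bigr)\log t\,dt\le 24\log x\,\bigl(2Q^{1/2}x^{1/2}+Q^{-1/2}x+\tfrac54x^{4/5}\bigr).
\]
This is lower order by one logarithm: it is bounded by $(\log x)^2$-terms times $24\cdot2/\log x$. Because $\log x\ge\log100\approx4.6$, the factor is at most about $10.5$ for the $Q^{1/2}x^{1/2}$ term.

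\emph{Collecting coefficients.} For $Q^{1/2}x^{1/2}(\log x)^2$ the total is roughly $24+10.5+(\text{small})$, and the other terms are similar or smaller. The main obstacle is bookkeeping here: the target constant is $34$, so the integral pieces must be sharpened slightly. Two options are available. One is to use the sharper coefficient $16$ from the proof of Lemma~\ref{lem:IK136-explicit} for the $Q^{\pm1/2}$ terms, which gives $17+8<34$ after the prime-power correction is included. The other, for the $x^{4/5}$ term, is to integrate $\log t\,t^{-1/5}\le\frac54x^{4/5}\log x$ and divide by $\log x\ge4.6$. If necessary, I would tighten the $t<t_0$ piece with $\vartheta(t)<1.02t$ and the bound $\int_2^{Q}dt/(\log t)^2\le 2Q/(\log Q)^2+O(1)$. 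I expect this constant audit, rather than any conceptual step, to be where care is needed.
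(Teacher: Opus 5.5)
Your proposal is correct and follows the paper's proof essentially step for step. Both remove the prime powers, apply Abel summation to $\sum_{p\le t}(\log p)e(\alpha p)$, split the integral at $\max(100,Q)$ (a trivial bound below, Lemma~\ref{lem:IK136-explicit} above), and use the same $\log t$-weighted elementary integrals.

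The only difference is in the bookkeeping. The paper keeps the prime-power error as a separate $\sqrt{t}\log t$ term, which costs only about $4\sqrt{x}$ after summation. The stated constant $23$ then already gives $23+46/\log x+1/((\log 2)(\log x)^2)<34$. Absorbing that error into a coefficient $24$, as you do, overshoots to about $34.4$ at $x=100$. You correctly close this with the internal coefficient $16$ from the proof of Lemma~\ref{lem:IK136-explicit}; charging $t^{1/2}(\log t)^3$ to the $t^{4/5}$ term instead would also suffice.
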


\begin{proof}
Define
\[
\Psi_\alpha(t):=\sum_{n\le t}\Lambda(n)e(\alpha n),
\qquad
A(t):=\sum_{p\le t}(\log p)\,e(\alpha p).
\]
Since $\Psi_\alpha(t)$ includes prime powers,
\[
A(t)=\Psi_\alpha(t)-\sum_{\substack{p^k\le t\\ k\ge 2}}(\log p)\,e(\alpha p^k).
\]
For each prime $p\le \sqrt t$, the number of exponents $k\ge 2$ with $p^k\le t$ is at most
$\lfloor \log t/\log p\rfloor$, hence
\[
\sum_{\substack{k\ge 2\\ p^k\le t}} \log p \le \frac{\log t}{\log p}\,\log p=\log t.
\]
Therefore
\[
\left|\sum_{\substack{p^k\le t\\ k\ge 2}}(\log p)\,e(\alpha p^k)\right|
\le \sum_{p\le \sqrt t}\log t \le \sqrt t\,\log t,
\]
and so $|A(t)|\le |\Psi_\alpha(t)|+\sqrt t\log t$.

By partial summation,
\[
\sum_{p\le x} e(\alpha p)
=
\frac{A(x)}{\log x}+\int_{2}^{x}\frac{A(t)}{t(\log t)^2}\,dt.
\]
Hence
\begin{align*}
\left|\sum_{p\le x} e(\alpha p)\right|
&\le
\frac{|\Psi_\alpha(x)|}{\log x}+\sqrt x
+\int_{2}^{x}\frac{|\Psi_\alpha(t)|}{t(\log t)^2}\,dt
+\int_{2}^{x}\frac{dt}{\sqrt t\log t}.
\end{align*}
The last integral is
\[
\int_{2}^{x}\frac{dt}{\sqrt t\log t}\le \frac{1}{\log 2}\int_2^x t^{-1/2}dt
=\frac{2}{\log 2}\bigl(\sqrt x-\sqrt 2\bigr)\le 3\sqrt x,
\]
so the two $\sqrt x$-terms contribute $\le 4\sqrt x\le 4x^{4/5}$ for $x\ge 3$.

	Apply Lemma~\ref{lem:IK136-explicit} to bound $\Psi_\alpha(t)$ on the subinterval
	$\max(100,Q)\le t\le x$, where its hypotheses $t\ge100$ and $Q\le t$ are satisfied:
		\[
		|\Psi_\alpha(t)|\le 23\bigl(Q^{1/2}t^{1/2}+Q^{-1/2}t+t^{4/5}\bigr)(\log t)^3.
		\]
	On the initial interval $2\le t<\max(100,Q)$, use instead the trivial bound
	$|\Psi_\alpha(t)|\le \sum_{n\le t}\Lambda(n)\le t\log t$ (valid because each summand
	is nonzero only for $n\ge2$ and $\Lambda(n)\le\log n\le\log t$). Hence
		\[
		\int_2^{\min(x,\max(100,Q))}\frac{|\Psi_\alpha(t)|}{t(\log t)^2}\,dt
		\le \frac{Q}{\log 2}+\frac{100}{\log 2}
		\le \frac{Q^{1/2}x^{1/2}}{\log 2}+\frac{100}{\log 2},
		\]
	This last inequality uses $Q\le x$.  The first term contributes at most
	$1.45\,Q^{1/2}x^{1/2}$; the second is at most $3.7x^{4/5}$ for $x\ge100$.
	Both are absorbed in the final constant below.
	For the boundary term,
		\[
		\frac{|\Psi_\alpha(x)|}{\log x}
		\le 23\bigl(Q^{1/2}x^{1/2}+Q^{-1/2}x+x^{4/5}\bigr)(\log x)^2.
		\]
	For the integral term over $[\max(100,Q),x]$, extending the lower limit down to $3$ only
	increases the following positive upper bounds.  Integrating by parts gives the elementary estimates
\begin{gather*}
\int_3^x t^{-1/2}\log t\,dt \le 2x^{1/2}\log x,\\
\int_3^x \log t\,dt \le x\log x,\\
\int_3^x t^{-1/5}\log t\,dt \le \tfrac54 x^{4/5}\log x,
\end{gather*}
After the substitution
\[
\frac{|\Psi_\alpha(t)|}{t(\log t)^2}
\le 23(\ldots)\frac{\log t}{t},
\]
these estimates involve only \emph{one} factor of $\log x$.  Thus
\[
\int_{3}^{x}\frac{|\Psi_\alpha(t)|}{t(\log t)^2}\,dt
\le
23\Bigl(2Q^{1/2}x^{1/2}+Q^{-1/2}x+\tfrac54 x^{4/5}\Bigr)\log x.
\]
	Combining the boundary term $23(Q^{1/2}x^{1/2}+Q^{-1/2}x+x^{4/5})(\log x)^2$ with the integral term
	above, the initial $t<\max(100,Q)$ contribution, and the absorbed
	$4x^{4/5}\le 4x^{4/5}(\log x)^2/(\log x)^2$, each piece is dominated by the boundary in the
	regime $x\ge 100$ (i.e.\ $\log x\ge 4.6$). For the $Q^{1/2}x^{1/2}$ piece,
	\[
	23(\log x)^2+46\log x+\frac{1}{\log 2}
	\le 34(\log x)^2
	\]
	since $2/\log x\le 0.44$ and $1/(\log 2)(\log x)^{-2}<0.07$. For
	$Q^{-1/2}x$ and $x^{4/5}$ the same calculation gives $\le 28(\log x)^2$ and
	$\le 31(\log x)^2$ respectively (using $1.25/\log x\le 0.272$ and
	$(4+3.7)/(\log x)^2<0.37$).
	Taking the uniform upper bound $34$ gives
\[
\left|\sum_{p\le x} e(\alpha p)\right|
\le
34\Bigl(Q^{1/2}x^{1/2}+Q^{-1/2}x+x^{4/5}\Bigr)(\log x)^2,
\]
as claimed.
\end{proof}

\medskip

We can now prove the desired explicit constant version of~\cite[Lemma~4.3]{DMRcomp2}.

\begin{lemma}[Explicit DMR Lemma~4.3]\label{lem:DMR43}
Fix an integer base $q\ge 2$. Let $x>0$ and let $K$ be an integer with
\[
0\le K \le \frac{2}{5}\log_q x.
\]
Let $Q$ be an integer with
\[
q^K\le Q\le xq^{-K},
\]
and let $A$ be an integer with $\gcd(A,Q)=1$. Then
\[
\left|\sum_{p\le x} e\!\left(\frac{A}{Q}p\right)\right|
\le 102\,(\log x)^2\,x\,q^{-K/2}.
\]
\end{lemma}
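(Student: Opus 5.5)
The plan is to reduce Lemma~\ref{lem:DMR43} to Lemma~\ref{lem:partial-to-primes} with $\alpha=A/Q$, and to handle the edge cases (where that lemma's hypotheses $Q\ge2$, $x\ge100$, $Q\le x$ may fail) by the trivial bound $\bigl|\sum_{p\le x}e(Ap/Q)\bigr|\le\pi(x)$.

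\emph{Main case: $x\ge100$ and $Q\ge2$.} The hypothesis $Q\le xq^{-K}$ gives $Q\le x$, and $\gcd(A,Q)=1$ holds by assumption, so Lemma~\ref{lem:partial-to-primes} yields
\[
\Bigl|\sum_{p\le x}e(Ap/Q)\Bigr|\le 34\bigl(Q^{1/2}x^{1/2}+Q^{-1/2}x+x^{4/5}\bigr)(\log x)^2 .
\]
Each of the three terms is then bounded by $x\,q^{-K/2}$:
\begin{itemize}
\item from $Q\le xq^{-K}$ we get $Q^{1/2}x^{1/2}\le x\,q^{-K/2}$;
\item from $Q\ge q^K$ we get $Q^{-1/2}x\le x\,q^{-K/2}$;
\item from $K\le\frac25\log_q x$ we get $q^{K/2}\le x^{1/5}$, hence $x^{4/5}=x\cdot x^{-1/5}\le x\,q^{-K/2}$.
\end{itemize}
Summing gives the constant $3\cdot34=102$.

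\emph{Degenerate cases.}
\begin{itemize}
\item If $x<2$, the sum is empty and the bound is trivial.
\item If $Q=1$, then $q^K\le1$ forces $K=0$. The right side is $102\,x(\log x)^2$, which dominates $\pi(x)\le x$ once $(\log x)^2\ge1/102$. This holds for every $x\ge2$ with room to spare.
\item If $2\le x<100$, use $q^{-K/2}\ge x^{-1/5}$ again, so it suffices to show $\pi(x)\le 102(\log x)^2x^{4/5}$. At $x=2$ the right side is already about $85$, and it increases with $x$, while $\pi(x)\le25$ on this range.
\end{itemize}

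There is no real obstacle: the substance was already done in Lemmas~\ref{lem:IK136-explicit} and~\ref{lem:partial-to-primes}. The only care needed is to check that the range constraints on $K$ and $Q$ convert exactly into the three inequalities above, and that the small-$x$ and $Q=1$ cases are not silently excluded by the hypotheses of Lemma~\ref{lem:partial-to-primes}.
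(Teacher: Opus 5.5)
Your proposal is correct and matches the paper's proof essentially step for step. The paper also handles $x<2$ (empty sum), $2\le x<100$ (via $\pi(99)=25<102(\log 2)^2 2^{4/5}$) and $Q=1$ (which forces $K=0$) by the trivial bound. It then applies Lemma~\ref{lem:partial-to-primes} and bounds each of the three terms by $x\,q^{-K/2}$, giving the constant $3\cdot 34=102$.
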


\begin{proof}
If $0<x<2$, then the prime sum is empty.  If $2\le x<100$, then the left-hand side is at most
$\pi(x)\le\pi(99)=25$ (using $x<100$ and
$\pi(99)=25$ since the $25$th prime is $97$).  The constraint
$0\le K\le \tfrac{2}{5}\log_q x$ gives $q^{-K/2}\ge x^{-1/5}$, so
$102(\log x)^2 x\,q^{-K/2}\ge 102(\log x)^2 x^{4/5}\ge 102(\log 2)^2\cdot 2^{4/5}>25\ge \pi(x)$ for $x\ge 2$; the claim holds in this regime
by direct verification.  Assume henceforth $x\ge 100$.

If $Q=1$, then the hypothesis $q^K\le Q$ forces $K=0$.  Hence $e(Ap/Q)=1$ and the left-hand side
equals $\pi(x)\le x\le 102(\log x)^2x=102(\log x)^2xq^{-K/2}$ for $x\ge100$.
Thus the desired bound holds.
We may therefore assume $Q\ge 2$.

Since $Q\le xq^{-K}\le x$ and $x\ge 100$, Lemma~\ref{lem:partial-to-primes} applies with $\alpha=A/Q$ and yields
\[
\left|\sum_{p\le x} e\!\left(\frac{A}{Q}p\right)\right|
\le
34\Bigl(Q^{1/2}x^{1/2}+Q^{-1/2}x+x^{4/5}\Bigr)(\log x)^2.
\]
Under the DMR hypotheses:
\begin{itemize}
\item From $Q\le xq^{-K}$ we get $Q^{1/2}x^{1/2}\le xq^{-K/2}$.
\item From $Q\ge q^K$ we get $Q^{-1/2}x\le xq^{-K/2}$.
\item From $K\le \tfrac25\log_q x$ we get $q^K\le x^{2/5}$, hence $q^{-K/2}\ge x^{-1/5}$ and so $x^{4/5}\le xq^{-K/2}$.
\end{itemize}
Therefore the bracket is $\le 3xq^{-K/2}$, and hence
\[
\left|\sum_{p\le x} e\!\left(\frac{A}{Q}p\right)\right|
\le
102(\log x)^2\,x\,q^{-K/2}.
\]
\end{proof}

\subsection{Explicit minor-arc decay exponent and an explicit minor-arc multiplicative constant}

DMR's minor-arc estimate yields an exponent of the form
$x^{1-c_1\|(q-1)\alpha\|^2}$ with explicit $c_1=c_1(q)>0$ (see~\cite{DMRcomp2}).
To make the minor-arc contribution fully explicit (and modular), we also make the remaining
multiplicative constants numerical for the case $q=10$.

\begin{lemma}[Explicit $c_1$ for $q=10$]\label{lem:c1}
In base $q=10$, one may take
\[
c_1=\frac{1}{28}\min\!\left(4\omega_{10},\ \frac{\pi^2}{12}\cdot\frac{9}{11\log 10}\right),
\qquad
\omega_{10}=\left(\frac32-\frac{\log 5}{\log 3}\right)\frac{\log 2}{\log 10}.
\]
Moreover, since $4\omega_{10}< \frac{\pi^2}{12}\cdot\frac{9}{11\log 10}$, we in fact have
\[
c_1=\frac{\omega_{10}}{7}=0.001506288700915\ldots > 0.0015062887.
\]
\end{lemma}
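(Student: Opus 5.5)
The lemma has two parts. The first is that the stated value of $c_1$ is admissible in DMR's minor-arc bound. The second is a purely numerical comparison together with the evaluation of $c_1$.

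\textbf{Admissibility.} I would trace how $c_1$ is produced in~\cite{DMRcomp2}, specialised to $q=10$. The minor-arc estimate comes from the Type-I/Type-II decomposition (Lemmas~3.1--3.2 and Proposition~3.1 there). Each of these saves a power of $x$ of the form $x^{-c\|(q-1)\alpha\|^2}$, where the Type-I and Type-II savings have different shapes.

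The Type-II saving rests on the Mauduit--Rivat carry/discrepancy bound for the digit-sum Fourier coefficient $F_\lambda$. At $q=10$ it produces an exponent of size $4\omega_{10}\|(q-1)\alpha\|^2$. Here $\omega_{10}$ is the exponent from the $L^1$ bound on $\sum_h|F_\lambda(h,\alpha)|$, namely $\bigl(\tfrac32-\tfrac{\log5}{\log3}\bigr)\tfrac{\log2}{\log10}$, which is MR10's $\gamma_q$-type quantity (cf.~\cite[(105)--(106)]{MR10}).

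The Type-I saving comes from the pointwise bound $|\prod_j \cos(\pi\ldots)|$ on digit products. That bound uses $|\cos\pi t|\le \exp(-\tfrac{\pi^2}{2}\|t\|^2)$-type estimates. Summed over a block of $q-1=9$ shifts and normalised by $\log q$, it yields the exponent $\tfrac{\pi^2}{12}\cdot\tfrac{9}{11\log10}$.

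The final exponent is the minimum of the two savings, divided by the loss factor $28$ incurred when the $\|\cdot\|^2$-savings are passed through the combinatorial decomposition. That loss comes from the van der Corput differencing and Cauchy--Schwarz steps, which halve the exponent repeatedly, with $28=4\cdot7$. I would cite these exponents directly from DMR's statement of $c_1(q)$ at $q=10$ rather than re-derive them. This is the step I expect to be the main obstacle: confirming that DMR's formula for $c_1(q)$ specialises exactly to this expression, with the factor $1/28$, and not only up to an implicit constant.

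\textbf{Numerical part.} This is a direct computation:
\[
\omega_{10}=(1.5-1.4649735\ldots)\cdot 0.30103\ldots\approx 0.0105440,
\]
so $4\omega_{10}\approx0.042176$. On the other side,
\[
\frac{\pi^2}{12}\cdot\frac{9}{11\log10}\approx 0.822467\cdot 0.355326\approx 0.29224.
\]
Hence $4\omega_{10}$ is smaller, and $c_1=4\omega_{10}/28=\omega_{10}/7\approx0.0015062887\ldots$. I would certify the strict lower bound $c_1>0.0015062887$ with interval arithmetic on $\log 2$, $\log 3$, $\log 5$ and $\log 10$, which is routine.
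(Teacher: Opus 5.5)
Your route is the same as the paper's. The lemma is DMR's definition of $c_1$ in \cite[(31)]{DMRcomp2} specialized to $q=10$, followed by comparing the two entries of the minimum, and your numerics are right. However, your account of where the $4$, the $28$ and the two entries come from is wrong. Since you single out the factor $1/28$ as the main obstacle, here is what \cite[(31)]{DMRcomp2} actually says.

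The minimum is not a Type-I saving compared with a Type-II saving. It sits inside a single lower bound for the Type-II parameter $\xi_q(\alpha)=\varepsilon_q(\alpha)/14$:
\[
\xi_q(\alpha)\ \ge\ \frac1{14}\min\!\left(\omega_q,\ \frac{\pi^2}{12}\cdot\frac{q-1}{(q+1)\log q}\,\|(q-1)\alpha\|^2\right)\qquad((q-1)\alpha\notin\mathbb Z).
\]
Here $\omega_q$ is an $\alpha$-independent saving. It comes from the $\eta_3=\log5/\log3-1$ bound for the Fourier means $G_\lambda$, not from the $\gamma_q$ estimates of MR10 (105)--(106). The $\|(q-1)\alpha\|^2$ entry is DMR's lower bound for the $\alpha$-dependent ingredients of $\varepsilon_q(\alpha)$, notably $1-\gamma_q(\alpha)$.

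The factor $4$ is not part of any exponent. It comes from $\|(q-1)\alpha\|^2\le 1/4$, which gives $\omega_q\ge 4\omega_q\|(q-1)\alpha\|^2$ and hence
\[
\xi_q(\alpha)\ \ge\ \frac1{14}\min\!\left(4\omega_q,\ \frac{\pi^2}{12}\cdot\frac{q-1}{(q+1)\log q}\right)\|(q-1)\alpha\|^2\ =\ 2c_1\|(q-1)\alpha\|^2 .
\]
So $28=2\cdot 14$, not $4\cdot 7$ from repeated halving. The $14$ is the admissibility normalization $\xi_q=\varepsilon_q/14$. The single $2$ appears because van der Corput saves only $q^{-\rho/2}$, so the Type-II bound $q^{(\mu+\nu)-\rho/2}$ becomes $q^{(1-c_1\|(q-1)\alpha\|^2)(\mu+\nu)}$ up to a constant.

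The inequality $\xi_q(\alpha)\ge 2c_1\|(q-1)\alpha\|^2$ is what ``one may take $c_1$'' means downstream. The same chain gives $\min(\omega_q,1-\gamma_q(\alpha))\ge 28c_1\|(q-1)\alpha\|^2$ for the Type-I lemma. Stating this chain explicitly is what removes the obstacle you anticipated.

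On the numerics, the margin $c_1-0.0015062887$ is only about $9\times10^{-13}$. Your interval arithmetic therefore has to carry the logarithms to roughly twelve significant digits. With that precision, the comparison $4\omega_{10}\approx 0.0422<0.2922$ and the value $c_1=\omega_{10}/7$ follow as you say.
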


\begin{proof}
This is exactly the constant $c_1$ defined in~\cite[(31)]{DMRcomp2}. Indeed,~\cite[(24)]{DMRcomp2}
introduces $\xi_q(\alpha)=\varepsilon_q(\alpha)/14$, and for $q\ge 3$~\cite[(31)]{DMRcomp2} states that
for all real $\alpha$ with $(q-1)\alpha\notin\mathbb{Z}$,
\[
\xi_q(\alpha)\ge
\frac1{14}\min\!\left(\omega_q,\ \frac{\pi^2}{12}\cdot\frac{q-1}{(q+1)\log q}\,\|(q-1)\alpha\|^2\right)
\ge 2c_1\|(q-1)\alpha\|^2,
\]
where $\omega_q=\left(\frac32-\frac{\log 5}{\log 3}\right)\frac{\log 2}{\log q}$ and
\[
c_1=\frac{1}{28}\min\!\left(4\omega_q,\ \frac{\pi^2}{12}\cdot\frac{q-1}{(q+1)\log q}\right).
\]
Specializing to $q=10$ gives the displayed formula and the stated numerical value.
\end{proof}

\medskip

In DMR's Type~II analysis, the only remaining multiplicative inputs are:
(i) a short-interval divisor-sum bound used to control carry propagation, supplied below by the
explicit Berkane--Bordell\`es--Ramar\'e divisor-problem remainder, and
(ii) the truncated correlation bound $S_2(\cdot)$ 
(DMR equation (29), following~\cite[(64)]{MR10}, proved via the estimates in~\cite[\S7.1--\S7.2]{MR10}).

\begin{lemma}[Explicit short-interval divisor-sum input]\label{lem:tau-input}
Let $\tau(n)$ be the divisor function.  For all real $X\ge2$ and all $Y$ with
$X^{1/3}\le Y\le X$,
\begin{equation}
\sum_{X-Y<n\le X}\tau(n)\ \le\ C_\tau\,Y\log X,\qquad C_\tau:=10^3.
\label{eq:Dtau}
\end{equation}
\end{lemma}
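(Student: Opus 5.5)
The plan is to deduce \eqref{eq:Dtau} from the explicit divisor-problem remainder of Berkane--Bordell\`es--Ramar\'e~\cite[Thm.~1.2]{BerkaneBordellesRamare2012}. That result gives, for $x\ge x_0$ (an explicit small value),
\[
D(x):=\sum_{n\le x}\tau(n)=x\log x+(2\gamma-1)x+\Delta(x),\qquad |\Delta(x)|\le c_0\,x^{1/2}
\]
with an explicit $c_0$ (the paper has $c_0\approx 0.961$ for $x\ge 1981$, together with cruder bounds of the same shape for all $x\ge1$). Our sum is $D(X)-D(X-Y)$. We split into the regimes $Y<X$ and $Y=X$, and also separate small $X$.

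\emph{Main term.} We write $f(t)=t\log t+(2\gamma-1)t$. Then $f(X)-f(X-Y)=\int_{X-Y}^X(\log t+2\gamma)\,dt\le Y(\log X+2\gamma)$. Since $X\ge2$ gives $\log X\ge\log 2$, we have $2\gamma\le (2\gamma/\log2)\log X<1.7\log X$. The main-term contribution is therefore at most $2.7\,Y\log X$.

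\emph{Error terms.} We bound $|\Delta(X)|+|\Delta(X-Y)|\le 2c_0X^{1/2}$, where the constant is taken valid for all arguments $\ge1$. The hypothesis $Y\ge X^{1/3}$ gives only $X^{1/2}\le Y\,X^{1/6}$. This is \emph{not} bounded by $Y\log X$ uniformly, and it is the main obstacle: a remainder of size $X^{1/2}$ cannot beat an interval of length $X^{1/3}$ for large $X$.

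I would first try to use the sharper form of the BBR theorem, which, if I recall it correctly, gives $|\Delta(x)|\le c\,x^{1/3}\log x$ for $x$ beyond an explicit point. With that form, $|\Delta(X)|+|\Delta(X-Y)|\le 2c\,X^{1/3}\log X\le 2c\,Y\log X$ directly from $Y\ge X^{1/3}$, and $C_\tau=10^3$ absorbs $2c+2.7$ with a wide margin.

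\emph{Remaining cases.} If $X-Y$ falls below the validity point $x_1$ of that bound, we use the trivial estimate $D(X-Y)\ge0$ and bound $D(X)\le X\log X+X$ by the elementary inequality $D(x)\le x(\log x+1)$. This handles the case $X\le 2x_1$, where $Y\ge X^{1/3}$ means $X\le (2x_1)^{2/3}Y$; a finite check gives a constant well below $10^3$ provided $x_1$ is moderate. When $Y$ is close to $X$, with $X-Y$ small, the same elementary bound applied to $D(X)$ gives $\le X(\log X+1)\le 2Y(\log X+1)$ once $Y\ge X/2$, which is again absorbed.

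The step I expect to need care is confirming the exact exponent and threshold in~\cite[Thm.~1.2]{BerkaneBordellesRamare2012}. If only an $x^{1/2}$-type remainder is available, the range $Y\ge X^{1/3}$ would have to be replaced by $Y\ge X^{1/2}$, or the finite-range check would have to be extended. The generous constant $10^3$ suggests the authors absorbed such threshold effects numerically.
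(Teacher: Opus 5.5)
Your proposal is correct and follows the paper's proof: the paper likewise subtracts the Berkane--Bordell\`es--Ramar\'e formula with the $x^{1/3}\log x$ remainder ($|\Delta(u)|\le 0.764\,u^{1/3}\log u$ for $u\ge 9995$, which is indeed their Thm.~1.2). It also bounds the main term by $Y(\log X+2\gamma)$ and treats $X-Y<9995$ separately, using the trivial lower bound on $D(X-Y)$ and $Y>X/2$ when $X\ge 2\cdot 9995$. The only difference is the case $X<2\cdot 9995$: the paper uses $\tau(n)\le 2\sqrt n$ on at most $Y+1$ integers and gets a constant of about $30$, whereas your route via $D(X)\le X(\log X+1)$ and $Y\ge X^{1/3}$ gives $X^{2/3}(1+1/\log X)\approx 811$ at $X=2\cdot 9995$; this is still below $10^3$ (not ``well below''), but only if you evaluate $1+1/\log X$ jointly with $X^{2/3}$, since bounding it by its value at $X=2$ would give about $1800$.
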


\begin{proof}
Let
\[
\Delta(u):=\sum_{n\le u}\tau(n)-u\log u-(2\gamma-1)u.
\]
Berkane--Bordell\`es--Ramar\'e prove the explicit divisor-problem remainder bound
\[
|\Delta(u)|\le 0.764\,u^{1/3}\log u\qquad (u\ge9995)
\]
in~\cite[Thm.~1.2]{BerkaneBordellesRamare2012}.  Put $Z=X-Y$.
If $X\ge2\cdot9995$ and $Z\ge9995$, subtracting the formula at $X$ and $Z$ gives
\[
\sum_{X-Y<n\le X}\tau(n)
\le \int_Z^X\log t\,dt+2\gamma Y+0.764\bigl(X^{1/3}\log X+Z^{1/3}\log Z\bigr).
\]
Since $Z\le X$ and $Y\ge X^{1/3}$, this is at most
\[
\bigl(1+2\gamma/\log2+1.528\bigr)Y\log X<5Y\log X.
\]
If $X\ge2\cdot9995$ but $Z<9995$, then $Y>X/2$; applying the same explicit bound only at $X$ gives
\[
\sum_{n\le X}\tau(n)\le X\log X+(2\gamma-1)X+0.764X^{1/3}\log X<2X\log X<4Y\log X.
\]
Finally suppose $2\le X<2\cdot9995$.  The elementary bound $\tau(n)\le2\sqrt n$ gives
\[
\sum_{X-Y<n\le X}\tau(n)\le 2(Y+1)\sqrt X.
\]
Because $Y\ge X^{1/3}$, $X\ge2$, and $X<2\cdot9995$, the right-hand side is $<1000Y\log X$.
Combining the three cases proves~\eqref{eq:Dtau}.
\end{proof}

\begin{lemma}[Elementary long-interval divisor-sum bound]\label{lem:tau-long-elementary}
For all real $X\ge 2$ and all $Y$ with $\sqrt{X}\le Y\le X$,
\[
\sum_{X-Y<n\le X}\tau(n)\ \le\ Y\log X+4Y\ \le\ 7\,Y\log X.
\]
\end{lemma}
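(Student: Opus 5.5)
The plan is to use the Dirichlet hyperbola method to count lattice points $(d,e)$ with $de\in(X-Y,X]$, together with the elementary bound $|\lfloor u\rfloor - \lfloor v\rfloor - (u-v)|<1$.

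\emph{Hyperbola decomposition.} Every divisor pair $(d,e)$ of some $n=de$ has $\min(d,e)\le\sqrt n\le\sqrt X$. Counting by the smaller factor gives
\[
\sum_{X-Y<n\le X}\tau(n)\ \le\ 2\sum_{d\le\sqrt X}\#\{e:\ (X-Y)/d<e\le X/d\},
\]
which overcounts only the diagonal pairs $d=e$, and that is harmless for an upper bound.

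\emph{Bounding each term.} For each $d$, the number of integers $e$ in $((X-Y)/d,\,X/d]$ is at most $Y/d+1$. Summing over $d\le\sqrt X$ gives
\[
2\sum_{d\le\sqrt X}\Bigl(\frac{Y}{d}+1\Bigr)\ \le\ 2Y\,H_{\lfloor\sqrt X\rfloor}+2\sqrt X\ \le\ 2Y\Bigl(1+\tfrac12\log X\Bigr)+2\sqrt X .
\]
This equals $Y\log X+2Y+2\sqrt X$. Since $\sqrt X\le Y$, it is at most $Y\log X+4Y$, which is the first claimed inequality.

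\emph{Converting to a multiple of $Y\log X$.} We need $4Y\le 6Y\log X$, that is, $\log X\ge 2/3$. This holds because $X\ge2$ gives $\log X\ge\log 2\approx0.693$. Hence $Y\log X+4Y\le 7Y\log X$.

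\emph{Main obstacle.} The delicate point is making the hyperbola step honest without asymptotic shortcuts. The factor $2$ must correctly account for ordered pairs, and restricting to $d\le\sqrt X$ must be justified for all $n\le X$ rather than only for $n$ near $X$. The inequality $H_k\le 1+\log k$ with $k=\lfloor\sqrt X\rfloor\ge1$ (true since $X\ge2$) supplies the $\tfrac12\log X$. If the diagonal or boundary cases cause trouble, I would instead bound the count for $d$ by $\lfloor X/d\rfloor-\lfloor (X-Y)/d\rfloor\le Y/d+1$ directly, which sidesteps these issues.
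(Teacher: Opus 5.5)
Your proposal is correct and is essentially the paper's proof. Both arguments bound $\tau(n)$ by $2\#\{d\le\sqrt X:\ d\mid n\}$, then interchange the sums so that each $d$ contributes at most $Y/d+1$ multiples in the interval. They then use $H_N\le 1+\log N$ with $N=\lfloor\sqrt X\rfloor\le\sqrt X\le Y$, and finish with $\log 2>2/3$ to pass from $Y\log X+4Y$ to $7Y\log X$.
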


\begin{proof}
Let $N=\lfloor \sqrt{X}\rfloor$. For any integer $n\le X$, every divisor $d>\sqrt{n}$ corresponds
to a co-divisor $n/d<\sqrt{n}\le N$. Hence
\[
\tau(n)\ \le\ 2\#\{d\le N:\ d\mid n\}.
\]
Summing over $X-Y<n\le X$ gives
\[
\sum_{X-Y<n\le X}\tau(n)
\le
2\sum_{d\le N}\#\{X-Y<n\le X:\ d\mid n\}.
\]
For each $d\le N$, the count of multiples of $d$ in the interval is
$\lfloor X/d\rfloor-\lfloor (X-Y)/d\rfloor\le Y/d+1$. Therefore
\[
\sum_{X-Y<n\le X}\tau(n)
\le 2\sum_{d\le N}\Bigl(\frac{Y}{d}+1\Bigr)
=2Y\sum_{d\le N}\frac1d + 2N.
\]
Using the elementary bound $\sum_{d\le N}\frac1d \le 1+\log N$ (integral comparison) and
$N\le \sqrt{X}\le Y$, we obtain
\[
\sum_{X-Y<n\le X}\tau(n)
\le 2Y(1+\log N)+2Y
\le 2Y\log N + 4Y
\le Y\log X + 4Y,
\]
since $\log N\le \tfrac12\log X$. Finally, because $X\ge 2$ implies $\log X\ge \log 2>2/3$,
we have $4Y\le 6Y\log X$, hence $Y\log X+4Y\le 7Y\log X$.
\end{proof}

\subsection{Self-contained combinatorial localization lemmas}\label{subsec:self-contained-comb}

We next reproduce the two elementary reductions from~\cite[\S4--\S5]{MR10} that DMR quote as
their Lemmas~3.1 and~3.2.  No new analytic input is involved here; the point is only to make the
losses visible.  The first lemma is Vaughan's combinatorial reduction in the exact form needed
below, with a deliberately rounded transfer constant for $q=10$.

\begin{lemma}[Explicit Vaughan transfer for one decimal block]\label{lem:DMR31-self}
Let $x\ge100$, let $0<\beta_1<1/3<1/2<\beta_2<1$, and assume
\[
x^{1-3\beta_1}\ge 10,\qquad x^{2\beta_2-1}\ge 10.
\]
Let $g$ be an arithmetic function.  Put
$u:=x^{\beta_1}$.  Suppose the following two estimates hold with the same number $U$:
\begin{align*}
\sum_{10^{-1}M<m\le M}
\max_{x/(10m)\le t\le x/m}
\left|\sum_{t<n\le x/m}g(mn)\right|
&\le U
\qquad(M\le x^{\beta_1}),\\
\left|\sum_{10^{-1}M<m\le M}\sum_{x/(10m)<n\le x/m}a_m b_n g(mn)\right|
&\le U
\qquad(x^{\beta_1}\le M\le x^{\beta_2})
\end{align*}
uniformly for all $|a_m|,|b_n|\le1$.  Then
\[
\left|\sum_{x/10<n\le x}\Lambda(n)g(n)\right|\le 20\,U(\log x)^2.
\]
\end{lemma}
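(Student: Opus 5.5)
Apply Vaughan's identity with the single parameter $u=x^{\beta_1}$, restricted to $x/10<n\le x$:
\[
\Lambda(n)=\sum_{\substack{bc=n\\ b\le u}}\mu(b)\log c-\sum_{\substack{bcd=n\\ b,d\le u}}\mu(b)\Lambda(d)+\sum_{\substack{bc=n\\ b>u,\ c>u}}\mu(b)\Bigl(\sum_{\substack{d\mid c\\ d>u}}\Lambda(d)\Bigr),
\]
valid for $n>u$. Since $x/10>x^{\beta_1}$ (this is what $x^{1-3\beta_1}\ge10$ guarantees, with room to spare), the identity holds for every $n$ in the range. Multiply by $g(n)$, sum over $x/10<n\le x$, and treat the three pieces separately. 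Each piece is split dyadically-in-base-$10$ into blocks $10^{-1}M<m\le M$. The number of such blocks is at most $\log_{10}x+1\le\log x$ for $x\ge100$.

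\textbf{Type I terms.} In the first sum, the variable $b\le u=x^{\beta_1}$ is the ``$m$'' variable. Write $\log c=\int_1^c dt/t$ and swap the order, so that the inner sum over $c$ becomes an integral over $t$ of sums $\sum_{t<c\le x/b}g(bc)$. These are controlled by the maximal Type-I hypothesis, giving a loss of at most $\log x$ from the integral. The second sum is treated by grouping $m=bd\le u^2=x^{2\beta_1}$ with coefficient $|\sum_{bd=m}\mu(b)\Lambda(d)|\le\log m\le\log x$. The problem here is that $u^2$ may exceed $x^{\beta_1}$.

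I would split the second sum as follows. The range $m\le x^{\beta_1}$ is handled by Type I, after partial summation to remove the bounded-by-$\log$ coefficient; this is why the hypothesis includes the max over $t$. The range $x^{\beta_1}<m\le x^{2\beta_1}$ is handled as a Type II sum with $a_m$ normalized by $\log x$. This requires $x^{2\beta_1}\le x^{\beta_2}$, which follows from $\beta_1<1/3<1/2<\beta_2$.

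\textbf{Type II term.} Here both $b>u$ and $c>u$. Since $bc\le x$, one of the variables lies in $[x^{\beta_1},x^{1-\beta_1}]$. Choose that variable to be the one in $[x^{\beta_1},x^{\beta_2}]$; the symmetric case $x^{1-\beta_2}\le b$ is handled by swapping roles, which is where $x^{2\beta_2-1}\ge10$ ensures that the ranges overlap by at least one decimal block. The coefficient $\sum_{d\mid c,d>u}\Lambda(d)\le\log c\le\log x$ is normalized by $\log x$. The hypothesis is stated with the sharp coupled range $x/(10m)<n\le x/m$, so summing over the $\le\log x$ blocks gives $U(\log x)^2$ up to a small constant. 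The constraint $n>x/10$ is automatically encoded in that coupled range.

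\textbf{Main obstacle.} The delicate point is bookkeeping the constants to reach $20$ rather than proving anything new. Each of the four pieces costs at most $(\log x)\cdot(\#\text{blocks})\cdot U$, with the number of blocks at most $\log_{10}x+2$. For $x\ge100$ this is at most $\tfrac{2}{\log 10}\log x+\ldots$, which is below $\log x$, and the pieces together contribute at most about $5$ units of $U(\log x)^2$.

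The risk is in the blocks that straddle the boundaries $u$, $u^2$, and $x^{\beta_2}$. There the coupled ranges $10^{-1}M<m\le M$ do not match the Vaughan cutoffs exactly. Splitting such a block into a sub-range requires the max over $t$ in the Type-I hypothesis, and for Type II it requires absorbing the cutoff into $a_m$ or $b_n$, which is legitimate because the hypothesis is uniform over all $|a_m|,|b_n|\le1$. I would check these boundary blocks carefully. Doubling the count for them still keeps the total below $20U(\log x)^2$, which explains the deliberately rounded constant.
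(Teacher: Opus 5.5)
Your overall architecture is the same as the paper's: Vaughan's identity with $u=x^{\beta_1}$, the representation $\log c=\int_1^c dt/t$ combined with the maximal Type-I bound, and an interchange of variables in the last sum. However, there is a genuine gap in your treatment of the second Vaughan sum. You group $m=bd\le u^2=x^{2\beta_1}$ and assert that the range $x^{\beta_1}<m\le x^{2\beta_1}$ is Type II because $x^{2\beta_1}\le x^{\beta_2}$ ``follows from $\beta_1<1/3<1/2<\beta_2$''. It does not follow. The bounds $2\beta_1<2/3$ and $\beta_2>1/2$ are compatible with $2\beta_1>\beta_2$, for instance $\beta_1=0.33$, $\beta_2=0.51$. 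This is exactly the regime in which the paper uses the lemma, with $\beta_1=\beta_1^{\mathrm{cp}}\approx 1/3$ and $\beta_2<0.51$, so $2\beta_1\approx 0.666>\beta_2$. Decimal blocks of $m$ with $x^{\beta_2}<M\le x^{2\beta_1}$ are then covered by neither hypothesis. As written, your argument leaves that piece of the second sum unbounded.

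The missing step is an interchange of variables for this piece, and it is what the hypothesis $x^{1-3\beta_1}\ge 10$ is really for; you used it only to get $x/10>x^{\beta_1}$. Take $x^{\beta_2}<m\le u^2$, and let $n$ be the complementary variable, so that $x/10<mn\le x$. Then $n>x/(10u^2)=x^{1-2\beta_1}/10\ge x^{\beta_1}$, which uses exactly $x^{1-3\beta_1}\ge10$. Also $n\le x/m<x^{1-\beta_2}\le x^{\beta_2}/10$, which uses $x^{2\beta_2-1}\ge 10$. So decompose $n$ into decimal blocks with $x^{\beta_1}\le N\le x^{\beta_2}$, and put $\mathbf 1_{m>x^{\beta_2}}\,c(m)/\log x$ into the coefficient of the other variable. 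The Type-II hypothesis then applies, because the coupled condition $x/10<mn\le x$ is symmetric in the two variables. This is the same swap you already perform for the third sum, and it is how the paper handles its $S_2$. With this step added, the block count still leaves ample room under $20U(\log x)^2$.

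A smaller point: in the Type-I part of the second sum, the $\log$-sized coefficient $c(m)$ sits on the outer variable, so the triangle inequality suffices and no partial summation is needed. The $\max_t$ in the Type-I hypothesis is needed for the $\log c$ weight on the inner variable in the first sum.
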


\begin{proof}
This is the proof of MR10 Lemme~1 with the constants kept.  The Dirichlet-series identity
\begin{align*}
L&=F+\zeta G L-\zeta G F+\zeta\Bigl(\frac1\zeta-G\Bigr)(L-F),\\
L&:=-\frac{\zeta'}{\zeta},\qquad
G(s)=\sum_{n\le u}\frac{\mu(n)}{n^s},\qquad
F(s)=\sum_{n\le u}\frac{\Lambda(n)}{n^s},
\end{align*}
gives Vaughan's decomposition after comparing coefficients.  Indeed, the isolated $F$-term
has support only on integers $\le u=x^{\beta_1}$, and
$u<x/10$ for $x\ge100$ and $\beta_1<1/3$; hence this term contributes nothing on the interval
$x/10<n\le x$.  Thus
\[
\sum_{x/10<n\le x}\Lambda(n)g(n)=S_1-S_2+S_3,
\]
where
\[
S_1=\sum_{\substack{m\le u\\x/10<mn\le x}}\mu(m)(\log n)g(mn),
\quad
S_2=\sum_{\substack{m_1,m_2\le u\\x/10<m_1m_2n\le x}}
\mu(m_1)\Lambda(m_2)g(m_1m_2n),
\]
and
\[
S_3=\sum_{\substack{m>u,\ n_1>u\\x/10<mn_1n_2\le x}}
\mu(m)\Lambda(n_1)g(mn_1n_2).
\]
For the constant bookkeeping, decompose every positive variable into decimal blocks
$(10^{-1}M,M]$.  Since all variables that occur in the interval $x/10<mn\le x$ are at most $x$,
there are at most $1+\log x/\log 10\le 2\log x$ nonempty decimal blocks for $x\ge100$.

For $S_1$, write $\log n=\int_1^n dt/t$ and interchange the integral and the $n$-sum.  For each
fixed value of the integration variable, the inner sum over any $m$-block with $M\le u$ is bounded
by the type-I hypothesis, and the integral has length at most $\log x$.  Thus
$|S_1|\le2U(\log x)^2$.

For $S_2$, combine $m_1m_2=m$.  The coefficient of a fixed $m$ has absolute value at most
$\sum_{d\mid m}\Lambda(d)=\log m\le\log x$.  Splitting $m$ into decimal blocks, a block with
$M\le u$ is type I and a block with $u<M\le x^{\beta_2}$ is type II.  If $M>x^{\beta_2}$, then
$m\le u^2=x^{2\beta_1}$.  After interchanging the two variables, the complementary variable lies,
up to the decimal-block enlargement, between $x/(10u^2)$ and $10x/x^{\beta_2}$; the two displayed
factor-$10$ hypotheses give
\[
\frac{x}{10u^2}\ge x^{\beta_1},\qquad 10x^{1-\beta_2}\le x^{\beta_2}.
\]
Thus the type-II hypothesis applies after interchange.  The two threshold-crossing
blocks at $u$ and $x^{\beta_2}$ are split and enlarged to adjacent complete decimal blocks, costing
at most a factor $2$.  Hence each decimal block contributes at most $2U$ before the common
coefficient $\log x$, and the block count gives $|S_2|\le8U(\log x)^2$.

For $S_3$, combine $n_1n_2=n$ and write
\[
S_3=(\log x)\sum_{u<m\le x/u} a_m
\sum_{x/(10m)<n\le x/m} b_n g(mn),
\]
where $|a_m|\le1$ and
$0\le b_n\le(\log x)^{-1}\sum_{d\mid n}\Lambda(d)\le1$, with $b_n=0$ for $n\le u$.
The same decimal-block split as above, with the roles of $m$ and $n$ interchanged once the current
outer block exceeds $x^{\beta_2}$, bounds the block supremum by $2U$: the support condition
$b_n=0$ for $n\le u$ supplies the lower type-II endpoint after splitting the crossing block, and the
factor-$10$ hypothesis $x^{2\beta_2-1}\ge10$ supplies the upper endpoint after interchange.
The same block count and coefficient bound give
$|S_3|\le8U(\log x)^2$.  Summing the three
bounds yields $18U(\log x)^2$, which we round to $20U(\log x)^2$.
\end{proof}

\subsection{Self-contained Type-II Fourier lemmas from Mauduit--Rivat}\label{subsec:self-contained-mr}

We now record the Fourier lemmas needed in Lemma~\ref{lem:S2-input}.  These standard
Mauduit--Rivat Type-II ingredients are written out to remove three black-box uses
of~\cite{MR10}: the van der Corput smoothing lemma, the elementary trigonometric summation
lemma, and the weighted $F_\lambda$ estimate.

For $\ell\ge1$ define the normalized discrete Fourier coefficient
\[
F_\ell(h,\alpha)
:=q^{-\ell}\sum_{0\le u<q^\ell} e\!\left(\alpha s_q(u)-\frac{hu}{q^\ell}\right),
\qquad h\in\mathbb Z,
\]
and set
\[
\phi_q(t):=\left|\sum_{0\le v<q}e(vt)\right|
=
\begin{cases}
|\sin(\pi qt)|/|\sin(\pi t)|,& t\notin\mathbb Z,\\
q,& t\in\mathbb Z.
\end{cases}
\]
The identity
\begin{equation}\label{eq:MR-phi-square}
\sum_{r=0}^{q-1}\phi_q\!\left(\frac{x+r}{q}\right)^2=q^2
\end{equation}
follows from Parseval on the cyclic group $\mathbb Z/q\mathbb Z$:
the left side is
$\sum_r|\sum_{v=0}^{q-1}e(v(x+r)/q)|^2
=\sum_{u,v}e((u-v)x/q)\sum_r e((u-v)r/q)=q^2$.
Also the digit decomposition $u=qv+w$ gives the recurrence
\begin{equation}\label{eq:MR-F-rec}
|F_{\ell+1}(h,\alpha)|
=q^{-1}\phi_q\!\left(\alpha-\frac{h}{q^{\ell+1}}\right)|F_\ell(h,\alpha)|,
\end{equation}
and hence
\begin{equation}\label{eq:MR-F-product}
|F_\ell(h,\alpha)|
=q^{-\ell}\prod_{j=1}^{\ell}\phi_q\!\left(\alpha-\frac{h}{q^j}\right).
\end{equation}

\begin{lemma}[Van der Corput smoothing]\label{lem:MR-vdc-self}
Let $z_1,\ldots,z_N$ be complex numbers.  For every integer $R\ge1$,
\[
\left|\sum_{1\le n\le N}z_n\right|^2
\le
\frac{N+R-1}{R}
\sum_{|r|<R}\left(1-\frac{|r|}{R}\right)
\sum_{\substack{1\le n\le N\\1\le n+r\le N}}z_{n+r}\overline{z_n}.
\]
\end{lemma}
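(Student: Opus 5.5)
The plan is to extend the sequence by zeros so that every shifted window fits inside a common index range, and then apply Cauchy--Schwarz to a sum of $R$ shifted copies. Set $z_n:=0$ for $n\le 0$ and $n>N$. For each $0\le k<R$,
\[
\sum_{1\le n\le N}z_n=\sum_{n\in\mathbb Z}z_{n+k}.
\]
Averaging this identity over $k$ gives
\[
R\sum_{n}z_n=\sum_{m\in\mathbb Z}\ \sum_{0\le k<R}z_{m+k}.
\]
The inner sum over $k$ vanishes unless $m+k\in[1,N]$ for some $0\le k<R$, that is, unless $m\in[2-R,N]$. This range contains exactly $N+R-1$ integers.

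Next apply Cauchy--Schwarz in the variable $m$ over these $N+R-1$ values:
\[
R^2\Bigl|\sum_n z_n\Bigr|^2\le (N+R-1)\sum_{m}\Bigl|\sum_{0\le k<R}z_{m+k}\Bigr|^2 .
\]
Expanding the square produces
\[
\sum_m\sum_{0\le k,l<R}z_{m+k}\overline{z_{m+l}}.
\]
Group the terms by the difference $r=k-l$, where $|r|<R$. For a fixed $r$ there are exactly $R-|r|$ pairs $(k,l)$ with $k-l=r$. For each such pair, the substitution $n=m+l$ turns the $m$-sum into
\[
\sum_{n\in\mathbb Z}z_{n+r}\overline{z_n}.
\]
Because of the zero extension, only terms with $1\le n\le N$ and $1\le n+r\le N$ survive, which is exactly the restricted sum in the statement. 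The whole expanded square therefore equals
\[
\sum_{|r|<R}(R-|r|)\sum_{\substack{1\le n\le N\\1\le n+r\le N}}z_{n+r}\overline{z_n}.
\]
Dividing by $R^2$ gives the claimed inequality with weight $(N+R-1)/R\cdot(1-|r|/R)$.

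No step here is genuinely hard. The one place needing care is the count $N+R-1$ for the support of $m$, since that count is exactly the prefactor in the statement. The reindexing must also be checked to reproduce the stated boundary conditions $1\le n,\,n+r\le N$. Finally, note that the right-hand side is automatically real and nonnegative, since it equals a sum of squared moduli; the terms for $r$ and $-r$ are complex conjugates, so no extra symmetrization is required.
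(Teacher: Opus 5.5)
Your proof is correct and follows the paper's argument essentially verbatim: zero-extend the sequence, write $R\sum_n z_n=\sum_m\sum_{0\le k<R}z_{m+k}$ with at most $N+R-1$ nonzero outer terms, apply Cauchy--Schwarz, and regroup the expanded square by the difference $r=k-l$, which occurs with multiplicity $R-|r|$. You also check two details the paper leaves implicit: that the support $[2-R,N]$ contains exactly $N+R-1$ integers, and that the reindexed sum reproduces the boundary conditions $1\le n,\,n+r\le N$.
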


\begin{proof}
Extend $z_n$ by $0$ outside $1\le n\le N$.  Then
\[
R\sum_n z_n=\sum_n\sum_{r=0}^{R-1}z_{n+r},
\]
and the outer sum has at most $N+R-1$ nonzero terms.  Cauchy's inequality gives
\[
R^2\left|\sum_n z_n\right|^2
\le
(N+R-1)\sum_n\left|\sum_{r=0}^{R-1}z_{n+r}\right|^2.
\]
Expanding the square and writing $r=r_1-r_2$ yields
\[
\sum_{r_1,r_2=0}^{R-1}\sum_n z_{n+r_1}\overline{z_{n+r_2}}
=
\sum_{|r|<R}(R-|r|)\sum_n z_{n+r}\overline{z_n},
\]
which is the claimed inequality after division by $R^2$.
\end{proof}

\begin{lemma}[Trigonometric minimum sum]\label{lem:MR-sin-sum-self}
Let $a,m\in\mathbb Z$, $m\ge1$, $d=(a,m)$, $b\in\mathbb R$, and $M>0$.  Then
\[
\begin{aligned}
\sum_{0\le n<m}\min\!\left(M,\frac{1}{|\sin(\pi(an+b)/m)|}\right)
&\le
d\min\!\left(M,\frac{1}{\sin(\pi d\|b/d\|/m)}\right)\\
&\quad+\frac{d}{\sin(\pi d/(2m))}
+\frac{2m}{\pi}\log\frac{2m}{d}.
\end{aligned}
\]
\end{lemma}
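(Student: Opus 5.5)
The approach is to reduce the sum to a sum over residues modulo $m/d$, each hit exactly $d$ times, and then bound the resulting sum of reciprocal sines by an elementary sine estimate plus a harmonic sum.

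\emph{Reduction.} Write $a=da'$ and $m=dm'$ with $(a',m')=1$. As $n$ runs over $0\le n<m$, the residue $a'n\bmod m'$ runs over every residue $r\in\{0,\dots,m'-1\}$ exactly $d$ times. Moreover $|\sin(\pi x)|$ is $1$-periodic, so $(an+b)/m=(a'n+b/d)/m'$ and the summand depends only on $a'n\bmod m'$. Hence the left-hand side equals
\[
d\sum_{0\le r<m'}\min\!\left(M,\frac{1}{|\sin(\pi(r+\beta)/m')|}\right),\qquad \beta:=b/d.
\]
Shifting $r$ by an integer, we may replace $\beta$ by a representative with $|\beta|=\|\beta\|\le 1/2$.

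\emph{Splitting off one term.} We single out the residue $r=0$. There $(r+\beta)/m'=\beta/m'$, and $|\sin(\pi\beta/m')|=\sin(\pi d\|b/d\|/m)$ because $d/m=1/m'$. This term, multiplied by $d$, gives exactly the first term $d\min(M,1/\sin(\pi d\|b/d\|/m))$ of the bound.

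\emph{Remaining residues.} For the other residues we take representatives $r\in\{-\lfloor (m'-1)/2\rfloor,\dots,\lceil (m'-1)/2\rceil\}\setminus\{0\}$, so that $|r+\beta|\ge |r|-1/2$ and $\|(r+\beta)/m'\|\ge(|r|-1/2)/m'$. Using $\sin(\pi t)\ge 2t$ for $t\in[0,1/2]$ (the chord inequality already used in Lemma~\ref{lem:geom-sum}), each such term is at most $m'/(2(|r|-1/2))=m'/(2|r|-1)$.

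\emph{The residues $r=\pm1$.} These are where $|r+\beta|$ can be as small as $1/2$. For them we keep the sine form and use the bound $1/\sin(\pi/(2m'))=1/\sin(\pi d/(2m))$. This covers the one of $\pm1$ on the side of $\beta$; after multiplying by $d$ it gives the second term $d/\sin(\pi d/(2m))$.

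\emph{The harmonic sum.} All remaining $r$ (the other of $\pm1$ and $|r|\ge2$) satisfy $\|(r+\beta)/m'\|\ge |r|/(2m')\cdot$(something) in a cleaner form. Grouping $\pm r$ gives
\[
\sum_{1\le |r|\le m'/2}\frac{m'}{2|r|-1}\lesssim m'\Bigl(\sum_{k\le m'/2}\frac{1}{k}\Bigr)\cdot\frac{2}{\pi},
\]
using the sharper $\sin(\pi t)\ge \pi t(1-t)$ or the Jordan inequality where needed. After multiplication by $d$, this should become $(2m/\pi)\log(2m')=(2m/\pi)\log(2m/d)$.

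\emph{Main obstacle.} The delicate step is getting the exact constant $2/\pi$ and the argument $\log(2m/d)$ in the third term, rather than a cruder constant like $1$. To do this I would avoid the chord bound for large $|r|$ and instead compare $\sum 1/\sin(\pi(k+\beta')/m')$ with the integral $\int \csc(\pi t/m')\,dt=(m'/\pi)\log\tan(\pi t/(2m'))$, which is monotone on each half-period. The two sides $r>0$ and $r<0$ each contribute at most $(m'/\pi)\log(2m'/\pi\cdot\ldots)\le (m'/\pi)\log(2m')$ once the nearest term on each side is excluded (it was absorbed in the first or second term). Summing both sides gives $(2m'/\pi)\log(2m')$, and multiplying by $d$ gives the stated third term. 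If this integral comparison leaves a small additive excess, I would absorb it into the second term by bounding both nearest residues $r=\pm1$ by $1/\sin(\pi/(2m'))$.
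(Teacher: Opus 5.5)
You have the paper's outline exactly: reduce to the $m'=m/d$ residues (each hit $d$ times), isolate $r=0$ as the first term, assign one neighbour of $0$ to $d/\sin(\pi d/(2m))$, and put the rest into the logarithmic term. You also locate the crux correctly. The chord bound $\sin(\pi t)\ge 2t$ only gives $1/\sin(\pi j/m')\le m'/(2j)$, so the total is of order $m\log m'$ rather than $\frac{2m}{\pi}\log m'$. For the same reason your display comparing $\sum m'/(2|r|-1)$ with $\frac{2}{\pi}m'\sum_k 1/k$ is off by a factor tending to $\pi/2$. The paper's own write-up appeals to $\sin u\ge 2u/\pi$ at this very step, and that likewise yields $m'/(2j)$, not the displayed $m'/(\pi j)$.

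\textbf{The gap is in your repair.} Comparing a \emph{decreasing} tail with $\int\csc(\pi t)\,dt$ needs an excluded anchor point on each side, and your accounting of the anchors does not close. Normalize $\beta\in[0,1/2]$ (replace $r$ by $-r$ if needed).
\begin{itemize}
\item If the anchor on the side $r\ge 0$ is the first-term point $\beta/m'$, then $\int_{\beta/m'}\csc(\pi t)\,dt$ diverges when $b/d\in\mathbb Z$.
\item If you anchor at $r=\pm1$ instead, the term $r=+1$, of order $m'$, is covered neither by the first term (that is $r=0$) nor by the second (that is $r=-1$).
\item Your fallback, bounding both $r=\pm1$ by $1/\sin(\pi/(2m'))$, costs $2d/\sin(\pi d/(2m))$, twice what the statement allows.
\end{itemize}
With anchors at $r=\pm1$ the two tails cost at most $\frac{m'}{\pi}\log\frac{4m'}{\pi}+\frac{m'}{\pi}\log\frac{2m'}{\pi}$. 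So the leftover term does fit into the slack of the logarithmic term, but only just: the slack is $\frac{m'}{\pi}\log\frac{\pi^2}{2}\approx 0.508\,m'$, against $\csc(\pi/m')\le m'/2$.

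\textbf{The clean fix is convexity rather than monotonicity.} Since $\csc(\pi t)$ is convex on $(0,1)$, you have $\csc(\pi t_r)\le m'\int_{t_r-1/(2m')}^{t_r+1/(2m')}\csc(\pi t)\,dt$. Take $t_r=(r+\beta)/m'$ with $0\le r<m'$. The point $r=m'-1$ has $\|t_r\|=(1-\beta)/m'\ge 1/(2m')$ and gives the second term. For $1\le r\le m'-2$ these intervals are disjoint and lie in $[1/(2m'),1-1/(2m')]$, so
\[
\sum_{r=1}^{m'-2}\frac{1}{\sin(\pi t_r)}\le m'\int_{1/(2m')}^{1-1/(2m')}\frac{dt}{\sin(\pi t)}=\frac{2m'}{\pi}\log\cot\frac{\pi}{4m'}\le\frac{2m'}{\pi}\log\frac{4m'}{\pi}<\frac{2m'}{\pi}\log(2m').
\]
Multiplying by $d$ gives $\frac{2m}{\pi}\log\frac{2m}{d}$.

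Alternatively, your inequality $\sin(\pi t)\ge\pi t(1-t)$ also works if applied directly, as $\frac{1}{\sin(\pi t_r)}\le\frac{1}{\pi t_r}+\frac{1}{\pi(1-t_r)}$. These two harmonic sums total at most $\frac{m'}{\pi}(1+2\log m')<\frac{2m'}{\pi}\log(2m')$. It just must not be routed through $m'/(2|r|-1)$.
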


\begin{proof}
If $d=m$, the summand is independent of $n$ and the assertion is immediate.  Otherwise
$1\le d\le m/2$.  Write $a=da'$, $m=dm'$, and $b=db_0+r$ with $b_0\in\mathbb Z$ and
$-d/2<r\le d/2$.  As $n$ runs through $m'$ consecutive values, $a'n+b_0$ runs through all
residue classes modulo $m'$, and there are $d$ such blocks.  By replacing $r$ with $|r|$ and
using symmetry, it is enough to consider $0\le r\le d/2$.  Isolating the closest residue class
to the singularity and then comparing the remaining decreasing tails with an integral gives
\[
\begin{aligned}
\sum_{0\le n<m}\min\!\left(M,\frac{1}{|\sin(\pi(an+b)/m)|}\right)
&\le
d\min\!\left(M,\frac{1}{\sin(\pi r/m)}\right)\\
&\quad+d\sum_{1\le j\le m'-1}\frac{1}{\sin(\pi(j-r/d)/m')}.
\end{aligned}
\]
Since $\sin u\ge 2u/\pi$ on $0\le u\le\pi/2$ and $\sin u=\sin(\pi-u)$, the two tails are bounded by
\[
\frac{d}{\sin(\pi/(2m'))}
+2d\sum_{1\le j\le m'/2}\frac{m'}{\pi j}
\le
\frac{d}{\sin(\pi d/(2m))}
+\frac{2m}{\pi}\log(2m').
\]
Finally $r/d$ is the distance of $b/d$ to the nearest integer, so
\[
\sin(\pi r/m)=\sin(\pi d\|b/d\|/m).
\]
Since $m'=m/d$, this gives the stated bound.
\end{proof}

\begin{lemma}[Range of $\gamma_q$]\label{lem:MR-gamma-self}
Assume $(q-1)\alpha\notin\mathbb Z$ and define
\[
q^{\gamma_q(\alpha)}
:=
\max_{t\in\mathbb R}\sqrt{\phi_q(\alpha+t)\phi_q(\alpha+qt)}.
\]
Then $1/2\le\gamma_q(\alpha)<1$.
\end{lemma}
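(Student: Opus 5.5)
The plan is to prove the two inequalities separately. Both rest on the elementary facts $0\le\phi_q(t)\le q$, with $\phi_q(t)=q$ if and only if $t\in\mathbb Z$, together with the Parseval identity~\eqref{eq:MR-phi-square}.

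\emph{Upper bound $\gamma_q(\alpha)<1$.} First note that $t\mapsto\phi_q(\alpha+t)\phi_q(\alpha+qt)$ is continuous and $1$-periodic in $t$. Continuity holds because $\phi_q(t)=|\sum_{0\le v<q}e(vt)|$ is the modulus of a trigonometric polynomial. Periodicity holds because $\phi_q$ has period $1$ and $q$ is an integer. The maximum in the definition of $q^{\gamma_q(\alpha)}$ is therefore attained at some $t_0\in[0,1]$. Since each factor is at most $q$, we have $q^{\gamma_q(\alpha)}\le q$.

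Now suppose equality held. Then both factors would equal $q$, which forces $\alpha+t_0\in\mathbb Z$ and $\alpha+qt_0\in\mathbb Z$. Consequently
\[
(q-1)\alpha=q(\alpha+t_0)-(\alpha+qt_0)\in\mathbb Z,
\]
contradicting the hypothesis. Hence the maximum is strictly less than $q^2$ under the square root, and $\gamma_q(\alpha)<1$. Attainment of the maximum is exactly what makes the inequality strict.

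\emph{Lower bound $\gamma_q(\alpha)\ge 1/2$.} It suffices to exhibit one $t$ with $\phi_q(\alpha+t)\phi_q(\alpha+qt)\ge q$. I will restrict to $t$ of the form $t=(k-\alpha)/q$ with $k\in\mathbb Z$. For such $t$ we have $\alpha+qt=k\in\mathbb Z$, so $\phi_q(\alpha+qt)=q$, and
\[
\alpha+t=\frac{(q-1)\alpha+k}{q}.
\]
Next apply~\eqref{eq:MR-phi-square} with $x=(q-1)\alpha$, letting $k=r$ range over $0,\dots,q-1$:
\[
\sum_{r=0}^{q-1}\phi_q\!\left(\frac{(q-1)\alpha+r}{q}\right)^2=q^2.
\]
By averaging, some $r$ satisfies $\phi_q(((q-1)\alpha+r)/q)^2\ge q$. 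For this $r$ the product is at least $q\cdot\sqrt q\ge q$. This gives $\gamma_q(\alpha)\ge 3/4\ge 1/2$.

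I do not see a genuine obstacle in either half. The only points needing care are justifying that the maximum is attained (continuity plus periodicity), which is what yields strictness in the upper bound, and checking that the Parseval identity is applied at the correct argument $x=(q-1)\alpha$.
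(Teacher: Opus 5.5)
Your proof is correct, and it differs from the paper's only in how the lower bound selects its test point. The upper bound is the paper's argument, with one addition: you justify that the maximum is attained, via continuity and $1$-periodicity of $t\mapsto\phi_q(\alpha+t)\phi_q(\alpha+qt)$. The paper takes this for granted by writing $\max$, and it is what makes the inequality strict.

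For the lower bound you use the same family of test points as the paper, those with $\alpha+qt\in\mathbb Z$, but you choose the integer differently.
\begin{itemize}
\item The paper writes $(q-1)\alpha=n+\beta$ with $-1/2<\beta\le 1/2$ and takes $t=\beta/q-\alpha$, which is your $k=-n$. It then uses that $\phi_q$ is decreasing on $[0,1/q]$ to get $\phi_q(\beta/q)\ge 1/\sin(\pi/(2q))>1$.
\item You instead average over all $q$ shifts using the identity~\eqref{eq:MR-phi-square} at $x=(q-1)\alpha$. This produces a shift with $\phi_q\ge\sqrt q$, hence $\gamma_q(\alpha)\ge 3/4$.
\end{itemize}

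Your version needs no information about the shape of $\phi_q$ (the paper asserts the monotonicity without proof) and reuses an identity already established in the paper. The paper's nearest-integer choice in exchange gives the sharper bound $q^{2\gamma_q(\alpha)}\ge q/\sin(\pi/(2q))\ge 2q^2/\pi$, for example $\gamma_{10}(\alpha)>0.9$. This coincides with your bound only at $q=2$, although the paper itself records only $\gamma_q(\alpha)>1/2$.
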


\begin{proof}
Since $\phi_q\le q$, equality $q^{\gamma_q(\alpha)}=q$ would force
$\alpha+t\in\mathbb Z$ and $\alpha+qt\in\mathbb Z$ for some $t$, hence
$(q-1)\alpha=q(\alpha+t)-(\alpha+qt)\in\mathbb Z$, contrary to the hypothesis.  Thus
$\gamma_q(\alpha)<1$.

Write $(q-1)\alpha=n+\beta$ with $n\in\mathbb Z$ and $-1/2<\beta\le1/2$.  Taking
$t=\beta/q-\alpha$ gives $\alpha+qt=-n\in\mathbb Z$ and $\alpha+t=\beta/q$.  Hence
\[
\phi_q(\alpha+t)\phi_q(\alpha+qt)=q\,\phi_q(\beta/q).
\]
If $\beta=0$ this already contradicts the hypothesis, so $0<|\beta|\le1/2$.  The function
$\phi_q$ is even and decreasing on $[0,1/q]$, and therefore
$\phi_q(\beta/q)\ge\phi_q(1/(2q))=1/\sin(\pi/(2q))>1$.  Thus the displayed product is $>q$,
so $q^{\gamma_q(\alpha)}>q^{1/2}$.
\end{proof}

\begin{lemma}[Order-one Fourier means for $q=10$]\label{lem:MR-G-first-self}
Let $q=10$ and
\[
\eta_3:=\frac{\log5}{\log3}-1,\qquad
\omega_q:=\left(\frac12-\eta_3\right)\frac{\log2}{\log q}
=\left(\frac32-\frac{\log5}{\log3}\right)\frac{\log2}{\log q}.
\]
For $0\le\delta\le\lambda$, $k\mid q^{\lambda-\delta}$, and $(k,q)<q$,
\[
G_\lambda(a,kq^\delta,\alpha)
\le k^{-\eta_3}q^{\eta_3(\lambda-\delta)}\,|F_\delta(a,\alpha)|.
\]
In particular
\[
G_\lambda(\alpha)\le q^{\eta_3\lambda}.
\]
Moreover, after summing over the possible $k$,
\[
\sum_{\substack{k\mid q^{\lambda-\delta}\\(k,q)<q}} k^{1-2\eta_3}
\le 32\,q^{(1-2\eta_3)(\lambda-\delta)}
q^{-\omega_q(\lambda-\delta)}.
\]
\end{lemma}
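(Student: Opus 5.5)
The plan is to prove the three assertions in order: the $k$-weighted bound on $G_\lambda(a,kq^\delta,\alpha)$, then the bound $G_\lambda(\alpha)\le q^{\eta_3\lambda}$ as its case $\delta=0$, $k=1$, and finally the divisor sum, which is pure arithmetic.

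\emph{Step 1: factor the product formula.} I read $G_\lambda(a,kq^\delta,\alpha)$ as in \cite{MR10}: an $L^1$-mean of $|F_\lambda(h,\alpha)|$ over $h$ in an arithmetic progression determined by $a$ and the modulus $kq^\delta$, normalized by the number of terms. By \eqref{eq:MR-F-product}, $|F_\lambda(h,\alpha)|$ splits as follows.

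\begin{itemize}
\item The factors $j\le\delta$ depend only on $h\bmod q^\delta$. This residue is fixed ($\equiv a$) along the progression. So they pull out as exactly $|F_\delta(a,\alpha)|$, using \eqref{eq:MR-F-rec} to identify the product.
\item The remaining $\lambda-\delta$ factors $q^{-1}\phi_q(\alpha-h/q^j)$, $\delta<j\le\lambda$, involve only the free digits of $h$.
\end{itemize}

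Summing these free digits one level at a time reduces everything to a one-level estimate. The estimate I would aim for is
\[
\sup_x \frac1q\sum_{r=0}^{q-1}\phi_q\!\left(\frac{x+r}{q}\right)\le q^{\eta_3}\qquad(q=10).
\]
Cauchy--Schwarz with \eqref{eq:MR-phi-square} only gives $q^{1/2}$, so a sharper input is needed. Here I would follow Mauduit--Rivat: compare the $q$-term sum with the ternary extremal configuration, where the maximum of $\sum_{r<3}\phi_3((x+r)/3)$ equals $5$. This is the source of $\eta_3=\log5/\log3-1$. For $q=10$, a direct numerical or monotonicity check of the $1$-periodic function $x\mapsto\sum_r\phi_{10}((x+r)/10)$ on $[0,1/2]$ should confirm the bound $10^{1+\eta_3}$.

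\emph{Step 2: the $k$-saving.} The divisor $k\mid q^{\lambda-\delta}$ restricts the free digits to a sub-progression of relative density $1/k$. Normalization then gives a factor $k^{-1}$ against the full sum. Bounding each level's restricted sum by the full one-level sum, but averaging over only $q^{\lambda-\delta}/k$ points, I expect to get $k^{-1}\cdot k^{1-\eta_3}$. The $k^{1-\eta_3}$ comes from the levels absorbed by $k$, each contributing at most its maximal term $q$ instead of the mean bound. The condition $(k,q)<q$ excludes the degenerate case where a full level is frozen and the mean bound fails.

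\textbf{This bookkeeping of which levels are frozen by $k$ is the step I expect to be the main obstacle.} I would first try an induction on $\lambda-\delta$ using \eqref{eq:MR-F-rec}, peeling off one digit of $k$ at a time.

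\emph{Step 3: the divisor sum.} Since $q=10$ and $(k,10)<10$, every admissible $k\mid 10^{n}$ with $n=\lambda-\delta$ is of the form $2^a$ or $5^b$ with $0\le a,b\le n$. Put $s=1-2\eta_3\approx0.0701$. Then
\[
\sum k^{s}\le\frac{5^{ns}}{1-5^{-s}}+\frac{2^{ns}}{1-2^{-s}}.
\]
On the other side, $q^{-\omega_q n}=2^{-(1/2-\eta_3)n}=2^{-sn/2}$, so
\[
10^{ns}\,q^{-\omega_q n}=5^{ns}2^{ns/2},
\]
which is at least both $5^{ns}$ and $2^{ns}$. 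Numerically $1/(1-5^{-s})<9.4$ and $1/(1-2^{-s})<21.2$, so the total constant is below $31<32$.
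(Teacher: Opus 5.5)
Your Steps~1 and~3 are fine. Step~2, however, carries all the content of the first inequality, and it has a genuine gap.

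Two things go wrong there. First, $G_\lambda(a,d,\alpha)$ is the \emph{unnormalized} sum $\sum_{0\le h<q^\lambda,\ h\equiv a\ (\mathrm{mod}\ d)}|F_\lambda(h,\alpha)|$ of Lemma~\ref{lem:MR-S2-G-self}, not a mean. So there is no normalization factor $k^{-1}$ to exploit. Under a mean reading the inequality would be trivial, because every term is $\le|F_\delta(a,\alpha)|$ by \eqref{eq:MR-F-product} and $k\le q^{\lambda-\delta}$. It is the unnormalized sum that enters later, through $d\,G_\lambda(a,d,\alpha)^2$.

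Second, for $q=10$ and $(k,10)<10$ one has $k=2^a$ or $k=5^b$, so $k$ never freezes a decimal digit. Peel digits from the top as in \eqref{eq:MR-F-rec}, and put $d_j:=(q^j,kq^\delta)$ and $\rho_j:=d_j/d_{j-1}$. The congruence $h\equiv a\pmod{kq^\delta}$ then restricts the digit added at level $j\in(\delta,\lambda]$ to one residue class modulo $\rho_j$. Here $\rho_j\in\{1,2,5\}$ and $\prod_j\rho_j=d_\lambda/d_\delta=k$. The level-$j$ factor is therefore the partial sum
\[
\psi_{10,R}(t):=\frac1{10}\sum_{0\le s<R}\phi_{10}\Bigl(t+\frac sR\Bigr),\qquad R=\frac{10}{\rho_j}\in\{10,5,2\}.
\]
The stated bound is exactly what follows from $\max_t\psi_{10,R}(t)\le R^{\eta_3}$ for all three values of $R$, because $\prod_j(10/\rho_j)^{\eta_3}=k^{-\eta_3}10^{\eta_3(\lambda-\delta)}$.

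The cases $R=5$ and $R=2$ are the missing idea, and none of your devices produces them:
\begin{itemize}
\item Bounding a restricted level by the full one-level sum gives $10^{\eta_3}$, i.e.\ no saving.
\item Bounding by (number of terms)$\times$(maximal term) gives $R>R^{\eta_3}$.
\item Cauchy--Schwarz with \eqref{eq:MR-phi-square} gives $R^{1/2}>R^{\eta_3}$.
\end{itemize}
You need separate pointwise estimates for these partial sums, and the paper proves them directly.
\begin{itemize}
\item For $R=5$ and $R=10$ it uses an explicit bound for $\max_t\psi_{R,R}$. The reduction for $R=5$ comes from $\phi_{10}(u)=\phi_5(u)\,|2\cos 5\pi u|$, which gives $\psi_{10,5}(t)=|\cos5\pi t|\,\psi_{5,5}(t)$.
\item For $R=2$ it splits at $t=1/30$, using $\psi_{10,2}(t)\le|\cos\pi t|+|\sin\pi t|$ on one side and $\psi_{10,2}(t)\le\frac{2\sqrt2}{10}\phi_5(2t)$ on the other. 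This yields $\psi_{10,2}<\sqrt{3/2}<2^{\eta_3}$.
\end{itemize}
For $R=10$, your numerical check would also do if made rigorous. The maximum there is about $2.43$, attained at $t=1/20$, against $10^{\eta_3}\approx2.92$.

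Your remark on $(k,q)<q$ is also off. A fully frozen level would contribute $\frac1q\phi_q\le1=1^{\eta_3}$, so the bound survives. The hypothesis only makes the decomposition $d=kq^\delta$ unique and keeps $\rho_j\in\{1,2,5\}$.

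Your Step~3 is correct, and slightly cleaner than the paper's route. You sum the geometric series over $2^a$ and $5^b$ separately, each dominated by $5^{ns}2^{ns/2}=10^{ns}q^{-\omega_q n}$, and get a constant of about $30.5$. The paper instead bounds the number of admissible $k$ by $2n+1$ and each term by $5^{sn}$. It then absorbs $2n+1\le32\cdot2^{sn/2}$ by maximizing $(2t+1)e^{-a_0t}$.
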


\begin{proof}
We reproduce MR10 Lemmes~16--17 for the only base needed here.  From the product formula
\[
|F_\lambda(h,\alpha)|=q^{-\lambda}\prod_{1\le j\le\lambda}\phi_q(\alpha-hq^{-j}),
\]
write, for a divisor $R$ of $q$,
\[
\psi_{q,R}(t):=\frac1q\sum_{1\le r\le R}\phi_q(t+r/R).
\]
The elementary trigonometric estimate needed for the decimal divisors gives
\[
\max_t\psi_{q,R}(t)\le R^{\eta_3}\qquad(2\le R\mid q),
\]
and only $R=2,5,10$ can occur.  For $R=5,10$, reducing to the Fejer kernel gives
$\max_t\psi_{q,R}(t)\le R^{\eta_R}$ with $\eta_R\le\eta_3$; the inequality
$\eta_R\le\eta_3$ follows from the explicit bound
\[
\max_t\psi_{R,R}(t)\le \frac{2}{R\sin(\pi/(2R))}+\frac{2}{\pi}\log\frac{2R}{\pi},
\]
which at $R=5$ and $R=10$ is respectively $<5^{\eta_3}$ and $<10^{\eta_3}$.  For $R=2$ we use
MR10's elementary two-interval argument, specialized to $q=10$.  The function
$\psi_{10,2}$ has period $1/2$ and is symmetric about $1/4$, so it is enough to take
$0\le t\le1/4$.  First,
\[
\psi_{10,2}(t)\le |\cos\pi t|+|\sin\pi t|.
\]
Hence on $0\le t\le1/30$,
\[
\psi_{10,2}(t)\le \cos(\pi/30)+\sin(\pi/30)<1.10<\sqrt{3/2}.
\]
On $1/30<t\le1/4$, the same decomposition used in MR10 Lemme~15 gives
\[
\psi_{10,2}(t)\le \frac{2\sqrt2}{10}\,\phi_5(2t).
\]
If $1/30<t\le1/10$, then $2t\in(1/15,1/5]$ and $\phi_5$ is decreasing on this interval: writing
$x=\pi u$, one has
$\phi_5(u)=\sin(5x)/\sin x=16\cos^4x-12\cos^2x+1$, whose derivative is
$-8\sin x\cos x(8\cos^2x-3)<0$ for $\pi/15\le x\le\pi/5$.  Therefore
\[
\phi_5(2t)\le \phi_5(1/15)=\frac{\sin(\pi/3)}{\sin(\pi/15)}
<5\cos(\pi/6).
\]
If $1/10\le t\le1/4$, then
\[
\phi_5(2t)\le \frac1{\sin(2\pi t)}
\le \frac1{\sin(\pi/5)}
<5\cos(\pi/6).
\]
Thus in both tail ranges
\[
\psi_{10,2}(t)<\frac{2\sqrt2}{10}\,5\cos(\pi/6)=\sqrt{3/2}<2^{\eta_3}.
\]
These are pointwise inequalities; no averaging constant is hidden.

First take $k=1$.  Splitting the last digit in the residue class modulo $q^\delta$
and applying $\max_t\psi_{q,q}(t)\le q^{\eta_3}$ at each of the
$\lambda-\delta$ steps gives
\[
G_\lambda(a,q^\delta,\alpha)\le q^{\eta_3(\lambda-\delta)}|F_\delta(a,\alpha)|.
\]
Taking $\delta=0$ and $a=0$ gives $G_\lambda(\alpha)\le q^{\eta_3\lambda}$.

For general $k\mid q^{\lambda-\delta}$ with $(k,q)<q$, set
$d_j=(q^j,kq^\delta)$ for $\delta\le j\le\lambda$ and
$\rho_j=d_j/d_{j-1}$ for $j>\delta$.  Then $\rho_j\mid q$ and $\rho_j<q$.
At the $j$th digit split, the admissible last digits form one residue class modulo
$\rho_j$, hence the preceding bound with $R=q/\rho_j$ gives the factor
$(q/\rho_j)^{\eta_3}$.  Iterating,
\[
G_\lambda(a,kq^\delta,\alpha)
\le \left(\prod_{j=\delta+1}^{\lambda}\rho_j\right)^{-\eta_3}
q^{\eta_3(\lambda-\delta)}|F_\delta(a,\alpha)|.
\]
Since $\prod \rho_j=d_\lambda/d_\delta=k$, this is the first assertion.

Finally put $n=\lambda-\delta$.  Since $q=10$, the admissible divisors are
$k=2^a$ or $k=5^b$, with $0\le a,b\le n$ and with $1$ counted once.  Hence there
are $2n+1$ of them, and each is at most $5^n$.  We shall use
\[
2n+1\le32\cdot 10^{\omega_q n}\qquad(n\ge0).
\]
Indeed, for $a_0:=\omega_q\log10$ one has $a_0>0.024$, and the continuous function
$(2t+1)e^{-a_0t}$ has maximum $2e^{a_0/2-1}/a_0<31.1<32$ on $t\ge0$.
Therefore
\[
\sum_{\substack{k\mid q^n\\(k,q)<q}} k^{1-2\eta_3}
\le (2n+1)5^{(1-2\eta_3)n}
\le 32\,10^{\omega_q n}10^{(1-2\eta_3)n}2^{-(1-2\eta_3)n}
\]
which is equal to
\[
32\,q^{(1-2\eta_3)n}q^{-\omega_q n},
\]
as claimed.
\end{proof}

\begin{lemma}[Explicit Type-I estimate in base $10$]\label{lem:typeI-explicit}
Let $q=10$, let $(q-1)\alpha\notin\mathbb Z$, and let $\gamma_q(\alpha)$ be defined as in
Lemma~\ref{lem:MR-gamma-self}.  Put
\[
\kappa_q(\alpha):=\min\left(\frac16,\frac{1-\gamma_q(\alpha)}3\right).
\]
For all $x\ge100$ and $1\le M\le x^{1/3}$,
\[
\sum_{M/10<m\le M}\max_{x/(10m)\le t\le x/m}
\left|\sum_{t<n\le x/m}e(\alpha s_{10}(mn))\right|
\le
10^8(\log x)^2\,x^{1-\kappa_q(\alpha)}.
\]
In particular, by Lemma~\ref{lem:c1},
$\kappa_q(\alpha)\ge c_1\|9\alpha\|^2$, so the right-hand side is at most
$10^8(\log x)^2x^{1-c_1\|9\alpha\|^2}$.
\end{lemma}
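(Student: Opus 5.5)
The plan is to follow the Mauduit--Rivat Type-I argument (MR10 \S5, reproduced as DMR Proposition~3.1) and track the constants. Two inputs do the work: the Fourier product formula~\eqref{eq:MR-F-product} with the $L^1$-type bound for $F_\lambda$, and the definition of $\gamma_q(\alpha)$ from Lemma~\ref{lem:MR-gamma-self}.

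\textbf{Step 1: truncate to the low digits.} Fix $m\in(M/10,M]$ and $t$. Put $\lambda:=\lceil \log_{10}(x/m)\rceil+\rho$, where $\rho$ is an integer of size roughly $\frac13\log_{10} x$. For $n\le x/m$ the number $mn$ has at most $\lambda-\rho+\log_{10}M+1$ digits. Write $s_{10}(mn)=s_{10}(mn \bmod 10^{\lambda'})+s_{10}(\lfloor mn/10^{\lambda'}\rfloor)$ with $\lambda'$ chosen just below $\log_{10}(x)$. As $n$ runs through an interval of length $\le x/m$, the high part $\lfloor mn/10^{\lambda'}\rfloor$ takes only $O(M)$ distinct values, and each value occupies a sub-interval of $n$. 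So the inner sum reduces, up to a factor $O(M)$ in the count of pieces, to sums of $e(\alpha s_{10}(mn \bmod 10^{\lambda'}))$ over $n$-intervals. Because $M\le x^{1/3}$, this loss is affordable.

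\textbf{Step 2: expand the truncated digit sum in Fourier.} Use
\[
e(\alpha s_{10}(u))=\sum_{0\le h<10^{\lambda'}}F_{\lambda'}(h,\alpha)\,e(hu/10^{\lambda'}).
\]
The $n$-sum then becomes a geometric sum in $hm/10^{\lambda'}$, which Lemma~\ref{lem:geom-sum} bounds by $\min(x/m,1/(2\|hm/10^{\lambda'}\|))$. Summing over $m\in(M/10,M]$, I split according to $d=\gcd(h,10^{\lambda'})$ and count how often $hm \bmod 10^{\lambda'}$ lands near $0$, in the manner of Lemma~\ref{lem:MR-sin-sum-self}. The total is then bounded by
\[
(\log x)\Bigl(\frac{x}{10^{\lambda'}}M+10^{\lambda'}\Bigr)\max_{\delta}\sum_h |F_{\lambda'}(h,\alpha)|\ \text{restricted to } 10^{\delta}\,\|\,h.
\]

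\textbf{Step 3: bound the $L^1$ norm of $F$.} Pair consecutive factors of the product~\eqref{eq:MR-F-product}, $\phi_q(\alpha-h/q^{j})\phi_q(\alpha-h/q^{j+1})$. By the definition of $\gamma_q(\alpha)$, each pair is at most $q^{2\gamma_q(\alpha)}$. The Parseval identity~\eqref{eq:MR-phi-square} handles the averaging over the free digits of $h$. Together these give
\[
\sum_h|F_{\lambda'}(h,\alpha)|\le C\,q^{\lambda'(\gamma_q(\alpha)-1)}\cdot q^{\lambda'}\cdot q^{-\lambda'/2}
\]
with an explicit loss $C\le 10^2$, the constant coming from the odd leftover factor and from rounding $\lambda'$.

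\textbf{Step 4: optimize and collect constants.} Choosing $10^{\lambda'}\asymp x^{2/3}$ balances the two terms in Step 2 and gives the saving $x^{-(1-\gamma_q)/3}$. The competing trivial term, from the truncation in Step 1, saves $x^{-1/6}$. Together these produce $\kappa_q(\alpha)=\min(1/6,(1-\gamma_q)/3)$. The constants to multiply out are: the $2\log x$ count of $\delta$-blocks, the piece-count factor from Step 1, the $1/\sin$ tail $2m/\pi\log$ from Lemma~\ref{lem:MR-sin-sum-self}, and the constant $C$ from Step 3. Their product should stay well under $10^8$, since each factor is at most about $10^2$, and $x\ge100$ lets me absorb the lower-order terms.

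The final clause is immediate. DMR~(24) and~(31) bound $\varepsilon_q(\alpha)$ below by $28c_1\|9\alpha\|^2$ (Lemma~\ref{lem:c1}), and since $1-\gamma_q\ge \varepsilon_q$ and $1/6>c_1$, this gives $\kappa_q\ge c_1\|9\alpha\|^2$.

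\textbf{Main obstacle.} I expect Step 3 to be the hardest part: making the pairing argument rigorous with an honest numerical constant, uniformly in the $10$-adic valuation $\delta$ of $h$. The paper states that $\gamma_q(\alpha)$ controls the correlation of two consecutive digits, but the $L^1$ sum over $h$ needs a careful recursion in $\lambda$ that mixes the maximum defining $\gamma_q$ with the Parseval sum. My first attempt would be an induction on $\lambda$ in steps of two digits, with $\max\le q^{2\gamma_q}$ used on one factor and Parseval on the averaged one.
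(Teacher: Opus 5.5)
Your Step~2 is where the argument breaks. You bound the double sum over $m\in(M/10,M]$ and over the pieces of $\min\bigl(10^{\lambda'}/m,\,1/(2\|hm/10^{\lambda'}\|)\bigr)$ by a factor $(\log x)(xM/10^{\lambda'}+10^{\lambda'})$ that does not depend on $h$, and then multiply by $\sum_h|F_{\lambda'}(h,\alpha)|$. No such uniform factor exists:
\begin{itemize}
\item Lemma~\ref{lem:MR-sin-sum-self} needs the summation variable to run through complete residue systems, as $n$ does in Lemma~\ref{lem:MR-S2-G-self}. Here $m$ only covers an interval of length $\asymp M\le x^{1/3}$, while $10^{\lambda'}\asymp x^{2/3}$.
\item Concretely, take $h=\lfloor 10^{\lambda'}/3\rfloor=33\cdots3$. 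It is coprime to $10$, so no $\delta$-bookkeeping helps, and $h/10^{\lambda'}=1/3-1/(3\cdot 10^{\lambda'})$. For every $m\equiv0\pmod 3$ the phase $hm/10^{\lambda'}$ lies within $m/(3\cdot10^{\lambda'})$ of an integer. On a piece of length $\approx 10^{\lambda'}/m$ the geometric sum is therefore $\gg$ the piece length. These $m$ alone contribute $\gg x$, against the claimed $\ll x^{2/3}\log x$.
\item The same happens whenever $h/10^{\lambda'}$ is very close to a fraction with denominator at most $M$.
\end{itemize}

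The fallback of summing over $h$ first does not rescue this. The pairing in your Step~3 only yields the pointwise bound $|F_{\lambda'}(h,\alpha)|\le 10^{(\gamma-1)\lambda'+1}$, hence $\sum_h|F_{\lambda'}|\le 10^{\gamma\lambda'+1}$. Parseval gives $10^{\lambda'/2}$. Neither produces your extra factor $10^{-\lambda'/2}$. Using the pointwise bound gives only $Mx^{\gamma}\log x$. This is worse than trivial for $M$ near $x^{1/3}$: in base $10$ the proof of Lemma~\ref{lem:MR-gamma-self} gives $10^{2\gamma_{10}(\alpha)}\ge 10/\sin(\pi/20)$, so $\gamma_{10}(\alpha)>0.9$.

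What must be exploited is the correlation between where $|F_{\lambda'}(h,\alpha)|$ is large and where $h/10^{\lambda'}$ sits near the fractions $k/m$. Your factorization throws exactly this away. Step~4 also does not match Steps~1--2: with $10^{\lambda'}\asymp x^{2/3}$ there are $\asymp x^{1/3}$ pieces per $m$, not $O(M)$, and the origin of the $1/6$ in $\kappa_q$ is left unexplained.

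The missing idea is a large sieve over the Farey points $k/m$, which is how the paper argues (MR10 Proposition~2 with constants). It detects $m\mid\ell$ by additive characters,
\[
\sum_{0\le n\le t}e(\alpha s_{10}(mn))=\frac1m\sum_{0\le k<m}\sum_{0\le\ell\le mt}e\bigl(\alpha s_{10}(\ell)+k\ell/m\bigr)+O(1),
\]
and splits the $\ell$-range into complete decimal blocks. It then bounds $\sum_{M/10<m\le M}\sum_{k<m}|H_\lambda(k/m)|$, with $H_\lambda(t)=\sum_{\ell<10^\lambda}e(\alpha s_{10}(\ell)+\ell t)$, by the Sobolev--Gallagher inequality:
\begin{itemize}
\item After grouping by $d=(k,m)$, the points $k/m$ are $(d/M)^2$-spaced.
\item One splits $|H_\lambda|\le|H_{\lambda_1}|\,10^{\gamma(\lambda-\lambda_1)+1}$, with $10^{\lambda_1}\le(M/d)^2$ matched to that spacing.
\item One uses $\int_0^1|H_{\lambda_1}|\le10^{\lambda_1/2}$ and $\int_0^1|H'_{\lambda_1}|\ll10^{3\lambda_1/2}$, with the Parseval fallback when $\lambda_1=\lambda$.
\end{itemize}
This gives $(\log x)^2\bigl(x^{\gamma}M^{2-2\gamma}+x^{1/2}M\bigr)$. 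Then $M\le x^{1/3}$ and $\gamma\ge1/2$ turn the two terms into $x^{1-(1-\gamma)/3}$ and $x^{5/6}$, which is exactly where both entries of $\kappa_q(\alpha)$ come from. Your closing deduction $\kappa_q(\alpha)\ge c_1\|9\alpha\|^2$ is fine and matches the paper.
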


\begin{proof}
This is MR10 Proposition~2 with constants retained and rounded upward.  We record the elementary
constant extraction because this is the only place where the Type-I input enters the present paper.

For $\lambda\ge1$ define
\[
\Phi_\lambda(t):=\prod_{0\le i<\lambda}\phi_{10}(\alpha+10^it).
\]
Equivalently, if
$H_\lambda(t):=\sum_{0\le\ell<10^\lambda}e(\alpha s_{10}(\ell)+\ell t)$, then
$|H_\lambda(t)|=\Phi_\lambda(t)$.
Parseval and Cauchy's inequality give
\[
\int_0^1\Phi_\lambda(t)\,dt\le 10^{\lambda/2}.
\]
Lemma~\ref{lem:MR-gamma-self} gives, exactly as in MR10 Lemme~8,
\[
\max_t\Phi_\lambda(t)\le 10^{\gamma_q(\alpha)\lambda+1}.
\]
The Sobolev--Gallagher inequality used in MR10 Lemme~9 may be taken in the explicit form
\[
\sum_r |F(t_r)|\le \Delta^{-1}\int_0^1|F(t)|\,dt+\int_0^1|F'(t)|\,dt
\]
for points $t_r$ separated by at least $\Delta$ modulo $1$.  We spell out the constant extraction.
Group the fractions $k/m$ by $d=(k,m)$ and write $A=M/d$.  The reduced fractions $k/m$ with
$M/10<m\le M$ and $(k,m)=d$ are separated by at least $d^2/M^2=A^{-2}$.  Let
\[
\lambda_1=\min\left(\lambda,\left\lfloor\frac{2\log A}{\log 10}\right\rfloor\right)
\quad(A\ge1),
\]
with $\lambda_1=0$ if $A<\sqrt{10}$.  The product splitting
$|H_\lambda(t)|\le |H_{\lambda_1}(t)|10^{\gamma(\lambda-\lambda_1)+1}$ and
Lemma~\ref{lem:MR-gamma-self} give
\[
\sum_{\substack{M/10<m\le M\\0\le k<m\\(k,m)=d}}|H_\lambda(k/m)|
\le 10^{\gamma(\lambda-\lambda_1)+1}
\left(A^2\int_0^1|H_{\lambda_1}(t)|\,dt+
\int_0^1|H'_{\lambda_1}(t)|\,dt\right).
\]
By Parseval, $\int_0^1|H_{\lambda_1}|\le 10^{\lambda_1/2}$.  Differentiating the
digit-product formula and using
$|D'(u)|\le 2\pi\sum_{a=0}^9 a=90\pi$ for $D(u)=\sum_{a=0}^9e(au)$ gives
\[
|H'_{\lambda_1}(t)|\le 9\pi\,10^{\lambda_1}
\sum_{0\le i<\lambda_1}|H_i(t)|.
\]
Consequently
\[
\int_0^1|H'_{\lambda_1}(t)|\,dt
\le 9\pi\,10^{\lambda_1}\sum_{0\le i<\lambda_1}10^{i/2}
\le 30\,10^{3\lambda_1/2}.
\]
The two terms in the last display are now bounded separately.  For the derivative term there is
nothing to prove when $\lambda_1=0$, since then $H_{\lambda_1}$ is constant and the derivative
contribution is zero.  Otherwise the choice of $\lambda_1$ gives $10^{\lambda_1}\le A^2$, and hence
\[
10^{(3/2-\gamma)\lambda_1}\le 32A^{3-2\gamma}.
\]
For the spacing term there are two cases.  If $\lambda_1<\lambda$, then
$10^{\lambda_1}>A^2/10$ (unless $\lambda_1=0$, where $1\le A<\sqrt{10}$ gives
$A^2\le\sqrt{10}\,A^{3-2\gamma}$), so
\[
A^2\,10^{(1/2-\gamma)\lambda_1}
\le \sqrt{10}\,A^{3-2\gamma}
\le4A^{3-2\gamma}.
\]
If $\lambda_1=\lambda$, we do not force this term into the $A^{3-2\gamma}$ contribution; we keep
it as the Parseval fallback $A^2\,10^{\lambda/2}$.  Thus for each $d$ the preceding display is at
most
\[
10^{\gamma\lambda+1}\,(4+30\cdot32)\,(M/d)^{3-2\gamma}
\ +\ 10\,10^{\lambda/2}(M/d)^2.
\]
Summing over $1\le d\le M$ and using
$\sum_{d\le M}d^{-3+2\gamma}\le1+\log M$ and $\sum_{d\le M}d^{-2}<2$ yields
\[
\begin{aligned}
&\sum_{M/10<m\le M}\sum_{0\le k<m}
\left|\sum_{0\le \ell<10^\lambda} e(\alpha s_{10}(\ell)+k\ell/m)\right|\\
&\qquad\le
10^5\!\left(
10^{\gamma\lambda+3/2}M^{3-2\gamma}(1+\log M)
+10^{\lambda/2+1}M^2
\right),
\end{aligned}
\]
where $\gamma=\gamma_q(\alpha)$.  The displayed derivation gives literal coefficients below
$10^4$ in the first term and below $20$ in the fallback term.  We retain the rounded envelope
$10^5$.

For any $y>1$, MR10 Lemme~10 decomposes the interval $0\le\ell<y$ into at most
$1+\log y/\log10$ complete decimal blocks, hence
\[
\begin{aligned}
\left|\sum_{0\le \ell<y}e(\alpha s_{10}(\ell)+k\ell/m)\right|
&\le
10(1+\log y)
\max_{\lambda\le \log y/\log10+1} \\
&\qquad\cdot
\left|
\sum_{0\le \ell<10^\lambda}e(\alpha s_{10}(\ell)+k\ell/m)
\right|.
\end{aligned}
\]
Using the identity
\[
\sum_{0\le n\le t}e(\alpha s_{10}(mn))
=\frac1m\sum_{0\le k<m}\sum_{0\le\ell\le mt}
e(\alpha s_{10}(\ell)+k\ell/m)+\vartheta_m(t),
\qquad |\vartheta_m(t)|\le 1,
\]
and the last two displays with $mt\le x$, we obtain
\[
\sum_{M/10<m\le M}\max_{t\le x/m}
\left|\sum_{0\le n\le t}e(\alpha s_{10}(mn))\right|
\le
10^7(\log x)^2
\left(x^\gamma M^{2-2\gamma}+x^{1/2}M\right).
\]
Since $M\le x^{1/3}$ and $\gamma\ge1/2$,
\[
x^\gamma M^{2-2\gamma}\le x^{2/3+\gamma/3}
=x^{1-(1-\gamma)/3},\qquad
x^{1/2}M\le x^{5/6}.
\]
Both exponents are at most $1-\kappa_q(\alpha)$, so the preceding display is bounded by
$2\cdot10^7(\log x)^2x^{1-\kappa_q(\alpha)}$, and the stated $10^8$ leaves room for the endpoint
change from $\sum_{0\le n\le t}$ to $\sum_{t<n\le x/m}$.

Finally DMR's inequality~\cite[(31)]{DMRcomp2}, restated in Lemma~\ref{lem:c1}, implies
\[
\min(\omega_q,1-\gamma_q(\alpha))\ge 28c_1\|9\alpha\|^2.
\]
Since $1/6\ge c_1/4\ge c_1\|9\alpha\|^2$ and
$(1-\gamma_q(\alpha))/3\ge(28/3)c_1\|9\alpha\|^2$, we get
$\kappa_q(\alpha)\ge c_1\|9\alpha\|^2$.
\end{proof}

\begin{lemma}[Weighted $F_\ell$ estimate]\label{lem:MR-F-weighted-self}
Assume $(q-1)\alpha\notin\mathbb Z$, let $a\in\mathbb Z$ with $(a,q)=1$, and let $\gamma_q(\alpha)$
be as in Lemma~\ref{lem:MR-gamma-self}.  For every $\ell\ge1$,
\[
\sum_{\substack{0\le h<q^\ell\\h\not\equiv0\pmod q}}
\frac{|F_\ell(h,\alpha)|^2}{|\sin(\pi ha/q^\ell)|}
\le
\frac{q^{\gamma_q(\alpha)(\ell-2)+1}}{\sin(\pi/q)}.
\]
\end{lemma}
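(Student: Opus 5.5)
The plan is to reproduce MR10 Lemme~21 with its constant kept. I would work throughout from the product formula \eqref{eq:MR-F-product},
\[
|F_\ell(h,\alpha)|^2=q^{-2\ell}\prod_{j=1}^{\ell}\phi_q\!\left(\alpha-\frac{h}{q^j}\right)^2 .
\]
For $1\le j<\ell$ the $j$th factor depends only on $h\bmod q^j$. Only the last factor $\phi_q(\alpha-h/q^\ell)$ and the weight $1/|\sin(\pi ha/q^\ell)|$ see the full residue of $h$ modulo $q^\ell$.

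The first step is to trade the middle factors for powers of $q^{\gamma_q(\alpha)}$. For each $j$, the choice $t=-h/q^{j+1}$ in the definition of $\gamma_q$ (Lemma~\ref{lem:MR-gamma-self}) gives $\alpha+qt=\alpha-h/q^j$. Hence every consecutive pair satisfies
\[
\phi_q(\alpha-h/q^{j+1})\,\phi_q(\alpha-h/q^{j})\le q^{2\gamma_q(\alpha)} .
\]
Writing $\prod_{j}\phi_j^2$ as a product of such consecutive pairs times the two end factors $\phi_1\phi_\ell$, I would bound the $\ell-2$ interior pairings. This is where I expect the exponent $\gamma_q(\alpha)(\ell-2)$ to come from. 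The pointwise bound would be
\[
|F_\ell(h,\alpha)|^2\le q^{-2\ell}\,q^{2\gamma_q(\alpha)(\ell-2)}\;\phi_q(\alpha-h/q)\,\phi_q(\alpha-h/q^{\ell})\cdot(\text{one leftover factor}).
\]
If the pairing does not close up evenly, I would retain one extra $\phi$ and control it on average rather than pointwise.

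The second step is the summation over $h$. I would decompose $h=h'+q^{\ell-1}r$ with $0\le h'<q^{\ell-1}$ and $0\le r<q$. Then $\alpha-h/q^\ell=(x+r')/q$ for a suitable $x$ depending on $h'$, with $r'\equiv -r$. The sum over the top digit $r$ of $\phi_q(\cdot)^2$ is then handled exactly by the Parseval identity \eqref{eq:MR-phi-square}, which gives the value $q^2$. The condition $h\not\equiv0\pmod q$ constrains only $h'$. Iterating the same device one digit at a time (equivalently, using the recurrence \eqref{eq:MR-F-rec}) should collapse the remaining sum to $O(q)$ times the accumulated $q^{\gamma}$-powers. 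The $q^{-2\ell}$ then cancels against the $q^{\ell}$ choices of $h$, up to the $q^{+1}$ visible in the statement.

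The main obstacle is the weight $1/|\sin(\pi ha/q^\ell)|$, because it varies with the top digit $r$ exactly where Parseval is used. I would first try to separate the weight from the $\phi$-factor using the $\sup$ over $r$ of the weight. This needs a lower bound $|\sin(\pi ha/q^\ell)|\ge\sin(\pi/q)$ for the relevant representatives, which is exactly the prefactor $1/\sin(\pi/q)$ in the statement. If this fails for $h$ with $ha/q^\ell$ close to an integer, I would fall back on Lemma~\ref{lem:MR-sin-sum-self} with $m=q^\ell$ and $(a,q)=1$. The aim then is to pay the logarithmic tail only against the decay already supplied by the $q^{\gamma(\ell-2)}$ bound. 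The precise bookkeeping that yields exactly $q^{\gamma(\ell-2)+1}/\sin(\pi/q)$ is the delicate part of the argument.
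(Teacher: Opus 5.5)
There is a genuine gap, and it is the step you flagged as delicate: you have no mechanism for the weight $1/|\sin(\pi ha/q^\ell)|$.

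\textbf{Why your two options fail.} Your first option bounds the weight by its supremum using $|\sin(\pi ha/q^\ell)|\ge\sin(\pi/q)$. This is false for $\ell\ge2$: the class $h\equiv a^{-1}\pmod{q^\ell}$ has $h\not\equiv0\pmod q$ and gives $|\sin(\pi ha/q^\ell)|=\sin(\pi/q^\ell)$.

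Your fallback through Lemma~\ref{lem:MR-sin-sum-self} with $m=q^\ell$ produces the term $\frac{2q^\ell}{\pi}\log(2q^\ell)$. That is a loss of order $\ell\log q$, which the stated bound does not contain, and it cannot be absorbed uniformly in $\ell$. After your pointwise pairing,
\[
|F_\ell(h,\alpha)|^2\le q^{-2\ell+2\gamma_q(\alpha)(\ell-1)}\phi_q(\alpha-h/q)\phi_q(\alpha-h/q^\ell),
\]
summing over the $q^\ell$ values of $h$ gives exponent $(2\gamma_q(\alpha)-1)\ell$. The only slack relative to $q^{\gamma_q(\alpha)\ell}$ is therefore $q^{(1-\gamma_q(\alpha))\ell}$. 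This is essentially $1$ when $(q-1)\alpha$ is close to an integer, which is the regime that matters, and $\ell\ll1/(1-\gamma_q(\alpha))$.

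\textbf{Bookkeeping errors.} Each consecutive pair gives $q^{2\gamma_q(\alpha)}$ in $|F_\ell|^2$, not $q^{\gamma_q(\alpha)}$. Also, $q^{-2\ell}$ against $q^\ell$ choices of $h$ leaves $q^{-\ell}$, not a cancellation. The correct per-digit balance is $q^{-2}$ against a Parseval sum $\sum_r\phi_q(\cdot)^2=q^2$. That sum is no longer available once the squares have been spent in a pointwise pairing.

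\textbf{The missing idea.} Your top-digit decomposition is the right one, but you should peel the weight off digit by digit instead of bounding it. Since $\phi_q(t)=|\sin(\pi qt)|/|\sin(\pi t)|$ and $|\sin(\pi a(h+rq^\ell)/q^\ell)|=|\sin(\pi ah/q^\ell)|$,
\[
\frac{1}{|\sin(\pi a(h+rq^\ell)/q^{\ell+1})|}=\frac{\phi_q\bigl(a(h+rq^\ell)/q^{\ell+1}\bigr)}{|\sin(\pi ah/q^\ell)|}.
\]
Combined with \eqref{eq:MR-F-rec}, the level-$(\ell+1)$ weighted sum equals the level-$\ell$ one with each term multiplied by
\[
\Phi_1(x)=q^{-2}\sum_{r=0}^{q-1}\phi_q\Bigl(\alpha-\frac{x+r}{q}\Bigr)^2\phi_q\Bigl(\frac{a(x+r)}{q}\Bigr),\qquad x=h/q^\ell .
\]
Iterating gives multipliers $\Phi_i$, with $\Phi_{i-1}(\frac{x+r}{q})$ inserted in the sum.

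\textbf{How the multipliers are bounded.} Bounding $\phi_q(a\cdot)\le q$ and using \eqref{eq:MR-phi-square} gives $\Phi_1\le q$. For two levels, apply Cauchy--Schwarz in $r$ to remove the weight factor. Because $(a,q)=1$, the residues $ar$ run over all classes, so $\sum_r\phi_q(a(x+r)/q)^2=q^2$. This gives $\Phi_i(x)^2\le q^{-2}\sum_r\phi_q(\alpha-\frac{x+r}{q})^4\Phi_{i-1}(\frac{x+r}{q})^2$.

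Now apply the $\gamma$-pairing to half of the fourth powers:
\[
\phi_q\Bigl(\alpha-\tfrac{x+r}{q}\Bigr)^4\phi_q\Bigl(\alpha-\tfrac{x+r+qs}{q^2}\Bigr)^4\le q^{4\gamma_q(\alpha)}\phi_q\Bigl(\alpha-\tfrac{x+r}{q}\Bigr)^2\phi_q\Bigl(\alpha-\tfrac{x+r+qs}{q^2}\Bigr)^2 .
\]
Parseval in $s$ and then in $r$ gives $\Phi_2\le q^{2\gamma_q(\alpha)}$.

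\textbf{Base case and parity.} The factor $1/\sin(\pi/q)$ enters only at the bottom level $\ell=1$. There $|\sin(\pi ha/q)|\ge\sin(\pi/q)$ does hold for $h\not\equiv0$, and $\sum_h|F_1(h,\alpha)|^2=1$. Iterating in pairs gives $q^{\gamma_q(\alpha)(\ell-1)}/\sin(\pi/q)$ for odd $\ell$. For even $\ell$, one initial step costing $q$ gives $q^{\gamma_q(\alpha)(\ell-2)+1}/\sin(\pi/q)$. The $(\ell-2)$ and the $+1$ come from this parity split, not from counting interior pairings.
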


\begin{proof}
For $i\ge0$ define $\Phi_0(x)=1$ and recursively
\[
\Phi_i(x):=\frac1{q^2}\sum_{r=0}^{q-1}
\phi_q\!\left(\alpha-\frac{x+r}{q}\right)^2
\phi_q\!\left(\frac{a(x+r)}{q}\right)
\Phi_{i-1}\!\left(\frac{x+r}{q}\right).
\]
Using~\eqref{eq:MR-F-rec} and the identity
\[
\frac1{|\sin(\pi a(h+rq^\ell)/q^{\ell+1})|}
=
\frac{\phi_q(a(h+rq^\ell)/q^{\ell+1})}{|\sin(\pi ah/q^\ell)|},
\]
one proves by induction on $i$ that, for every $\ell\ge1$,
\[
\sum_{\substack{0\le h<q^{\ell+i}\\h\not\equiv0\pmod q}}
\frac{|F_{\ell+i}(h,\alpha)|^2}{|\sin(\pi ha/q^{\ell+i})|}
=
\sum_{\substack{0\le h<q^\ell\\h\not\equiv0\pmod q}}
\frac{|F_\ell(h,\alpha)|^2}{|\sin(\pi ha/q^\ell)|}\,
\Phi_i(h/q^\ell).
\]
Since $\phi_q\le q$, \eqref{eq:MR-phi-square} gives $\Phi_1(x)\le q$ and hence the one-step
iteration
\[
\sum_{h<q^{\ell+1}}{}'\frac{|F_{\ell+1}(h,\alpha)|^2}{|\sin(\pi ha/q^{\ell+1})|}
\le
q\sum_{h<q^\ell}{}'\frac{|F_\ell(h,\alpha)|^2}{|\sin(\pi ha/q^\ell)|},
\]
where the prime means $h\not\equiv0\pmod q$.

The sharper two-step estimate follows from Cauchy's inequality in the recursion:
because $(a,q)=1$, the residues $ar$ run through all classes modulo $q$, so
\[
\Phi_i(x)^2
\le
\frac1q\sum_{r=0}^{q-1}
\phi_q\!\left(\alpha-\frac{x+r}{q}\right)^4
\Phi_{i-1}\!\left(\frac{x+r}{q}\right)^2.
\]
Applying this twice and using the definition of $\gamma_q(\alpha)$ gives
\[
\Phi_2(x)^2
\le
q^{4\gamma_q(\alpha)}
q^{-2}\sum_{r,s}
\phi_q\!\left(\alpha-\frac{x+r}{q}\right)^2
\phi_q\!\left(\alpha-\frac{x+r+qs}{q^2}\right)^2
\le q^{4\gamma_q(\alpha)},
\]
where the last inequality is \eqref{eq:MR-phi-square} first in $s$ and then in $r$.  Thus
$\Phi_2(x)\le q^{2\gamma_q(\alpha)}$, and the two-step iteration gives
\[
\sum_{h<q^{\ell+2}}{}'\frac{|F_{\ell+2}(h,\alpha)|^2}{|\sin(\pi ha/q^{\ell+2})|}
\le
q^{2\gamma_q(\alpha)}
\sum_{h<q^\ell}{}'\frac{|F_\ell(h,\alpha)|^2}{|\sin(\pi ha/q^\ell)|}.
\]
For $\ell=1$, Parseval and $|\sin(\pi ha/q)|\ge\sin(\pi/q)$ give the initial bound
\[
\sum_{0<h<q}\frac{|F_1(h,\alpha)|^2}{|\sin(\pi ha/q)|}
\le\frac{1}{\sin(\pi/q)}\sum_{0\le h<q}|F_1(h,\alpha)|^2
=\frac1{\sin(\pi/q)}.
\]
Iterating in pairs proves the claim for odd $\ell$; for even $\ell$ one first applies the one-step
bound.  Since $\gamma_q(\alpha)<1$, both parity cases are bounded by the displayed right-hand side.
\end{proof}

\begin{lemma}[Dyadic-to-$q$-adic Type-II localization]\label{lem:DMR32-self}
Let $g$ be an arithmetic function, $q\ge2$, and
$0<\delta<\beta_1<1/3$ and $1/2<\beta_2<1$.  Suppose that, uniformly for all
complex $b_n$ with $|b_n|\le1$,
\[
\sum_{q^{\mu-1}<m\le q^\mu}
\left|\sum_{q^{\nu-1}<n\le q^\nu} b_n g(mn)\right|\le V
\]
whenever
\[
\beta_1-\delta\le\frac{\mu}{\mu+\nu}\le\beta_2+\delta.
\]
Then for $x>x_0:=\max(q^{1/(1-\beta_2)},q^{3/\delta})$ and
$x^{\beta_1}\le M\le x^{\beta_2}$ one has, uniformly in $|a_m|,|b_n|\le1$,
\[
\left|\sum_{M/q<m\le M}\sum_{x/(qm)<n\le x/m}a_m b_n g(mn)\right|
\le
6(1+\log x)V.
\]
\end{lemma}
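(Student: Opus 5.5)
The plan is to reduce the decimal-block bilinear sum to a bounded number of $q$-adic blocks $(q^{\mu-1},q^\mu]\times(q^{\nu-1},q^\nu]$. On each such block the hypothesis gives $V$, and the sum over the hyperbolic constraint $x/q<mn\le x$ is covered by $O(\log x)$ pieces.

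\emph{Step 1 (fix the $m$-range).} Let $\mu_0$ be the integer with $q^{\mu_0-1}<M\le q^{\mu_0}$. The range $M/q<m\le M$ is contained in $(q^{\mu_0-2},q^{\mu_0}]$, so it meets at most two $q$-adic $m$-blocks, namely $\mu\in\{\mu_0-1,\mu_0\}$.

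\emph{Step 2 (cover the $n$-range).} For $m$ in a fixed $\mu$-block, the admissible $n$ satisfy
\[
x/(qm)<n\le x/m,\qquad\text{hence}\qquad x q^{-\mu-1}<n< xq^{-\mu+2}.
\]
So only $\nu$ in a window of at most three or four consecutive values occur. For each pair $(\mu,\nu)$ we must remove the hyperbolic cutoff $x/(qm)<n\le x/m$. I would write it as the difference of two one-sided conditions $n\le x/m$, i.e. $mn\le y$ with $y\in\{x,\,x/q\}$. To separate the variables I would not use Perron's formula, which loses a $\log$ and has no clean constant. Instead I would use the elementary device behind the $(1+\log x)$ factor.

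Here is that device. For fixed $n$ the condition $m\le y/n$ is an $m$-interval. Since the hypothesis is stated with an absolute value inside the $m$-sum, $\sum_m|\sum_n b_n g(mn)|$, an $n$-dependent cutoff in $m$ is not directly allowed. The cutoff is allowed when it depends on $m$, since $b_n$ may depend only on $n$. So I swap viewpoints and put the cutoff on $n$: for each $m$ the constraint is $n\le y/m$. I would then reorganize $m$ into subintervals on which $\lfloor y/m\rfloor$ takes a controlled value. The cleaner alternative is to absorb $a_m$ via $|\sum_m a_m(\cdot)|\le\sum_m|\cdot|$, and then handle the cutoff $n\le y/m$ by splitting $n$ into at most $1+\log x$ ranges: write $\mathbf 1_{n\le y/m}=\sum_k \mathbf 1_{n\in I_k}\mathbf 1_{m\le y/n_k}$ over a subdivision. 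Either way the output is a bound of $(1+\log x)$ times a sum of full-block sums, each bounded by $V$. The constant $6$ then comes from counting (at most $2$ $\mu$-values) $\times$ (at most $3$ $\nu$-values), with the $\log$ from this cutoff removal.

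\emph{Step 3 (admissibility of $(\mu,\nu)$).} Each pair used must satisfy $\beta_1-\delta\le\mu/(\mu+\nu)\le\beta_2+\delta$. From $x^{\beta_1}\le M\le x^{\beta_2}$ we get $\mu\log q\approx\beta\log x$ up to $O(\log q)$, and similarly $(\mu+\nu)\log q=\log x+O(\log q)$. So $\mu/(\mu+\nu)$ differs from $\log M/\log x$ by $O(\log q/\log x)$, with a small absolute constant. The hypothesis $x>q^{3/\delta}$ makes this error at most $\delta$. The condition $x>q^{1/(1-\beta_2)}$ ensures $x/M\ge q$, so that $\nu\ge1$ and the blocks are nonempty and genuine.

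The main obstacle is the cutoff removal in Step 2. It must lose exactly one $(1+\log x)$ and stay compatible with the hypothesis, which has an absolute value inside the $m$-sum and arbitrary $b_n$ but no $m$-dependent truncation of $n$. I would try first to monotonize: order $m$ and split the $n$-block at the points $x/m$ and $x/(qm)$, grouping $m$ by $\lfloor\log(x/m)\cdot\text{const}\rfloor$ and bounding the number of groups by $1+\log x$. If that fails to keep the constant at $6$, I would fall back on a Fourier expansion of the indicator with $\ell^1$-norm $\le 1+\log x$.
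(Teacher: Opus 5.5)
\textbf{There is a genuine gap in Step~2, and Step~2 is the whole content of the lemma.}

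Your primary device cannot work, and the problem is structural, not just the constant. On a block pair $B_\mu\times B_\nu$, the cutoff matrix $(\mathbf 1_{mn\le y})_{m,n}$ has rows that are initial segments of $B_\nu$ ending at $\lfloor y/m\rfloor$. Distinct initial segments are linearly independent, so the rank is the number of distinct row patterns. For the block pairs straddling $mn=y$ this is a positive power of $x$. Hence there is no exact identity $\mathbf 1_{n\le y/m}=\sum_{k\le 1+\log x}\alpha_k(m)\beta_k(n)$.

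Concretely, your splitting $\sum_k\mathbf 1_{n\in I_k}\mathbf 1_{m\le y/n_k}$ is exact only if $\lfloor y/n\rfloor$ is constant on each $I_k$, which forces polynomially many pieces. A coarser split leaves boundary terms you cannot estimate: $g$ is an arbitrary arithmetic function, and the hypothesis controls only full-block sums.

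Grouping $m$ by $\lfloor c\log(x/m)\rfloor$ fails for the same reason. Inside a group the cutoff $n\le x/m$ still moves with $m$, so $b_n\mathbf 1_{n\le x/m}$ is not a single sequence. Your remark that ``the cutoff is allowed when it depends on $m$'' has it backwards. Restricting $m$ alone is harmless, because the $m$-terms are nonnegative. Restricting $n$ alone is absorbed into $b_n$. The joint condition $mn\le y$ is exactly what the hypothesis does not cover.

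\textbf{What is needed is your Fourier fallback, which is the paper's argument and should be your main line.} The point is that total weight matters, not the number of terms. Fix a block $B_\mu$. Let $\mathcal N_\mu$ be the union of the $q$-adic $n$-blocks meeting $\bigcup_{m\in B_\mu}(x/(qm),x/m]$. Since $xq^{-\mu-1}<n\le x/m<xq^{1-\mu}$, $\mathcal N_\mu$ consists of at most three blocks, which is what gives $6=2\cdot 3$. (Your upper bound $xq^{2-\mu}$ is too generous and would give four blocks.) For each $m$,
\[
\sum_{x/(qm)<n\le x/m}b_n\,g(mn)=\int_{-1/2}^{1/2}\Bigl(\sum_{x/(qm)<k\le x/m}e(-k\xi)\Bigr)\sum_{n\in\mathcal N_\mu}b_n\,e(n\xi)\,g(mn)\,d\xi ,
\]
and the bracket is at most $\min(x,1/|\sin\pi\xi|)$, uniformly in $m$. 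So all $m$-dependence sits in a scalar.

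Now take absolute values, sum over $m$ (extended to full $\mu$-blocks), and split $\mathcal N_\mu$ into its blocks. For each fixed $\xi$ this leaves sums of exactly the hypothesised shape, with coefficients $b_n e(n\xi)$ of modulus $\le 1$, so each is $\le V$. Finally, $\int_{-1/2}^{1/2}\min(x,1/|\sin\pi\xi|)\,d\xi\le 1+\log x$, using $|\sin\pi\xi|\ge 2|\xi|$ and splitting at $|\xi|=1/(2x)$.

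Your Step~3 is fine. From $\log_q M-1\le\mu<\log_q M+1$ and $\log_q x-1<\mu+\nu<\log_q x+2$, the condition $\delta\log_q x>3$ keeps both endpoint losses at most $\delta$.
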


\begin{proof}
For fixed $m$, write the inner interval as $N_1(m)<n\le N_2(m)$.  The elementary Fourier
majorant for interval sums gives
\[
\begin{aligned}
\left|\sum_{N_1(m)<n\le N_2(m)}b_n g(mn)\right|
&\le
\int_{-1/2}^{1/2}
\min\!\left(x,\frac{1}{|\sin\pi\xi|}\right)\\
&\quad\times
\left|\sum_{q^{\nu_0-1}<n\le q^{\nu_0+2}}b_n e(n\xi)g(mn)\right|\,d\xi,
\end{aligned}
\]
after padding the $n$-range to the three adjacent $q$-adic boxes it can meet.  Similarly the
$m$-range $M/q<m\le M$ is contained in two adjacent $q$-adic boxes.  Thus the original double sum
is bounded by a sum over at most six pairs $(\mu,\nu)$ of the integrals
\[
\int_{-1/2}^{1/2}\min\!\left(x,\frac{1}{|\sin\pi\xi|}\right)
\sum_{q^{\mu-1}<m\le q^\mu}
\left|\sum_{q^{\nu-1}<n\le q^\nu}b_n e(n\xi)g(mn)\right|\,d\xi.
\]
For every such padded pair, the inequalities $x^{\beta_1}\le M\le x^{\beta_2}$ and
$x>x_0$ imply
\[
\beta_1-\delta\le\frac{\mu}{\mu+\nu}\le\beta_2+\delta.
\]
Indeed the lower endpoint is minimized by replacing $M$ by $x^{\beta_1}$ and adding the two
padding boxes in the denominator; the condition $x>q^{3/\delta}$ makes the resulting loss $<\delta$.
The upper endpoint is maximized by replacing $M$ by $x^{\beta_2}$ and using
$x>q^{1/(1-\beta_2)}$ together with $x>q^{3/\delta}$, which again makes the padding loss
$<\delta$.  The hypothesis therefore applies uniformly to the coefficients $b_n e(n\xi)$.
Finally,
\[
\int_{-1/2}^{1/2}\min\!\left(x,\frac1{|\sin\pi\xi|}\right)d\xi
\le
1+\log x,
\]
using $|\sin\pi\xi|\ge 2|\xi|$ for $|\xi|\le1/2$ and splitting at $|\xi|=1/(2x)$.  Multiplying
by the six padded boxes gives the claimed factor.
\end{proof}

\begin{lemma}[Initial Fourier reduction of the truncated correlation sum]\label{lem:MR-S2-G-self}
For a fixed real parameter $\alpha$, let $\lambda=\mu+2\rho$ and
\[
f_\lambda(y):=\alpha\,s_q(y\bmod q^\lambda),
\]
where the residue is represented in $\{0,\ldots,q^\lambda-1\}$.  Put
\[
S_2(r,\mu,\nu,\rho)=
\sum_{q^{\nu-1}<n\le q^\nu}
\left|\sum_{q^{\mu-1}<m\le q^\mu}
e(f_\lambda(m(n+r))-f_\lambda(mn))\right|.
\]
Define
\[
G_\lambda(a,d,\alpha):=
\sum_{\substack{0\le h<q^\lambda\\h\equiv a\pmod d}}|F_\lambda(h,\alpha)|,\qquad
G_\lambda(\alpha):=\sum_{0\le h<q^\lambda}|F_\lambda(h,\alpha)|.
\]
Then
\begin{align}
S_2(r,\mu,\nu,\rho)
&\le
4(1+q^{\nu-\lambda})
\sum_{d\mid q^\lambda}d\sum_{0\le a<d}
\min\!\left(q^\mu,
\frac{1}{\sin\!\left(\pi\frac{d}{q^\lambda}\left\|\frac{ar}{d}\right\|\right)}\right)
G_\lambda(a,d,\alpha)^2
\nonumber\\
&\qquad
+4(1+q^{\nu-\lambda})\,\lambda(\log q)\,q^\lambda\,G_\lambda(\alpha)^2,
\label{eq:S2-G-internal}
\end{align}
\end{lemma}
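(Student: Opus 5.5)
Since $f_\lambda$ has period $q^\lambda$, the plan is to expand each phase by discrete Fourier inversion on $\mathbb Z/q^\lambda\mathbb Z$. By definition of $F_\lambda$,
\[
e(f_\lambda(y))=\sum_{0\le h<q^\lambda}F_\lambda(h,\alpha)\,e\!\left(\frac{hy}{q^\lambda}\right),
\]
so that
\[
e\bigl(f_\lambda(m(n+r))-f_\lambda(mn)\bigr)
=\sum_{h_1,h_2}F_\lambda(h_1,\alpha)\overline{F_\lambda(h_2,\alpha)}\,
e\!\left(\frac{(h_1-h_2)mn+h_1mr}{q^\lambda}\right).
\]
The inner $m$-sum then becomes a geometric sum in $m$ with frequency $((h_1-h_2)n+h_1r)/q^\lambda$. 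Lemma~\ref{lem:geom-sum}, in its interval-shifted form, bounds this by $\min\bigl(q^\mu,1/|\sin(\pi((h_1-h_2)n+h_1r)/q^\lambda)|\bigr)$. The triangle inequality gives
\[
S_2\le\sum_{h_1,h_2}|F_\lambda(h_1)||F_\lambda(h_2)|\sum_{q^{\nu-1}<n\le q^\nu}
\min\!\left(q^\mu,\frac{1}{|\sin(\pi((h_1-h_2)n+h_1r)/q^\lambda)|}\right).
\]

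Next I split the $n$-range, of length $<q^\nu$, into at most $1+q^{\nu-\lambda}$ complete or partial periods of length $q^\lambda$. On each period I apply Lemma~\ref{lem:MR-sin-sum-self} with $m=q^\lambda$, $a=h_1-h_2$, $b=h_1r$, $M=q^\mu$, and $d=(h_1-h_2,q^\lambda)$. This produces three terms. The first is the main term $d\min\bigl(q^\mu,1/\sin(\pi d\|h_1r/d\|/q^\lambda)\bigr)$. The second is $d/\sin(\pi d/(2q^\lambda))\le q^\lambda$, using $\sin u\ge 2u/\pi$. The third is $(2q^\lambda/\pi)\log(2q^\lambda/d)\le q^\lambda(\lambda\log q+\log 2)$. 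The last two terms together are $\le 2\lambda(\log q)q^\lambda$ for $\lambda\ge1$. Summed against $|F(h_1)||F(h_2)|$ over all pairs, they give $\le 2\lambda(\log q)q^\lambda G_\lambda(\alpha)^2$. Together with the $1+q^{\nu-\lambda}$ period count and a factor-$2$ safety margin, this is the second term of~\eqref{eq:S2-G-internal}.

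For the main term I group pairs by $d\mid q^\lambda$ and by the common residue $a\equiv h_1\equiv h_2\pmod d$. The condition $d\mid h_1-h_2$ is weaker than $(h_1-h_2,q^\lambda)=d$, so I relax it to $d\mid h_1-h_2$, which only enlarges the nonnegative sum. The quantity $\|h_1r/d\|$ depends only on $a=h_1\bmod d$. Summing $|F(h_1)||F(h_2)|$ over $h_1,h_2\equiv a\pmod d$ gives exactly $G_\lambda(a,d,\alpha)^2$. This yields the first term with coefficient $(1+q^{\nu-\lambda})$, well inside the stated factor $4$.

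I expect the main obstacle to be the partial-period bookkeeping. The $n$-interval is not a union of full periods, and the degenerate case $d=q^\lambda$ (that is, $h_1=h_2$) must be handled through the ``$d=m$'' branch of Lemma~\ref{lem:MR-sin-sum-self}. I would handle both by extending each partial block to a full period, which is legitimate because all summands are nonnegative. I would also check that this degenerate case is absorbed into the main term with $d=q^\lambda$, where $\|ar/d\|$ is evaluated at $a=h_1$. The generous factor $4$ leaves room for a possible doubling here, for example from the interval-shift phase or from using the bound $\lceil q^\nu/q^\lambda\rceil\le 1+q^{\nu-\lambda}$.
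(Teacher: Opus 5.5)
Your proposal is correct and is essentially the paper's proof. The common steps are Fourier inversion on $\mathbb Z/q^\lambda\mathbb Z$, the geometric-sum bound $\min(q^\mu,1/|\sin\pi\beta|)$ in $m$, padding the $n$-range into at most $1+q^{\nu-\lambda}$ complete residue systems, Lemma~\ref{lem:MR-sin-sum-self} with $d=(h_1-h_2,q^\lambda)$, and relaxing $(h_1-h_2,q^\lambda)=d$ to $d\mid h_1-h_2$ using $\|h_1r/d\|=\|ar/d\|$; your $h_2$ is the paper's $-k$. Your bookkeeping is tighter than the stated factor $4$, and your only borderline step, ``nonsingular terms $\le 2\lambda(\log q)q^\lambda$'', needs $\lambda\log q\ge 1+\log 2$; this holds for $q=10$, and in general the factor-$4$ slack covers it, since $1+\log 2\le 3\lambda\log q$ for all $q\ge 2$, $\lambda\ge 1$.
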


\begin{proof}
Fourier inversion on the cyclic group $\mathbb Z/q^\lambda\mathbb Z$ gives
\[
e(f_\lambda(y))=\sum_{0\le h<q^\lambda}F_\lambda(h,\alpha)e(hy/q^\lambda).
\]
Applying this to $y=m(n+r)$ and $y=mn$, then summing the resulting geometric progression in $m$,
gives for each fixed $n$
\[
\begin{aligned}
&\left|\sum_{q^{\mu-1}<m\le q^\mu}
e(f_\lambda(m(n+r))-f_\lambda(mn))\right|\\
&\qquad\le
\sum_{h,k<q^\lambda}|F_\lambda(h,\alpha)|\,|F_\lambda(-k,\alpha)|
\min\!\left(q^\mu,\frac1{\left|\sin\pi\frac{(h+k)n+hr}{q^\lambda}\right|}\right).
\end{aligned}
\]
Sum this over $n$ by intervals of length $q^\lambda$.  There are at most
$1+q^{\nu-\lambda}$ such intervals after padding.  On each full interval Lemma~\ref{lem:MR-sin-sum-self},
with $a=h+k$, $m=q^\lambda$, $b=hr$, and $d=(h+k,q^\lambda)$, gives three terms.  The two
non-singular terms are bounded by
$4\lambda(\log q)q^\lambda G_\lambda(\alpha)^2$: the second term in
Lemma~\ref{lem:MR-sin-sum-self} contributes at most
$2q^\lambda G_\lambda(\alpha)^2$, and the logarithmic term contributes at most
$(2/\pi)\lambda(\log q)q^\lambda G_\lambda(\alpha)^2$.  For the singular term, relax
$(h+k,q^\lambda)=d$ to the congruence $h+k\equiv0\pmod d$, write $h\equiv a\pmod d$ and
$-k\equiv a\pmod d$, and use $d\|hr/d\|=d\|ar/d\|$.  Summing over residue classes
$a\bmod d$ gives the first term in
\eqref{eq:S2-G-internal}; the same padded factor $4$ covers endpoint padding and the three-term
split in Lemma~\ref{lem:MR-sin-sum-self}.
\end{proof}

\begin{lemma}[Internal Type-II structural reduction]\label{lem:MR-struct-self}
Let $q=10$, $\lambda=\mu+2\rho$, $1\le |r|<q^\rho$, and
\[
\Delta=\left\lfloor \rho\,\frac{\log q}{\log 2}\right\rfloor,\qquad
\omega_q=\left(\frac32-\frac{\log5}{\log3}\right)\frac{\log2}{\log q}.
\]
For $\Delta<\delta\le\lambda$, $k\mid q^{\lambda-\delta}$, and $(k,q)<q$, put
\[
r'=r/(r,kq^\Delta),\qquad (r',q)=1.
\]
Define
\begin{align*}
S_3(k,\delta)&:=
\sum_{0\le a<kq^\delta}|F_\delta(a,\alpha)|^2
\min\left(q^\mu,
\frac1{\sin\!\left(\pi kq^{\delta-\lambda}
\left\|\frac{ar}{kq^\delta}\right\|\right)}\right),\\
S_4(k,\delta')&:=
q^{\lambda-\delta'}\sum_{0\le a<q^{\delta'}}
|F_{\delta'}(a,\alpha)|^2
\min\left(q^{\delta'-\rho},\frac1{|\sin(\pi ar'/q^{\delta'})|}\right),\\
S_5(k,\delta')&:=
q^{\lambda-\delta'}\sum_{0\le\theta<\delta'}
\sum_{\substack{1\le b\le q^{\delta'-\theta}\\ b\not\equiv0\pmod q}}
\frac{|F_{\delta'-\theta}(b,\alpha)|^2}
{|\sin(\pi br'/q^{\delta'-\theta})|}.
\end{align*}
Then the first term in~\eqref{eq:S2-G-internal} is bounded by
\begin{align}
&2\cdot10^4\,q(1+q^{\nu-\lambda})\,q^\lambda
\sum_{\Delta<\delta\le\lambda} q^{-\omega_q(\lambda-\delta)}
\max_{\substack{k\mid q^{\lambda-\delta}\\(k,q)<q}}S_5(k,\delta-\Delta)
\nonumber\\
&\qquad+\ 
2\cdot10^4\,\lambda(\log q)(1+q^{\nu-\lambda})
q^{(2-\tau_q(\alpha))\lambda+\rho},
\label{eq:MR-struct-internal}
\end{align}
where $\tau_q(\alpha)=\min(\omega_q,-2\log(\phi_q(\alpha)/q)/\log q)$.
\end{lemma}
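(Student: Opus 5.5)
The plan is to follow MR10 \S7.2 (the treatment of the singular term in (64)) with constants retained, starting from the first term of \eqref{eq:S2-G-internal}:
\[
T_1:=4(1+q^{\nu-\lambda})\sum_{d\mid q^\lambda}d\sum_{0\le a<d}\min\!\Bigl(q^\mu,\frac{1}{\sin(\pi\frac{d}{q^\lambda}\|\frac{ar}{d}\|)}\Bigr)G_\lambda(a,d,\alpha)^2 .
\]

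\textbf{Parametrize the moduli.} First write each divisor $d\mid q^\lambda$ uniquely as $d=kq^\delta$ with $0\le\delta\le\lambda$, $k\mid q^{\lambda-\delta}$ and $(k,q)<q$. In base $10$, $k$ is a pure power of $2$ or a pure power of $5$.

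\textbf{Step 1: remove $G_\lambda$.} Apply Lemma~\ref{lem:MR-G-first-self} to replace $G_\lambda(a,kq^\delta,\alpha)^2$ by $k^{-2\eta_3}q^{2\eta_3(\lambda-\delta)}|F_\delta(a,\alpha)|^2$. The weight $d=kq^\delta$ then combines into $k^{1-2\eta_3}q^{\delta+2\eta_3(\lambda-\delta)}$. Summing over $k$ with the last assertion of Lemma~\ref{lem:MR-G-first-self} gives a factor
\[
32\,q^{(1-2\eta_3)(\lambda-\delta)}q^{-\omega_q(\lambda-\delta)},
\]
so the prefactor becomes $32\,q^\lambda q^{-\omega_q(\lambda-\delta)}$ times $\max_k S_3(k,\delta)$. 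This is exactly the shape of the first line of \eqref{eq:MR-struct-internal}.

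\textbf{Step 2: small $\delta$.} For $\delta\le\Delta$, use the crude bound $\min(\cdot)\le q^\mu$. Then use Parseval in the form $\sum_{a<kq^\delta}|F_\delta(a,\alpha)|^2\le k$, together with the bound on $|F_\lambda|$ coming from the product formula \eqref{eq:MR-F-product}. The factors $\phi_q(\alpha-h/q^j)$ with $j>\delta$ are where $\phi_q(\alpha)/q$ enters. This should produce the second line: there are at most $\Delta+1\le 4\rho\le\lambda\log q$ values of $\delta$, each of size $\ll q^{(2-\tau_q(\alpha))\lambda+\rho}$, with $\tau_q$ taking whichever of the two savings $\omega_q$ and $-2\log(\phi_q(\alpha)/q)/\log q$ is available.

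\textbf{Step 3: large $\delta$, the main obstacle.} For $\Delta<\delta\le\lambda$, transform $S_3(k,\delta)$ into $S_4$ and then into $S_5$.

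1. Write $r=(r,kq^\Delta)\,r'$. Since $|r|<q^\rho\le 2^\Delta$, the gcd $(r,kq^\Delta)$ is bounded. Factor $a=a_1+kq^{\Delta}a_2$, in the spirit of MR10's splitting $a\bmod kq^\delta$ into a low part and a high part of $\delta-\Delta$ digits. The function $|F_\delta|^2$ then factors via \eqref{eq:MR-F-rec} into $|F_{\delta-\Delta}|^2$ times a factor summing to at most $q^{\Delta}$ by \eqref{eq:MR-phi-square}.
2. The argument of the sine depends only on $ar'/q^{\delta'}$ with $\delta'=\delta-\Delta$. The truncation $q^\mu$ becomes $q^{\delta'-\rho}$ after the rescaling by $kq^{\delta-\lambda}$. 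This yields $S_3\le C\,q\,S_4(k,\delta')$.
3. Decompose $a$ by its $q$-adic valuation $\theta$, writing $a=q^\theta b$ with $b\not\equiv0\pmod q$. Remove the $\min$ in favour of $1/|\sin|$, and use $|F_{\delta'}(q^\theta b)|\le|F_{\delta'-\theta}(b)|$, which follows since every factor of \eqref{eq:MR-F-product} is at most $q$ after normalization. This turns $S_4$ into $S_5$.

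The delicate point is the bookkeeping of the gcd of $r$ with $kq^\Delta$ and of the valuation $\theta$, so that $(r',q)=1$ holds as required in $S_5$ and in Lemma~\ref{lem:MR-F-weighted-self}. Here the plan is to track MR10's case split and to absorb all losses (the factor $32$, the padding factor $4$, the bounded gcd multiplicities, and the $\le q$ from the digit shift) into the generous constant $2\cdot10^4$.
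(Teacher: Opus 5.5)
Your Step~1 is the same as the paper's. Step~2, however, fails, and this is a genuine gap.

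\textbf{Step~2.} With $\min(\cdot)\le q^\mu$ and $\sum_{a<kq^\delta}|F_\delta(a,\alpha)|^2=k$ you only get $S_3(k,\delta)\le k\,q^\mu$. After Step~1 you must take the maximum over all admissible $k\mid q^{\lambda-\delta}$, and these include $k=5^{\lambda-\delta}$. So your bound exceeds the target $q^{(2-\tau_q(\alpha))\lambda+\rho}$ by an exponentially large factor.

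Undoing Step~1 does not rescue the trivial bound. Cauchy--Schwarz gives $d\sum_{a<d}G_\lambda(a,d,\alpha)^2\le q^\lambda$, hence $q^{\mu+\lambda}=q^{2\lambda-2\rho}$ per divisor. This exceeds the target by $q^{\tau_q(\alpha)\lambda-3\rho}$, which is unbounded in the admissible range of Lemma~\ref{lem:S2-input}.

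The missing idea is to keep the sine. By concavity, $\sin(\pi kq^{\delta-\lambda}\|y\|)\ge kq^{\delta-\lambda}|\sin(\pi y)|$, which extracts a factor $k^{-1}q^{\lambda-\delta}$. Then write $a=a_0+iq^\delta$ with $0\le a_0<q^\delta$ and $0\le i<k$; note that $|F_\delta(a,\alpha)|$ depends only on $a_0$. Apply Lemma~\ref{lem:MR-sin-sum-self} to the $i$-sum with modulus $k$ and $d=(r,k)\le|r|<q^\rho$. This gives $S_3(k,\delta)\le q^{\lambda-\rho}+\tfrac{13}{10}\lambda(\log q)q^{\lambda-\delta}$ uniformly in $k$. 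Then $\omega_q\Delta\le\rho$ and the geometric tails in $\delta$ yield the second line of~\eqref{eq:MR-struct-internal}, using only $\tau_q(\alpha)\le\omega_q$.

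So $\phi_q(\alpha)/q$ plays no role for $\delta\le\Delta$. The factors $\phi_q(\alpha-h/q^j)$ become $\phi_q(\alpha)$ when $q^j\mid h$, which is the zero residue treated below.

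\textbf{Step~3.} The overall plan is right, but the bookkeeping is wrong in three places.

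First, the split must be $a=a_0+iq^{\delta'}$ with $a_0<q^{\delta'}$ and $0\le i<kq^\Delta$. Since $F_{\delta'}(\cdot,\alpha)$ depends only on the residue mod $q^{\delta'}$, you bound $|F_\delta(a,\alpha)|\le|F_{\delta'}(a_0,\alpha)|$ pointwise. You then sum the sine factor over $i$ by Lemma~\ref{lem:MR-sin-sum-self}, with modulus $kq^\Delta$ and $d=(r,kq^\Delta)$. This $d$ is not bounded by an absolute constant, only by $|r|<q^\rho$; note also $2^\Delta\le q^\rho$, not the reverse. It is exactly $d\,q^{\delta'-2\rho}\le q^{\delta'-\rho}$ that produces the truncation in $S_4$.

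Second, the outcome is $S_3(k,\delta)\le S_4(k,\delta')+\tfrac{13}{10}\lambda(\log q)q^{\lambda-\delta'}$, not $S_3\le C\,q\,S_4$. The nonsingular terms carry a factor $\lambda$, so they cannot be absorbed into $2\cdot10^4$. They must be summed into the second line of~\eqref{eq:MR-struct-internal}.

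Third, your valuation split $a=q^\theta b$ omits $a=0$. There $1/|\sin|$ is infinite, so the minimum cannot be dropped. That residue contributes
\[
q^{\lambda-\rho}|F_{\delta'}(0,\alpha)|^2=q^{\lambda-\rho}(\phi_q(\alpha)/q)^{2\delta'}\le q^{\lambda-\rho-\tau_q(\alpha)\delta'}.
\]
This is where the $\phi_q(\alpha)$ part of $\tau_q(\alpha)$ is actually used, and after summing over $\delta$ it also goes into the second line.
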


\begin{proof}
Let $T$ be the first term in~\eqref{eq:S2-G-internal}.  In that term write each divisor
$d\mid q^\lambda$ uniquely as $d=kq^\delta$, where $0\le\delta\le\lambda$,
$k\mid q^{\lambda-\delta}$, and $(k,q)<q$.  Lemma~\ref{lem:MR-G-first-self} gives
\[
d\,G_\lambda(a,d,\alpha)^2
\le q^\delta q^{2\eta_3(\lambda-\delta)}k^{1-2\eta_3}|F_\delta(a,\alpha)|^2.
\]
Using the final divisor sum in Lemma~\ref{lem:MR-G-first-self}, and then bounding the
$k$-sum by its largest summand, gives
\begin{equation}
T\le
128(1+q^{\nu-\lambda})q^\lambda
\sum_{0\le\delta\le\lambda}q^{-\omega_q(\lambda-\delta)}
\max_{\substack{k\mid q^{\lambda-\delta}\\(k,q)<q}}S_3(k,\delta).
\label{eq:S3-stage-explicit}
\end{equation}
Here $128=4\cdot32$ combines the padding in Lemma~\ref{lem:MR-S2-G-self} with the decimal
admissible-divisor envelope.
The nonsingular term already present in~\eqref{eq:S2-G-internal} is bounded by
\[
4(1+q^{\nu-\lambda})\lambda(\log q)q^\lambda G_\lambda(\alpha)^2
\le
4(1+q^{\nu-\lambda})\lambda(\log q)q^{(2-\tau_q(\alpha))\lambda},
\]
because $G_\lambda(\alpha)\le q^{\eta_3\lambda}$ and
$2\eta_3\le1-\tau_q(\alpha)$.

We next make MR10's reductions (47)--(52) explicit.  We shall use the following direct
consequence of Lemma~\ref{lem:MR-sin-sum-self}: if $m\le q^\lambda$, $d\mid m$, and
$\lambda\ge1$, then the two nonsingular terms in Lemma~\ref{lem:MR-sin-sum-self} are at most
\begin{equation}
\frac{d}{\sin(\pi d/(2m))}+\frac{2m}{\pi}\log\frac{2m}{d}
\le \frac{13}{10}\,\lambda(\log q)m.
\label{eq:sin-nonsing-13}
\end{equation}
Indeed $d/\sin(\pi d/(2m))\le m$ and
$\log(2m/d)\le \lambda\log q+\log2$; for $q=10$ the resulting coefficient is
$1/\log10+(2/\pi)(1+\log2/\log10)<1.27<13/10$.

First suppose $\delta\le\Delta$.  The concavity inequality
$\sin(cu)\ge c\sin u$ for $0\le c\le1$ and $0\le u\le\pi/2$ turns the singular factor in
$S_3$ into MR10's form~(47).  Splitting $a=a_0+iq^\delta$, $0\le a_0<q^\delta$,
$0\le i<k$, and applying Lemma~\ref{lem:MR-sin-sum-self} to the $i$-sum with
$m=k$ gives
\[
S_3(k,\delta)\le q^{\lambda-\rho}
\;+\;
\frac{13}{10}\lambda(\log q)q^{\lambda-\delta},
\]
where we used $(r,k)\le |r|<q^\rho$ and
$\sum_{0\le a_0<q^\delta}|F_\delta(a_0,\alpha)|^2=1$.  Substituting this into
\eqref{eq:S3-stage-explicit}, using $\omega_q\Delta\le\rho$ and the two geometric sums recorded in
Appendix~\ref{app:MR10-S2-ledger}, gives a total contribution from $\delta\le\Delta$ of at most
\[
2\cdot10^4\,\lambda(\log q)(1+q^{\nu-\lambda})q^{(2-\tau_q(\alpha))\lambda+\rho}.
\]

Now let $\delta>\Delta$, put $\delta'=\delta-\Delta$, and set
$r'=r/(r,kq^\Delta)$.  Since $1\le |r|<q^\rho$ and
$\Delta=\lfloor\rho\log q/\log2\rfloor$, every prime-power divisor of $r$ supported on
the primes of $q$ is removed by $(r,kq^\Delta)$; hence $(r',q)=1$.  Applying
Lemma~\ref{lem:MR-sin-sum-self} to the $i$-sum over the blocks
$a=a_0+iq^{\delta'}$, $0\le i<kq^\Delta$, gives
\begin{equation}
S_3(k,\delta)\le S_4(k,\delta')+
\frac{13}{10}\lambda(\log q)q^{\lambda-\delta'}.
\label{eq:S3-to-S4-explicit}
\end{equation}
The singular term is exactly $S_4$ (with $(r,kq^\Delta)$ absorbed into $r'$); the nonsingular
terms are bounded by~\eqref{eq:sin-nonsing-13}.  After insertion in~\eqref{eq:S3-stage-explicit},
their total is bounded by
\[
128\cdot\frac{13}{10}\cdot1.12\,
\lambda(\log q)(1+q^{\nu-\lambda})q^{(2-\omega_q)\lambda+\rho},
\]
which is covered by the displayed $2\cdot10^4$ envelope since $\tau_q(\alpha)\le\omega_q$.

It remains to pass from $S_4$ to $S_5$.  Split $a<q^{\delta'}$ as $a=q^\theta b$ with
$0\le\theta<\delta'$ and $b\not\equiv0\pmod q$, plus the single zero residue.  For the
nonzero residues,
\[
\begin{aligned}
|F_{\delta'}(q^\theta b,\alpha)|
&\le |F_{\delta'-\theta}(b,\alpha)|,\\
\min\left(q^{\delta'-\rho},\frac1{|\sin(\pi q^\theta br'/q^{\delta'})|}\right)
&\le
\frac1{|\sin(\pi br'/q^{\delta'-\theta})|},
\end{aligned}
\]
where the first inequality follows by factoring off the first $\theta$ digit positions and using
$|F_\theta(0,\alpha)|\le1$.  Thus their contribution is at most $S_5(k,\delta')$.  The zero residue contributes
\[
q^{\lambda-\rho}|F_{\delta'}(0,\alpha)|^2
\le q^{\lambda-\rho-\tau_q(\alpha)\delta'}.
\]
After summing over $\delta$ in~\eqref{eq:S3-stage-explicit} this is again covered by the same
$2\cdot10^4$ error term.  Replacing the leading coefficient $128$ by the envelope
$2\cdot10^4q$ in the main $S_5$ term gives
\eqref{eq:MR-struct-internal}.
\end{proof}

\begin{lemma}[Explicit truncated correlation bound for $q=10$]
\label{lem:S2-input}
Let $q=10$. For every real $\alpha$ with $(q-1)\alpha\notin\mathbb{Z}$, for all integers $\mu,\nu>0$
satisfying the admissibility condition~\cite[(13)]{DMRcomp2},
and for every integer $\rho$ with $0\le \rho\le \xi_q(\alpha)(\mu+\nu)$, one has
\[
S_2(r,\mu,\nu,\rho)\ \le\ 10^{8}\,(\mu+\nu)^2\,10^{\mu+\nu-\rho}
\qquad\text{for all integers $r$ with $1\le |r|<10^\rho$,}
\]
where $S_2(r,\mu,\nu,\rho)$ is DMR's truncated sum defined in~\cite[(23)]{DMRcomp2}, and
$\xi_q(\alpha)=\varepsilon_q(\alpha)/14$ with $\varepsilon_q(\alpha)$ as in~\cite[(24)--(28)]{DMRcomp2}.
\end{lemma}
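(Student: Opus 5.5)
The plan is to combine the two internal reductions already established, Lemma~\ref{lem:MR-S2-G-self} and Lemma~\ref{lem:MR-struct-self}, and then bound the residual quantity $S_5$ using the weighted estimate of Lemma~\ref{lem:MR-F-weighted-self}.

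\emph{Step 1: bound $S_2$ by $S_5$ and a remainder.} Lemma~\ref{lem:MR-S2-G-self} gives $S_2(r,\mu,\nu,\rho)$ as the sum of two terms. The second is the nonsingular term
\[
4(1+q^{\nu-\lambda})\lambda(\log q)q^\lambda G_\lambda(\alpha)^2 .
\]
Lemma~\ref{lem:MR-struct-self} bounds the first term by an $S_5$-sum plus the remainder
\[
2\cdot10^4\lambda(\log q)(1+q^{\nu-\lambda})q^{(2-\tau_q(\alpha))\lambda+\rho} .
\]
The nonsingular term was already shown in that proof to be at most $4(1+q^{\nu-\lambda})\lambda(\log q)q^{(2-\tau_q)\lambda}$.

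\emph{Step 2: estimate $S_5$.} For fixed $\theta$, Lemma~\ref{lem:MR-F-weighted-self} applies with $a=r'$, which is legitimate because $(r',q)=1$. This bounds the inner $b$-sum by $q^{\gamma_q(\alpha)(\delta'-\theta-2)+1}/\sin(\pi/q)$. Summing over $0\le\theta<\delta'$ is a geometric sum dominated by $\theta=0$, since $\gamma_q\ge1/2$; it costs at most a factor $1/(1-10^{-1/2})<1.5$. Hence
\[
S_5(k,\delta')\ll q^{\lambda-\delta'}q^{\gamma_q\delta'}=q^{\lambda-(1-\gamma_q)\delta'} .
\]
Inserting this into the $S_5$ term of Step~1 with $\delta'=\delta-\Delta$, the sum over $\Delta<\delta\le\lambda$ of $q^{-\omega_q(\lambda-\delta)}q^{-(1-\gamma_q)(\delta-\Delta)}$ is at most $(\lambda+1)q^{-\min(\omega_q,1-\gamma_q)(\lambda-\Delta)}$.

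\emph{Step 3: convert to the target exponent.} Put $\lambda=\mu+2\rho$. The factor $(1+q^{\nu-\lambda})q^{2\lambda}$ is at most about $2q^{\mu+\nu+4\rho}$. Two facts are needed:
\begin{itemize}
\item $\Delta\le\rho\log10/\log2<3.33\rho$;
\item $\min(\omega_q,1-\gamma_q,\tau_q)\ge\varepsilon_q(\alpha)=14\xi_q(\alpha)$, which is DMR's definition (24)--(28), with $\tau_q$ controlled through $\phi_q(\alpha)$.
\end{itemize}
With these, the decay $q^{-\varepsilon_q(\lambda-\Delta)}$ and $q^{-\tau_q\lambda}$ beats the losses $q^{5\rho}$ (from $4\rho$ and the $+\rho$) provided $\rho\le\xi_q(\alpha)(\mu+\nu)$. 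The admissibility condition~\cite[(13)]{DMRcomp2} keeps $\mu$ comparable to $\mu+\nu$ and $\nu\le\lambda$, roughly. This is exactly DMR's choice of the constant $1/14$, and I would retrace their arithmetic for it.

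\emph{Step 4: the constant.} The constants collected along the way include $2\cdot10^4 q$, $1/\sin(\pi/10)<3.3$, the factors $10$ from $q^{1-2\gamma_q}$, the $\theta$-geometric factor, and $\lambda+1\le 3(\mu+\nu)$. Collecting these, together with $\lambda\log q\le 3(\mu+\nu)\log 10$ for the remainder, should give a constant of order $10^7(\mu+\nu)^2$, inside the stated $10^8$.

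\textbf{Main obstacle.} I expect the main obstacle to be Step~3: verifying that the exponent bookkeeping really closes, that is, $(2-\varepsilon_q)\lambda+\rho+\varepsilon_q\Delta\le\lambda+\mu+\nu-\rho+\lambda$-slack, under the precise form of (13) and $\rho\le\varepsilon_q(\mu+\nu)/14$. If it fails, I would first try absorbing the loss using $\nu\le\lambda$ only when $\nu>\lambda$ (i.e.\ $\mu<\nu$), splitting the two cases of $1+q^{\nu-\lambda}$.
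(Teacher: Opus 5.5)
Your Steps~1, 2 and~4 match the paper's proof: combine Lemmas~\ref{lem:MR-S2-G-self} and~\ref{lem:MR-struct-self}, bound $S_5$ by Lemma~\ref{lem:MR-F-weighted-self} plus the geometric $\theta$-sum, and collect the explicit prefactors. Your reduction of the $S_5$ term to $(\lambda+1)(1+q^{\nu-\lambda})q^{(2-\varepsilon)\lambda+\varepsilon\Delta}$ (up to constants, with $\varepsilon=\varepsilon_q(\alpha)$) is also sound. The gap is in Step~3, the only non-mechanical step: the accounting you sketch there does not close.

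After replacing $(1+q^{\nu-\lambda})q^{2\lambda}$ by $2q^{\mu+\nu+4\rho}$, you need $\varepsilon\lambda\ge5\rho+\varepsilon\Delta$ for the $S_5$ term and $\tau_q(\alpha)\lambda\ge6\rho$ for the remainder, once the target factor $q^{-\rho}$ is counted. Admissibility does not give this. Take a point near the lower endpoint of~(13) with $\rho$ maximal, i.e.\ $\rho/(\mu+\nu)\approx\varepsilon/14$ and $\mu/(\mu+\nu)\approx(4-2\varepsilon)/14$. There $\varepsilon\mu\approx(4-2\varepsilon)\rho$, hence $\varepsilon\lambda=\varepsilon\mu+2\varepsilon\rho\approx4\rho$, and $\tau_q$ may equal $\varepsilon$. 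In that regime $\nu\approx\frac52\lambda$, so your heuristic ``$\nu\le\lambda$, roughly'' is false. The dominant summand is then $q^{\nu-\lambda}q^{2\lambda}=q^{\mu+\nu+2\rho}$. Its exact requirement is $\varepsilon\lambda\ge4\rho$, which is met with no room to spare, so the extra $q^{2\rho}$ your replacement charges it cannot be absorbed. At the other end, (13) permits $\mu>\nu$, where $q^{2\lambda}\le q^{\mu+\nu+4\rho}$ is not even true.

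The fix is the paper's Steps~3--4, in two stages.

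\emph{Common exponent.} First bring both terms to the exponent $(2-\varepsilon)\lambda+\rho$.
For the $S_5$ term use $\varepsilon\Delta\le\omega_q\Delta\le\rho$, which follows from $\omega_q\log10/\log2=\frac32-\frac{\log5}{\log3}<1$; your bound $\Delta<3.33\rho$ also works.
For the remainder use $\tau_q(\alpha)\ge\varepsilon$.

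\emph{Split the two summands.} Only then split $1+q^{\nu-\lambda}$, using each endpoint of the admissibility interval in exact form with $\xi=\rho/(\mu+\nu)$. If (13) is stated with $\xi_q(\alpha)$, passing to $\xi\le\xi_q(\alpha)$ only widens the interval.
The upper endpoint $\frac{\mu}{\mu+\nu}\le\frac{1-(6-2\varepsilon)\xi}{2-\varepsilon}$ is equivalent to $(2-\varepsilon)\lambda+\rho\le\mu+\nu-\rho$. This handles the summand $1$.
The lower endpoint $\frac{(4-2\varepsilon)\xi}{\varepsilon}\le\frac{\mu}{\mu+\nu}$ is equivalent to $(4-2\varepsilon)\rho\le\varepsilon\mu$, i.e.\ to $(2-\varepsilon)\lambda+\rho+(\nu-\lambda)\le\mu+\nu-\rho$. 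This handles $q^{\nu-\lambda}$.

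Together these give $(1+q^{\nu-\lambda})q^{(2-\varepsilon)\lambda+\rho}\le2q^{\mu+\nu-\rho}$, and $\lambda\le\mu+\nu$ yields the factor $(\mu+\nu)^2$. Your fallback of splitting the two summands is the right instinct. But both endpoint inequalities are sharp, so it works only in this exact form, not after a coarsening such as $q^{2\mu}\le q^{\mu+\nu}$.

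A minor point: $q^{1-2\gamma_q(\alpha)}\le1$, not $10$, since $\gamma_q(\alpha)\ge\frac12$. This only makes your constant more generous.
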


\begin{proof}
The statement renders explicit the qualitative bound
$S_2(r,\mu,\nu,\rho)\ll_q (\mu+\nu)^2 q^{\mu+\nu-\rho}$ proved by Mauduit--Rivat~\cite[(64),
\S 7.1--7.3]{MR10} (and used by DMR as their~\cite[(29)]{DMRcomp2}).  The elementary Fourier
lemmas, the initial reduction of $S_2$ to the $G_\lambda$ means
(Lemma~\ref{lem:MR-S2-G-self}), the passage from the $G_\lambda$ means to the terminal $S_5$
decomposition (Lemma~\ref{lem:MR-struct-self}), and the final admissibility absorption are proved
internally in \S\ref{subsec:self-contained-mr} and Step~4 below.
The conclusion follows from Steps~2--4 below once $K_{10}\le 10^6$ is established in Step~1.

We write
\[
\lambda:=\mu+2\rho,\qquad \Delta:=\Bigl\lfloor \rho\,\frac{\log q}{\log 2}\Bigr\rfloor,
\]
as in~\cite[(53)]{MR10}, and for $k\mid q^{\lambda-\delta}$ set $r':=r/(r,kq^\Delta)$ so that $(r',q)=1$
\cite[(54)]{MR10}. (The hypothesis $1\le |r|<q^\rho$ is essential here and matches~\cite[(22)]{DMRcomp2}.)

\medskip
\noindent\textbf{Step 1 (an explicit upper bound $K_{10}\le 10^6$).}
Appendix~\ref{app:MR10-S2-ledger} gives the source-equation audit for MR10's derivation of
(64), including the steps with factor~$1$ and the carry-propagation term, which enters separately
in Proposition~\ref{prop:typeII-explicit} rather than in $K_{10}$.  The nontrivial numerical
losses used here are the following displayed inequalities, all for $q=10$ and under MR10's
admissibility hypotheses.

The initial Fourier expansion and the reduction from $S_2$ to the $G_\lambda$ means
(MR10 (24)--(28), with Lemme~6 (26) for the first trigonometric split) give
\begin{equation}\label{eq:K10-fourier}
C_{\mathrm{Fourier}}\le 4 .
\end{equation}
The remaining nonsingular trigonometric splits, admissible divisor count, and geometric tails
in MR10 Lemme~6 and (49)--(52) are bounded by
\begin{equation}\label{eq:K10-trig}
C_{\mathrm{trig}}\le
32\left(2+\frac2\pi\right)^2
\left(1-10^{-\omega_{10}}\right)^{-1}
1.12\cdot\frac{13}{10}
<1.37\cdot10^4<2\cdot10^4 .
\end{equation}
Here $(1-10^{-\omega_{10}})^{-1}<42$, and the factor $1.12$ bounds the second geometric tail
$\sum_{\delta\ge0}10^{-(1-\omega_{10})\delta}$.  The reduction from $S_3(k,\delta)$ to
$S_4(k,\delta')$ in MR10 (51)--(54) produces two nonnegative contributions, whence
\begin{equation}\label{eq:K10-S34}
C_{34}\le 2 .
\end{equation}
Finally, the admissibility inequalities MR10 (59), (62)--(63), unrolled in Step~4 below, give
\begin{equation}\label{eq:K10-adm}
C_{\mathrm{adm}}\le 2 .
\end{equation}
Thus the cumulative structural constant is defined directly by the product of these four displayed
losses:
\begin{equation}\label{eq:K10-product}
K_{10}\le 4\cdot(2\cdot10^4)\cdot2\cdot2=3.20\times10^5<10^6.
\end{equation}
The deliberately padded factor $2\cdot10^4$ is the only visibly non-tight part of this accounting;
Lemma~\ref{lem:insensitivity} shows that the theorem's threshold is unchanged for any honest
bound $K_{10}\le10^9$.

\smallskip
Combining Lemmas~\ref{lem:MR-S2-G-self} and~\ref{lem:MR-struct-self}, and substituting
$K_{10}\le 10^6$, yields, for $q=10$,
\begin{align}
S_2(r,\mu,\nu,\rho)
&\ \le\ K_{10}\Biggl[
q(1+q^{\nu-\lambda})\,q^\lambda
   \sum_{\Delta<\delta\le \lambda} q^{-\omega_q(\lambda-\delta)}
   \max_{\substack{k\mid q^{\lambda-\delta}\\ (k,q)<q}} S_5(k,\delta-\Delta)
\nonumber\\
&\qquad\qquad
 +\ \lambda(\log q)\,(1+q^{\nu-\lambda})\,q^{(2-\tau_q(\alpha))\lambda+\rho}\Biggr],
\label{eq:S2-decomp-explicit}
\end{align}
where $S_5$ is defined in Lemma~\ref{lem:MR-struct-self}.  We control $S_5$ via the internally
proved weighted Fourier estimate, Lemma~\ref{lem:MR-F-weighted-self}.

\medskip
\noindent\textbf{Step 2 (explicit control of $S_5$ via Lemma~\ref{lem:MR-F-weighted-self}).}
Assume $(q-1)\alpha\notin\mathbb{Z}$ (MR10's condition~\cite[(58)]{MR10}). By the definition of $S_5$
\cite[(57)]{MR10} and Lemma~\ref{lem:MR-F-weighted-self}, for each $0\le \theta<\delta'$ one gets an inner weighted
$L^2$ bound with prefactor $1/\sin(\pi/q)$. Using also $\gamma_q(\alpha)\ge \tfrac12$ from
Lemma~\ref{lem:MR-gamma-self},
one may bound the resulting $\theta$-sum by a geometric series to obtain
\begin{equation}
S_5(k,\delta')\ \le\ \frac{1}{\sin(\pi/q)}\cdot \frac{1}{1-q^{-1/2}}\,
q^{\lambda-\delta' + \gamma_q(\alpha)\delta'}.
\label{eq:S5-bound}
\end{equation}
The two prefactors $1/\sin(\pi/q)$ and $1/(1-q^{-1/2})$ above are fixed positive constants depending
only on $q$.  For $q=10$, $1/\sin(\pi/10)<3.24$ and $1/(1-10^{-1/2})<1.47$.  As in Step~1, they
feed only into $Y_{\mathrm{min}}\le Y_\ast$ (the constraint checker's binding threshold) and are
absorbed into the cumulative numerical prefactor of Step~4.

\medskip
\noindent\textbf{Step 3 (back to $S_2$, no hidden constant).}
Insert~\eqref{eq:S5-bound} into~\eqref{eq:S2-decomp-explicit}. Since the bound~\eqref{eq:S5-bound}
is independent of $k$, the max disappears. Write $\delta'=\delta-\Delta$ so that
\[
q^\lambda\,q^{-\omega_q(\lambda-\delta)}\,q^{\lambda-\delta'+\gamma_q(\alpha)\delta'}
= q^{(2-\omega_q)\lambda+\omega_q\Delta+(\omega_q-(1-\gamma_q(\alpha)))\delta'}.
\]
Let $\varepsilon_q(\alpha)=\min(\omega_q,\tau_q(\alpha),1-\gamma_q(\alpha))$ as in~\cite[(59)]{MR10}.
For $q=10$,
\[
\omega_q\,\frac{\log q}{\log 2}
=\frac32-\frac{\log 5}{\log 3}<1,
\]
and therefore $\omega_q\Delta\le\rho$.  If
$\omega_q\le1-\gamma_q(\alpha)$, the exponent in the preceding display is maximal at
$\delta'=0$, hence it is at most
$(2-\omega_q)\lambda+\rho\le(2-\varepsilon_q(\alpha))\lambda+\rho$.  If
$\omega_q>1-\gamma_q(\alpha)$, it is maximal at $\delta'=\lambda-\Delta$, where it equals
$(2-(1-\gamma_q(\alpha)))\lambda+(1-\gamma_q(\alpha))\Delta$; since
$1-\gamma_q(\alpha)<\omega_q$, this is again at most
$(2-\varepsilon_q(\alpha))\lambda+\rho$.  Thus for all admissible $\delta'$ one has
\[
q^\lambda\,q^{-\omega_q(\lambda-\delta)}\,q^{\lambda-\delta'+\gamma_q(\alpha)\delta'}
\ \le\ q^{(2-\varepsilon_q(\alpha))\lambda+\rho}.
\]
Therefore the $\delta$-sum contributes at most a factor $\le \lambda$, and we obtain
\begin{equation}
S_2(r,\mu,\nu,\rho)\ \le\
K_{10}\,\lambda\,(1+q^{\nu-\lambda})\,q^{(2-\varepsilon_q(\alpha))\lambda+\rho}
\Biggl(\frac{q}{\sin(\pi/q)(1-q^{-1/2})}+\log q\Biggr),
\label{eq:S2-pre-adm}
\end{equation}
with $K_{10}\le 10^6$ the cumulative prefactor of Step~1.

\medskip
\noindent\textbf{Step 4 (admissibility absorbs $(1+q^{\nu-\lambda})$).}
Put $\epsilon:=\varepsilon_q(\alpha)$ and $\xi:=\rho/(\mu+\nu)$.  The admissibility interval used
by MR10 for $q\ge3$ is
\[
\frac{(4-2\epsilon)\xi}{\epsilon}
\le
\frac{\mu}{\mu+\nu}
\le
\frac{1-(6-2\epsilon)\xi}{2-\epsilon},
\qquad 0\le\xi\le\frac{\epsilon}{14},
\]
with harmless endpoint slack removed here since we only need a sufficient inequality.  The upper
bound gives
\[
(2-\epsilon)\mu+(6-2\epsilon)\rho\le\mu+\nu,
\]
and hence
\[
(2-\epsilon)\lambda+\rho\le\mu+\nu-\rho.
\]
The lower bound gives
\[
(4-2\epsilon)\rho\le\epsilon\mu,
\]
and hence
\[
(2-\epsilon)\lambda+\rho+\nu-\lambda\le\mu+\nu-\rho.
\]
These two exponent inequalities imply
\begin{equation}
(1+q^{\nu-\lambda})\,q^{(2-\varepsilon_q(\alpha))\lambda+\rho}\ \le\ 2\,q^{\mu+\nu-\rho}.
\label{eq:adm-absorb}
\end{equation}
Also $\lambda=\mu+2\rho\le \mu+\nu$ in the admissible regime, hence $\lambda\le (\mu+\nu)^2$.
Combining~\eqref{eq:S2-pre-adm}--\eqref{eq:adm-absorb} gives
\[
S_2(r,\mu,\nu,\rho)\ \le\
2K_{10}\,(\mu+\nu)^2\,q^{\mu+\nu-\rho}
\Biggl(\frac{q}{\sin(\pi/q)(1-q^{-1/2})}+\log q\Biggr).
\]
For $q=10$, the numerical prefactor evaluates to
\[
\frac{10}{\sin(\pi/10)(1-10^{-1/2})}+\log 10\ <\ 49.7,
\]
so $2K_{10}\cdot 49.7\le 2\cdot10^6\cdot 49.7<10^8$ using $K_{10}\le 10^6$ from Step~1.
The stated bound with the padded numerical constant $10^8$ follows.
\end{proof}

\medskip

\begin{proposition}[Explicit $10$-adic Type~II bound with numerical constant]\label{prop:typeII-explicit}
Fix $q=10$ and let $\alpha$ be real with $(q-1)\alpha\notin\mathbb{Z}$.
Assume Lemmas~\ref{lem:tau-input} and~\ref{lem:S2-input}.
Then, in the notation of~\cite[\S3.3]{DMRcomp2}, for all integers $\mu,\nu>0$ satisfying
DMR's admissibility condition~\cite[(13)]{DMRcomp2} and the strengthened carry range
\[
\frac{\mu}{\mu+\nu}\ge \frac13-\frac{\xi_q(\alpha)}4,
\]
and all complex $b_n$ with $|b_n|\le 1$, one has
\[
\sum_{10^{\mu-1}<m\le 10^\mu}
\left|\sum_{10^{\nu-1}<n\le 10^\nu} b_n\,e\!\bigl(\alpha s(mn)\bigr)\right|
\ \le\
C_{\mathrm{II}}\,(\mu+\nu)\,10^{(1-c_1\|9\alpha\|^2)(\mu+\nu)},
\]
where $c_1$ is as in Lemma~\ref{lem:c1} and
\[
C_{\mathrm{II}}\ :=\ 2\sqrt{10}\,\sqrt{\,1+C_\tau\log 10+10^{8}\,}\ <\ 64000.
\]
(The closed form gives $C_{\mathrm{II}}<63253$; we use the round value $64000$.)
\end{proposition}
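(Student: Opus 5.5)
We follow the DMR/Mauduit--Rivat Type~II scheme of~\cite[\S3.3]{DMRcomp2} and keep every constant. Write $g(n)=e(\alpha s(n))$ and denote the left-hand side by $S$.

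\textbf{Step 1: Cauchy--Schwarz in $m$ and van der Corput in $n$.}
Cauchy--Schwarz over the $m$-sum gives
\[
S^2\le 10^\mu\sum_m\Bigl|\sum_n b_n g(mn)\Bigr|^2 .
\]
We then apply Lemma~\ref{lem:MR-vdc-self} in the $n$-variable with $R=10^\rho$. The length $N\le 10^\nu$ satisfies $(N+R-1)/R\le 2\cdot 10^{\nu-\rho}$. This produces a diagonal term $r=0$ of size at most $2\cdot 10^{\mu+\nu}\cdot 10^{\nu-\rho}$. It also produces correlation sums over $1\le|r|<10^\rho$ of
\[
\sum_n\Bigl|\sum_m g(m(n+r))\overline{g(mn)}\Bigr| .
\]
Here we interchange the sums, so that $b_n$ is removed by the triangle inequality.

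\textbf{Step 2: carry truncation.}
For each $r$ we replace $s(m(n+r))-s(mn)$ by $f_\lambda(m(n+r))-f_\lambda(mn)$ with $\lambda=\mu+2\rho$. The two differences agree unless a carry propagates past the first $\lambda$ digits. The number of exceptional pairs $(m,n)$ is controlled by the carry-propagation count with the explicit divisor bound of Lemma~\ref{lem:tau-input}:
\[
E\le C_\tau(\mu+\nu)(\log 10)\,10^{\mu+\nu-\rho}.
\]
The strengthened carry range $\mu/(\mu+\nu)\ge 1/3-\xi_q(\alpha)/4$ is used exactly here, to guarantee $Y\ge X^{1/3}$ in Lemma~\ref{lem:tau-input}. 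The truncated sums are then $S_2(r,\mu,\nu,\rho)$, and Lemma~\ref{lem:S2-input} bounds each of them by $10^8(\mu+\nu)^2 10^{\mu+\nu-\rho}$.

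\textbf{Step 3: assemble the pieces.}
Averaging over $r$ with the Fejér weights, whose total mass is at most $1$ per $|r|<R$ after normalization, gives
\[
S^2\le 2\cdot 10^{\mu}\cdot 10^{\nu-\rho}\cdot 10^{\mu+\nu}\Bigl(1+C_\tau\log 10\,(\mu+\nu)+10^8(\mu+\nu)^2\Bigr).
\]
The two lower-order terms are absorbed using $(\mu+\nu)\ge1$, which yields
\[
S\le \sqrt{2}\,\sqrt{1+C_\tau\log10+10^8}\,(\mu+\nu)\,10^{\mu+\nu}10^{-\rho/2}.
\]
We then choose $\rho=\lfloor\xi_q(\alpha)(\mu+\nu)\rfloor$, which is allowed by Lemma~\ref{lem:S2-input}. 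Lemma~\ref{lem:c1} gives $\xi_q(\alpha)\ge 2c_1\|9\alpha\|^2$, so
\[
10^{-\rho/2}\le 10^{1/2}\,10^{-c_1\|9\alpha\|^2(\mu+\nu)}.
\]
Here the floor costs the factor $\sqrt{10}$. Together with the factor $2$ from the vdC length and a padding factor, this explains the prefactor $2\sqrt{10}$. Substituting $C_\tau=10^3$, the bound $\sqrt{1+2302.6+10^8}<10^4\cdot1.0001$ gives $C_{\mathrm{II}}<63253<64000$.

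\textbf{Main obstacle.}
I expect the bookkeeping in Step~2 to be the hardest part. Three things must be checked at once:
\begin{itemize}
\item the carry-exception count fits the stated form of Lemma~\ref{lem:tau-input} for every $r$, with the short-interval range condition verified uniformly;
\item the admissibility condition~\cite[(13)]{DMRcomp2} together with $\rho\le\xi_q(\alpha)(\mu+\nu)$ keeps $\lambda\le\mu+\nu$;
\item the various factors of $2$ from padding and Fejér normalization combine into the single constant $2\sqrt{10}$ rather than something larger.
\end{itemize}
To settle this I would first verify the carry count by reproducing DMR Lemma~3.4 with $\tau$-sums over intervals of length $10^{\mu+\nu-\rho}$.
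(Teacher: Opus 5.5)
Your outline follows the paper's proof step for step:
\begin{itemize}
\item Cauchy--Schwarz in $m$, then van der Corput in $n$ with $R=10^\rho$;
\item truncation at $\lambda=\mu+2\rho$, with the exceptional pairs counted via Lemma~\ref{lem:tau-input};
\item Lemma~\ref{lem:S2-input} for the truncated sums;
\item the choice $\rho=\lfloor\xi_q(\alpha)(\mu+\nu)\rfloor$, with the floor costing $\sqrt{10}$.
\end{itemize}
Your weight bookkeeping uses off-diagonal mass $R-1<R$, so it yields $\sqrt{20}$ where the paper, with the cruder bound $2R$, gets $2\sqrt{10}$. You dropped a factor $2$: each exceptional pair costs up to $|e(u)-e(v)|\le 2$, so the carry term is $2E$, not $E$. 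That slack absorbs it, and the stated $C_{\mathrm{II}}$ still follows.

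What you leave open are exactly the checks you list. For the one that is not imported, the hypothesis $X^{1/3}\le Y$ of Lemma~\ref{lem:tau-input}, your closing plan uses the wrong window length.
\begin{itemize}
\item \emph{The windows.} Since $|mr|<10^{\mu+\rho}$, truncation fails only when $mn \bmod 10^\lambda$ lies within $10^{\mu+\rho}$ of a multiple of $10^\lambda$. So the divisor bound is applied to about $10^{\mu+\nu-\lambda}$ windows of length $Y=10^{\mu+\rho}$, with $X$ of size at most about $10^{\mu+\nu}$.
\item \emph{The quantity $10^{\mu+\nu-\rho}$.} This is the resulting total count, not a window length. With windows that long, the strengthened carry range would be pointless.
\item \emph{The inequality $\mu+\rho\ge(\mu+\nu)/3$.} This needs $\rho\ge 1$. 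Split off $\rho=0$: there are no shifts, $|S|\le 10^{\mu+\nu}$, and $c_1\|9\alpha\|^2(\mu+\nu)<1/2$. For $\rho\ge 1$, use $\lfloor u\rfloor\ge u/2$ to get $\rho/(\mu+\nu)\ge\xi_q(\alpha)/2$, hence $(\mu+\rho)/(\mu+\nu)\ge 1/3+\xi_q(\alpha)/4$.
\end{itemize}

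Likewise, $\lambda\le\mu+\nu$, i.e.\ $2\rho\le\nu$, is also what makes $(N+R-1)/R\le 2\cdot 10^{\nu-\rho}$ and the window count valid. It follows from two facts:
\begin{itemize}
\item the upper admissibility endpoint gives $\mu/(\mu+\nu)\le 1/(2-\epsilon)<0.506$;
\item $2\rho/(\mu+\nu)\le 2\xi_q(\alpha)\le\omega_{10}/7<0.002$.
\end{itemize}
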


\begin{proof}
Choose coefficients $a_m$ with $|a_m|\le1$ so that
\[
S:=\sum_{10^{\mu-1}<m\le10^\mu}a_m
\sum_{10^{\nu-1}<n\le10^\nu}b_n\,e(\alpha s(mn))
\]
has absolute value equal to the left-hand side in the statement.  It suffices to bound $|S|$.
Following DMR's proof of Proposition~3.2
(from~\cite[(18)]{DMRcomp2} through~\cite[(22)]{DMRcomp2}), put
\[
\rho:=\lfloor \xi_q(\alpha)(\mu+\nu)\rfloor,\qquad R:=10^\rho.
\]

If $\rho=0$, then trivially $|S|\le 10^{\mu+\nu}$.  Since $\rho=0$ forces $\xi_q(\alpha)(\mu+\nu)<1$,
and $\xi_q(\alpha)\ge 2c_1\|9\alpha\|^2$ (as in~\cite[(31)]{DMRcomp2} and Lemma~\ref{lem:c1}),
we have $c_1\|9\alpha\|^2(\mu+\nu)<\tfrac12$, hence
$10^{\mu+\nu}\le \sqrt{10}\,10^{(1-c_1\|9\alpha\|^2)(\mu+\nu)}$, and the desired bound follows since
$C_{\mathrm{II}}(\mu+\nu)\ge \sqrt{10}$.

Now assume $\rho\ge 1$ (so $R>1$).  Put $\epsilon=\varepsilon_q(\alpha)$.  The upper endpoint in
DMR's admissibility condition gives
\[
\frac{\mu}{\mu+\nu}\le \frac{1-(6-2\epsilon)\rho/(\mu+\nu)}{2-\epsilon}
\le \frac{1}{2-\epsilon}<0.506,
\]
since $\epsilon\le\omega_{10}<0.011$.  Hence $\nu/(\mu+\nu)>0.494$.  On the other hand
$2\rho/(\mu+\nu)\le2\xi_q(\alpha)\le 2\omega_{10}/14<0.002$, and therefore
$\rho\le\nu/2$, as required in DMR's van der Corput setup.  Starting from the
Cauchy--Schwarz step~\cite[(18)]{DMRcomp2} and then
applying the van der Corput inequality (Lemma~3.3 of~\cite{DMRcomp2}) with parameter $R$ exactly as in DMR,
one obtains the analogue of DMR's inequality~\cite[(22)]{DMRcomp2} but with explicit constants:
\[
|S|^2 \ \le\ 4\cdot 10^{\,2(\mu+\nu)-\rho}\;+\;4\,E(\rho)\,10^{\,\mu+\nu}\;+\;
4\cdot 10^{\,\mu+\nu}\max_{1\le |r|<10^\rho} S_2(r,\mu,\nu,\rho),
\]
where $E(\rho)$ counts the carry-propagation exceptions and $S_2$ is the truncated correlation sum
defined in~\cite[(23)]{DMRcomp2}.  (The factor $4$ comes from bounding the van der Corput prefactor
$\frac{N+R-1}{R}$ by $2\cdot 10^{\nu-\rho}$, and bounding the total weight of the off-diagonal shifts
by $2R$, as well as the standard boundary trimming in DMR's passage from~\cite[(18)]{DMRcomp2} to
\cite[(22)]{DMRcomp2}.)

Using Lemma~\ref{lem:tau-input} in DMR's carry-propagation argument (Lemmas~3.4--3.5 of~\cite{DMRcomp2})
yields the explicit bound
\[
E(\rho)\ \le\ C_\tau(\mu+\nu)(\log 10)\,10^{\,\mu+\nu-\rho},
\]
because the strengthened carry hypothesis and $\rho\ge1$ give
\[
\frac{\mu+\rho}{\mu+\nu}
\ge \frac13-\frac{\xi_q(\alpha)}4+\frac{\xi_q(\alpha)}2
> \frac13,
\]
where we used $\rho/(\mu+\nu)\ge \xi_q(\alpha)/2$, valid for
$\rho=\lfloor\xi_q(\alpha)(\mu+\nu)\rfloor\ge1$ since $\lfloor u\rfloor\ge u/2$ for $u\ge1$.  Hence
$10^{\mu+\rho}\ge (10^{\mu+\nu})^{1/3}\ge X^{1/3}$ for the
short intervals $Y=10^{\mu+\rho}$ and $X\le10^{\mu+\nu}$ appearing in the carry count,
so that
\[
|S|^2\ \le\ 4\cdot 10^{\,2(\mu+\nu)-\rho}\Bigl(1+C_\tau(\mu+\nu)\log 10\Bigr)
\;+\;4\cdot 10^{\,\mu+\nu}\max_{1\le |r|<10^\rho} S_2(r,\mu,\nu,\rho).
\]
By Lemma~\ref{lem:S2-input},
\[
\max_{1\le |r|<10^\rho} S_2(r,\mu,\nu,\rho)\ \le\ 10^8(\mu+\nu)^2\,10^{\,\mu+\nu-\rho}.
\]
Combining the last two displays and using $(\mu+\nu)\ge 1$ gives
\[
|S|\ \le\ 2(\mu+\nu)\sqrt{1+C_\tau\log 10+10^8}\;10^{\,(\mu+\nu)-\rho/2}.
\]
Since $\rho\ge \xi_q(\alpha)(\mu+\nu)-1$, we have
\[
10^{(\mu+\nu)-\rho/2}
\ \le\ \sqrt{10}\,10^{(1-\xi_q(\alpha)/2)(\mu+\nu)}
\ \le\ \sqrt{10}\,10^{(1-c_1\|9\alpha\|^2)(\mu+\nu)},
\]
using $\xi_q(\alpha)\ge 2c_1\|9\alpha\|^2$ again.  Absorbing the factor $\sqrt{10}$ into the numerical
constant yields the claimed bound with
$C_{\mathrm{II}}=2\sqrt{10}\sqrt{1+C_\tau\log 10+10^8}$.
\end{proof}

\noindent\emph{Comment.} We follow DMR, but exploit that their reductions apply only for
$x>\max(q^2,x_0(\alpha))$ (here $x\ge 100$) to sharpen the $\mu+\nu$ and $\log x$ bookkeeping and
thus reduce the final constant $C_{\mathrm{min}}$.

\paragraph{Minor-arc localization parameters.}
For the following minor-arc lemmas assume $9\alpha\notin\mathbb Z$ and put
\begin{equation}\label{eq:minor-localization-params}
\begin{gathered}
\epsilon:=\varepsilon_{10}(\alpha),\qquad
\xi:=\epsilon/14,\qquad
\delta_0:=\epsilon/112,\\
\beta_1^{\mathrm{cp}}:=\frac13-\frac{\xi}{8},\qquad
\beta_2:=\frac{1-(6-2\epsilon)\epsilon/14}{2-\epsilon}-\delta_0 .
\end{gathered}
\end{equation}
Let
\begin{equation}\label{eq:minor-x0-def}
x_0(\alpha):=\max\{10^{1/(1-\beta_2)},10^{3/\delta_0}\},
\qquad
\beta:=1-c_1\|9\alpha\|^2 .
\end{equation}
Since $\epsilon\le\omega_{10}<0.011$, one has
$0<\delta_0<\beta_1^{\mathrm{cp}}<1/3$ and $1/2<\beta_2<0.51$.

\begin{lemma}[Bridge below the DMR localization threshold]\label{lem:minor-bridge}
With the parameters in~\eqref{eq:minor-localization-params}--\eqref{eq:minor-x0-def}, for every
$2\le t<x_0(\alpha)$ one has
\begin{equation}\label{eq:minor-bridge-power}
t^{-c_1\|9\alpha\|^2}>10^{-12}.
\end{equation}
Consequently,
\begin{equation}\label{eq:minor-bridge-Lambda}
\left|\sum_{n\le t}\Lambda(n)e(\alpha s(n))\right|
\le 10^{12}t^\beta\log t
\qquad(2\le t<x_0(\alpha)),
\end{equation}
and
\begin{equation}\label{eq:minor-bridge-prime}
\left|\sum_{p\le t}e(\alpha s(p))\right|
\le 4\cdot10^{12}(\log t)^3t^\beta
\qquad(3\le t<x_0(\alpha)).
\end{equation}
\end{lemma}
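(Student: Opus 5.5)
The plan is to prove \eqref{eq:minor-bridge-power} directly from the definition of $x_0(\alpha)$ and the inequality relating $\varepsilon_{10}(\alpha)$ to $c_1\|9\alpha\|^2$. The two sum bounds then follow from trivial estimates, because below $x_0(\alpha)$ the saving $t^{-c_1\|9\alpha\|^2}$ is at most a bounded factor.

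\emph{Step 1 (the power inequality).} Since $9\alpha\notin\mathbb Z$, Lemma~\ref{lem:c1} (that is, DMR's inequality \cite[(31)]{DMRcomp2} with $\xi_q=\varepsilon_q/14$) gives
\[
\epsilon=\varepsilon_{10}(\alpha)=14\,\xi_{10}(\alpha)\ge 28\,c_1\|9\alpha\|^2 .
\]
In particular $\epsilon>0$, so $\delta_0>0$ and $x_0(\alpha)$ is finite. Write $\kappa:=c_1\|9\alpha\|^2$. For $2\le t<x_0(\alpha)$ we have $\kappa\log t<\kappa\log x_0(\alpha)$, and we treat the two candidates for $x_0(\alpha)$ separately.
\begin{itemize}
\item For $x_0(\alpha)=10^{3/\delta_0}=10^{336/\epsilon}$, we get $\kappa\cdot(336/\epsilon)\log 10\le (\epsilon/28)(336/\epsilon)\log10=12\log10$.
\item For $x_0(\alpha)=10^{1/(1-\beta_2)}$, we use $\beta_2<0.51$, so $1/(1-\beta_2)<2.05$, together with $\kappa\le c_1/4<10^{-3}$. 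This exponent is far smaller than $12\log 10$.
\end{itemize}
Since $t<x_0(\alpha)$ is strict, in both cases $\kappa\log t<12\log10$, which is \eqref{eq:minor-bridge-power}. Equivalently $t^{1-\beta}=t^{\kappa}<10^{12}$.

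\emph{Step 2 (the $\Lambda$-sum).} Use the trivial bound $\bigl|\sum_{n\le t}\Lambda(n)e(\alpha s(n))\bigr|\le\sum_{n\le t}\Lambda(n)\le t\log t$, as already used in the proof of Lemma~\ref{lem:partial-to-primes}. Writing $t=t^{\beta}t^{\kappa}<10^{12}t^\beta$ gives \eqref{eq:minor-bridge-Lambda}.

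\emph{Step 3 (the prime sum).} Bound $\bigl|\sum_{p\le t}e(\alpha s(p))\bigr|\le\pi(t)\le t<10^{12}t^\beta$. For $t\ge3$ we have $\log t>1$, hence $10^{12}t^\beta\le 4\cdot10^{12}(\log t)^3t^\beta$, which is \eqref{eq:minor-bridge-prime}. One could instead go through \eqref{eq:minor-bridge-Lambda} and partial summation, but the trivial route already has ample slack.

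The only real obstacle is Step~1. Its content is that the constant $112=8\cdot14$ in $\delta_0$ and the $3$ in $10^{3/\delta_0}$ combine with the factor $28$ in $\epsilon\ge28c_1\|9\alpha\|^2$ to give exactly $12$. This requires using the DMR lower bound for $\xi_{10}(\alpha)$ in precisely the normalization of Lemma~\ref{lem:c1}, and invoking the strict inequality $t<x_0(\alpha)$ to obtain strictness.
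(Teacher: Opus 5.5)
Your proposal is correct and follows essentially the same route as the paper: Lemma~\ref{lem:c1} gives $\epsilon\ge 28c_1\|9\alpha\|^2$, equivalently $\delta_0\ge c_1\|9\alpha\|^2/4$, and combining this with $\log_{10}t<3/\delta_0$ yields the exponent bound $12$. The two sum estimates then follow from $\sum_{n\le t}\Lambda(n)\le t\log t$ and $\pi(t)\le t\le 10^{12}t^\beta$; the only cosmetic difference is that you bound both branches of the maximum defining $x_0(\alpha)$ separately, whereas the paper first observes that $10^{3/\delta_0}$ is the larger one.
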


\begin{proof}
The bound $\beta_2<0.51$ and $\delta_0<0.011/112$ make the second term in
\eqref{eq:minor-x0-def} dominate, so $x_0(\alpha)=10^{3/\delta_0}$.  By
Lemma~\ref{lem:c1}, $\xi_q(\alpha)=\epsilon/14\ge2c_1\|9\alpha\|^2$, hence
\[
\delta_0=\epsilon/112\ge c_1\|9\alpha\|^2/4 .
\]
Thus for $2\le t<10^{3/\delta_0}$,
\[
c_1\|9\alpha\|^2\log_{10} t < 4\delta_0\cdot\frac{3}{\delta_0}=12,
\]
which proves~\eqref{eq:minor-bridge-power}.  The $\Lambda$-sum bound follows from
$\sum_{n\le t}\Lambda(n)\le t\log t$ and $t\le10^{12}t^\beta$.  Similarly,
$\pi(t)\le t\le10^{12}t^\beta\le4\cdot10^{12}(\log t)^3t^\beta$ for $t\ge3$.
\end{proof}

\begin{lemma}[Localized unrestricted minor-arc prime sum]\label{lem:Cmin-local}
Assume $9\alpha\notin\mathbb Z$ and $x\ge x_0(\alpha)$.  Then
\[
\left|\sum_{p\le x}e(\alpha s(p))\right|
\le 2\cdot10^{10}(\log x)^3x^{1-c_1\|9\alpha\|^2}.
\]
\end{lemma}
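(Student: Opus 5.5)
The plan is to feed Lemma~\ref{lem:typeI-explicit} and Proposition~\ref{prop:typeII-explicit} into the Vaughan transfer Lemma~\ref{lem:DMR31-self} with $g(n)=e(\alpha s(n))$, using Lemma~\ref{lem:DMR32-self} to pass from $10$-adic boxes to the intervals Lemma~\ref{lem:DMR31-self} requires. This bounds the $\Lambda$-weighted sum over decimal blocks $(x/10,x]$. We then sum over blocks and finish by partial summation to primes, exactly as in Lemma~\ref{lem:partial-to-primes}.

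\textbf{Type II input.} Take $\beta_1:=\beta_1^{\mathrm{cp}}$, $\beta_2$ and $\delta:=\delta_0$ from~\eqref{eq:minor-localization-params}. Boxes with $\beta_1-\delta_0\le\mu/(\mu+\nu)\le\beta_2+\delta_0$ satisfy DMR's admissibility condition~(13), since $\beta_2+\delta_0$ is precisely the upper admissibility endpoint. For the strengthened carry range we need $\beta_1^{\mathrm{cp}}-\delta_0=\frac13-\frac{\xi}{8}-\frac{\epsilon}{112}=\frac13-\frac{\xi}{4}$, which indeed equals $\frac13-\xi/4$ because $\epsilon/112=\xi/8$. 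This is why the parameters were chosen this way. Proposition~\ref{prop:typeII-explicit} then gives $V=C_{\mathrm{II}}(\mu+\nu)x^{\beta}$ with $\mu+\nu\le\log_{10}x+2$. For $x\ge x_0(\alpha)$, Lemma~\ref{lem:DMR32-self} yields a Type-II bound $6(1+\log x)V$. We also need the hypotheses $x^{1-3\beta_1}\ge10$ and $x^{2\beta_2-1}\ge10$ of Lemma~\ref{lem:DMR31-self}. Both should follow from $x\ge10^{3/\delta_0}$, since $1-3\beta_1=3\xi/8\ge\delta_0$ and $2\beta_2-1$ is a constant multiple of $\epsilon$ that is at least $\delta_0/3$; I expect a small computation to confirm the second of these.

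\textbf{Type I input.} Lemma~\ref{lem:typeI-explicit} applies for $M\le x^{\beta_1}\le x^{1/3}$ and gives $10^8(\log x)^2x^{\beta}$, using $\kappa_q\ge c_1\|9\alpha\|^2$. Set $U$ to be the larger of the two bounds. After converting $\mu+\nu$ and $1+\log x$ into multiples of $\log x$, this is $U\ll(\log x)^2x^\beta$, with constant about $10^8$ from the Type-I term and $6\cdot C_{\mathrm{II}}\cdot(\text{small})$ from the Type-II term. Lemma~\ref{lem:DMR31-self} then gives $|\sum_{x/10<n\le x}\Lambda(n)g(n)|\le20U(\log x)^2$.

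\textbf{Main obstacle.} The difficulty is the exponent of $\log$. Done naively, this chain gives $(\log x)^4$ for the $\Lambda$-sum, whereas the target after partial summation is $(\log x)^3$. The fix I would try first is the one advertised in the Comment: in the Type-I lemma, and in how Lemma~\ref{lem:DMR31-self} invokes it, the Type-I $(\log x)^2$ should really be a single $\log$ plus a constant. Failing that, redistribute the bookkeeping so that the $\Lambda$-sum over $n\le x$ is $\le C(\log x)^3x^\beta$; the division by $\log$ in the partial summation then brings it back to $(\log x)^2$, leaving room for a factor $\log x$.

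\textbf{Summing and partial summation.} Summing over decimal blocks $(x10^{-j-1},x10^{-j}]$ costs only a geometric factor $\sum_j10^{-j\beta}<1.2$ while the blocks stay above $x_0(\alpha)$. Blocks below $x_0(\alpha)$ are handled by the bridge bound~\eqref{eq:minor-bridge-Lambda}, so the constant $10^{12}$ from Lemma~\ref{lem:minor-bridge} enters only through $t<x_0$. Its contribution is about $10^{12}x_0^\beta\log x_0$, which is at most $10^{12}x^\beta\log x$ and is absorbed into $2\cdot10^{10}(\log x)^3x^\beta$ only because $(\log x)^2\ge(3\log 10/\delta_0)^2$ is astronomically large at $x\ge x_0$. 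Finally, partial summation as in Lemma~\ref{lem:partial-to-primes}, together with the prime-power correction $\sqrt x\log x$, gives the stated $2\cdot10^{10}(\log x)^3x^{\beta}$. Checking that all the numerical constants (about $20\cdot10^8$ times small factors) fit under $2\cdot10^{10}$ is routine.
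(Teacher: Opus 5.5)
Your route is the paper's: the Type-II bound from Proposition~\ref{prop:typeII-explicit} localized by Lemma~\ref{lem:DMR32-self} with $(\beta_1^{\mathrm{cp}},\beta_2,\delta_0)$, the Type-I bound from Lemma~\ref{lem:typeI-explicit}, and the transfer Lemma~\ref{lem:DMR31-self} with $U$ equal to the dominant Type-I bound. It then sums geometrically over decimal blocks down to $x_0(\alpha)$, uses the bridge Lemma~\ref{lem:minor-bridge} below that, and finishes by partial summation.

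Your ``main obstacle'', however, is not real, and you should not attempt the proposed fix. The naive chain gives
\[
\Bigl|\sum_{n\le x}\Lambda(n)e(\alpha s(n))\Bigr|\le 2.23\cdot10^{9}(\log x)^4x^{\beta},
\]
and partial summation removes exactly one logarithm:
\begin{itemize}
\item the boundary term is this sum divided by $\log x$;
\item the integral over $[x_0(\alpha),x]$ is at most $2.23\cdot10^9\int(\log t)^2t^{\beta-1}\,dt\le 2.24\cdot10^9(\log x)^2x^\beta$.
\end{itemize}
So $(\log x)^3$ comes out directly, and no logarithm is lost anywhere else. This is exactly what the paper does.

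The single-$\log$ Type-I bound you suggest is not what Lemma~\ref{lem:typeI-explicit} provides. The Comment after Proposition~\ref{prop:typeII-explicit} concerns the numerical constant, not the power of $\log x$, so drop that idea.

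One small slip: Proposition~\ref{prop:typeII-explicit} gives $10^{\beta(\mu+\nu)}\le(100x)^\beta\le 100x^\beta$, not $x^\beta$. The Type-II $U$ is therefore about $4.1\cdot10^7(\log x)^2x^\beta$. This is still below the Type-I $10^8(\log x)^2x^\beta$, so the final constant still fits under $2\cdot10^{10}$.
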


\begin{proof}
Write $S_\Lambda(x;\alpha):=\sum_{n\le x}\Lambda(n)e(\alpha s(n))$.
The condition $x\ge x_0(\alpha)$ implies $x\ge100$.  It also supplies the factor-$10$ slack
required in Lemma~\ref{lem:DMR31-self}: here
$1-3\beta_1^{\mathrm{cp}}=3\delta_0$, and a direct simplification of the displayed formula for
$\beta_2$ gives $2\beta_2-1>\delta_0$ for $0<\epsilon<0.011$.  Hence
$x\ge10^{3/\delta_0}$ implies both $x^{1-3\beta_1^{\mathrm{cp}}}\ge10$ and
$x^{2\beta_2-1}\ge10$.

The range coverage used in the proof is as follows; the constants in the last column are the ones
propagated below.
\begin{center}
\small
\renewcommand{\arraystretch}{1.18}
\begin{tabular}{@{}p{0.24\linewidth}p{0.43\linewidth}p{0.24\linewidth}@{}}
Range & Estimate used & Loss paid \\
\hline
$2\le t<x_0(\alpha)$ &
Lemma~\ref{lem:minor-bridge} and the trivial $\Lambda$ bound &
$10^{12}$ bridge factor \\
$x_0(\alpha)\le t\le x$ &
localized Type-I/Type-II estimates on decimal blocks, then Lemma~\ref{lem:DMR31-self} &
$2.23\cdot10^9(\log t)^4t^\beta$ for $S_\Lambda(t;\alpha)$ \\
partial summation &
split the integral at $T_0=\min\{x,x_0(\alpha)\}$ and use the preceding two rows &
$2\cdot10^{10}(\log x)^3x^\beta$
\end{tabular}
\end{center}

\emph{Step 1 (dyadic Type~II $\Rightarrow$ a Type~II bound of the form (11) in DMR).}
We apply Lemma~\ref{lem:DMR32-self} with $\beta_1=\beta_1^{\mathrm{cp}}$ and $\beta_2$ as in
\eqref{eq:minor-localization-params}.  Every padded $10$-adic block produced in that lemma then has
\[
\frac{\mu}{\mu+\nu}\ge \beta_1^{\mathrm{cp}}-\delta_0=\frac13-\frac{\xi}{4},
\]
so the strengthened carry range in Proposition~\ref{prop:typeII-explicit} is satisfied.  The lower
endpoint is also stronger than DMR/MR's admissible lower endpoint
$((4-2\epsilon)(\epsilon/14))/\epsilon$, while the upper endpoint is exactly DMR's admissible
upper endpoint.

Consider a dyadic block with $10^{\mu-1}<m\le 10^\mu$ and $10^{\nu-1}<n\le 10^\nu$
intersecting $mn\le x$. Then $10^{\mu+\nu}<100x$ and
\[
\mu+\nu\le \log_{10}(100x)=\frac{\log x}{\log 10}+2
\le\Bigl(\frac{1}{\log 10}+\frac{2}{\log 100}\Bigr)\log x<0.869\,\log x.
\]
By Proposition~\ref{prop:typeII-explicit},
\[
\sum_{10^{\mu-1}<m\le 10^\mu}
\left|\sum_{10^{\nu-1}<n\le 10^\nu} b_n\,e\!\bigl(\alpha s(mn)\bigr)\right|
\le
C_{\mathrm{II}}\,(\mu+\nu)\,10^{(1-c_1\|9\alpha\|^2)(\mu+\nu)}
\]
for all $|b_n|\le 1$, where $C_{\mathrm{II}}<64000$.
Using
$10^{(1-c_1\|9\alpha\|^2)(\mu+\nu)}
\le (100x)^{1-c_1\|9\alpha\|^2}\le 100\,x^{1-c_1\|9\alpha\|^2}$,
we obtain a dyadic bound of the form
\[
V(x;\alpha)\,(\log x)\,x^{1-c_1\|9\alpha\|^2}
\qquad\text{with}\qquad
V(x;\alpha)\le 87\,C_{\mathrm{II}}.
\]
Applying the internal localization Lemma~\ref{lem:DMR32-self} gives a bound (11) with
\[
U(x;\alpha)\le 6(1+\log x)\,V(x;\alpha)\,(\log x)\,x^{1-c_1\|9\alpha\|^2}.
\]
Since $x\ge 100$, one has $1+\log x\le \bigl(1+\frac{1}{\log 100}\bigr)\log x<1.218\,\log x$, hence
\[
U(x;\alpha)\le 4.1\times 10^{7}\,(\log x)^2\,x^{1-c_1\|9\alpha\|^2}.
\]
(The displayed numerical constants here and in the rest of \S\ref{sec:inputs} are deliberately
rounded up.  The bound $4.1\times 10^7$ uses the loose unrolling $K_{10}\le 10^6$ proved in Step~1
of Lemma~\ref{lem:S2-input} and the divisor envelope $C_\tau=10^3$; the proof only needs the
displayed rounded value, and
Lemma~\ref{lem:insensitivity} shows that the precise value of $C_{\mathrm{min}}$ does not
affect $M$.)

\emph{Step 2 ($U$ $\Rightarrow$ $\Lambda$-sum).}
Lemma~\ref{lem:typeI-explicit} supplies the Type-I hypothesis of Lemma~\ref{lem:DMR31-self} with
\[
U_{\mathrm{I}}(x;\alpha):=10^8(\log x)^2x^{1-c_1\|9\alpha\|^2}.
\]
Step~1 supplies the Type-II hypothesis with the smaller quantity
$U_{\mathrm{II}}(x;\alpha)\le4.1\cdot10^7(\log x)^2x^{1-c_1\|9\alpha\|^2}$.  We therefore apply
Lemma~\ref{lem:DMR31-self} with $U_0:=U_{\mathrm I}$ and obtain
\[
\left|\sum_{x/10<n\le x}\Lambda(n)\,e(\alpha s(n))\right|
\le 2\cdot10^9(\log x)^4x^{1-c_1\|9\alpha\|^2}.
\]
Summing over the disjoint intervals $(x/10^{j+1},x/10^j]$ whose upper endpoint is at least
$x_0(\alpha)$ and using
$\sum_{j\ge 0}10^{-j(1-c_1\|9\alpha\|^2)}\le (1-10^{-1+c_1/4})^{-1}<1.112$ gives a contribution
at most
\[
2.224\times 10^{9}(\log x)^4\,x^{1-c_1\|9\alpha\|^2}.
\]
There remains at most one initial segment $n\le T<x_0(\alpha)$.  Lemma~\ref{lem:minor-bridge}
gives
\[
\left|\sum_{n\le T}\Lambda(n)e(\alpha s(n))\right|
\le 10^{12}T^{1-c_1\|9\alpha\|^2}\log T
\le 10^{12}x^{1-c_1\|9\alpha\|^2}\log x.
\]
In the present Type-I/Type-II range $x\ge x_0(\alpha)\ge 10^{3/\delta_0}$ and
$\delta_0<0.011/112$, hence $\log x>7\cdot10^4$; consequently
$10^{12}\log x<10^{-2}(\log x)^4$.  Combining the high blocks with the initial segment gives
\[
|S_\Lambda(x;\alpha)|
\le 2.23\times 10^{9}(\log x)^4\,x^{1-c_1\|9\alpha\|^2}.
\]

\emph{Step 3 (from $\Lambda$ to primes).}
Removing prime powers and applying partial summation,
\[
\sum_{p\le x} e(\alpha s(p))
=
\frac{S_\Lambda(x;\alpha)}{\log x}
+\int_{2}^{x}\frac{S_\Lambda(t;\alpha)}{t(\log t)^2}\,dt
\ +\ R_{\mathrm{pp}}(x;\alpha),
\]
where the prime-power correction satisfies
$|R_{\mathrm{pp}}(x;\alpha)|\le \sum_{k\ge 2}\pi(x^{1/k})\le 2\sqrt{x}$.  Since
$\|9\alpha\|\le 1/2$, $\beta\ge 1-c_1/4\ge 0.9996$, and in particular $\beta>1/2$.

The Step~2 bound for $S_\Lambda(t;\alpha)$ was proved only in the localized range
$t\ge x_0(\alpha)$.  We therefore split the partial-summation integral at
$T_0:=\min\{x,x_0(\alpha)\}=x_0(\alpha)$.  On $2\le t<T_0$, Lemma~\ref{lem:minor-bridge} gives
$t\le10^{12}t^\beta$, and the trivial bound
$|S_\Lambda(t;\alpha)|\le\sum_{n\le t}\Lambda(n)\le t\log t$ gives
\[
\int_{2}^{T_0}\frac{|S_\Lambda(t;\alpha)|}{t(\log t)^2}\,dt
\le
\frac{10^{12}}{\log 2}\int_{2}^{T_0}t^{\beta-1}\,dt
\le
\frac{10^{12}}{\beta\log 2}\,x^\beta
<1.45\cdot10^{12}x^\beta.
\]
Since $x\ge100$, this is $<1.5\cdot10^{10}(\log x)^3x^\beta$.

On $x_0(\alpha)\le t\le x$ we apply the Step~2 bound
$|S_\Lambda(t;\alpha)|\le 2.23\cdot 10^{9}(\log t)^{4}t^{\beta}$ to each remaining piece:
\begin{itemize}
  \item[--] The boundary term satisfies
        $|S_\Lambda(x;\alpha)/\log x|\le 2.23\cdot 10^{9}(\log x)^{3}x^{\beta}$.
  \item[--] For the integral, $\log t\le\log x$ on $[2,x]$ gives
    \[
    \begin{aligned}
      \int_{x_0(\alpha)}^{x}\!\!\frac{|S_\Lambda(t;\alpha)|}{t(\log t)^{2}}\,dt
      &\ \le\ 2.23\cdot 10^{9}(\log x)^{2}\!\int_{2}^{x}t^{\beta-1}\,dt \\
      &\ \le\ \frac{2.23\cdot 10^{9}}{\beta}(\log x)^{2}x^{\beta}
      \ \le\ 2.24\cdot 10^{9}(\log x)^{2}x^{\beta}.
    \end{aligned}
    \]
    For $x\ge 100$, $\log x\ge 4.605$, hence
    $(\log x)^{2}\le(\log x)^{3}/4.605$, so this is at most $4.87\cdot 10^{8}(\log x)^{3}x^{\beta}$.
  \item[--] Since $\beta>1/2$, $\sqrt{x}\le x^{\beta}$, so the prime-power correction is
        bounded by $2x^{\beta}\le 0.02\cdot 10^{6}(\log x)^{3}x^{\beta}$ for $x\ge 100$.
\end{itemize}
Summing the three contributions proves the lemma.
\end{proof}

\begin{lemma}[Explicit minor-arc prime sum bound in residue classes]\label{lem:Cmin}
Fix $q=10$ and let $c_1$ be as in Lemma~\ref{lem:c1}.  Define
\[
C_{\mathrm{min}}:=4\cdot 10^{12}
\]
(the localized derivation gives $2\cdot10^{10}$ before residue classes; the larger displayed value
covers the direct bridge below the DMR localization threshold).
Then for all $x\ge 3$, all $\alpha\in\mathbb{R}$, and all integers $m$ with $\gcd(m,9)=1$,
\[
\left|\sum_{\substack{p\le x\\ p\equiv m\ (\mathrm{mod}\ 9)}} e(\alpha s(p))\right|
\le
C_{\mathrm{min}}(\log x)^3\,x^{\,1-c_1\|9\alpha\|^2}.
\]
\end{lemma}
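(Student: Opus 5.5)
The natural route is to detect the residue class by additive characters modulo $9$ and reduce to the unrestricted estimates already proved. Write
\[
\sum_{\substack{p\le x\\ p\equiv m\ (9)}} e(\alpha s(p))
=\frac19\sum_{j=0}^{8} e\!\left(-\frac{jm}{9}\right)\sum_{p\le x} e\!\left(\frac{jp}{9}\right)e(\alpha s(p)).
\]
By Lemma~\ref{lem:mod9}, $p\equiv s(p)\pmod 9$, so $e(jp/9)=e(js(p)/9)$. The inner sum is therefore $\sum_{p\le x}e(\alpha_j s(p))$ with $\alpha_j:=\alpha+j/9$. The key point is that $9\alpha_j=9\alpha+j\equiv 9\alpha\pmod 1$, so $\|9\alpha_j\|=\|9\alpha\|$ for every $j$. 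Likewise $9\alpha_j\notin\mathbb Z$ if and only if $9\alpha\notin\mathbb Z$, and the quantities $\varepsilon_{10}$, $\xi$, and $x_0$ built from the digit-sum phase depend on $\alpha_j$ only through $9\alpha_j$ modulo $1$. It would be worth checking that last point. Failing it, we simply work with $x_0(\alpha_j)$ separately for each $j$. The triangle inequality then gives the residue-class sum at most $\max_j|\sum_{p\le x}e(\alpha_j s(p))|$, so the factor $1/9$ cancels the nine terms.

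The unrestricted sum with $9\alpha_j\notin\mathbb Z$ is handled in two ranges. If $x\ge x_0(\alpha_j)$, Lemma~\ref{lem:Cmin-local} gives the bound $2\cdot10^{10}(\log x)^3x^{1-c_1\|9\alpha\|^2}$. If $3\le x<x_0(\alpha_j)$, Lemma~\ref{lem:minor-bridge}, inequality \eqref{eq:minor-bridge-prime}, gives $4\cdot10^{12}(\log x)^3x^{\beta}$. Both are at most $C_{\mathrm{min}}(\log x)^3x^{1-c_1\|9\alpha\|^2}$.

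The degenerate case $9\alpha\in\mathbb Z$ must be treated separately, since then the exponent is $1$ and no cancellation is needed. There the bound reduces to
\[
\pi(x;9,m)\le x\le C_{\mathrm{min}}(\log x)^3x,
\]
which is trivial for $x\ge3$. In fact this trivial bound works directly, even without the character expansion.

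I expect the main obstacle to be bookkeeping rather than analysis. The two points to confirm are that the localization threshold and the parameters $\epsilon,\xi,\delta_0$ in \eqref{eq:minor-localization-params} are indeed functions of $9\alpha$ modulo $1$, or else are handled per $j$, and that the bridge constant $4\cdot10^{12}$ is what dictates $C_{\mathrm{min}}$. If the character detection loses more than the factor $1/9$ recovers, we can instead observe that the sum over $p\equiv m$ is an average of twisted sums each bounded by the same quantity, so no loss occurs.
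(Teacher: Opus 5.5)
Your proposal is correct and matches the paper's proof. The paper first establishes the unrestricted bound for every real $\theta$: trivially when $9\theta\in\mathbb{Z}$, via Lemma~\ref{lem:minor-bridge} below $x_0(\theta)$, and via Lemma~\ref{lem:Cmin-local} above it. It then applies the same mod-$9$ character projection, using $s(p)\equiv p\pmod 9$ and $\|9(\alpha+h/9)\|=\|9\alpha\|$. On the point you flagged, $\varepsilon_{10}$ is not a function of $9\alpha$ modulo $1$, because its $\tau_{10}$ component involves $\phi_{10}(\alpha)$ itself. So your per-$j$ fallback with $x_0(\alpha_j)$ is the correct reading, and it is exactly what the paper's ``for every $\theta$'' formulation does.
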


\begin{proof}
If $9\alpha\in\mathbb{Z}$ the unrestricted prime sum is at most $\pi(x)\le x$, which is already
bounded by $4\cdot10^{12}(\log x)^3x^{1-c_1\|9\alpha\|^2}$ for $x\ge3$.  Now assume
$9\alpha\notin\mathbb Z$.  If $3\le x<x_0(\alpha)$, Lemma~\ref{lem:minor-bridge} gives the same
unrestricted bound; if $x\ge x_0(\alpha)$, Lemma~\ref{lem:Cmin-local} gives the stronger
$2\cdot10^{10}$ bound.  Thus, for every real $\theta$ with $9\theta\notin\mathbb Z$ or
$9\theta\in\mathbb Z$, the unrestricted estimate
\begin{equation}\label{eq:global-unrestricted-minor}
\left|\sum_{p\le x}e(\theta s(p))\right|
\le 4\cdot10^{12}(\log x)^3x^{1-c_1\|9\theta\|^2}
\end{equation}
holds for all $x\ge3$.

Finally use the residue-class projection, as in the unnumbered display in~\cite[p.~17, just above
(38)]{DMRcomp2}:
\[
\sum_{\substack{p\le x\\ p\equiv m\pmod 9}} e(\alpha s(p))
=\frac19\sum_{h=0}^{8}e(-hm/9)
\sum_{p\le x} e\bigl((\alpha+h/9)s(p)\bigr),
\]
because $s(p)\equiv p\pmod9$.  Taking absolute values and applying
\eqref{eq:global-unrestricted-minor} to $\theta=\alpha+h/9$ gives the stated bound, since
$\|9(\alpha+h/9)\|=\|9\alpha+h\|=\|9\alpha\|$.
\end{proof}

\subsection*{Insensitivity of \texorpdfstring{$M$}{M} to the minor-arc structural constant}\label{subsec:insensitivity}

Step~1 of Lemma~\ref{lem:S2-input} proves $K_{10}\le 10^6$, an explicit bound on the cumulative
implicit constant in MR'10's derivation.  We show that the resulting threshold $M$ is provably
independent of any honest upper bound on this constant up to $10^{9}$ (an insensitivity range
three orders of magnitude wider than the proved bound).

\begin{lemma}[Quantitative insensitivity of $M$ to the minor-arc structural constant]\label{lem:insensitivity}
Suppose the cumulative implicit constant $K_{10}$ in Step~1 of Lemma~\ref{lem:S2-input} is replaced
by an arbitrary upper bound $B\ge 10^6$ (in place of the value $K_{10}\le 10^6$ proved there), while
the explicit prefactors in~\eqref{eq:S5-bound} remain the displayed trigonometric constants.  Let
$C_{\mathrm{II}}(B)$, $C_{\mathrm{min}}(B)$, $Y_{\mathrm{min}}(B)$, $M(B)$ denote the resulting values of
the corresponding constants and thresholds.  Then at the working parameters
$(q,\eta,\nu)=(10,0.0545,0.2859)$:
\[
\text{For any } B\le 10^{9},\quad
Y_{\mathrm{min}}(B) < 8.00\times 10^{31},
\]
and consequently the rigorous headline bound $M(B)<1.78\times10^{32}$ in Theorem~\ref{thm:main} is unchanged.
In words: the theorem does not depend on the precise value of the MR'10 implicit constant, provided
that some honest upper bound below $10^{9}$ holds --- which Step~1 of Lemma~\ref{lem:S2-input}
establishes with three orders of magnitude of margin ($K_{10}\le 10^6$).
\end{lemma}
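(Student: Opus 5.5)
The plan is to trace how $B$ propagates through the constants and confirm that, even at $B=10^9$, every quantity depending on $B$ stays inside the margin already used to define $C_{\mathrm{min}}$. The first step is bookkeeping. In Step~4 of Lemma~\ref{lem:S2-input} the $S_2$ constant becomes $2B\cdot 49.7<10^{2}B$, so the padded value $10^8$ is replaced by $10^2B\le 10^{11}$. Proposition~\ref{prop:typeII-explicit} then gives
\[
C_{\mathrm{II}}(B)=2\sqrt{10}\sqrt{1+C_\tau\log 10+10^2B}\le 2\sqrt{10}\cdot 3.2\cdot 10^{5}<2.1\cdot 10^{6},
\]
because the constant grows only like $\sqrt B$. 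Feeding this into Step~1 of Lemma~\ref{lem:Cmin-local}, with $V\le 87\,C_{\mathrm{II}}(B)$ and the factor $6\cdot 1.218$, gives $U_{\mathrm{II}}(B)\le 1.4\cdot10^{9}(\log x)^2x^{\beta}$.

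The next step is the combination. Lemma~\ref{lem:DMR31-self} is applied with $U_0:=\max(U_{\mathrm I},U_{\mathrm{II}}(B))\le 1.4\cdot 10^9(\log x)^2x^\beta$, which yields a $\Lambda$-sum bound of about $3\cdot10^{10}(\log x)^4x^\beta$. The bridge term and the geometric factor $1.112$ are handled exactly as before. Partial summation as in Step~3 then gives a localized prime-sum constant of roughly $3.2\cdot10^{10}$, replacing $2\cdot10^{10}$. The crucial observation is that this is still far below the bridge constant $4\cdot10^{12}$ from Lemma~\ref{lem:minor-bridge}. Therefore the maximum in the proof of Lemma~\ref{lem:Cmin} is unchanged, and $C_{\mathrm{min}}(B)=4\cdot10^{12}=C_{\mathrm{min}}$.

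The last step uses the definition of $Y_{\mathrm{min}}$ in \S\ref{sec:derivation_m}. There $Y_{\mathrm{min}}$ is a function of $C_{\mathrm{min}}$, $c_1$ and the working parameters only, and $B$ enters nowhere else. Since $C_{\mathrm{min}}(B)=C_{\mathrm{min}}$, we get $Y_{\mathrm{min}}(B)=Y_{\mathrm{min}}<8.00\times10^{31}$, and $Y_\ast$, and hence $M$, are unchanged. If that definition turns out to use the localized constant directly instead of $C_{\mathrm{min}}$, I would instead note that $Y_{\mathrm{min}}$ depends on this constant only through a term of order $\log C$, so a factor-$10^{2}$ change shifts it negligibly relative to the $10^{31}$ scale. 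I would check this against the certified numerical value.

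The main obstacle is verifying that $B$ enters nowhere else unnoticed, in particular through the absorption step in Step~2 of Lemma~\ref{lem:Cmin-local}, where $10^{12}\log x<10^{-2}(\log x)^4$. That step is $B$-independent, so it goes through. The $\sqrt B$ damping is what makes a thousandfold increase harmless.
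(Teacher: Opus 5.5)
Your proposal is correct and follows essentially the paper's route: $C_{\mathrm{II}}(B)$ grows only like $\sqrt{B}$ (about $2\cdot10^{6}$ at $B=10^{9}$), so the localized Type-I/Type-II constant stays far below the bridge constant $4\cdot10^{12}$ that already fixes $C_{\mathrm{min}}$, and therefore $Y_{\mathrm{min}}$ and $M$ are unaffected. The only difference from the paper is minor: it pads to $C_{\mathrm{min}}(B)\le4.1\cdot10^{12}$ and re-checks the $Y_{\mathrm{min}}$ inequality at $8.00\times10^{31}$, whereas you observe that $C_{\mathrm{min}}$ is exactly invariant; do not lean on your fallback heuristic, though, because that certificate has a logarithmic margin of only about $1.8<\log 100$, so a genuine factor-$10^{2}$ increase in a constant entering $Y_{\mathrm{min}}$ would break $Y_{\mathrm{min}}(B)<8.00\times10^{31}$, and it is the domination by the bridge constant, not the $\log C$ dependence, that makes the lemma work.
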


\begin{proof}
Proposition~\ref{prop:typeII-explicit} controls $C_{\mathrm{II}}(B)$ by
\[
C_{\mathrm{II}}(B) = 2\sqrt{10}\sqrt{1+C_\tau\log 10+100B}.
\]
Since $C_\tau=10^3$, this gives directly
$C_{\mathrm{II}}(10^{9})<2.01\times 10^{6}$.
Reading the linear dependence on $C_{\mathrm{II}}$ from Lemma~\ref{lem:Cmin-local}, together with
the bridge in Lemma~\ref{lem:minor-bridge} and the residue-class projection in Lemma~\ref{lem:Cmin},
gives
\[
C_{\mathrm{min}}(B)\le
\max\{\,4\cdot10^{12},\ 5\cdot10^4 C_{\mathrm{II}}(B)\,\}.
\]
The first entry is the direct bridge below $x_0(\alpha)$, and the second is the localized
Type-I/Type-II contribution after Lemmas~\ref{lem:DMR32-self} and~\ref{lem:DMR31-self} and partial
summation.  Hence every $B\le10^9$ is covered by $C_{\mathrm{min}}(B)\le4.1\times10^{12}$.

The minor-arc threshold $Y_{\mathrm{min}}(B) = \log x_{\mathrm{min}}(B)$ is the smallest $Y$ such that
\[
\bigl(C_{\mathrm{min}}(B)/9\bigr)\,Y^{9/2}\,e^{-81 c_1 Y^{2\eta}} \le R,\qquad
R := \sqrt{\log 10}/(240\,\sqrt{2\pi\sigma^2}).
\]
Taking logarithms,
\begin{equation}\label{eq:Ymin-inequality}
81\,c_1\,Y^{2\eta} \;\ge\; \log\!\bigl(C_{\mathrm{min}}(B)/(9R)\bigr) + (9/2)\log Y.
\end{equation}
At $\eta=0.0545$ and $c_1>0.0015062887$, one has $81c_1>0.122009$ and $2\eta=0.109$.
Let $\widehat Y:=8.00\times10^{31}$.  Outward-rounded interval arithmetic gives
\[
73.459<\log\widehat Y<73.460,\qquad
0.00790<9R<0.00791,\qquad
\widehat Y^{0.109}>3002.0.
\]
Therefore the left-hand side of~\eqref{eq:Ymin-inequality} at $\widehat Y$ is greater than
\[
0.122009\cdot 3002.0>366.27,
\]
whereas the right-hand side is at most
\[
\log(4.1\times10^{12}/0.00790)+4.5\cdot73.460<364.46.
\]
The margin is greater than $1.8$, so $Y_{\mathrm{min}}(10^{9})<\widehat Y$; by monotonicity in $B$,
the same holds for every $B\le10^9$.

Since Corollary~\ref{cor:Lstar-upper-bound} gives $Y_{43}^\ast<9.10\times10^{31}$ and
\S\ref{subsec:solve-major} gives $Y_{\mathrm{maj}}<7.90\times10^{31}$, replacing $K_{10}$ by any
honest upper bound $\le 10^{9}$ leaves the closed-form maximum bounded by the same
$Y_\ast\le 9.10\times10^{31}$.  Thus the theorem's $M<1.78\times10^{32}$ bound is unchanged.
\end{proof}

\section{An explicit major-arc error bound}\label{sec:maj}

The major-arc argument in~\cite{DMRcomp2} compares the distribution of (middle) digits of primes
to an i.i.d.\ digit model via characteristic functions and a truncation of the digit sum.  In this
section we unroll the proof of~\cite[Prop.~2.2]{DMRcomp2}
(see~\cite[\S4.1--\S4.3]{DMRcomp2}) and make all constants explicit.

We track every approximation step on the natural major-arc range
\[
|\alpha|\le(\log x)^{\eta-1/2}
\]
and the induced $t$-range $|t|\le t_{\max}(x)$.
We also give a self-contained i.i.d.-to-Gaussian comparison with explicit remainder and a sign
argument ($H(t)\le 0$) to avoid avoidable slack.
The only external analytic input is the explicit substitute for~\cite[Lemma~4.3]{DMRcomp2}
(Lemma~\ref{lem:DMR43}, with constant $C_{\mathrm{DMR}}$); all other constants are derived here.

Throughout this section $q=10$, $q-1=9$, $\mu:=\mu_{10}=(q-1)/2=9/2$,
$\sigma^2:=\sigma_{10}^2=(q^2-1)/12=99/12$, and $e(t):=\exp(2\pi i t)$.
The explicit digit-block comparison is used only at integral powers $x=q^L$ with
$L\in\mathbb Z_{\ge1}$; this is the setting needed in the proof of Theorem~\ref{thm:main}.
We write $\pi(x;9,m)$ for primes $\le x$ in the residue class $m\bmod 9$.

\begin{lemma}[Fej\'er approximation for the centered $q$-adic digit function]\label{lem:fejer-digit}
Let $q\ge 2$ and $\mu=(q-1)/2$. Define the centered digit function
\[
f(u):=\lfloor qu\rfloor-\mu\qquad(u\in[0,1)).
\]
Let $H\ge 1$ and let $F_H$ be the Fej\'er kernel of order $H$ (normalized so that
$F_H\ge 0$ and $\int_0^1 F_H=1$).  Set $f_H:=f*F_H$.

\begin{enumerate}
\item[\rm (i)] $f_H$ is a trigonometric polynomial of degree $\le H$ with Fourier expansion
\[
f_H(u)=\sum_{|h|\le H} a(h)\,e(hu),\qquad a(0)=0,
\]
and
\begin{equation}\label{eq:fejer-l1}
\sum_{|h|\le H}|a(h)|\le \frac{2q}{\pi}\bigl(1+\log(H+1)\bigr).
\end{equation}

\item[\rm (ii)] For any $\Delta\in(0,1/(2q)]$, let
\[
U_\Delta:=\bigcup_{k=0}^{q-1}\Bigl(\frac{k}{q}-\Delta,\frac{k}{q}+\Delta\Bigr)\pmod{1}
\]
be the union of digit-boundary intervals (the $k=0$ interval, taken mod $1$, covers the
wrap-around jump of $f$ at $u\equiv 0$).  Then for all $u\in[0,1)$,
\begin{equation}\label{eq:fejer-pointwise}
|f(u)-f_H(u)|\le q\,\mathbf 1_{U_\Delta}(u)+\frac{q}{(H+1)\Delta}.
\end{equation}

\item[\rm (iii)] (Indicator version, used for joint digit events.)
Fix $\ell\in\{0,1,\dots,q-1\}$ and set
\[
g_\ell(u):=\mathbf 1_{[\ell/q,\,(\ell+1)/q)}(u),\qquad (u\in[0,1)).
\]
Let $g_{\ell,H}:=g_\ell*F_H$. Then $g_{\ell,H}$ has Fourier expansion
\[
g_{\ell,H}(u)=\sum_{|h|\le H} b_\ell(h)\,e(hu),
\]
and
\begin{equation}\label{eq:fejer-ind-l1}
\sum_{|h|\le H}|b_\ell(h)|\le \frac{1}{q}+\frac{2}{\pi}\bigl(1+\log(H+1)\bigr).
\end{equation}
Moreover, for every $\Delta\in(0,1/(2q)]$ and all $u\in[0,1)$,
\begin{equation}\label{eq:fejer-ind-pointwise}
|g_\ell(u)-g_{\ell,H}(u)|\le \mathbf 1_{U_\Delta}(u)+\frac{1}{(H+1)\Delta}.
\end{equation}
\end{enumerate}
\end{lemma}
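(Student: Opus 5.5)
The plan is to compute Fourier coefficients of $f$ and $g_\ell$ explicitly and then use the standard Fejér multiplier. Convolution with $F_H$ multiplies the $h$-th Fourier coefficient by $(1-|h|/(H+1))_+$. Hence $f_H$ and $g_{\ell,H}$ are trigonometric polynomials of degree $\le H$, with $a(h)=(1-|h|/(H+1))\hat f(h)$ and $b_\ell(h)=(1-|h|/(H+1))\hat g_\ell(h)$.

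For $g_\ell$ a direct integration gives $\hat g_\ell(0)=1/q$ and, for $h\neq0$,
\[
|\hat g_\ell(h)|=\frac{|\sin(\pi h/q)|}{\pi|h|}\le\frac{1}{\pi|h|}.
\]
Summing over $1\le|h|\le H$ gives $(2/\pi)H_H\le(2/\pi)(1+\log H)$, which proves \eqref{eq:fejer-ind-l1}. For $f$ we write $f=\sum_{k=0}^{q-1}(k-\mu)g_k$, so $\hat f(0)=\sum_k(k-\mu)/q=0$. Alternatively, $f(u)=qu-\mu-\{qu\}$ is a sawtooth, and its coefficients vanish unless $q\mid h$, in which case $\hat f(h)=-\widehat{\{\cdot\}}(h/q)=1/(2\pi i h/q)$. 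Then $\sum_{|h|\le H}|a(h)|\le \sum_{0<|j|\le H/q}\frac{q}{2\pi |j|}\le\frac{q}{\pi}(1+\log(H+1))$. This is within the claimed $\frac{2q}{\pi}(1+\log(H+1))$, with room to spare.

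For the pointwise bounds (ii) and (iii) the argument is local. Fix $u\notin U_\Delta$. Then $f$ (respectively $g_\ell$) is constant on $(u-\Delta,u+\Delta)$, because every jump occurs at a point $k/q$. Writing $f_H(u)-f(u)=\int (f(u-t)-f(u))F_H(t)\,dt$, only the range $\|t\|\ge\Delta$ contributes. There $|f(u-t)-f(u)|\le q-1\le q$, since the range of $f$ has diameter $q-1$; for $g_\ell$ the corresponding bound is $1$. So the difference is bounded by $q\int_{\Delta\le\|t\|\le1/2}F_H$.

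The standard tail estimate is $F_H(t)=\frac{1}{H+1}\bigl(\frac{\sin(\pi(H+1)t)}{\sin\pi t}\bigr)^2\le\frac{1}{(H+1)4t^2}$ for $|t|\le1/2$, using $\sin\pi t\ge2t$. Integrating over $\Delta\le|t|\le1/2$ gives $\le\frac{2}{4(H+1)}\cdot\frac1\Delta=\frac{1}{2(H+1)\Delta}$, which is better than needed. For $u\in U_\Delta$ we use instead the trivial bound: $|f(u)-f_H(u)|\le q-1\le q$, because $f_H$ is an average of values of $f$ and so lies in $[-\mu,\mu]$. Likewise $|g_\ell-g_{\ell,H}|\le1$. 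Adding the two bounds gives \eqref{eq:fejer-pointwise} and \eqref{eq:fejer-ind-pointwise} in all cases.

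The main point needing care is the constant in the $L^1$ coefficient bound for $f$, including the correct normalization of the sawtooth coefficients and the harmonic-sum estimate $H_n\le1+\log(n+1)$. The pointwise part is routine once the Fejér tail estimate and the locality of the jumps are in place. Wrap-around at $u\equiv0$ is handled by the $k=0$ interval taken mod $1$.
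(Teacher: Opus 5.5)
Your pointwise estimates in (ii) and (iii), and your explicit formula for $\hat g_\ell$, are correct. They amount to the paper's argument; the paper bounds the coefficients via total variation, $|\hat f(h)|\le \mathrm{Var}(f)/(2\pi|h|)$ with $\mathrm{Var}(f)\le 2q$, instead of computing them.

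The step that fails is the sawtooth computation of $\hat f$ in (i). The identity $f(u)=qu-\mu-\{qu\}$ holds on $[0,1)$, but $u\mapsto qu$ is not $1$-periodic. On $\mathbb R/\mathbb Z$ the first part is $q\{u\}-\mu$, itself a sawtooth whose $h$-th coefficient is $-q/(2\pi i h)$ for \emph{every} $h\neq 0$, and you dropped this term. The coefficient of $\{qu\}$ is $-q/(2\pi i h)$ when $q\mid h$ and $0$ otherwise, so
\[
\hat f(h)=-\frac{q}{2\pi i h}\quad(q\nmid h),\qquad \hat f(h)=0\quad(q\mid h,\ h\neq0).
\]
This is the exact opposite of the support you assert: for $q=2$, $f$ is a square wave with only odd harmonics. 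Your own computation already rules out your claim, since $|\hat g_\ell(h)|=|\sin(\pi h/q)|/(\pi|h|)$ vanishes at nonzero multiples of $q$ and $f=\sum_k(k-\mu)g_k$. Also, the summand $q/(2\pi|j|)$ you display does not follow from your formula $1/(2\pi i h/q)$, which at $h=qj$ has modulus $1/(2\pi|j|)$.

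The inequality you end with is nevertheless true, so the repair is easy. With the correct coefficients,
\[
\sum_{|h|\le H}|a(h)|\le 2\sum_{h=1}^{H}\frac{q}{2\pi h}\le \frac{q}{\pi}(1+\log H),
\]
which is better than \eqref{eq:fejer-l1} by a factor $2$. You can get these coefficients by finishing your own decomposition $f=\sum_k(k-\mu)g_k$. For $\omega=e(-h/q)\neq 1$ one has $\hat g_k(h)=\omega^k(1-\omega)/(2\pi i h)$ and $\sum_{k=0}^{q-1}(k-\mu)\omega^k=q/(\omega-1)$, which gives $\hat f(h)=-q/(2\pi i h)$. Alternatively, use the paper's variation bound, which yields the stated constant $2q/\pi$ with no computation at all. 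The explicit route buys you the sharper constant $q/\pi$ and the exact support, but neither is needed for the statement.
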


\begin{proof}
(i) Since the periodic step function $f$ has total variation at most $2(q-1)\le2q$ and mean zero,
we have $a(0)=0$.
Moreover, for $h\ne 0$ the bounded-variation Fourier bound gives
\[
|\widehat f(h)|\le \frac{{\rm Var}(f)}{2\pi|h|}\le \frac{q}{\pi|h|}.
\]
Fej\'er summation multiplies $\widehat f(h)$ by $(1-|h|/(H+1))_+$, hence
\[
\sum_{|h|\le H}|a(h)|\le 2\sum_{h=1}^H |\widehat f(h)|
\le \frac{2q}{\pi}\sum_{h=1}^H\frac{1}{h}
\le \frac{2q}{\pi}\bigl(1+\log(H+1)\bigr),
\]
which is~\eqref{eq:fejer-l1}.

(ii) If $u\notin U_\Delta$, then $f$ is constant on $(u-\Delta,u+\Delta)$ (mod~$1$), hence
\[
f(u)-f_H(u)=\int_0^1 \bigl(f(u)-f(u-t)\bigr)\,F_H(t)\,dt
= \int_{|t|\ge \Delta}\bigl(f(u)-f(u-t)\bigr)\,F_H(t)\,dt,
\]
where $|t|$ denotes distance to the nearest integer. Since the range of $f$ has diameter at most $q$,
we obtain
\[
|f(u)-f_H(u)|\le q\int_{|t|\ge \Delta}F_H(t)\,dt.
\]
Using the standard Fej\'er kernel bound
\[
F_H(t)=\frac{1}{H+1}\left(\frac{\sin\bigl(\pi(H+1)t\bigr)}{\sin(\pi t)}\right)^2
\le \frac{1}{H+1}\cdot\frac{1}{\sin^2(\pi t)}
\]
and, for $|t|\le 1/2$,
\[
F_H(t)
\le \frac{1}{H+1}\cdot\frac{1}{4t^2},
\]
we get
\[
\int_{|t|\ge \Delta}F_H(t)\,dt
\le 2\int_\Delta^{1/2}\frac{1}{H+1}\cdot\frac{1}{4t^2}\,dt
\le \frac{1}{(H+1)\Delta}.
\]
Thus for $u\notin U_\Delta$, $|f(u)-f_H(u)|\le q/((H+1)\Delta)$, and the trivial bound
$|f-f_H|\le q$ gives~\eqref{eq:fejer-pointwise} for all $u$.

(iii) The function $g_\ell$ has total variation ${\rm Var}(g_\ell)=2$ on $[0,1)$ and
$\widehat g_\ell(0)=1/q$.  Repeating the nonzero-frequency argument of (i) with
${\rm Var}(g_\ell)=2$ and then adding the zero Fourier coefficient gives
~\eqref{eq:fejer-ind-l1}.  Repeating (ii) with the range diameter of $g_\ell$ bounded by $1$
gives~\eqref{eq:fejer-ind-pointwise}.
\end{proof}

\begin{lemma}[Explicit choice of $c_4$ and the moment comparison bound]\label{lem:c4-explicit}
Fix the decimal base $q=10$ and a parameter $\nu$ with $0<\nu<1/2$.  Let $L\ge3$ be an integer,
put $x=q^L$, and write
\[
r:=\lceil L^\nu\rceil,\qquad L_0:=L-2r.
\]
Assume $L_0\ge 1$ and set
\[
H:=q^{\lfloor r/3\rfloor},\qquad K:=\bigl\lfloor r/2\bigr\rfloor,\qquad c_4:=\frac{\log q}{6}.
\]

Assume moreover that $x$ is large enough that:
\begin{enumerate}
\item $K\le \frac25 L$ (so Lemma~\ref{lem:DMR43} applies with this $K$);
\item $q^{K}\le (q-1)q^r/H$ and $(q-1)q^{L-r+1}\le xq^{-K}$ (so all denominators $Q$ arising below lie in
      $[q^K,xq^{-K}]$);
\item $x\ge x_{\mathrm{AP}}$ so that
      $\pi(x;9,m)\ge\frac{x}{6\log x}$ for $\gcd(m,9)=1$ (Lemma~\ref{lem:pi9-lb});
\item and $x\ge x_{45}:=q^{L_{45}}$, where $L_{45}$ is the least integer $L\ge 3$ such that the
      auxiliary inequalities \textup{(C1)}--\textup{(C8)} in the proof hold.
\end{enumerate}
Then the following statements hold uniformly in $m$ with $\gcd(m,q-1)=1$:

\smallskip\noindent
\textbf{(A) Boundary probability.}
Let $U_\Delta\subset[0,1)$ denote the union of the $q$ digit-boundary intervals (as in Lemma~\ref{lem:fejer-digit}).
Then for every $j$ with $r\le j\le L-r-1$ and every $0<\Delta\le \frac1{2q}$ we have
\[
\mathbb P\!\left(\Bigl\{\frac{p}{q^{j+1}}\Bigr\}\in U_\Delta\right)
\le 2q\Delta + e^{-c_4 L^\nu},
\]
where the probability is over primes $p\le x$ with $p\equiv m\pmod{q-1}$ chosen uniformly.

\smallskip\noindent
\textbf{(B) Joint digit distribution (distinct indices).}
For every integer $d\ge 1$, every choice of \emph{distinct} indices
\[
r\le j_1<j_2<\cdots<j_d\le L-r-1,
\]
and digits $\ell_1,\dots,\ell_d\in\{0,1,\dots,q-1\}$, one has
\[
\left|\mathbb P\!\left(\varepsilon_{j_1}(p)=\ell_1,\dots,\varepsilon_{j_d}(p)=\ell_d\right)-q^{-d}\right|
\le (4L^\nu)^d e^{-c_4 L^\nu}.
\]

\smallskip\noindent
\textbf{(C) Moment comparison.}
Let $T_x$ denote the truncated digit sum of a uniformly chosen prime $p\in\mathcal P_m(x)$ (obtained
by deleting the first and last $r$ base-$q$ digits), and let $\widetilde T_x:=\sum_{j=r}^{L-r-1} Z_j$
where the $Z_j$ are i.i.d.\ uniform on $\{0,1,\ldots,q-1\}$.  Define the normalized variables
\[
X:=\frac{T_x-\mu L_0}{\sigma\sqrt{L_0}},\qquad
Y:=\frac{\widetilde T_x-\mu L_0}{\sigma\sqrt{L_0}}.
\]
Then the explicit moment comparison
\begin{equation}\label{eq:MC}
\bigl|\mathbb E X^d-\mathbb E Y^d\bigr|
\le A^d\,L^{(1/2+\nu)d}\,e^{-c_4L^\nu},
\qquad A:=\frac{4q^2}{\sigma},
\end{equation}
holds for all integers $d$ with $1\le d\le L_0$.
\end{lemma}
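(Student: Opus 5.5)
The argument is Fourier analysis in the digit variables. Every digit event is smoothed by Fej\'er kernels (Lemma~\ref{lem:fejer-digit}), the resulting exponential sums are reduced to rational-phase prime sums, and these are bounded with Lemma~\ref{lem:DMR43}. The residue class $p\equiv m\pmod 9$ is enforced with additive characters $e(hp/9)$, and the count is normalised by $\pi(x;9,m)\ge x/(6\log x)$ from hypothesis~(3).

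\textbf{Step 1: part (A).} The quantity $\{p/q^{j+1}\}$ determines the digit $\varepsilon_j(p)=\lfloor q\{p/q^{j+1}\}\rfloor$. I will majorize $\mathbf 1_{U_\Delta}$ by a Fej\'er-smoothed trapezoid of degree $H$ whose mean is at most $2q\Delta+O(1/(H\Delta))$. I could instead apply~\eqref{eq:fejer-ind-pointwise} to the indicator of $U_\Delta$ with width $2\Delta$.

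Expanding this majorant produces the main term $2q\Delta$ plus a sum over $1\le|h|\le H$ of coefficients times
\[
\frac{1}{\pi(x;9,m)}\,\frac19\sum_{k=0}^{8}e(-km/9)\sum_{p\le x}e\!\bigl(p(h/q^{j+1}+k/9)\bigr).
\]
After reduction the phase is $A/Q$ with $Q=9q^{j+1}/\gcd$. Hypothesis~(2), together with $j\in[r,L-r-1]$ and $|h|\le H=q^{\lfloor r/3\rfloor}$, places $Q$ in $[q^K,xq^{-K}]$. The term $h=0$, $k\neq0$ has $Q\in\{3,9\}$; it is absorbed because $\sum_k e(-km/9)\cdot\pi(x)$-type terms are exactly the definition of $\pi(x;9,m)$, so no bound on them is needed.

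Lemma~\ref{lem:DMR43} then gives an error of
\[
6\log x\cdot 102(\log x)^2q^{-K/2}\sum_h|b(h)|\ll (\log x)^3 L\, q^{-r/4}.
\]
Since $q^{-r/4}\le e^{-(\log q/4)L^\nu}$ and $c_4=\log q/6<\log q/4$, the polynomial factors in $L$ are absorbed into the slack $e^{-(\log q/12)L^\nu}$. That absorption is what one of the conditions (C1)--(C8) in hypothesis~(4) encodes. The choice of $\Delta$ is free: (A) holds as stated for every $\Delta$, with the $1/(H\Delta)$ smoothing term absorbed, if necessary, by imposing $\Delta\ge q^{-r/6}$ (if $\Delta$ is smaller, bound $U_\Delta\subset U_{q^{-r/6}}$).

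\textbf{Step 2: part (B).} Here I take the product of the indicator functions $g_{\ell_i}(\{p/q^{j_i+1}\})$ and replace each factor by its smoothing $g_{\ell_i,H}$. The replacement errors are controlled telescopically using~\eqref{eq:fejer-ind-pointwise} together with part~(A), contributing $d(2q\Delta+e^{-c_4L^\nu}+1/(H\Delta))$. Expanding the smoothed product gives the main term $q^{-d}$ (all $h_i=0$) and $d$-fold frequency tuples. Each nonzero tuple yields the phase $\sum_i h_i/q^{j_i+1}$; it has a well-defined exact $q$-adic denominator because the indices are distinct and the top index with $h_i\neq0$ dominates.

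Lemma~\ref{lem:DMR43} applies to each tuple as in Step~1. The $\ell^1$ bound~\eqref{eq:fejer-ind-l1} raised to the $d$-th power gives $(1/q+(2/\pi)(1+\log(H+1)))^d\le(2L^\nu)^d$ roughly. The factor $4^d$ leaves room for the telescoping error and the choice $\Delta=q^{-r/6}$.

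\textbf{Step 3: part (C).} Write $T_x-\mu L_0=\sum_j(\varepsilon_j-\mu)$ and expand the $d$-th power. This gives at most $L_0^d$ index tuples, each grouped into distinct indices with multiplicities. Each moment is a linear combination of joint digit probabilities, with weights bounded by $(q/2)^d$ per digit assignment and $q^{d}$ assignments. Part~(B) with at most $d$ distinct indices then gives
\[
\bigl|\mathbb E X^d-\mathbb E Y^d\bigr|\le (\sigma\sqrt{L_0})^{-d}L_0^d q^{2d}(4L^\nu)^d e^{-c_4L^\nu}\le A^dL^{(1/2+\nu)d}e^{-c_4L^\nu}.
\]

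\textbf{Main obstacle.} The main obstacle is the denominator bookkeeping in Step~2. I must verify that every combined phase, including the mod-$9$ twist, has reduced denominator in $[q^K,xq^{-K}]$ uniformly over all tuples. I also need the exponent loss $q^{-K/2}$ to beat the combined factors $(\log x)^3$, $(2L^\nu)^d$ and $H$-dependence, with exactly the rate $c_4=\log q/6$. The choice $H=q^{\lfloor r/3\rfloor}$ against $K=\lfloor r/2\rfloor$ is what makes this margin work.
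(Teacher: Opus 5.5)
Your treatment of part (C) matches the paper and is fine. Parts (A) and (B) each have a genuine gap.

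\textbf{Part (A).} Smoothing $\mathbf 1_{U_\Delta}$ (or a trapezoid around it) at degree $H=q^{\lfloor r/3\rfloor}$ costs a width-dependent error of order $1/(H\Delta)$ per arc. This cannot be absorbed into $e^{-c_4L^\nu}$ with the constant $1$ that (A) asserts. The rate $c_4=\log q/6$ is exactly the balance point of $2q\Delta$ against $1/(H\Delta)$ when $H\approx q^{r/3}$, so there is no slack.
\begin{itemize}
\item At your cut-off $\Delta=q^{-r/6}$ you already have $1/(H\Delta)\ge q^{-r/6}>q^{-1/6}e^{-c_4L^\nu}$ per arc.
\item For $\Delta<q^{-r/6}$, the inclusion $U_\Delta\subset U_{q^{-r/6}}$ leaves the term $2q\cdot q^{-r/6}>2q^{5/6}e^{-c_4L^\nu}$. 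But the right-hand side of (A) tends to $e^{-c_4L^\nu}$ as $\Delta\to0$.
\end{itemize}
Applying \eqref{eq:fejer-ind-pointwise} to $\mathbf 1_{U_\Delta}$ instead is circular, since its error term is again a boundary probability.

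The paper avoids the width-dependent loss by working with interval discrepancy. The Erd\H{o}s--Tur\'an bound \eqref{eq:ET} costs only $6/(H+1)\approx 6q^{-r/3}$, far below $e^{-c_4L^\nu}$. So (C5) gives $D_j\le q^{-1}e^{-c_4L^\nu}$, and (A) follows for every $\Delta$ at once from $\mathbb P\le\mathrm{meas}(U_\Delta)+qD_j$. Beurling--Selberg majorants, or a larger degree $\approx q^{r/2}$ used only in (A), would also repair your route.

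Relatedly, the exponent $1/6$ comes from the choice $\Delta=H^{-1/2}$ in the smoothing step of (B), i.e.\ (C7), not from $H$ versus $K$. The prime-sum saving $q^{-K/2}\approx q^{-r/4}$ has room to spare.

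\textbf{Part (B).} Your claim that every nonzero tuple has a phase with an exact $q$-adic denominator ``because the top index dominates'' is false. The frequencies run up to $H\gg q$, so adjacent positions can cancel. For example, with $j_{t-1}=j_t-1$, $h_t=q$, $h_{t-1}=-1$ the phase is $q/q^{j_t+1}-1/q^{j_t}=0$, and the normalized prime sum equals $1$.

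The missing input is that the Fourier coefficients $b_\ell(h)$ of a digit indicator, and of its Fej\'er smoothing, vanish at every nonzero multiple of $q$, because the digit interval has length $1/q$. Hence in every tuple with nonzero coefficient product the top nonzero frequency satisfies $q\nmid h_t$. With $J=j_t+1$ this gives $A=\sum_i h_iq^{J-j_i-1}\equiv h_t\not\equiv0\pmod q$.

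Even then, because $q=10$ is composite, the reduced denominator need not be $q^J$: powers of $2$ or $5$ can cancel. So ``as in Step~1'' must be replaced by the size argument
$Q\ge q^J/\gcd(A,q^J)\ge q^J/|A|>(q-1)q^r/H\ge q^K$.
This uses $|A|<Hq^{J-r}/(q-1)$, and $\gcd(q-1,q)=1$ to handle the mod-$9$ twist. This is how the paper closes the denominator bookkeeping you identified as the main obstacle. With it, and a correct (A), your Steps~2 and~3 go through.
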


\begin{proof}[Proof of Lemma~\ref{lem:c4-explicit}]
We follow~\cite[\S4.2]{DMRcomp2} (Lemmas~4.4--4.6), replacing their prime exponential-sum input by
Lemma~\ref{lem:DMR43} and keeping the threshold $x_{45}$ explicit.

\medskip\noindent
\textbf{Step 0: discrepancy and Erd\H{o}s--Tur\'an.}
Let
\[
\mathcal P_m(x):=\{p\le x:\ p\equiv m \!\!\!\pmod{q-1}\},
\qquad
N:=|\mathcal P_m(x)|=\pi(x;q-1,m).
\]
For each $j$ with $r\le j\le L-r-1$ put
\[
u_{p,j}:=\left\{\frac{p}{q^{j+1}}\right\}\in[0,1)\qquad(p\in\mathcal P_m(x)).
\]
Let $D_j$ be the interval discrepancy of $(u_{p,j})_{p\in\mathcal P_m(x)}$, i.e.\ the supremum over
all intervals $I\subset[0,1)$ of
$|\#\{p:u_{p,j}\in I\}/N-\mathrm{meas}(I)|$.  The
Erd\H{o}s--Tur\'an inequality~\cite[Thm.~2.5]{KN74} bounds this interval discrepancy directly and gives, for any
integer $H\ge 1$,
\begin{equation}\label{eq:ET}
D_j \le \frac{6}{H+1}+\frac{4}{\pi}\sum_{h=1}^H\frac{1}{h}
\left|\frac{1}{N}\sum_{p\in\mathcal P_m(x)} e\!\left(\frac{hp}{q^{j+1}}\right)\right|.
\end{equation}
(Strictly, \cite[Thm.~2.5]{KN74} has $\frac{1}{h}-\frac{1}{H+1}$ in place of $\frac{1}{h}$ in the second sum; we use the weaker form.)

\medskip\noindent
\textbf{Step 1: reduce residue-class sums to unrestricted prime sums.}
For any real $\alpha$,
\[
\sum_{p\in\mathcal P_m(x)} e(\alpha p)
=\frac{1}{q-1}\sum_{\lambda=0}^{q-2} e\!\left(-\frac{m\lambda}{q-1}\right)
\sum_{p\le x} e\!\left(\left(\alpha+\frac{\lambda}{q-1}\right)p\right),
\]
so
\begin{equation}\label{eq:proj}
\left|\sum_{p\in\mathcal P_m(x)} e(\alpha p)\right|
\le \max_{0\le \lambda\le q-2}\left|\sum_{p\le x} e\!\left(\left(\alpha+\frac{\lambda}{q-1}\right)p\right)\right|.
\end{equation}
We apply this with $\alpha=h/q^{j+1}$, $1\le h\le H$.

\medskip\noindent
\textbf{Step 2: apply Lemma~\ref{lem:DMR43} and define $x_{45}$.}
Fix $1\le h\le H$ and $0\le\lambda\le q-2$ and set
\[
\beta:=\frac{h}{q^{j+1}}+\frac{\lambda}{q-1}.
\]
Let $\beta=A/Q$ in lowest terms.  Its unreduced denominator is $(q-1)q^{j+1}$, so
$Q\le(q-1)q^{j+1}\le(q-1)q^{L-r}\le xq^{-K}$ by (C3).  For the lower bound, the $q^{j+1}$-part
of the denominator is reduced by at most $\gcd(h,q^{j+1})$: the unreduced numerator is
$(q-1)h+\lambda q^{j+1}$, and $(q-1,q)=1$.  Therefore
\[
Q\ge \frac{q^{j+1}}{\gcd(h,q^{j+1})}\ge \frac{q^{j+1}}{|h|}
\ge \frac{q^{r+1}}{H}\ge \frac{(q-1)q^r}{H}\ge q^K.
\]
Thus Lemma~\ref{lem:DMR43} gives
\[
\left|\sum_{p\le x} e(\beta p)\right|
\le C_{\mathrm{DMR}}(\log x)^2 x\,q^{-K/2}.
\]
Using~\eqref{eq:proj}, we obtain
\[
\left|\sum_{p\in\mathcal P_m(x)} e\!\left(\frac{hp}{q^{j+1}}\right)\right|
\le C_{\mathrm{DMR}}(\log x)^2 x\,q^{-K/2}.
\]
In the base-$10$ application, for $x\ge x_{\mathrm{AP}}$ we have $N\ge x/(6\log x)$ by Lemma~\ref{lem:pi9-lb},
so
\begin{equation}\label{eq:normalized}
\left|\frac{1}{N}\sum_{p\in\mathcal P_m(x)} e\!\left(\frac{hp}{q^{j+1}}\right)\right|
\le 6C_{\mathrm{DMR}}(\log x)^3 q^{-K/2}.
\end{equation}
Insert~\eqref{eq:normalized} into~\eqref{eq:ET} and use $\sum_{h=1}^H 1/h \le 1+\log H$ to obtain
\begin{equation}\label{eq:Dj-bound}
D_j \le \frac{6}{H+1} + \frac{4}{\pi}(1+\log H)\,6C_{\mathrm{DMR}}(\log x)^3\,q^{-K/2}.
\end{equation}

Define $x_{45}:=q^{L_{45}}$, where $L_{45}$ is the least integer $L\ge 3$ such that, for every
integer $L'\ge L$, the following conditions hold with
$x=q^{L'}$, $r=\lceil (L')^\nu\rceil$, $L_0=L'-2r$, $H=q^{\lfloor r/3\rfloor}$, and
$K=\lfloor r/2\rfloor$:
\begin{enumerate}
\item[\textup{(C1)}] $L_0\ge 1$;
\item[\textup{(C2)}] $K\le \frac25 L$;
\item[\textup{(C3)}] $q^{K}\le (q-1)q^r/H$ and $(q-1)q^{L-r+1}\le q^{L-K}$;
\item[\textup{(C4)}] (base $10$ only) $x\ge x_{\mathrm{AP}}$;
\item[\textup{(C5)}]~\eqref{eq:Dj-bound} implies $D_j \le \frac1q e^{-c_4L^\nu}$;
\item[\textup{(C6)}] $H\ge (2q)^2$ (so $\Delta:=H^{-1/2}\le 1/(2q)$);
\item[\textup{(C7)}] $2q\Delta+\dfrac{q}{(H+1)\Delta}\le 4L^\nu\,e^{-c_4L^\nu}$ for $\Delta:=H^{-1/2}$;
\item[\textup{(C8)}] $1+\log(H+1)\le L^\nu$.
\end{enumerate}
(Each condition is effectively checkable for a given integer $L$, and the displayed definition uses
permanent validity rather than a first-passing monotonicity assumption.)

From now on assume $x\ge x_{45}$, so that (C5) holds and therefore
\begin{equation}\label{eq:Dj-exp}
D_j \le \frac{1}{q}\,e^{-c_4 L^\nu}.
\end{equation}

\medskip\noindent
\textbf{Step 3: boundary probability (A).}
On the circle $\mathbb R/\mathbb Z$, the boundary set $U_\Delta$ is a union of at most $q$
disjoint arcs of total Lebesgue measure $\le 2q\Delta$.  Each such arc is controlled by the
ordinary interval discrepancy $D_j$: a non-wrapping arc is an interval in $[0,1)$, and a wrapping
arc has discrepancy equal to that of its complementary interval.  Hence, summing over the at most
$q$ arcs and using~\eqref{eq:Dj-exp},
\[
\mathbb P(u_{p,j}\in U_\Delta) \le \mathrm{meas}(U_\Delta)+qD_j \le 2q\Delta + e^{-c_4 L^\nu},
\]
which proves (A).

\textbf{Step 4: joint digit distribution (B) for distinct indices.}
Fix distinct $j_1<\cdots<j_d$ and digits $\ell_1,\dots,\ell_d$.
(If $d=1$, then (B) follows immediately from the discrepancy bound
$|\mathbb P(\varepsilon_{j_1}(p)=\ell_1)-q^{-1}|\le D_{j_1}\le q^{-1}e^{-c_4L^\nu}$, so assume $d\ge 2$ below.)

Write $g_{\ell}(u):=\mathbf 1_{[\ell/q,(\ell+1)/q)}(u)$ and let $g_{\ell,H}:=g_\ell*F_H$.
Set $\Delta:=H^{-1/2}$, which is admissible by (C6).
Let $U_1,\ldots,U_d$ be independent uniform random variables on $[0,1)$; then
$\mathbb E\prod_i g_{\ell_i}(U_i)=q^{-d}$, the i.i.d.\ digit probability.
By Lemma~\ref{lem:fejer-digit}\,\textup{(iii)}, for every $u\in[0,1)$,
\[
|g_{\ell}(u)-g_{\ell,H}(u)|\le \mathbf 1_{U_\Delta}(u)+\frac{1}{(H+1)\Delta}.
\]
Taking expectations and using (A) for $u_{p,j_i}$ and
$\mathrm{meas}(U_\Delta)\le 2q\Delta$ for uniform $U_i$, we obtain
\[
\mathbb E\bigl|g_{\ell_i}(u_{p,j_i})-g_{\ell_i,H}(u_{p,j_i})\bigr|
\le \mathbb P(u_{p,j_i}\in U_\Delta)+\frac{1}{(H+1)\Delta}
\le 2q\Delta+\frac{1}{(H+1)\Delta}+e^{-c_4L^\nu},
\]
\[
\mathbb E\bigl|g_{\ell_i}(U_i)-g_{\ell_i,H}(U_i)\bigr|
\le \mathbb P(U_i\in U_\Delta)+\frac{1}{(H+1)\Delta}
\le 2q\Delta+\frac{1}{(H+1)\Delta}.
\]
Since $\frac{1}{(H+1)\Delta}\le \frac{q}{(H+1)\Delta}$ (for $q\ge2$), condition (C7) gives
\[
2q\Delta+\frac{1}{(H+1)\Delta}\le 2q\Delta+\frac{q}{(H+1)\Delta}\le 4L^\nu e^{-c_4L^\nu}.
\]
Hence, for each $i$,
\begin{align*}
\mathbb E\bigl|g_{\ell_i}(u_{p,j_i})-g_{\ell_i,H}(u_{p,j_i})\bigr|
&\le (4L^\nu+1)e^{-c_4L^\nu},\\
\mathbb E\bigl|g_{\ell_i}(U_i)-g_{\ell_i,H}(U_i)\bigr|
&\le 4L^\nu e^{-c_4L^\nu}.
\end{align*}
By telescoping the product and using $0\le g_{\ell},g_{\ell,H}\le 1$,
\begin{align*}
&\left|\mathbb E\prod_{i=1}^d g_{\ell_i}(u_{p,j_i})
-\mathbb E\prod_{i=1}^d g_{\ell_i,H}(u_{p,j_i})\right|\\
&\qquad\le
\sum_{i=1}^d \mathbb E\bigl|g_{\ell_i}(u_{p,j_i})-g_{\ell_i,H}(u_{p,j_i})\bigr|\\
&\qquad\le d(4L^\nu+1)e^{-c_4L^\nu}.
\end{align*}

Now expand each smoothed factor into its Fourier series at the corresponding digit position:
\begin{align*}
g_{\ell_i,H}(u_{p,j_i})
&=\sum_{|h_i|\le H} b_{\ell_i}(h_i)
  e\!\left(\frac{h_i p}{q^{j_i+1}}\right),\\
\sum_{|h_i|\le H}|b_{\ell_i}(h_i)|
&\le \frac1q+\frac{2}{\pi}(1+\log(H+1))
\le L^\nu
\end{align*}
by Lemma~\ref{lem:fejer-digit}\,\textup{(iii)}, (C8), and $L^\nu\ge2$ in the present range.
In the product, a frequency tuple $(h_1,\ldots,h_d)$ contributes
\[
e\!\left(p\sum_{i=1}^d \frac{h_i}{q^{j_i+1}}\right).
\]
The only integer-frequency contribution with nonzero coefficient is $h_1=\cdots=h_d=0$.
Indeed, suppose $\sum_i h_i/q^{j_i+1}\in\mathbb Z$ and let $t$ be the largest index with
$h_t\ne0$.  Multiplication by $q^{j_t+1}$ gives
\[
\sum_i h_i q^{j_t-j_i}\equiv h_t\equiv0\pmod q.
\]
But the Fourier coefficient of the digit interval $[\ell/q,(\ell+1)/q)$, and hence also of
its Fej\'er smoothing, vanishes at every nonzero multiple of $q$.  Removing this zero coefficient
and descending proves the claim.  Thus the constant term equals $\prod_i b_{\ell_i}(0)=q^{-d}$,
matching the independent-uniform comparison above.

For every remaining nonconstant term, let $t$ be the largest index with $h_t\ne0$ and
$b_{\ell_t}(h_t)\ne0$, put $J:=j_t+1$, and write
\[
\sum_i \frac{h_i}{q^{j_i+1}}=\frac{A}{q^J},\qquad
A:=\sum_i h_i q^{J-j_i-1}\ne0.
\]
Here $A/q^J\notin\mathbb Z$ by the preceding paragraph.
The denominator after reduction need not be exactly $q^J$ when $q$ is composite; what is needed is
only a lower bound.  Since the indices are distinct, $j_i\ge r$, and $|h_i|\le H$,
\[
|A|\le H\sum_{a=0}^{J-r-1}q^a < \frac{Hq^{J-r}}{q-1}.
\]
After the residue-class projection as in Step~1, the relevant rational phase is
$A/q^J+\lambda/(q-1)$ for some $0\le\lambda\le q-2$.  If this is written in lowest terms with
denominator $Q$, then it is still nonintegral (otherwise $q^J\mid A$, since $(q-1,q)=1$).
Moreover, the $q^J$-part of the denominator is reduced by at most
$\gcd(A,q^J)$: the unreduced numerator is $(q-1)A+\lambda q^J$, and $(q-1,q)=1$.  Hence
\[
Q\ge \frac{q^J}{\gcd(A,q^J)}\ge \frac{q^J}{|A|}
> \frac{(q-1)q^r}{H}\ge q^K
\]
by (C3).  Also $Q\le (q-1)q^J\le (q-1)q^{L-r}\le q^{L-K}=xq^{-K}$ by (C3).  Thus
Lemma~\ref{lem:DMR43} applies to every nonzero frequency term, and, after normalizing by $N$, each is
bounded by $\le 6C_{\mathrm{DMR}}(\log x)^3 q^{-K/2}$.
Using also (C5) to fold the polylog factor into the exponential loss, we obtain
\[
\left|\mathbb E\prod_{i=1}^d g_{\ell_i,H}(u_{p,j_i})-q^{-d}\right|
\le (3L^\nu)^d e^{-c_4L^\nu}.
\]

Combining,
\[
\left|\mathbb E\prod_{i=1}^d g_{\ell_i}(u_{p,j_i})-q^{-d}\right|
\le (3L^\nu)^d e^{-c_4L^\nu}+d(4L^\nu+1)e^{-c_4L^\nu}.
\]
Finally, since (C6) and (C8) force $L^\nu$ to be large (in particular $L^\nu\ge 2$), one checks
for $d\ge2$ that
\[
(3L^\nu)^d + d(4L^\nu+1)\le (4L^\nu)^d,
\]
and together with the $d=1$ case handled at the start, this gives (B) with $(4L^\nu)^d$.

\medskip\noindent
\textbf{Step 5: moments (C) and~\eqref{eq:MC}.}
Write the centered middle-digit sums as
\[
S:=T_x-\mu L_0=\sum_{j=r}^{L-r-1}\xi_j,\qquad 
\widetilde S:=\widetilde T_x-\mu L_0=\sum_{j=r}^{L-r-1}\widetilde\xi_j,
\]
where $\xi_j:=\varepsilon_j(p)-\mu$ and $\widetilde\xi_j:=Z_j-\mu$ with $Z_j$ i.i.d.\ uniform on $\{0,\dots,q-1\}$.
Then
\[
X=\frac{S}{\sigma\sqrt{L_0}},\qquad Y=\frac{\widetilde S}{\sigma\sqrt{L_0}}.
\]

Fix $1\le d\le L_0$. Expanding gives
\[
\mathbb E S^d-\mathbb E \widetilde S^d
=\sum_{(j_1,\dots,j_d)\in \{r,\dots,L-r-1\}^d}
\Bigl(\mathbb E\prod_{i=1}^d \xi_{j_i}-\mathbb E\prod_{i=1}^d \widetilde\xi_{j_i}\Bigr).
\]
We bound each ordered tuple by reducing it to the distinct-index estimate (B), thereby avoiding
any Fourier zero-mode issue from repeated digit positions.  Given an ordered tuple
$(j_1,\ldots,j_d)$, let $i_1<\cdots<i_s$ be its distinct indices and let
$a_h:=\#\{i:j_i=i_h\}$, so $1\le s\le d$ and $\sum_h a_h=d$.  For
$P_h(\ell):=(\ell-\mu)^{a_h}$, part (B) gives
\[
\begin{aligned}
&\Bigl|\mathbb E\prod_{i=1}^d \xi_{j_i}
-\mathbb E\prod_{i=1}^d \widetilde\xi_{j_i}\Bigr| \\
&\qquad\le
(4L^\nu)^s e^{-c_4L^\nu}
\sum_{\ell_1,\ldots,\ell_s=0}^{q-1}\prod_{h=1}^s |P_h(\ell_h)|.
\end{aligned}
\]
Indeed the i.i.d.\ expectation is the same digit-pattern sum with $q^{-s}$ in place of the joint
prime probability.  Since $|\ell-\mu|\le q$,
\[
\sum_{\ell=0}^{q-1}|P_h(\ell)|\le q^{a_h+1},
\]
and consequently
\[
\Bigl|\mathbb E\prod_{i=1}^d \xi_{j_i}
-\mathbb E\prod_{i=1}^d \widetilde\xi_{j_i}\Bigr|
\le q^{d+s}(4L^\nu)^s e^{-c_4L^\nu}
\le (4q^2)^d L^{\nu d}e^{-c_4L^\nu},
\]
using $s\le d$ and $L^\nu\ge1$.

Summing over the $L_0^d$ tuples gives
\[
|\mathbb E S^d-\mathbb E\widetilde S^d|
\le L_0^d(4q^2)^d L^{\nu d}e^{-c_4L^\nu}.
\]
Dividing by $(\sigma\sqrt{L_0})^d$ and using $L_0\le L$ yields~\eqref{eq:MC} with $A=4q^2/\sigma$.
\end{proof}

\begin{proposition}[Characteristic-function comparison without a $\kappa$-loss]\label{prop:CF-replacement}
Fix parameters $0<\eta<\nu/2<1/4$ and a constant $C>0$.  The error term obtained below tends to
$0$ as $L\to\infty$ provided $\nu+\eta<1/2$; the working values $(\eta,\nu)=(0.0545,0.2859)$ satisfy
this with margin.
Let $X,Y$ be as in Lemma~\ref{lem:c4-explicit}\,(C), and define the characteristic functions
\[
\varphi_2(t):=\mathbb E\,e^{itX},\qquad \varphi_3(t):=\mathbb E\,e^{itY}.
\]

Assume that $L$ is an integer, $x=q^L$, and $x\ge x_{45}$, so that
Lemma~\ref{lem:c4-explicit} applies. In particular the
moment comparison bound~\eqref{eq:MC} holds with the explicit constants
\[
A=\frac{4q^2}{\sigma},
\qquad
c_4=\frac{\log q}{6}.
\]
Then for any constant
\begin{equation}\label{eq:c43-admissible}
0<c_{43}^\ast<\frac{c_4}{32}\min\!\left\{1,\ \frac{\nu-2\eta}{\,\eta+1/2\,}\right\}
=\frac{\log q}{192}\min\!\left\{1,\ \frac{\nu-2\eta}{\,\eta+1/2\,}\right\}
\end{equation}
there exists an explicit threshold $x_{43}^\ast=x_{43}^\ast(q,\eta,\nu,C,c_{43}^\ast)\ge 3$
such that, for every integer $L\ge1$ with $x:=q^L\ge x_{45}$ and $x\ge x_{43}^\ast$,
and every real $t$ with $|t|\le C\,L^\eta$, we have
\[
|\varphi_2(t)-\varphi_3(t)|\ \le\ |t|\,e^{-c_{43}^\ast L^\nu}.
\]
One may take $x_{43}^\ast:=q^{L_\ast}$, where $L_\ast=L_\ast(q,\eta,\nu,C,c_{43}^\ast)$ is any explicit integer threshold large enough for the estimates in the proof below.
\end{proposition}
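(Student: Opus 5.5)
The plan is to compare $\varphi_2$ and $\varphi_3$ through a truncated Taylor expansion of $e^{itX}$ and $e^{itY}$, feeding the moment comparison~\eqref{eq:MC} of Lemma~\ref{lem:c4-explicit}\,(C) into the polynomial part and a moment bound into the remainder. Fix an even integer $D\ge2$, to be chosen with $D\le L_0$. Using $\bigl|e^{iu}-\sum_{d<D}(iu)^d/d!\bigr|\le |u|^D/D!$, I write
\[
|\varphi_2(t)-\varphi_3(t)|\le \sum_{d=1}^{D-1}\frac{|t|^d}{d!}\,\bigl|\mathbb E X^d-\mathbb E Y^d\bigr|
+\frac{|t|^D}{D!}\bigl(\mathbb E X^D+\mathbb E Y^D\bigr).
\]
The $d=0$ term cancels, so every surviving term carries at least one factor $|t|$. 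This is where the factor $|t|$ in the conclusion comes from.

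\emph{Polynomial part.} By~\eqref{eq:MC}, the first sum is at most $e^{-c_4L^\nu}\sum_{d=1}^{D-1}(|t|AL^{1/2+\nu})^d/d!$. With $|t|\le CL^\eta$, each term is at most $|t|\,(CAL^{1/2+\nu+\eta})^{D}$, and there are $D$ terms; the case $|t|<1$ only helps. So this part is bounded by
\[
|t|\exp\bigl(-c_4L^\nu+D(\tfrac12+\nu+\eta)\log L+O(D)\bigr).
\]
I will choose $D\approx \kappa L^\nu/\log L$ (even) with $\kappa(\tfrac12+\nu+\eta)\le c_4/2$. Then the polynomial part is $\le |t|e^{-c_4L^\nu/4}$ once $L$ is large.

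\emph{Remainder.} For $\mathbb E Y^D$, $Y$ is a normalized sum of $L_0$ independent centered digits bounded by $\mu$. A Hoeffding / sub-Gaussian moment bound, proved directly and not via Marchal--Arbel, gives $\mathbb E Y^D\le (c_0D)^{D/2}$ with an explicit $c_0$ depending only on $q$. For $\mathbb E X^D$ I use~\eqref{eq:MC} once more: $\mathbb E X^D\le \mathbb E Y^D+A^DL^{(1/2+\nu)D}e^{-c_4L^\nu}$. The second summand is $\le1$ by the same choice of $\kappa$. Stirling then gives
\[
\frac{|t|^D}{D!}(\mathbb E X^D+\mathbb E Y^D)\le |t|\,\bigl(c'CL^\eta D^{-1/2}\bigr)^{D-1}
=|t|\exp\Bigl(-(D-1)\bigl[(\tfrac\nu2-\eta)\log L-\tfrac12\log(\kappa^{-1}\log L)-O(1)\bigr]\Bigr),
\]
where $c'$ depends only on $q$ and $c_0$, and the $O(1)$ depends only on $q,C,\kappa$. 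With $D\sim\kappa L^\nu/\log L$ this is $\le |t|\exp\bigl(-\kappa(\tfrac\nu2-\eta)L^\nu(1-o(1))\bigr)$.

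\emph{Balancing (the main obstacle).} The step I expect to be hardest is balancing the two exponents while keeping every lower-order term explicit. I need $\kappa(\tfrac12+\nu+\eta)\lesssim c_4$ for the polynomial part, while the remainder decays at rate $\kappa(\nu/2-\eta)$. Taking $\kappa$ proportional to $c_4/(\eta+1/2)$, with slack for $\nu$ and the various halvings, gives a total rate of order $c_4\min\{1,(\nu-2\eta)/(\eta+1/2)\}$ up to a numerical factor. I expect the factor $1/32$ in~\eqref{eq:c43-admissible} to cover the losses from evenness and rounding of $D$, the halving $c_4\to c_4/2$, and the gap $D-1$ versus $D$. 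The strict inequality in~\eqref{eq:c43-admissible} leaves room to absorb the $\log\log L$ and $O(1)$ terms for $L\ge L_\ast$. Accordingly, $L_\ast$ is defined as the least integer beyond which all of the following hold: $D\le L_0$, $L^\nu\ge2$, and the explicit inequalities absorbing these lower-order terms.
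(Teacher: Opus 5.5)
Your proposal is correct and follows essentially the paper's route. The paper also uses a truncated Taylor expansion at an even $D\asymp L^\nu/\log L$, applies \eqref{eq:MC} to the polynomial part, combines a sub-Gaussian moment bound for $Y$ with \eqref{eq:MC} at $d=D$ for $X$, and factors out one $|t|$ before balancing. Your two departures are the Hoeffding variance proxy in place of the sharp one in Lemma~\ref{lem:uniform-subgaussian}, and dropping the $1/d!$ saving in the polynomial part. These affect only lower-order terms and the size of $L_\ast$. Your choice $\kappa(\tfrac12+\nu+\eta)=c_4/2$ yields the rate $c_4(\nu-2\eta)/(4(\tfrac12+\nu+\eta))$, which exceeds the upper limit on $c_{43}^\ast$ in~\eqref{eq:c43-admissible} by more than a factor $4$.
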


\begin{proof}
Fix $C>0$. Throughout this proof $L$ is an integer, $x=q^L$, and $x\ge x_{45}$,
so that Lemma~\ref{lem:c4-explicit} applies. In particular the moment
comparison bound~\eqref{eq:MC} holds for all $1\le d\le L_0$ with the explicit constants
\[
A:=\frac{4q^2}{\sigma},
\qquad
c_4:=\frac{\log q}{6}.
\]

Define also
\[
B:=\frac{q-1}{\sigma},
\qquad
\theta:=\frac{c_4}{16(\eta+1/2)}.
\]
For $L\ge 3$ define an even truncation order
\begin{equation}\label{eq:D-def}
D:=D(L):=2\left\lfloor \theta\,\frac{L^\nu}{\log L}\right\rfloor.
\end{equation}
Since $\theta>0$, we have $D\to\infty$ as $L\to\infty$, and for all $L$ beyond an explicit threshold
$L\ge L_\ast^{(0)}(q,\eta,\nu)$ we have $2\le D\le L_0$, so~\eqref{eq:MC} applies to all $1\le d\le D$.

\medskip\noindent
\textbf{Step 1: Taylor expansion.}
For any real $u$ and integer $D\ge 1$,
\[
e^{iu}=\sum_{d=0}^{D-1}\frac{(iu)^d}{d!}+R_D(u),
\qquad
|R_D(u)|\le \frac{|u|^D}{D!}.
\]
Taking expectations and subtracting gives
\begin{equation}\label{eq:phi-Taylor-split}
\varphi_2(t)-\varphi_3(t)
=
\sum_{d=1}^{D-1}\frac{(it)^d}{d!}\bigl(\mathbb E X^d-\mathbb E Y^d\bigr)
+\mathbb E R_D(tX)-\mathbb E R_D(tY).
\end{equation}

\medskip\noindent
\textbf{Step 2: Bounding the truncated Taylor sum using~\eqref{eq:MC}.}
Assume $|t|\le C L^\eta$.
Using $|t|^d\le |t|\,(C L^\eta)^{d-1}$ and~\eqref{eq:MC}, we obtain
\begin{align}
\sum_{d=1}^{D-1}\frac{|t|^d}{d!}\bigl|\mathbb E X^d-\mathbb E Y^d\bigr|
&\le
\frac{|t|}{C L^\eta}\,e^{-c_4L^\nu}\sum_{d=1}^{D-1}\frac{\bigl(A C L^{\eta+1/2+\nu}\bigr)^d}{d!}
\nonumber\\
&\le
|t|\,e^{-c_4L^\nu}\sum_{d=1}^{D-1}\frac{\bigl(A C L^{\eta+1/2+\nu}\bigr)^d}{d!},
\label{eq:Taylor-main-1}
\end{align}
where the second line uses $CL^\eta\ge 1$ for $L\ge L_\ast^{(1)}(q,\eta,\nu,C)$.
For $L$ beyond an explicit threshold $L\ge L_\ast^{(1)}(q,\eta,\nu,C)$, the ratio of consecutive terms satisfies
\[
\frac{A C L^{\eta+1/2+\nu}}{d}\ \ge\ 2
\qquad (1\le d\le D),
\]
Indeed $D\le 2\theta L^\nu/\log L$ by~\eqref{eq:D-def}, while the denominator is a fixed positive
multiple of $L^{\eta+1/2+\nu}$; the displayed ratio therefore exceeds $2$ after increasing
$L_\ast^{(1)}$ if necessary.
Hence the summand is increasing for $1\le d\le D$, and therefore
\[
\sum_{d=1}^{D-1}\frac{z^d}{d!}\le D\frac{z^D}{D!}\qquad(z>0).
\]
Apply this with $z:=A C L^{\eta+1/2+\nu}$ and use $D!\ge (D/e)^D$ to get
\begin{equation}\label{eq:Taylor-main-2}
\sum_{d=1}^{D-1}\frac{\bigl(A C L^{\eta+1/2+\nu}\bigr)^d}{d!}
\le
D\left(\frac{e A C L^{\eta+1/2+\nu}}{D}\right)^D.
\end{equation}
Taking logarithms and using~\eqref{eq:D-def}, write
$\widetilde D:=2\theta L^\nu/\log L$ and $D=\widetilde D-\delta$ with $0\le\delta<2$.  Then
\[
\log\!\left(\frac{e A C L^{\eta+1/2+\nu}}{D}\right)
=
(\eta+1/2)\log L+\log\log L+\log\!\left(\frac{eAC}{2\theta}\right)
-\log\!\left(1-\frac{\delta}{\widetilde D}\right),
\]
and hence
\[
\log D+D\log\!\left(\frac{e A C L^{\eta+1/2+\nu}}{D}\right)
\le
\frac{c_4}{8}L^\nu+\frac{c_4}{16}L^\nu
\]
once $L$ is beyond an explicit threshold $L\ge L_\ast^{(2)}(q,\eta,\nu,C)$.  The
leading-coefficient gap $c_4/16$ identified below in the discussion of (C11) is chosen to dominate the
subleading $\log\log L$, bounded-constant, and floor-error terms; condition (C11) is the explicit
finite predicate used in the proof.
Combining~\eqref{eq:Taylor-main-1}--\eqref{eq:Taylor-main-2} yields
\begin{equation}\label{eq:main-sum-final}
\sum_{d=1}^{D-1}\frac{|t|^d}{d!}\bigl|\mathbb E X^d-\mathbb E Y^d\bigr|
\le
|t|\,e^{-(c_4-3c_4/16)L^\nu}\ \le\ |t|\,e^{-(3c_4/4)L^\nu}
\end{equation}
for all $L\ge \max\{L_\ast^{(1)},L_\ast^{(2)}\}$ (the first inequality from $c_4-3c_4/16=13c_4/16$,
the second from $13c_4/16\ge 3c_4/4$).

\medskip\noindent
\textbf{Step 3: Bounding the remainder terms.}
We use a \emph{tight} sub-Gaussian moment bound for $Y$, exploiting the fact that the underlying
digit distribution is uniform.

\begin{lemma}[Sharp sub-Gaussianity of uniform digit]\label{lem:uniform-subgaussian}
For $Z$ uniform on $\{0,\dots,q-1\}$ with $\mu=(q-1)/2$ and $\sigma^2=(q^2-1)/12$, one has
\[
\mathbb E\,e^{\lambda(Z-\mu)}\ \le\ e^{\sigma^2\lambda^2/2}\qquad(\lambda\in\mathbb R).
\]
\end{lemma}

\begin{proof}
Set $f(\lambda):=\log \mathbb E\,e^{\lambda(Z-\mu)}$.  Then $f(0)=0$ and $f'(0)=\mathbb E(Z-\mu)=0$.
Since $Z-\mu$ is symmetric about $0$, $f$ is even.  It suffices to show $f''(\lambda)\le \sigma^2$
for all $\lambda\in\mathbb R$, since integrating twice from $0$ with $f(0)=f'(0)=0$ then yields
$f(\lambda)\le\sigma^2\lambda^2/2$.

The centred MGF admits the closed form
$\mathbb E\,e^{\lambda(Z-\mu)}=\sinh(q\lambda/2)/(q\sinh(\lambda/2))$ (multiply
$\sum_{k=0}^{q-1}e^{(k-\mu)\lambda}$ by $e^{-\lambda/2}/e^{-\lambda/2}$ and apply the geometric
sum formula), whence
\begin{equation}\label{eq:Kpp-explicit}
f''(\lambda)\ =\ \tfrac14\,\mathrm{csch}^2(\lambda/2)\ -\ \tfrac{q^2}{4}\,\mathrm{csch}^2(q\lambda/2).
\end{equation}
By evenness it suffices to treat $\lambda\ge 0$.  Substitute $x:=\lambda/2$; the inequality
$f''(\lambda)\le\sigma^2=(q^2-1)/12$ is equivalent to
\begin{equation}\label{eq:g-bound}
g(x)\ :=\ \mathrm{csch}^2(x)\ -\ q^2\,\mathrm{csch}^2(qx)\ \le\ \tfrac{q^2-1}{3}
\qquad\text{for all }x>0.
\end{equation}

\noindent\emph{Value of $g$ at the endpoint.}
The Laurent expansion of $\mathrm{csch}^2(x)-1/x^2+1/3$ has value tending to $0$ at $x=0$.
Therefore, term by term,
$g(x)-(q^2-1)/3$ tends to $0$ as $x\to0^+$, so
$\lim_{x\to 0^+}g(x)=(q^2-1)/3$.  Hence~\eqref{eq:g-bound} amounts to showing that $g$ does not
exceed its limiting value at $x=0^+$.

\medskip
\noindent\emph{$g$ is strictly decreasing on $(0,\infty)$.}
Differentiating $(\mathrm{csch}^2 t)'=-2\,\mathrm{csch}^2(t)\,\coth(t)$ gives
\[
g'(x)\ =\ -2\,\mathrm{csch}^2(x)\,\coth(x)\ +\ 2q^3\,\mathrm{csch}^2(qx)\,\coth(qx).
\]
Define
\begin{equation}\label{eq:u-def}
u(t)\ :=\ \frac{t^3\,\cosh(t)}{\sinh^3(t)}\qquad(t>0).
\end{equation}
Since $\mathrm{csch}^2(t)\coth(t)=\cosh(t)/\sinh^3(t)=u(t)/t^3$, we have
$q^3\,\mathrm{csch}^2(qx)\coth(qx)=q^3\cdot u(qx)/(qx)^3=u(qx)/x^3$ and
$\mathrm{csch}^2(x)\coth(x)=u(x)/x^3$, so
\begin{equation}\label{eq:gprime-u}
g'(x)\ =\ \frac{2}{x^3}\bigl(u(qx)-u(x)\bigr).
\end{equation}
Hence $g'(x)<0$ for all $x>0$ provided $u$ is strictly decreasing on $(0,\infty)$, since
$qx>x$ when $q\ge 2$.

\medskip
\noindent\emph{$u$ is strictly decreasing on $(0,\infty)$.}
A direct calculation gives
\begin{align*}
\frac{u'(x)\sinh^4(x)}{x^2}
&=3\cosh(x)\sinh(x)-x\bigl(3+2\sinh^2(x)\bigr).
\end{align*}
Using $2\cosh(x)\sinh(x)=\sinh(2x)$ and $2\sinh^2(x)=\cosh(2x)-1$, the right-hand side equals
\[
\tfrac{3}{2}\sinh(2x)\ -\ x\bigl(2+\cosh(2x)\bigr)\ =\ \tfrac{1}{2}\Bigl(3\sinh(v)\ -\ v\bigl(2+\cosh(v)\bigr)\Bigr),
\qquad v:=2x.
\]
Hence $u'(x)<0$ for all $x>0$ is equivalent to
\begin{equation}\label{eq:h-positivity}
h(v)\ :=\ v\bigl(2+\cosh(v)\bigr)\ -\ 3\sinh(v)\ >\ 0\qquad\text{for all }v>0.
\end{equation}
We verify~\eqref{eq:h-positivity} by twice differentiating: $h(0)=0$ and
\begin{align*}
h'(v)\ &=\ 2+\cosh(v)+v\sinh(v)-3\cosh(v)\ =\ 2-2\cosh(v)+v\sinh(v),\\
h''(v)\ &=\ -2\sinh(v)+\sinh(v)+v\cosh(v)\ =\ \cosh(v)\bigl(v-\tanh(v)\bigr).
\end{align*}
The elementary inequality $\tanh(v)<v$ for $v>0$ (proved by $(v-\tanh(v))'=\tanh^2(v)\ge 0$ and
equality at $v=0$) gives $h''(v)>0$ for all $v>0$.  Since $h'(0)=0$, integration gives
$h'(v)>0$ for $v>0$, and since $h(0)=0$ a second integration gives $h(v)>0$ for $v>0$.
This proves~\eqref{eq:h-positivity}, hence $u$ is strictly decreasing on $(0,\infty)$.

\medskip
\noindent\emph{Conclusion.}  By~\eqref{eq:gprime-u} and the monotonicity of $u$, $g'(x)<0$ for
all $x>0$, so $g$ is strictly decreasing on $(0,\infty)$.  Combined with
$\lim_{x\to 0^+}g(x)=(q^2-1)/3$, this gives $g(x)<(q^2-1)/3$ for all $x>0$,
proving~\eqref{eq:g-bound} and hence the lemma.  (The discrete-uniform case is the natural
$q$-point analog of the continuous-uniform case --- a special case of Marchal--Arbel's strict
sub-Gaussianity result for $\mathrm{Beta}(\alpha,\alpha)$~\cite{MarchalArbel}, taking $\alpha=1$.)
\end{proof}

Since $Y$ is a normalized sum of $L_0$ independent copies of $(Z-\mu)/(\sigma\sqrt{L_0})$,
Lemma~\ref{lem:uniform-subgaussian} implies
\[
\mathbb E\,e^{\lambda Y}\ \le\ e^{\lambda^2/2}\qquad(\lambda\in\mathbb R),
\]
i.e.\ $Y$ is sub-Gaussian with variance proxy $1$ (matching its actual variance).  Chernoff's
bound gives, for $u\ge0$,
\[
\mathbb P(|Y|\ge u)\le 2e^{-u^2/2}.
\]
Therefore, for every even integer $D=2k\ge2$,
\begin{equation}\label{eq:Y-moment}
\mathbb E|Y|^D
\ =\ D\int_0^\infty u^{D-1}\mathbb P(|Y|\ge u)\,du
\ \le\ 2D\int_0^\infty u^{D-1}e^{-u^2/2}\,du
\ =\ 2^{k+1}k!,
\end{equation}
where the last equality follows from $v=u^2/2$.  By Stirling's bound
\[
k!\le \sqrt{2\pi k}(k/e)^k\,e^{1/(12k)}
\]
and the elementary inequality
$e^{1/(6D)}\sqrt{\pi/(2\pi)}<1$ for $D\ge 2$, we get the cleaner explicit bound
\begin{equation}\label{eq:Y-moment-Stirling}
\mathbb E|Y|^D\ \le\ 2\sqrt{2\pi D}\,(D/e)^{D/2}\qquad(D\ge 2),
\end{equation}
which we use below.

For even $D$ we have $\mathbb E|X|^D=\mathbb E X^D$, hence by~\eqref{eq:MC} (with $d=D$) and~\eqref{eq:Y-moment},
\[
\mathbb E|X|^D\le \mathbb E|Y|^D+\bigl|\mathbb E X^D-\mathbb E Y^D\bigr|.
\]
For $L$ beyond an explicit threshold $L\ge L_\ast^{(3)}(q,\eta,\nu,C)$, the exponentially small
term in~\eqref{eq:MC} is bounded by the same explicit moment envelope as~\eqref{eq:Y-moment-Stirling},
and therefore
\begin{equation}\label{eq:X-moment}
\mathbb E|X|^D\le 4\sqrt{2\pi D}\,(D/e)^{D/2}.
\end{equation}

Using $|R_D(u)|\le |u|^D/D!$ and $D!\ge \sqrt{2\pi D}\,(D/e)^D$ (Stirling),
\eqref{eq:Y-moment-Stirling}, and \eqref{eq:X-moment}, we obtain
\begin{align}
|\mathbb E R_D(tY)|
&\le 2\left(\frac{\sqrt{e}\,|t|}{\sqrt D}\right)^D,
\label{eq:RY}\\
|\mathbb E R_D(tX)|
&\le 4\left(\frac{\sqrt{e}\,|t|}{\sqrt D}\right)^D.
\label{eq:RX}
\end{align}
Now assume $|t|\le C L^\eta$.  Since $D=2\lfloor\theta L^\nu/\log L\rfloor$, the explicit bounds on
$D$ recorded below show that $\sqrt D$ has size $L^{\nu/2}/\sqrt{\log L}$ in the range where the
remainder predicates are used.
To convert the moment bounds~\eqref{eq:RY}--\eqref{eq:RX} into a bound of the form $|t|\,e^{-c' L^\nu}$,
we factor out one factor of $|t|$ before taking logarithms.  Write
\[
\left(\frac{\sqrt e\,|t|}{\sqrt D}\right)^D
=
|t|\cdot |t|^{D-1}\left(\frac{\sqrt e}{\sqrt D}\right)^D
\le
|t|\cdot (CL^\eta)^{D-1}\left(\frac{\sqrt e}{\sqrt D}\right)^D,
\]
using $|t|\le CL^\eta$.  Taking the logarithm of the right-hand factor,
\[
\log\!\Bigl((CL^\eta)^{D-1}(\sqrt e/\sqrt D)^D\Bigr)
=
(D-1)\eta\log L \;-\; \frac{D}{2}\log D \;+\; D\cdot\tfrac{1}{2} + (D-1)\log C,
\]
and substituting the expansion of $\log D$ from the definition of $D$ (with the floor error bounded
explicitly below) yields
\begin{equation}\label{eq:remainder-leading}
\log\!\Bigl((CL^\eta)^{D-1}(\sqrt e/\sqrt D)^D\Bigr)
=
-2\theta\!\left(\frac{\nu}{2}-\eta\right)L^\nu
\;+\;R_{\mathrm{rem}}(L),
\end{equation}
where the exact floor-expanded form of $R_{\mathrm{rem}}(L)$ is bounded in
Lemma~\ref{lem:r4-explicit}.  That bound makes $R_{\mathrm{rem}}(L)$ smaller than every fixed positive
multiple of $L^\nu$ for all sufficiently large $L$, and the
leading coefficient $-2\theta(\nu/2-\eta)$ is strictly more negative than $-(1-\epsilon)\theta(\nu/2-\eta)$
for every $\epsilon\in(0,1)$; we retain only the weaker decay rate $\theta(\nu/2-\eta)$, which is what
the admissible range~\eqref{eq:c43-admissible} for $c_{43}^\ast$ requires. Hence for any fixed
$\epsilon\in(0,1)$ there is an explicit threshold
$L\ge L_\ast^{(4)}(q,\eta,\nu,C,\epsilon)$ such that
\[
(CL^\eta)^{D-1}\left(\frac{\sqrt e}{\sqrt D}\right)^D
\;\le\;\exp\!\left(-(1-\epsilon)\,\theta\!\left(\frac{\nu}{2}-\eta\right)L^\nu\right).
\]
Combining with~\eqref{eq:RY}--\eqref{eq:RX}, we obtain
\begin{equation}\label{eq:remainder-final}
|\mathbb E R_D(tX)|+|\mathbb E R_D(tY)|
\le
6\,|t|\,\exp\!\left(-(1-\epsilon)\,\theta\!\left(\frac{\nu}{2}-\eta\right)L^\nu\right).
\end{equation}

\medskip\noindent
\textbf{Step 4: Combine.}
Combining~\eqref{eq:phi-Taylor-split},~\eqref{eq:main-sum-final}, and~\eqref{eq:remainder-final}, we obtain
\[
|\varphi_2(t)-\varphi_3(t)|
\le
|t|\,\bigl(e^{-(3c_4/4) L^\nu}+6 e^{-(1-\epsilon)\theta(\nu/2-\eta)L^\nu}\bigr)
\le
7\,|t|\,e^{-c_\ast L^\nu}
\]
for all $|t|\le C L^\eta$ and all $L\ge L_\ast(\epsilon)$, where
\begin{gather*}
c_\ast(\epsilon):=\min\!\left\{\tfrac{3c_4}{4},\ (1-\epsilon)\,\theta\!\left(\tfrac{\nu}{2}-\eta\right)\right\}, \\
L_\ast(\epsilon):=\max\{L_\ast^{(0)},L_\ast^{(1)},L_\ast^{(2)},L_\ast^{(3)},L_\ast^{(4)}(\epsilon)\}.
\end{gather*}
Using $\theta=c_4/(16(\eta+1/2))$ we have
$\theta(\nu/2-\eta)=c_4(\nu-2\eta)/(32(\eta+1/2))$, hence
\[
c_\ast(\epsilon)=(1-\epsilon)\cdot\frac{c_4}{32}\min\!\left\{1,\ \frac{\nu-2\eta}{\eta+1/2}\right\}
\]
once $\epsilon$ is small enough that $(1-\epsilon)\theta(\nu/2-\eta)\le 3c_4/4$
(which is automatic for the range $0<\eta<\nu/2<1/4$).
The leading factor $7$ may then be absorbed into the exponential by enlarging $L_\ast$ further:
for any
\[
0<c_{43}^\ast<\frac{c_4}{32}\min\!\left\{1,\ \frac{\nu-2\eta}{\eta+1/2}\right\},
\]
choose $\epsilon\in(0,1)$ so that $(1-\epsilon)\theta(\nu/2-\eta)>c_{43}^\ast$, then take $L_\ast$ large
enough that $7\le e^{(c_\ast(\epsilon)-c_{43}^\ast)L^\nu}$ for $L\ge L_\ast$; the bound
$|\varphi_2(t)-\varphi_3(t)|\le |t|\,e^{-c_{43}^\ast L^\nu}$ then holds for all $|t|\le C L^\eta$
and all $L\ge L_\ast$.

\medskip\noindent
\textbf{List of explicit sufficient conditions.}
The thresholds $L_\ast^{(0)},\dots,L_\ast^{(4)}$ may be defined concretely as the smallest
integers $L\ge 3$ past which the following predicates hold permanently; these are the
predicates verified numerically in the supplementary script.
\begin{itemize}
  \item[(C9)] ($L_\ast^{(0)}$, Step 1): $L_0\ge 1$, $D(L)\ge 2$, and $D(L)\le L_0$.
  \item[(C10)] ($L_\ast^{(1)}$, Step 2): $\displaystyle\frac{A\,C\,L^{\eta+1/2+\nu}}{D(L)}\ge 2$.
  \item[(C11)] ($L_\ast^{(2)}$, Step 2):
  \[
  \log D(L)+D(L)\,\log\!\left(\frac{e\,A\,C\,L^{\eta+1/2+\nu}}{D(L)}\right)
  \ \le\ \frac{3\,c_4}{16}\,L^\nu.
  \]
  \item[(C12)] ($L_\ast^{(3)}$, Step 3):
    \[
    D(L)\Bigl(\log A+(1/2+\nu)\log L-\tfrac12\log D(L)+\tfrac12\Bigr)
    \ \le\ c_4 L^\nu.
    \]
  \item[(C13)] ($L_\ast^{(4)}$, Step 4): with $t_{\max}:=C\,L^\eta$,
    \[
    \log 7\ \le\ \left(\frac{3c_4}{4}-c_{43}^\ast\right)L^\nu,
    \qquad
    \log 7+D(L)\,\log\!\left(\frac{\sqrt{e}\,t_{\max}}{\sqrt{D(L)}}\right)
    \ \le\ \log t_{\max}-c_{43}^\ast L^\nu .
    \]
\end{itemize}

The two inequalities in (C13) allocate the final exponential absorption explicitly.  The
first gives
$e^{-(3c_4/4)L^\nu}\le \tfrac17 e^{-c_{43}^\ast L^\nu}$ for the Taylor main term in
\eqref{eq:main-sum-final}.  The second gives
$6\exp(G(L))\le \tfrac67 e^{-c_{43}^\ast L^\nu}$ for the Taylor-remainder estimate preceding
\eqref{eq:remainder-final}, where
$G(L)=\log((t_{\max})^{D-1}(\sqrt e/\sqrt D)^D)$.  Together with
\eqref{eq:phi-Taylor-split} these imply the required
$|\varphi_2(t)-\varphi_3(t)|\le |t|e^{-c_{43}^\ast L^\nu}$ for all $|t|\le t_{\max}$.

Taking $L_\ast$ to be any integer threshold after which \textup{(C9)}--\textup{(C13)} hold
permanently, and setting $x_{43}^\ast=q^{L_\ast}$, proves the proposition.
\end{proof}

\medskip\noindent
\textbf{Note on (C12).}  The sufficient condition used at the end of Step~3 is
\[
A^D L^{(1/2+\nu)D}e^{-c_4L^\nu}
\ \le\ 2\sqrt{2\pi D}\,(D/e)^{D/2},
\]
so that the moment-comparison error is no larger than the explicit upper envelope for
$\mathbb E|Y|^D$ in~\eqref{eq:Y-moment-Stirling}.  Combining~\eqref{eq:MC} at $d=D$ with this
envelope and taking
logarithms gives
\[
D\bigl(\log A+(1/2+\nu)\log L-\tfrac12\log D+\tfrac12\bigr)
\ \le\ c_4L^\nu+\log 2+\tfrac12\log(2\pi D).
\]
Predicate (C12) drops the positive term $\log 2+\tfrac12\log(2\pi D)$ from the right-hand side,
so it is a clean sufficient condition for the displayed inequality.  Since the leading
$L^\nu$-coefficient gap for (C12) is large (computed below), this strengthening is harmless;
moreover, the explicit estimates below prove (C12) throughout the range $L\ge e^{72.75}$ used in the
final certificate.

\medskip\noindent
\textbf{Validity of the thresholds.}
For (C9) and (C10), the relevant ratio compares a positive power of $L$ against a
polylogarithmic or constant target, so the predicate eventually holds and is monotone
beyond an explicit point.

For (C11), (C12), and the second inequality in (C13), both sides are of order $L^\nu$, and
the claim rests on a strict gap in the leading $L^\nu$-coefficients together with the
explicit next-order controls below.
Write $\widetilde D=2\theta L^\nu/\log L$ and $D=\widetilde D-\delta$ with $0\le\delta<2$; then
$\log D=\nu\log L-\log\log L+\log(2\theta)+\log(1-\delta/\widetilde D)$.
The leading $L^\nu$-coefficients of (LHS, RHS) are:
\begin{center}
\renewcommand{\arraystretch}{1.3}
\begin{tabular}{lll}
        & LHS leading coefficient                                                    & RHS leading coefficient\\\hline
(C11):  & $2\theta(\eta+\tfrac12)=\dfrac{c_4}{8}=\dfrac{2c_4}{16}$                  & $\dfrac{3c_4}{16}$\\
(C12):  & $\theta(1+\nu)=\dfrac{c_4(1+\nu)}{16(\eta+\tfrac12)}$                     & $c_4$\\
(C13b): & $-2\theta(\tfrac{\nu}{2}-\eta)=-\dfrac{c_4(\nu-2\eta)}{16(\eta+\tfrac12)}$ & $-c_{43}^\ast$
\end{tabular}
\end{center}
For (C11), the gap is $c_4/16>0$.  For (C12), $\tfrac{1+\nu}{16(\eta+1/2)}<1$ throughout
the range $0<\eta<\nu/2<1/4$ (since $\sup(1+\nu)=3/2$ at $\nu\nearrow1/2$ and $\inf(\eta+\tfrac12)=1/2$
at $\eta\searrow 0$, giving $\sup=\tfrac{3/2}{16\cdot 1/2}=\tfrac{3}{16}=0.1875$), so the gap is at
least $(1-3/16)\,c_4 = \tfrac{13}{16}c_4 > 0$.  For the second inequality in (C13), the choice
$c_{43}^\ast<\tfrac{c_4}{32}\min\!\bigl\{1,(\nu-2\eta)/(\eta+\tfrac12)\bigr\}$ implies in
particular $c_{43}^\ast<2\theta(\nu/2-\eta)$ (using $\theta=c_4/(16(\eta+1/2))$), so the
LHS leading coefficient is strictly more negative than the RHS leading coefficient and the
gap $2\theta(\nu/2-\eta)-c_{43}^\ast$ is positive.
The first inequality in (C13) is eventually automatic since
$3c_4/4-c_{43}^\ast>0$.

The remaining contributions to (LHS$-$RHS) come from the $\log\log L$, bounded-constant, and floor
terms inside the brackets.  Each is handled by the explicit estimates below.

\begin{lemma}[Explicit remainder bound for the second inequality in \textup{(C13)}]\label{lem:r4-explicit}
Let $D(L)=2\lfloor\theta L^\nu/\log L\rfloor$ as in~\eqref{eq:D-def}, and define
$G(L):=\log\!\bigl((CL^\eta)^{D-1}(\sqrt e/\sqrt D)^D\bigr)$, so that the LHS of the second
inequality in \textup{(C13)} equals
$\log 7+\log|t|+G(L)$ at $|t|=t_{\max}=CL^\eta$.  Set
\[
K_0\ :=\ 2\log\!\bigl(\sqrt e\,C/\sqrt{2\theta}\bigr)\ =\ 1+\log(C^2/(2\theta)),\qquad
K_0^\ast\ :=\ K_0\ +\ \tfrac{1}{25}.
\]
At the working parameters used in Corollary~\ref{cor:Lstar-upper-bound}, one may take
$L_\mathrm{init}=10^{25}$, and for every integer $L\ge L_\mathrm{init}$,
\begin{equation}\label{eq:r4-bound}
\Bigl|\,\log 7+G(L)+2\theta\!\left(\tfrac{\nu}{2}-\eta\right)L^\nu\,\Bigr|
\ \le\ \theta\,L^\nu\,\frac{\log\log L+K_0^\ast}{\log L}.
\end{equation}
Equivalently, $\log7+G(L)=-2\theta(\nu/2-\eta)L^\nu+r(L)$ with
$|r(L)|\le\theta L^\nu (\log\log L+K_0^\ast)/\log L$.
\end{lemma}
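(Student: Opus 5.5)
We prove \eqref{eq:r4-bound} by expanding $G(L)$ exactly, isolating the leading $L^\nu$ term and the $\log\log L$, $K_0$ second-order term, and bounding everything else by $O(\log L)$. That $O(\log L)$ error is then absorbed into the slack $\theta L^\nu/(25\log L)$ built into $K_0^\ast=K_0+\tfrac1{25}$.

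\emph{Exact expansion.} Write $\widetilde D=2\theta L^\nu/\log L$ and $D=\widetilde D-\delta$ with $0\le\delta<2$. Then
\[
\log D=\nu\log L-\log\log L+\log(2\theta)+\log(1-\delta/\widetilde D).
\]
Substituting into $G(L)=(D-1)(\log C+\eta\log L)-\tfrac D2\log D+\tfrac D2$ gives
\[
G(L)=D\Bigl[(\eta-\tfrac\nu2)\log L+\tfrac12\log\log L+\tfrac12K_0-\tfrac12\log(1-\delta/\widetilde D)\Bigr]-\log C-\eta\log L .
\]
Here we use the identity $\log C-\tfrac12\log(2\theta)+\tfrac12=\tfrac12K_0$. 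Replacing $D$ by $\widetilde D$ in the bracket, the first term yields exactly $-2\theta(\nu/2-\eta)L^\nu$. The middle two terms yield exactly $\theta L^\nu(\log\log L+K_0)/\log L$.

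\emph{Error terms.} The quantity $r(L)-\theta L^\nu(\log\log L+K_0)/\log L$ is the sum of the following pieces:
\begin{enumerate}
\item[(a)] the floor correction $-\delta\bigl[(\eta-\nu/2)\log L+\tfrac12\log\log L+\tfrac12K_0\bigr]$, which has absolute value at most $\nu\log L+\log\log L+|K_0|$;
\item[(b)] the term $-\tfrac D2\log(1-\delta/\widetilde D)$, which is at most about $\delta\le2$ since $\delta/\widetilde D$ is tiny at $L\ge10^{25}$ (using $-\log(1-y)\le 2y$ for $y\le1/2$);
\item[(c)] the constant pieces $\log7-\log C-\eta\log L$.
\end{enumerate}
Together these are bounded by an explicit $c\log L$, with $c$ depending only on the working $(\eta,\nu,C,\theta)$.

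\emph{Absorption and sign.} It remains to check that $c\log L\le\theta L^\nu/(25\log L)$ for all $L\ge L_{\mathrm{init}}=10^{25}$. Since $L^\nu/(\log L)^2$ is increasing in this range, it suffices to verify this once at $L=10^{25}$ with outward-rounded arithmetic.

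For the two-sided absolute value in \eqref{eq:r4-bound}, note that $\log\log L+K_0>0$ at these $L$, because $\log\log L\ge 4$ dominates any negative part of $K_0$ at the working $C$. So the main second-order term is nonnegative and at most the right-hand side, and the error pieces are covered by the extra $\tfrac1{25}$ on either side.

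The main obstacle is purely numerical bookkeeping: confirming that the slack $\tfrac1{25}$ suffices at $L=10^{25}$ with the actual working values of $\theta$, $C$ and $\nu$ (where $L^\nu\approx 10^{7}$). If the margin fails there, I would first try raising $L_{\mathrm{init}}$, which is harmless since the certificate range is $L\ge e^{72.75}$.
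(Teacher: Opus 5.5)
Your proposal is correct and follows the paper's proof essentially step for step. It uses the same exact expansion of $G(L)$ via $D=\widetilde D-\delta$, collects the floor, $\log(1-\delta/\widetilde D)$, and constant errors into an $O(\log L)$ remainder, and absorbs it into the $\theta L^\nu/(25\log L)$ slack using monotonicity of $L^\nu/(\log L)^2$ from $L=10^{25}$. The numerical check you left open does pass without raising $L_{\mathrm{init}}$: at $L=10^{25}$ your error pieces total about $39$, while $\theta L^\nu/(25\log L)\approx 420$ (the paper records this as $15.85(\log L)^2<5.34\times10^4$ against $\theta L^\nu>5.97\times10^5$).
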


\begin{proof}
Write $D=2\lfloor\theta L^\nu/\log L\rfloor$, so $D\in(\widetilde D-2,\widetilde D]$ with
$\widetilde D:=2\theta L^\nu/\log L$; in particular $D\le\widetilde D$ and $\widetilde D-D\in[0,2)$.
Set $\delta:=\widetilde D-D\in[0,2)$.  For $L\ge L_\mathrm{init}\ge 10^{25}$ at the working
parameters, $\widetilde D\ge 4$, so $\delta/\widetilde D\le 1/2$.

\smallskip
\noindent\emph{Step 1 (expand $\log D$).}
$\log D=\log\widetilde D+\log(1-\delta/\widetilde D)$; by $|\log(1-y)|\le 2y$ for $y\in[0,1/2]$,
$|\log D-\log\widetilde D|\le 2\delta/\widetilde D\le 4/\widetilde D$.  Hence
\[
\log D\ =\ \nu\log L-\log\log L+\log(2\theta)+\varepsilon_1,
\qquad |\varepsilon_1|\le 4/\widetilde D.
\]

\smallskip
\noindent\emph{Step 2 (assemble $G$).}
Substituting $\log D$ into
$G(L)=(D-1)(\eta\log L+\log C)+\tfrac{D}{2}-\tfrac{D}{2}\log D
   =D\bigl(\eta\log L+\log C+\tfrac12-\tfrac12\log D\bigr)-\eta\log L-\log C$,
and using $K_0/2=\log(\sqrt e\,C/\sqrt{2\theta})=\log C+\tfrac12-\tfrac12\log(2\theta)$, gives
\[
G(L)\ =\ D\Bigl[-(\tfrac{\nu}{2}-\eta)\log L+\tfrac12\log\log L+\tfrac{K_0}{2}-\tfrac{\varepsilon_1}{2}\Bigr]
\ -\ \eta\log L-\log C.
\]

\smallskip
\noindent\emph{Step 3 (compute each $D$-term with explicit floor remainders).}
Using $D\log L=2\theta L^\nu-(\delta\log L)$ with $\delta\log L\in[0,2\log L)$, and
$D\cdot Y=\widetilde D\cdot Y-\delta Y$ for any $Y$ independent of $L$,
\begin{align*}
G(L)+2\theta(\tfrac{\nu}{2}-\eta)L^\nu
\ =\ &(\tfrac{\nu}{2}-\eta)(\delta\log L)
\ +\ \tfrac{\theta L^\nu\log\log L}{\log L}-\tfrac{\delta}{2}\log\log L \\
&\ +\ \tfrac{\theta L^\nu K_0}{\log L}-\tfrac{\delta}{2}K_0
\ -\ \tfrac{D\varepsilon_1}{2}
\ -\ \eta\log L-\log C.
\end{align*}
The first, fourth, fifth, sixth, and seventh terms on the right have absolute value at most
$2(\nu/2-\eta)\log L$, $\log\log L$, $|K_0|$, $2$ (since $D|\varepsilon_1|\le 4D/\widetilde D\le 4$), and
$\eta\log L+|\log C|$ respectively.  Their combined contribution is bounded by
\[
R_0(L)\ :=\ (\nu-\eta)\log L+\log\log L+|K_0|+|\log C|+2,
\]
which is uniformly bounded by
\[
R_0(L)\le 0.600\,\log L\qquad(L\ge 10^{25})
\]
at the working parameters: indeed $\nu-\eta=0.2314$, $K_0+|\log C|+2\le14.0$, and
$(\log\log L+14.0)/\log L\le0.361$ for $L\ge10^{25}$.  Thus
\begin{equation}\label{eq:r4-with-R0}
\Bigl|\,G(L)+2\theta(\tfrac{\nu}{2}-\eta)L^\nu\,\Bigr|
\ \le\ \frac{\theta L^\nu}{\log L}\bigl(\log\log L+K_0\bigr)\ +\ R_0(L).
\end{equation}
Adding the constant term in the second inequality of \textup{(C13)} gives the same display with
$G(L)$ replaced by $\log7+G(L)$ and with $R_0(L)$ replaced by $R_0(L)+\log7$.

\smallskip
\noindent\emph{Step 4 (absorb $R_0$ into the bound).}
We need $R_0(L)+\log7\le (\theta L^\nu/\log L)(K_0^\ast-K_0)=(\theta L^\nu)/(25\log L)$, i.e.\
$25\,(R_0(L)+\log7)\log L\le\theta L^\nu$.  Since $\log7\le0.034\log L$ for
$L\ge10^{25}$, the preceding display gives
$25(R_0(L)+\log7)\log L\le15.85(\log L)^2$ for $L\ge10^{25}$.  Also
$\theta\ge 0.04325$ and, at $L=10^{25}$,
\[
\theta L^\nu\ge0.04325\cdot10^{7.14}>5.97\times10^5,
\qquad
15.85(\log L)^2<15.85(58)^2<5.34\times10^4.
\]
Finally $L^\nu/(\log L)^2$ is increasing for $L\ge e^{2/\nu}$ because its derivative has the sign of
$\nu\log L-2$, and $10^{25}>e^{2/\nu}$.  Hence
$\theta L^\nu\ge15.85(\log L)^2\ge25(R_0(L)+\log7)\log L$ for all
$L\ge L_\mathrm{init}=10^{25}$, and
~\eqref{eq:r4-bound} follows from~\eqref{eq:r4-with-R0}.
\end{proof}

\noindent For completeness we now certify directly that \textup{(C9)}--\textup{(C12)} also hold
throughout the range $L\ge e^{72.75}$ used below.  For such $L$,
$\log L\ge72.75$ and $\widetilde D:=2\theta L^\nu/\log L>4$, hence
\[
\frac{\theta L^\nu}{\log L}\le D(L)\le \frac{2\theta L^\nu}{\log L}.
\]
The conditions \textup{(C9)} and \textup{(C10)} are also numerical at this point.  Since
$e^{72.75(1-\nu)}>8$ and $L\ge e^{72.75}$, we have
$L\ge 8L^\nu\ge4(\lceil L^\nu\rceil+1)$, whence
$L_0=L-2\lceil L^\nu\rceil\ge L/2$.  Also
\[
\frac{AC L^{\eta+1/2+\nu}}{D(L)}
\ge
\frac{AC}{2\theta}\,L^{\eta+1/2}\log L>2
\]
already at $L=e^{72.75}$, and the left-hand side is increasing in $L$.

For \textup{(C11)}, using the displayed bounds on $D$ gives
\[
\begin{aligned}
\log D+D\log\!\left(\frac{eAC L^{\eta+1/2+\nu}}{D}\right)
&\le
\log D+
2\theta L^\nu
\left((\eta+\tfrac12)\right.\\
&\hspace{3.5cm}\left.
+\frac{\log\log L+\log(eAC/\theta)}{\log L}\right).
\end{aligned}
\]
At the working parameters, with $A=4q^2/\sigma$, $C=2\pi\sigma(\log q)^{\eta-1/2}$, and
$\theta=c_4/(16(\eta+1/2))$, the elementary bounds used above give
\[
\begin{gathered}
\theta\le0.04326,\qquad \log(eAC/\theta)\le 11.60.
\end{gathered}
\]
Although $\log\log L$ itself increases, Lemma~\ref{lem:rho-monotone} applied with $K=11.60$
shows that $(\log\log L+11.60)/\log L$ is decreasing on $[e,\infty)$.  Hence, for
$L\ge e^{72.75}$,
\[
\frac{\log\log L+11.60}{\log L}
\le
\frac{\log(72.75)+11.60}{72.75}
<
\frac{4.288+11.60}{72.75}.
\]
Moreover $\log D(L)\le\nu\log L\le10^{-6}L^\nu$ throughout this range; the last inequality is
checked at $L=e^{72.75}$ and then follows because $L^\nu/\log L$ is increasing.
Therefore the coefficient of $L^\nu$ on the right is at most
\[
2(0.04326)\left(0.5545+\frac{4.288+11.60}{72.75}\right)+10^{-6}<0.06691.
\]
On the other hand $(3c_4/16)\ge (2.30258)/32>0.07195$, so \textup{(C11)} holds for every
$L\ge e^{72.75}$.

For \textup{(C12)}, the same estimates and the lower bound on $D$ give
\[
\begin{aligned}
D\Bigl(\log A+(\tfrac12+\nu)\log L-\tfrac12\log D+\tfrac12\Bigr)
&\le
2\theta L^\nu\left(\tfrac12+\frac{\nu}{2}\right.\\
&\qquad\left.
+\frac{\tfrac12\log\log L+\log A-\tfrac12\log\theta+\tfrac12}{\log L}\right).
\end{aligned}
\]
At the working parameters,
$\log A-\tfrac12\log\theta+\tfrac12\le7.01$.  Lemma~\ref{lem:rho-monotone}, now applied to
$(\log\log L+14.02)/\log L$, gives
\[
\frac{\tfrac12\log\log L+7.01}{\log L}
\le
\frac{0.5\cdot4.288+7.01}{72.75}
\qquad (L\ge e^{72.75}).
\]
Thus the coefficient of $L^\nu$ is at most
\[
2(0.04326)\left(0.643+\frac{0.5\cdot4.288+7.01}{72.75}\right)<0.0666
<0.3837<c_4.
\]
Thus \textup{(C12)} also holds for every $L\ge e^{72.75}$.

The choice $3c_4/16$ in (C11) leaves the $c_4/16$ margin needed in Step~2.  The positive
leading-coefficient gaps are then packaged by the following elementary permanence lemma.

\begin{lemma}[Permanence past eventual sign, with explicit threshold]\label{lem:permanence}
Let $g:\mathbb{Z}_{\ge 3}\to\mathbb{R}$ satisfy $g(L)=aL^\nu+r(L)$ with $a>0$ a fixed constant and
$|r(L)|\le C\,L^\nu\,\rho(L)$, where $\rho:\mathbb{Z}_{\ge 3}\to(0,\infty)$ is non-increasing for
$L\ge L_\mathrm{init}$ and $\rho(L)\to 0$ as $L\to\infty$.  Define
\[
L_0\ :=\ \min\bigl\{L\in\mathbb{Z}_{\ge L_\mathrm{init}}:\ \rho(L)\le a/C\bigr\}.
\]
Then $g(L)\ge L^\nu(a-C\rho(L))\ge 0$ for every $L\ge L_0$, and the unique smallest integer
$L^\sharp\ge 3$ with $g(\ell)\ge 0$ for all $\ell\ge L^\sharp$ satisfies $L^\sharp\le L_0$.
\end{lemma}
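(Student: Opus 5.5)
The argument is a direct triangle-inequality bound, followed by a short check of the definition of $L_0$ and of $L^\sharp$. First I would make sure $L_0$ is well defined. Since $\rho(L)\to0$ and $a/C>0$, there is some integer $L\ge L_{\mathrm{init}}$ with $\rho(L)\le a/C$. The set in the definition of $L_0$ is therefore a nonempty set of integers bounded below, so its minimum exists. (If $C=0$ the remainder vanishes identically, $g(L)=aL^\nu>0$ for all $L$, and one reads $a/C=+\infty$, so $L_0=L_{\mathrm{init}}$.)

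Next I would show the pointwise bound for every $L\ge L_0$. Since $L_0\ge L_{\mathrm{init}}$ and $\rho$ is non-increasing on $[L_{\mathrm{init}},\infty)$, we get $\rho(L)\le\rho(L_0)\le a/C$. The hypothesis $|r(L)|\le CL^\nu\rho(L)$ then gives
\[
g(L)=aL^\nu+r(L)\ \ge\ aL^\nu-CL^\nu\rho(L)=L^\nu\bigl(a-C\rho(L)\bigr).
\]
Because $L^\nu>0$ and $a-C\rho(L)\ge a-C\rho(L_0)\ge 0$, this is $\ge0$.

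Finally I would handle $L^\sharp$. Let $S$ be the set of integers $L'\ge3$ such that $g(\ell)\ge0$ for all $\ell\ge L'$. The previous step shows $L_0\in S$, so $S$ is nonempty. Being a set of integers bounded below by $3$, it has a unique least element $L^\sharp$, and $L^\sharp\le L_0$ by minimality.

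There is no real obstacle here. The only point needing care is to use monotonicity of $\rho$ only past $L_{\mathrm{init}}$, which is exactly why $L_0$ is taken in $\mathbb Z_{\ge L_{\mathrm{init}}}$.
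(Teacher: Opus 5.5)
Your proof is correct and follows the paper's argument. Monotonicity of $\rho$ past $L_{\mathrm{init}}$ gives $\rho(L)\le\rho(L_0)\le a/C$, and the bound on $|r(L)|$ then yields $g(L)\ge L^\nu(a-C\rho(L))\ge 0$; your extra checks that $L_0$ is well defined (via $\rho(L)\to 0$) and that $L^\sharp$ exists with $L^\sharp\le L_0$ just make explicit what the paper's one-line proof leaves implicit.
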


\begin{proof}
For $L\ge L_0$, the monotonicity of $\rho$ gives $\rho(L)\le \rho(L_0)\le a/C$, hence
$g(L)=aL^\nu+r(L)\ge aL^\nu-CL^\nu\rho(L)=L^\nu(a-C\rho(L))\ge 0$.
\end{proof}

\smallskip
We will also need that the specific weight $\rho(L)=(\log\log L+K)/\log L$ used in
Corollary~\ref{cor:Lstar-upper-bound} below satisfies the monotonicity hypothesis of
Lemma~\ref{lem:permanence}.

\begin{lemma}[Monotonicity of $(\log\log L+K)/\log L$]\label{lem:rho-monotone}
For any real $K\ge 1$, the function $\rho(L):=(\log\log L+K)/\log L$ is strictly decreasing on
$[e,\infty)$, hence in particular on $\mathbb{Z}_{\ge 3}$.
\end{lemma}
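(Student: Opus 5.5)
The plan is to work with the continuous variable $y=\log L\in[1,\infty)$ and reduce to a sign computation for the derivative. Since $\log$ is a strictly increasing bijection from $[e,\infty)$ onto $[1,\infty)$, it suffices to show that
\[
\psi(y):=\frac{\log y+K}{y}
\]
is strictly decreasing on $[1,\infty)$. Strict monotonicity on $[e,\infty)$ then restricts immediately to $\mathbb{Z}_{\ge 3}\subset[e,\infty)$, which is the form needed for Lemma~\ref{lem:permanence}.

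Differentiating gives
\[
\psi'(y)=\frac{1-\log y-K}{y^{2}}.
\]
For $y\ge 1$ we have $\log y\ge 0$, and with $K\ge 1$ the numerator satisfies $1-\log y-K\le -\log y\le 0$. Equality holds only at the single point $y=1$, and only when $K=1$; elsewhere the numerator is strictly negative. A continuous function whose derivative is negative except at one point is strictly decreasing, so this takes care of the boundary case. Equivalently, one can apply the mean value theorem on any $[y_1,y_2]\subset[1,\infty)$ with $y_1<y_2$: the intermediate point lies in $(y_1,y_2)\subset(1,\infty)$, where $\psi'<0$ strictly.

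The only point that needs care is this endpoint case $K=1$, $L=e$, where the derivative vanishes. The mean value argument handles it cleanly. Nothing else is an obstacle. The hypothesis $K\ge 1$ is what guarantees that the decrease starts already at $L=e$; for smaller $K$ the function would first increase up to $\log L=e^{1-K}$. In the applications, $K=11.60$ and $K=14.02$, which are comfortably in range.
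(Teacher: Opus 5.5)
Your proof is correct and follows the paper's route: the paper differentiates directly in $L$ to get $\rho'(L)=(1-\log\log L-K)/(L(\log L)^2)$, which is your $\psi'(y)$ after the chain rule with $y=\log L$, and it uses the same sign argument. Your mean-value treatment of the endpoint case $K=1$, $L=e$ spells out the strictness step that the paper states only briefly.
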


\begin{proof}
Differentiating $\rho$ with respect to $L$ gives
\[
\rho'(L)\ =\ \frac{1}{L(\log L)^2}\bigl(1-\log\log L-K\bigr).
\]
For $L\ge e$ we have $\log L\ge 1$, hence $\log\log L\ge 0$, hence $1-\log\log L-K\le 1-K\le 0$
when $K\ge 1$.  Thus $\rho'(L)\le 0$ on $[e,\infty)$.  The inequality is strict whenever
$\log\log L+K>1$, which holds for $L>e$ when $K\ge 1$ (and trivially for $K>1$).
\end{proof}

\begin{corollary}[Analytical threshold for $L_\ast$ and $M$]\label{cor:Lstar-upper-bound}
At the working parameters $(q,\eta,\nu)=(10,0.0545,0.2859)$, conditions
\textup{(C9)}--\textup{(C13)} hold for every integer
$L\ge\lceil e^{72.75}\rceil$.  Consequently
$Y_{43}^\ast\le \lceil e^{72.75}\rceil\log 10<9.10\times10^{31}$, and the final theorem threshold
satisfies $M<1.78\times10^{32}$.
\end{corollary}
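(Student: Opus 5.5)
The corollary has three parts, and I would verify them in turn: first (C9)--(C12), then both inequalities of (C13), and finally the conversion of the resulting $L_\ast$ into bounds on $Y_{43}^\ast$ and $M$. Throughout, $L\ge L_1:=\lceil e^{72.75}\rceil$, so $\log L\ge 72.75$. The paragraph after Lemma~\ref{lem:r4-explicit} already certifies (C9)--(C12) on this range. There, $D(L)$ is pinned between $\theta L^\nu/\log L$ and $2\theta L^\nu/\log L$. Conditions (C9) and (C10) are checked at $L_1$ and then propagate by monotonicity. For (C11) and (C12) the leading $L^\nu$-coefficients are $0.06691<3c_4/16$ and $0.0666<c_4$ respectively, with lower-order terms controlled by Lemma~\ref{lem:rho-monotone}. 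So I will cite that paragraph for these four conditions and concentrate on (C13).

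\emph{The first inequality of (C13).} This one is immediate. At the working parameters, $c_{43}^\ast\approx1.91\times10^{-5}$ while $3c_4/4\approx0.288$. Moreover $L^\nu\ge e^{72.75\nu}=e^{20.8\ldots}$ is enormous, so $\log 7\le(3c_4/4-c_{43}^\ast)L^\nu$ holds with vast room, and it is monotone in $L$.

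\emph{The second inequality of (C13).} This is the substantive step. I would apply Lemma~\ref{lem:r4-explicit}, which is valid because $L_1\ge10^{25}$. It gives
\[\log 7+G(L)\le -2\theta(\tfrac\nu2-\eta)L^\nu+\theta L^\nu\,\rho(L),\qquad \rho(L)=\frac{\log\log L+K_0^\ast}{\log L}.\]
The desired inequality is $\log7+\log t_{\max}+G(L)\le \log t_{\max}-c_{43}^\ast L^\nu$. The $\log t_{\max}$ terms cancel, which is exactly why $G$ was defined with the single factor of $|t|$ pulled out. What remains is
\[\theta\rho(L)\le a:=2\theta(\tfrac\nu2-\eta)-c_{43}^\ast .\]
I would then apply Lemma~\ref{lem:permanence} with this $a$ and with $C$ replaced by $\theta$. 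The hypothesis that $\rho$ is decreasing comes from Lemma~\ref{lem:rho-monotone}, which needs $K_0^\ast\ge1$. So the whole matter reduces to one numerical inequality at $L=L_1$:
\[\frac{\log 72.75+K_0^\ast}{72.75}\le \frac{a}{\theta}=(\nu-2\eta)-\frac{c_{43}^\ast}{\theta}.\]

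\emph{Evaluating the constants.} I would first evaluate $K_0=1+\log(C^2/(2\theta))$ with $C=2\pi\sigma(\log 10)^{\eta-1/2}$, using outward rounding. Then I would check three things: that $K_0^\ast\ge1$, that $\nu-2\eta=0.1769$, and that $c_{43}^\ast/\theta\approx4.4\times10^{-4}$. The left side is roughly $(4.287+K_0^\ast)/72.75$. For this to be at most about $0.1765$ we need $K_0^\ast\lesssim8.55$.

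This numerical verification is the main obstacle. The exponent $72.75$ was evidently chosen to make this inequality nearly tight, so the margin must be certified with interval arithmetic and not with loose rounding. If the crude constant $14.0$ from inside the proof of Lemma~\ref{lem:r4-explicit} were used in place of $K_0$, the inequality would fail. The sharp value of $K_0$ is therefore essential, and my first step would be to compute $C^2/(2\theta)$ to several digits.

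\emph{Concluding the bounds on $Y_{43}^\ast$ and $M$.} Once (C9)--(C13) hold permanently from $L_1$ on, Proposition~\ref{prop:CF-replacement} allows $L_\ast=L_1$. Hence $Y_{43}^\ast=\log x_{43}^\ast\le L_1\log10$. Since $e^{72.75}\approx3.95\times10^{31}$, this gives $Y_{43}^\ast<9.10\times10^{31}$. This is then combined with the separately established bounds $Y_{\mathrm{min}}<8.00\times10^{31}$ (Lemma~\ref{lem:insensitivity}) and $Y_{\mathrm{maj}}<7.90\times10^{31}$, together with the dominated thresholds. These give $Y_\ast\le9.10\times10^{31}$, and then
\[M=\Bigl\lceil \tfrac{9}{2\log10}Y_\ast+\tfrac92\Bigr\rceil<1.78\times10^{32}.\]
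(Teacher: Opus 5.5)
Your proposal follows the paper's proof essentially step for step. The paper likewise cites the preceding paragraph for (C9)--(C12) and dismisses (C13a) via $L^\nu\ge e^{20}$. It handles (C13b) by combining Lemma~\ref{lem:r4-explicit} with Lemma~\ref{lem:permanence} (taking $C=\theta$) and Lemma~\ref{lem:rho-monotone}, which reduces everything to the single check $\log L\ge(\theta/a)(\log\log L+K_0^\ast)$ at $\log L=72.75$; that is your inequality $\rho(e^{72.75})\le a/\theta$ rewritten.

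The numerical step you leave open does go through. One has $K_0^\ast\approx 8.53$, below your tolerance of about $8.55$. The paper certifies this with the outward-rounded bounds $\theta/a\le 5.67$ and $K_0^\ast\le 8.54$, which give $5.67\cdot 12.828\le 72.74<72.75$.
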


\begin{proof}
At the working parameters $(q,\eta,\nu)=(10,0.0545,0.2859)$, by Lemma~\ref{lem:r4-explicit} the
quantity $g_4(L):=\text{RHS}-\text{LHS}$ of the second inequality in \textup{(C13)}
decomposes as $g_4(L)=aL^\nu+r_4(L)$ with
\[
a\ =\ 2\theta(\tfrac{\nu}{2}-\eta)-c_{43}^\ast,\qquad
|r_4(L)|\ \le\ \theta\,L^\nu\,\frac{\log\log L+K_0^\ast}{\log L}\quad\text{for }L\ge 10^{25},
\]
where $K_0^\ast=K_0+1/25$ and $K_0=2\log(\sqrt e\,C/\sqrt{2\theta})\approx 8.49$, using the
sharp sub-Gaussian form of Lemma~\ref{lem:r4-explicit}.  Numerically at the working parameters
$(\eta,\nu)=(0.0545,0.2859)$: $a\approx 7.63\times 10^{-3}$, $\theta\approx 0.04326$,
$K_0^\ast\approx 8.53$.  Set
$\rho(L):=(\log\log L+K_0^\ast)/\log L$.  By Lemma~\ref{lem:rho-monotone} (with $K=K_0^\ast\ge 1$),
$\rho$ is strictly decreasing on $[e,\infty)$, so $\rho$ satisfies the monotonicity hypothesis of
Lemma~\ref{lem:permanence}.  Applying that lemma with $C=\theta$, the threshold
\[
L_\ast^{(4)}\ :=\ \min\bigl\{L\in\mathbb{Z}_{\ge 10^{25}}:\ \rho(L)\le a/\theta\bigr\}
\]
satisfies $g_4(L)\ge L^\nu(a-\theta\rho(L))\ge 0$ for all $L\ge L_\ast^{(4)}$, so
$L_\ast^{(4)}$ is a \emph{rigorous} permanence threshold for the second inequality in
\textup{(C13)}.  Equivalently, $L_\ast^{(4)}$ is the smallest integer $L\ge 10^{25}$ with
$\log L\ge (\theta/a)(\log\log L+K_0^\ast)$.

\emph{Numerical evaluation.}  At the working parameters we take
\[
c_{43}^\ast=\tfrac{1}{200}\cdot(c_4/32)\min\{1,(\nu-2\eta)/(\eta+1/2)\}.
\]
The choice of safety factor is essentially arbitrary in the strict-inequality range of
Proposition~\ref{prop:CF-replacement}: the iteration factor $\theta/a$ for the second
inequality in (C13) is
\[
\theta/(2\theta(\nu/2-\eta)-c_{43}^\ast)\ \longrightarrow\ 1/(2(\nu/2-\eta))
\qquad\text{as $c_{43}^\ast\to 0$,}
\]
and the final theorem is insensitive to any fixed safety factor in this range because the resulting
closed-form threshold remains below the dominating $10^{32}$ scale; we adopt
$1/200$ as a convenient round number that keeps $Y_{47}$ below the major/minor thresholds while giving
strict inequality at finite $L$ with ample margin in the certificate below.  With this choice, one has
$a\approx 7.63\times 10^{-3}$ and $\theta/a\approx 5.67$.

\smallskip
\noindent\emph{Certified inequality at $\log L=72.75$.}
We verify the fixed-point inequality $\log L\ge (\theta/a)(\log\log L+K_0^\ast)$ explicitly at
$L=e^{72.75}$ via rigorous upper bounds on each factor of the right-hand side.

\smallskip
\noindent (a) \emph{Bound on $\theta$.}  $\theta=c_4/(16(\eta+\tfrac12))=\log 10/(96\cdot 0.5545)$.
Using $\log 10\le 2.30259$, $\theta\le 2.30259/(96\cdot 0.5545)\le 0.04326$.

\smallskip
\noindent (b) \emph{Bound on $a$.}  $a=2\theta(\nu/2-\eta)-c_{43}^\ast$.  With $\nu/2-\eta=0.08845$ exact
and $\theta\ge 0.04325$ (using $\log 10\ge 2.30258$ in the lower direction),
$2\theta(\nu/2-\eta)\ge 0.007650$.
For $c_{43}^\ast$: $c_4/32 = \log 10/192 \le 0.011993$, and
$(\nu-2\eta)/(\eta+\tfrac12)=0.1769/0.5545\le 0.3191$, so
$c_{43}^\ast=\tfrac{1}{200}\cdot(c_4/32)\cdot\min\{1,0.3191\}\le 1.92\times 10^{-5}$.
Hence $a\ge 0.007650-0.0000192\ge 0.007631$.

\smallskip
\noindent (c) \emph{Bound on $\theta/a$.}  $\theta/a\le 0.04326/0.007631\le 5.67$.

\smallskip
\noindent (d) \emph{Bound on $K_0$.}  $K_0=1+\log(C^2/(2\theta))$ where
$C=C_{\mathrm{tmax}}=2\pi\sigma(\log q)^{\eta-1/2}$ is the $C$ from the major-arc $t$-range.
Numerically with $\sigma=\sqrt{99/12}$, $\log q=\log 10$, $\eta=0.0545$:
$(\log q)^{\eta-1/2}=2.30258^{-0.4455}\le 0.690$, so $C\le 2\pi\cdot 2.873\cdot 0.690\le 12.46$,
and $C^2\le 155.3$.  Combined with $2\theta\ge 0.08650$, $C^2/(2\theta)\le 155.3/0.08650\le 1796$,
hence $K_0\le 1+\log(1796)\le 1+7.493\le 8.50$.  Then $K_0^\ast=K_0+1/25\le 8.54$.

\smallskip
\noindent (e) \emph{Bound on $\log\log L+K_0^\ast$.}  At $\log L=72.75$,
$\log\log L=\log(72.75)<4.288$, so $\log\log L+K_0^\ast\le 4.288+8.54\le 12.828$.

\smallskip
\noindent (f) \emph{Bound on the right-hand side.}  It is at most
$(\theta/a)(\log\log L+K_0^\ast)\le 5.67\cdot 12.828
\le 72.74$.

\smallskip
\noindent\emph{Conclusion.}  At $\log L=72.75$, the left-hand side is $72.75$, while the right-hand
side is at most $72.74$.  The inequality
$\log L\ge (\theta/a)(\log\log L+K_0^\ast)$ holds at $L=e^{72.75}$ with rigorous margin $\ge 0.01$.

Together with the monotonicity of $\rho(L)=(\log\log L+K_0^\ast)/\log L$ on $[e,\infty)$
(Lemma~\ref{lem:rho-monotone}), this shows
\begin{equation}\label{eq:Y43-rigorous}
L_\ast^{(4)}\ \le\ \lceil e^{72.75}\rceil\ <\ 3.94\times 10^{31}+1.
\end{equation}
The direct estimates above prove \textup{(C9)}--\textup{(C12)} for every integer
$L\ge\lceil e^{72.75}\rceil$, while
the preceding fixed-point check proves the second inequality in \textup{(C13)} for every
$L\ge\lceil e^{72.75}\rceil$.  The first inequality in \textup{(C13)} holds in the same range because
$3c_4/4-c_{43}^\ast>0.287$ and $L^\nu\ge e^{20}$, so
$(3c_4/4-c_{43}^\ast)L^\nu>10^8>\log 7$.  Hence, for
\[
L_\ast := \max\!\bigl\{L_\ast^{(0)},L_\ast^{(1)},L_\ast^{(2)},L_\ast^{(3)},L_\ast^{(4)}\bigr\},
\qquad
Y_{43}^\ast:=L_\ast\log q,
\]
the full characteristic-function threshold satisfies
\[
Y_{43}^\ast\le \lceil e^{72.75}\rceil\log q < 9.10\times 10^{31}.
\]
Since $Y_{\mathrm{maj}}<7.90\times10^{31}$ and $Y_{\mathrm{min}}\le8.00\times10^{31}$
(see~\S\ref{sec:derivation_m}), the rigorous threshold
$Y_\ast=\max(Y_{43}^\ast,Y_{\mathrm{maj}},Y_{\mathrm{min}},\ldots)$ satisfies
$Y_\ast\le 9.10\times10^{31}$,
giving the rigorous closed-form bound
\[
M\ =\ \left\lceil(9/(2\log q))\,Y_\ast+\frac92\right\rceil\ <\ 1.78\times10^{32}.
\]
Separately, the supplementary script's bisection on the exact predicates locates
the empirical full threshold $L_\ast\approx 2.72\times 10^{31}$, hence
$Y_{43,\mathrm{bisect}}^\ast\approx 6.27\times 10^{31}$, at which the binding threshold in $Y_\ast$
shifts to $Y_{\mathrm{min}}\approx 7.59\times 10^{31}$ and $M_{\mathrm{bisect}}\approx 1.48\times
10^{32}$.  We use the rigorous form $Y_{43}^\ast\le \lceil e^{72.75}\rceil\log q$ in the proof of
Theorem~\ref{thm:main}; $M_{\mathrm{bisect}}$ is reported as a sharper numerical observation.
\end{proof}

For auditability, the permanence check used in Corollary~\ref{cor:Lstar-upper-bound} is summarized
in the following table.  The same named checks are emitted by the supplementary certificate script.
\begin{center}
\small
\renewcommand{\arraystretch}{1.18}
\begin{tabular}{p{0.13\linewidth}p{0.36\linewidth}p{0.35\linewidth}}
Condition & Bound at $\log L=72.75$ & Permanence reason \\\hline
(C9) & $L_0\ge L/2$, $\widetilde D>4$, hence $2\le D\le L_0$ &
$L^{1-\nu}/8$ and $L^\nu/\log L$ increase \\
(C10) & $AC L^{\eta+1/2+\nu}/D>4.8\cdot10^{23}>2$ &
lower bound is a positive power of $L$ times $\log L$ \\
(C11) & LHS coefficient $\le0.06691<3c_4/16>0.07195$ &
$(\log\log L+11.60)/\log L$ decreases \\
(C12) & LHS coefficient $\le0.0666<c_4>0.3837$ &
$(\log\log L+14.02)/\log L$ decreases \\
(C13a) & $(3c_4/4-c_{43}^\ast)L^\nu>10^8>\log7$ &
$L^\nu$ increases \\
(C13b) & fixed-point RHS $\le72.74<72.75=\log L$ &
$(\log\log L+K_0^\ast)/\log L$ decreases \\
\end{tabular}
\end{center}

The full list of conditions \textup{(C9)}--\textup{(C13)} therefore holds once $L\ge L_\ast$.

\begin{proposition}[Explicit major-arc approximation]\label{prop:maj}
For the decimal base $q=10$, fix $\eta,\nu$ with $0<\eta<\nu/2<1/4$ and
$\nu+2\eta<1/2$ (the latter is needed for the major-arc criterion below to have a negative
exponent of $\log x$), and fix an admissible $c_{43}^\ast$ satisfying
\eqref{eq:c43-admissible}.  There exist explicit constants
$C_{\mathrm{maj}}=C_{\mathrm{maj}}(10,\eta,\nu)$ and
$x_{\mathrm{maj}}=x_{\mathrm{maj}}(10,\eta,\nu,C_{\mathrm{DMR}},c_{43}^\ast)\ge 3$
such that for every integer $L\ge1$ with $x:=10^L\ge x_{\mathrm{maj}}$, all integers $m$ with
$\gcd(m,9)=1$, and all $|\alpha|\le (\log x)^{\eta-1/2}$,
\[
\left|\frac{1}{\pi(x;9,m)}\sum_{\substack{p\le x\\ p\equiv m\ (\mathrm{mod}\ 9)}} e(\alpha s(p))
\ -\ e(\alpha \mu L)\,e^{-2\pi^2\sigma^2 \alpha^2 L}\right|
\le
C_{\mathrm{maj}}\,(\log x)^{-1/2+\nu+\eta}.
\]
More precisely, the same proof gives explicit constants $C^{(1)}(q,\nu)$ and $C^{(0)}(q)$ such that
\[
\begin{aligned}
&\left|\frac{1}{\pi(x;9,m)}
\sum_{\substack{p\le x\\ p\equiv m\ (\mathrm{mod}\ 9)}} e(\alpha s(p))
- e(\alpha \mu L)\,e^{-2\pi^2\sigma^2 \alpha^2 L}\right| \\
&\qquad\le
C^{(1)}(q,\nu)|\alpha|(\log x)^\nu+C^{(0)}(q)(\log x)^{-1},
\end{aligned}
\]
and one may take $C_{\mathrm{maj}}=C^{(1)}(q,\nu)+C^{(0)}(q)$.
\end{proposition}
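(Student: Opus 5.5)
The plan is to split the normalized prime sum into three comparisons and bound each separately: full digit sum versus truncated digit sum; truncated prime model versus the i.i.d.\ model (via Proposition~\ref{prop:CF-replacement}); and the i.i.d.\ model versus the Gaussian. Write $x=10^L$, $r=\lceil L^\nu\rceil$, $L_0=L-2r$. For a prime $p\le x$ we decompose $s(p)=s_{\mathrm{low}}(p)+T(p)+s_{\mathrm{high}}(p)$, where $s_{\mathrm{low}}$ and $s_{\mathrm{high}}$ are the sums of the lowest and highest $r$ digits. Each of these has absolute value at most $9r$.

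\emph{Step 1 (truncation).} We use $|e(\alpha s(p))-e(\alpha T(p))|\le 2\pi|\alpha|\cdot 18r$. Since $L\asymp\log x$, this contributes $\le C|\alpha|(\log x)^\nu$, which is a piece of the $C^{(1)}$ term. Similarly we replace $e(\alpha\mu L)$ by $e(\alpha\mu L_0)$ at cost $2\pi|\alpha|\mu\cdot 2r$, absorbed in the same way.

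\emph{Step 2 (prime model versus i.i.d.\ model).} With $t=2\pi\alpha\sigma\sqrt{L_0}$ we have
\[
\frac{1}{\pi(x;9,m)}\sum_{p\equiv m}e(\alpha T(p))=e(\alpha\mu L_0)\,\varphi_2(t),
\]
and likewise the i.i.d.\ sum equals $e(\alpha\mu L_0)\varphi_3(t)$. On the range $|\alpha|\le(\log x)^{\eta-1/2}$ we get $|t|\le 2\pi\sigma\sqrt{L}\,(L\log10)^{\eta-1/2}=C L^\eta$ with $C=2\pi\sigma(\log 10)^{\eta-1/2}$, which matches the constant used in Corollary~\ref{cor:Lstar-upper-bound}. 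Proposition~\ref{prop:CF-replacement} then gives an error $|t|e^{-c_{43}^\ast L^\nu}$. We put $x_{\mathrm{maj}}\ge\max(x_{45},x_{43}^\ast)$ and enlarge it so that $|t|e^{-c_{43}^\ast L^\nu}\le (\log x)^{-1}$; this is the $C^{(0)}$ part. Note that $(\log x)^{-1}\le(\log x)^{-1/2+\nu+\eta}$.

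\emph{Step 3 (i.i.d.\ versus Gaussian).} Here $\varphi_3(t)=\psi(2\pi\alpha)^{L_0}$, where $\psi(u)=\mathbb E e^{iu(Z-\mu)}=\sin(qu/2)/(q\sin(u/2))$. We need
\[
\bigl|\psi(u)^{L_0}-e^{-\sigma^2u^2L_0/2}\bigr|\quad\text{for }|u|\le 2\pi(\log x)^{\eta-1/2}.
\]
This is the point where the sign argument $H(t)\le0$ mentioned in the section intro should enter. I would set $H(u):=\log\psi(u)+\sigma^2u^2/2$ and show $H\le0$ and $|H(u)|\le c\,u^4$ on this range, using the Taylor expansion of $\log(\sin(qu/2)/(q\sin(u/2)))$, whose quartic coefficient is negative. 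Then
\[
\bigl|\psi^{L_0}-e^{-\sigma^2u^2L_0/2}\bigr|\le e^{-\sigma^2u^2L_0/2}\,L_0|H(u)|\le c' L u^4 e^{-\sigma^2u^2L_0/2},
\]
and this is bounded by $c''/L$ uniformly in $u$, so it also goes into $C^{(0)}$. Finally we replace $e^{-2\pi^2\sigma^2\alpha^2L_0}$ by $e^{-2\pi^2\sigma^2\alpha^2L}$. The cost is at most $2\pi^2\sigma^2\alpha^2\cdot 2r\ll|\alpha|(\log x)^{\eta-1/2}(\log x)^\nu$, which is absorbed into $C^{(1)}$.

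Summing the three steps gives the bound $C^{(1)}|\alpha|(\log x)^\nu+C^{(0)}(\log x)^{-1}$. Using $|\alpha|\le(\log x)^{\eta-1/2}$, this is at most $C_{\mathrm{maj}}(\log x)^{-1/2+\nu+\eta}$. The main obstacle is Step 3: obtaining a clean explicit quartic bound for $H$ with the correct sign, uniformly over the whole $u$-range. I would first prove $H\le0$ by a monotonicity argument in the spirit of Lemma~\ref{lem:uniform-subgaussian}, and then bound $|H|$ by its fourth-order Taylor remainder. The remaining bookkeeping is routine constant tracking.
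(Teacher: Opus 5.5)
Your proposal is correct and follows essentially the paper's argument. It uses the same three comparisons: truncation to the middle digits, then Proposition~\ref{prop:CF-replacement} with $C=2\pi\sigma(\log 10)^{\eta-1/2}$, then the i.i.d.\ expansion with the sign fact $H\le 0$ giving $|\varphi_3-e^{-t^2/2}|\le e^{-t^2/2}|H|$.

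The differences are only bookkeeping. You normalize $t$ by $\sqrt{L_0}$ and pay for the $L_0\to L$ change with a final Gaussian swap costing $O(\alpha^2 r)$; the paper normalizes by $\sqrt{L}$ and absorbs that rescaling into the $|1/\sqrt{L_0}-1/\sqrt{L}|$ term of its $\varphi_1$--$\varphi_2$ comparison. You also put the exponentially small characteristic-function error into $C^{(0)}$, whereas the paper puts it into $C^{(1)}$.
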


\begin{proof}
The proof follows Sections~4.1--4.3 of~\cite{DMRcomp2}.  We use the characteristic
functions $\varphi_1,\varphi_2,\varphi_3$ from Section~4.1 there.

\medskip\noindent

\textbf{Step 1: reduction to a normalized characteristic function bound.}
Let
\[
\mathcal P(x;9,m):=\{p\le x:\ p\ \text{prime and}\ p\equiv m\ (\mathrm{mod}\ 9)\},
\qquad \#\mathcal P(x;9,m)=\pi(x;9,m).
\]
(For $x\ge x_{\mathrm{maj}}$ we will ensure $\pi(x;9,m)\ge 1$ for all $\gcd(m,9)=1$.)
Let $S_x:=s(p)$ where $p$ is chosen uniformly from $\mathcal P(x;9,m)$, so that for any function $F$
\[
\mathbb EF(S_x)=\frac1{\pi(x;9,m)}\sum_{\substack{p\le x\\ p\equiv m\ (9)}} F(s(p)).
\]
Define the normalized characteristic function
\[
\varphi_1(t):=\mathbb{E}\exp\!\left(\frac{i t (S_x-\mu L)}{\sigma\sqrt L}\right).
\]
For real $\alpha$ set
\begin{equation}\label{eq:alpha-t-change}
t = 2\pi\sigma\,\alpha\,\sqrt{L}
\qquad\text{(equivalently, }\alpha=\frac{t}{2\pi\sigma\sqrt{L}}\text{)}.
\end{equation}
Then for each prime $p\in\mathcal P(x;9,m)$,
\[
\exp\!\left(\frac{i t (s(p)-\mu L)}{\sigma\sqrt L}\right)
=\exp\!\bigl(2\pi i\alpha(s(p)-\mu L)\bigr)
=e(\alpha s(p))\,e(-\alpha\mu L),
\]
and therefore
\begin{equation}\label{eq:sum-to-phi1}
\frac{1}{\pi(x;9,m)}\sum_{\substack{p\le x\\ p\equiv m\ (9)}} e(\alpha s(p))
=
e(\alpha \mu L)\,\varphi_1(t).
\end{equation}

On the major arc $|\alpha|\le (\log x)^{\eta-1/2}$ we have the corresponding $t$-range
\begin{equation}\label{eq:t-max-def}
|t|\le t_{\max}(x):=2\pi\sigma\sqrt{L}\,(\log x)^{\eta-1/2}
=2\pi\sigma(\log 10)^{\eta-\tfrac12}\,L^\eta.
\end{equation}
Thus, to prove Proposition~\ref{prop:maj}, it suffices to show that for all $|t|\le t_{\max}(x)$,
\begin{equation}\label{eq:phi1-target}
|\varphi_1(t)-e^{-t^2/2}|\ \le\ C_{\mathrm{maj}}\,(\log x)^{-1/2+\nu+\eta}.
\end{equation}
Indeed, multiplying~\eqref{eq:phi1-target} by $|e(\alpha\mu L)|=1$ and using
$e^{-t^2/2}=e^{-2\pi^2\sigma^2\alpha^2L}$ (from~\eqref{eq:alpha-t-change}) gives the desired bound.

\medskip\noindent
\textbf{Step 2: truncation to middle digits (explicit \cite[Lemma~4.1]{DMRcomp2}).}
Let $r:=\lceil L^\nu\rceil$ and let $T_x$ be the truncated digit sum obtained by deleting the first and last
$r$ base-$q$ digit positions of the fixed $L$-digit expansion of $p<q^L$, allowing leading zeros
(as in~\cite[\S4.1]{DMRcomp2}). Set
\[
L_0:=L-2r,
\qquad
\varphi_2(t):=\mathbb{E}\exp\!\left(\frac{i t (T_x-\mu L_0)}{\sigma\sqrt{L_0}}\right).
\]
Following~\cite[Lemma~4.1]{DMRcomp2}, we use $|e^{iu}-e^{iv}|\le |u-v|$ and obtain
\begin{equation}\label{eq:phi1-phi2-struct}
|\varphi_1(t)-\varphi_2(t)|
\le
\frac{|t|}{\sigma}\left(
\frac{\|S_x-T_x\|_\infty}{\sqrt L}
+\mu\,\frac{|L-L_0|}{\sqrt L}
+\|T_x-\mu L_0\|_\infty\left|\frac1{\sqrt{L_0}}-\frac1{\sqrt L}\right|
\right),
\end{equation}
cf.\ the displayed estimate in~\cite[Lemma~4.1]{DMRcomp2}.

We now bound each term explicitly. Since we delete $r$ digits at each end,
\[
\|S_x-T_x\|_\infty \le 2(q-1)r \le 2(q-1)(L^\nu+1),
\qquad
|L-L_0|=2r\le 2(L^\nu+1),
\]
The last term in~\eqref{eq:phi1-phi2-struct} contains the centered truncated sum; since each centered
digit has absolute value at most $\mu=(q-1)/2$, we have
\[
\|T_x-\mu L_0\|_\infty \le \mu L_0\le \frac{q-1}{2}L.
\]
Assume in addition that $L$ is large enough that $L_0\ge L/2$ (an explicit condition is given below). Then
\[
\left|\frac1{\sqrt{L_0}}-\frac1{\sqrt L}\right|
=\frac{|L-L_0|}{\sqrt L\,\sqrt{L_0}\,(\sqrt L+\sqrt{L_0})}
\le \frac{2}{\sqrt2+1}\cdot \frac{|L-L_0|}{L^{3/2}}
\le 0.83\,\frac{|L-L_0|}{L^{3/2}}.
\]
Hence
\[
\|T_x-\mu L_0\|_\infty\left|\frac1{\sqrt{L_0}}-\frac1{\sqrt L}\right|
\le 0.83\,\frac{q-1}{2}\frac{|L-L_0|}{\sqrt L}
\le 0.83\,(q-1)\frac{L^\nu+1}{\sqrt L}.
\]
Substituting these bounds into~\eqref{eq:phi1-phi2-struct} and using $2\mu=q-1$ gives
\begin{equation}\label{eq:phi1-phi2}
|\varphi_1(t)-\varphi_2(t)|
\le
\frac{(2+1+0.83)(q-1)}{\sigma}\,|t|\,\frac{L^\nu+1}{\sqrt L}
\le
\frac{5(q-1)}{\sigma}\,|t|\,\frac{L^\nu+1}{\sqrt L}.
\end{equation}
In particular, for $L^\nu\ge 1$ this implies
\[
|\varphi_1(t)-\varphi_2(t)|
\le
\frac{10(q-1)}{\sigma}\,|t|\,L^{\nu-1/2}.
\]

We may therefore take $x_{41}(q,\nu)$ to be any explicit threshold such that simultaneously
\[
L=\log_q x\ge 10,
\qquad
L_0=L-2\lceil L^\nu\rceil\ge \frac{L}{2}.
\]
For example, it suffices to require $L^{1-\nu}\ge 8$ (since then $L\ge 8L^\nu\ge 4(\lceil L^\nu\rceil+1)$),
so one admissible choice is
\[
x_{41}(q,\nu):=\exp\!\Bigl((\log q)\,\max\{10,\ 8^{1/(1-\nu)}\}\Bigr).
\]
(Here $x_{41}(q,\nu)$ is an explicit threshold ensuring $L_0\ge L/2$ and $L$ is large enough for the displayed
inequalities; e.g.\ $L\ge 8$ suffices.)

\medskip\noindent
\textbf{Step 3: digit-independence on primes (explicit form of DMR Prop.\ 4.1).}
Let $\varphi_3$ denote the characteristic function of the corresponding i.i.d.\ digit model
(see~\cite[\S4.1]{DMRcomp2}):
\[
\varphi_3(t):=\mathbb{E}\exp\!\left(\frac{i t (\widetilde T_x-\mu L_0)}{\sigma\sqrt{L_0}}\right),
\]
where $\widetilde T_x$ is the sum of $L_0$ i.i.d.\ uniform digits in $\{0,\dots,q-1\}$.

The digit-independence inputs needed for comparing $\varphi_2$ and $\varphi_3$ are obtained in
\cite[\S4.2]{DMRcomp2} from the prime exponential sum estimate~\cite[Lemma~4.3]{DMRcomp2}.
Here this input is supplied by the explicit Lemma~\ref{lem:DMR43} (with constant $C_{\mathrm{DMR}}$).
With this input in place, our explicit Lemma~\ref{lem:c4-explicit} produces a constant $c_4=c_4(q)>0$
and an explicit threshold $x_{45}=x_{45}(q,\nu,C_{\mathrm{DMR}})$ for the moment-comparison bound
(part~(C)) that subsumes the explicit content of~\cite[Lemmas~4.4--4.6]{DMRcomp2} required below.  For
integral powers $x=q^L\ge x_{45}$ we have:

\begin{itemize}
\item (Explicit form of~\cite[Lemma~4.5]{DMRcomp2}, joint digit distribution.)
For any fixed pattern of $d$ digits in the middle range, its probability differs from $q^{-d}$ by at most
\[
(4L^\nu)^d\,e^{-c_4L^\nu}.
\]

\item (Explicit form of~\cite[Lemma~4.6]{DMRcomp2}, moment comparison.)
Writing
\[
X:=\frac{T_x-\mu L_0}{\sigma\sqrt{L_0}},
\qquad
Y:=\frac{\widetilde T_x-\mu L_0}{\sigma\sqrt{L_0}},
\]
we have uniformly for $1\le d\le L_0$ the explicit bound
\[
|\mathbb E X^d-\mathbb E Y^d|
\le
\left(\frac{4q^2}{\sigma}\right)^d L^{(1/2+\nu)d}\,e^{-c_4L^\nu}.
\]

\item (Replacement for~\cite[Prop.~4.1]{DMRcomp2}, characteristic functions.)
Fix any constant $C>0$. Under the moment comparison bound above, Proposition~\ref{prop:CF-replacement}
gives explicit constants $c_{43}^\ast=c_{43}^\ast(q,\eta,\nu,C)>0$ and an explicit threshold
$x_{43}^\ast=x_{43}^\ast(q,\eta,\nu,C,c_{43}^\ast)\ge 3$ such that
\[
|\varphi_2(t)-\varphi_3(t)|
\le
|t|\,e^{-c_{43}^\ast L^\nu}
\qquad\text{uniformly for }|t|\le C\,L^\eta,
	\]
	for every integer $L\ge1$ with $x=q^L\ge x_{43}^\ast$.
In our application we take
\[
C:=2\pi\sigma(\log q)^{\eta-\tfrac12},
\]
so that by~\eqref{eq:t-max-def} the range $|t|\le t_{\max}(x)$ is contained in $|t|\le C L^\eta$.
Hence, for all $|t|\le t_{\max}(x)$,
\begin{equation}\label{eq:phi2-phi3}
|\varphi_2(t)-\varphi_3(t)|
\le
|t|\,e^{-c_{43}^\ast L^\nu},
\end{equation}
	for every integer $L\ge1$ with $x=q^L\ge x_{43}^\ast(q,\eta,\nu,C,c_{43}^\ast)$.
\end{itemize}

We emphasize that $C_{\mathrm{DMR}}$ enters only through the threshold $x_{45}$ (and hence through the
availability of the moment-comparison bound).  Once the integral power $x=q^L$ is beyond $x_{45}$, the term~\eqref{eq:phi2-phi3}
is exponentially small and will be absorbed into the final polynomial-rate major-arc error bound.

\medskip\noindent
\textbf{Step 4: the i.i.d.\ digit CLT expansion (explicit constants).}
Write $Z$ for a uniform digit in $\{0,1,\dots,q-1\}$, with $\mu=(q-1)/2$ and $\sigma^2=(q^2-1)/12$.
For $q=10$ we have $\mu=9/2$, $\sigma^2=99/12=33/4$, and hence $\sigma=\sqrt{33/4}$.

Let
\[
\psi(s):=\mathbb E\,e^{is(Z-\mu)}.
\]
A direct geometric-sum computation gives
\[
\psi(s)=\frac1{q}\sum_{k=0}^{q-1}e^{is(k-\mu)}=\frac{\sin(qs/2)}{q\sin(s/2)} \qquad (s\in\mathbb R),
\]
which is real and even. In particular, for $q=10$,
\[
\psi(s)=\frac{\sin(5s)}{10\sin(s/2)}
=\frac{\sin(5s)/(5s)}{\sin(s/2)/(s/2)}.
\]
For $|s|\le 1/10$ we have $\sin(5s)$ and $\sin(s/2)$ of the same sign, hence $\psi(s)>0$ and
$\log\psi(s)$ is real on the entire range used below.  The relevant $t$-range $|t|\le t_{\max}(x)$,
via $s=t/(\sigma\sqrt{L_0})$, satisfies $|s|\le t_{\max}/(\sigma\sqrt{L_0})\le 1/10$ for $x\ge x_{42}$
(this is precisely how $x_{42}$ is defined in the construction of $x_{\mathrm{maj}}$).

Define $s=t/(\sigma\sqrt{L_0})$, so that
\[
\varphi_3(t)=\Bigl(\psi\!\Bigl(\frac{t}{\sigma\sqrt{L_0}}\Bigr)\Bigr)^{L_0}.
\]
Set
\[
H(t):=\log\varphi_3(t)+\frac{t^2}{2}
= L_0\left(\log\psi(s)+\frac{\sigma^2 s^2}{2}\right).
\]

\smallskip
\emph{Series expansion and the fourth cumulant.}
From the Maclaurin expansion
\[
\log\!\Bigl(\frac{\sin z}{z}\Bigr)
=\sum_{n=1}^{\infty}\frac{(-1)^n\,2^{2n-1}B_{2n}}{n(2n)!}\,z^{2n}
\qquad (|z|<\pi),
\]
we obtain for $|s|<\pi/5$ that
\[
\log\psi(s)
=\log\!\Bigl(\frac{\sin(5s)}{5s}\Bigr)-\log\!\Bigl(\frac{\sin(s/2)}{s/2}\Bigr)
=\sum_{n=1}^{\infty}\frac{(-1)^n\,2^{2n-1}B_{2n}}{n(2n)!}\Bigl((5s)^{2n}-(s/2)^{2n}\Bigr).
\]
(See~\cite[Eq.~4.19.7]{DLMF} for the displayed expansion.)
Reading off the coefficients at $n=1,2$ gives
\[
\log\psi(s)=-\frac{\sigma^2s^2}{2}+\frac{\kappa_4}{4!}s^4+\text{terms of degree at least }6,
\]
where for $q=10$ the fourth cumulant of $Z-\mu$ is
\[
\kappa_4=-\frac{q^4-1}{120}=-\frac{9999}{120}=-\frac{3333}{40}.
\]

\smallskip
\emph{A uniform $s^6$ remainder bound for $|s|\le 1/10$.}
Using the classical identity
\[
B_{2n}=(-1)^{n-1}\,\frac{2(2n)!}{(2\pi)^{2n}}\zeta(2n),
\]
we may rewrite the above series as
\[
\log\!\Bigl(\frac{\sin z}{z}\Bigr)
=-\sum_{n=1}^{\infty}\frac{\zeta(2n)}{n\pi^{2n}}\,z^{2n},
\qquad (|z|<\pi),
\]
and hence
\[
\log\psi(s)
=-\sum_{n=1}^{\infty}\frac{\zeta(2n)}{n\pi^{2n}}\Bigl((5s)^{2n}-(s/2)^{2n}\Bigr).
\]
(The identity for $B_{2n}$ is standard; see~\cite[Eq.~25.6.2]{DLMF}.)
Define the remainder after the $s^4$ term by
\[
R_6(s):=\log\psi(s)+\frac{\sigma^2s^2}{2}-\frac{\kappa_4}{4!}s^4
=-\sum_{n=3}^{\infty}\frac{\zeta(2n)}{n\pi^{2n}}\Bigl((5s)^{2n}-(s/2)^{2n}\Bigr).
\]
For $n\ge 3$ we bound (by the integral test)
\[
\zeta(2n)=1+\sum_{k\ge2}k^{-2n}
\le 1+2^{-2n}+\int_2^\infty x^{-2n}\,dx
\le 1+\frac1{64}+\frac1{160}
=:\zeta_\ast=1.021875.
\]
Hence for $|s|\le 1/10$,
\[
|R_6(s)|
\le \zeta_\ast\sum_{n=3}^\infty \frac{1}{n}
\left(\frac{5|s|}{\pi}\right)^{2n}
+\zeta_\ast\sum_{n=3}^\infty \frac{1}{n}
\left(\frac{|s|}{2\pi}\right)^{2n}.
\]
Using $1/n\le 1/3$ for $n\ge 3$ and summing the resulting geometric series gives
\[
|R_6(s)|
\le C_6\,|s|^6
\qquad(|s|\le 1/10),
\]
where one may take explicitly
\[
C_6:=\frac{\zeta_\ast}{3}\left(
\frac{(5/\pi)^6}{1-(5/(10\pi))^2}
+\frac{(1/(2\pi))^6}{1-(1/(20\pi))^2}
\right)
\le 5.7.
\]
For definiteness we take $C_6:=6$.

\smallskip
\emph{From the logarithm to $\varphi_3$ (and $H(t)\le 0$).}
Assume $|s|\le 1/10$. Then
\[
H(t)=L_0\left(\frac{\kappa_4}{24}s^4+R_6(s)\right),
\qquad |R_6(s)|\le 6|s|^6.
\]
Since $\kappa_4<0$ and $|s|\le 1/10$,
\[
\frac{\kappa_4}{24}s^4+R_6(s)
\le -\frac{|\kappa_4|}{24}s^4+6|s|^6
= -|s|^4\left(\frac{|\kappa_4|}{24}-6|s|^2\right)
\le -|s|^4\left(\frac{|\kappa_4|}{24}-\frac{6}{100}\right)\le 0,
\]
so $H(t)\le 0$ on this range. Moreover,
\[
|H(t)|
\le L_0\left(\frac{|\kappa_4|}{24}|s|^4+6|s|^6\right)
\le L_0\left(\frac{|\kappa_4|}{24}+\frac{6}{100}\right)|s|^4
=\left(\frac{|\kappa_4|}{24\sigma^4}+\frac{6}{100\,\sigma^4}\right)\frac{t^4}{L_0}.
\]
For $q=10$ this coefficient equals
\[
\frac{|\kappa_4|}{24\sigma^4}+\frac{6}{100\,\sigma^4}
=\frac{5651}{108900}<0.052.
\]
Because $H(t)\le 0$, we have $|\e^{H(t)}-1|=1-\e^{H(t)}\le -H(t)\le |H(t)|$, hence
\[
|\varphi_3(t)-\e^{-t^2/2}|
=\e^{-t^2/2}\,|\e^{H(t)}-1|
\le \e^{-t^2/2}\,|H(t)|
\le \e^{-t^2/2}\left(\frac{5651}{108900}\right)\frac{t^4}{L_0},
\]
whenever $|s|=|t|/(\sigma\sqrt{L_0})\le 1/10$.

\smallskip
\emph{Convert $L_0$ to $\log x$.}
For $x\ge x_{41}$ we have $L_0=L-2\lceil L^\nu\rceil\ge L/2$ (by the threshold construction of $x_{41}$ above), and since $L=\log x/\log 10$,
\[
\frac{1}{L_0}\le \frac{2\log 10}{\log x}.
\]
Thus, provided $|t|/(\sigma\sqrt{L_0})\le 1/10$ and $L_0\ge L/2$, we obtain
\begin{equation}\label{eq:phi3-explicit}
|\varphi_3(t)-e^{-t^2/2}|
\le
C_{42}(10)\,e^{-t^2/2}\,\frac{t^4}{\log x},
\qquad
C_{42}(10):=\frac{2\log 10\cdot 5651}{108900}<0.24.
\end{equation}
In particular, one may take $C_{42}(10):=\frac{2\log 10\cdot 5651}{108900}$.

One explicit choice of threshold ensuring the hypotheses is:
take $x_{42}(10,\eta,\nu)$ large enough that $L_0\ge L/2$ and
\[
\frac{t_{\max}(x)}{\sigma\sqrt{L_0}}\le \frac{1}{10},
\]
where $t_{\max}(x)$ is as in~\eqref{eq:t-max-def}. Equivalently, since
$t_{\max}(x)=2\pi\sigma(\log 10)^{\eta-\tfrac12}L^\eta$ and $L_0\ge L/2$, it suffices to require
\[
2\pi\sqrt{2}\,(\log 10)^{\eta-\tfrac12}\,L^{\eta-\tfrac12}\le \frac{1}{10},
\]
i.e.
\[
L\ \ge\ \Bigl(20\pi\sqrt{2}\,(\log 10)^{\eta-\tfrac12}\Bigr)^{1/(1/2-\eta)},
\]
in addition to the condition $x\ge x_{41}(10,\nu)$ from Step~2 (which ensures $L_0\ge L/2$).

\medskip\noindent
\textbf{Step 5: conclude the major-arc bound and extract $C_{\mathrm{maj}}$.}
Combine~\eqref{eq:phi1-phi2},~\eqref{eq:phi2-phi3}, and~\eqref{eq:phi3-explicit}.
For $x\ge x_{41}(q,\nu)$ we have $L^\nu\ge 1$, hence~\eqref{eq:phi1-phi2} gives
\[
|\varphi_1(t)-\varphi_2(t)|
\le \frac{10(q-1)}{\sigma}\,|t|\,L^{\nu-1/2}.
\]
Therefore, for all $|t|\le t_{\max}(x)$ and all
\[
x\ge \max\{x_{41}(q,\nu),\ x_{42}(q,\eta,\nu),\
x_{43}^\ast(q,\eta,\nu,C,c_{43}^\ast),\ x_{45}(q,\nu,C_{\mathrm{DMR}})\},
\]
we obtain
\[
|\varphi_1(t)-e^{-t^2/2}|
\le
\frac{10(q-1)}{\sigma}\,|t|\,L^{\nu-1/2}
\;+\;
|t|\,e^{-c_{43}^\ast L^\nu}
\;+\;
C_{42}(q)\,e^{-t^2/2}\,\frac{t^4}{\log x}.
\]

The middle term is exponentially decaying in $L^\nu$ while the first term decays only polynomially.
Hence for all $L$ beyond an explicit threshold,
\[
e^{-c_{43}^\ast L^\nu}\le L^{\nu-1/2}.
\]
Concretely, since $\log L\le \frac{2}{\nu e}L^{\nu/2}$ for $L\ge 1$, the inequality holds whenever
\[
L\ge \left(\frac{2(1/2-\nu)}{\nu e\,c_{43}^\ast}\right)^{2/\nu};
\]
we fold this lower bound on $L$ into the final threshold $x_{\mathrm{maj}}$ defined below.
Then for $x$ beyond this threshold and all $|t|\le t_{\max}(x)$,
\begin{equation}\label{eq:phi1-final}
|\varphi_1(t)-e^{-t^2/2}|
\le
C_{1}(q,\nu)\,|t|\,L^{\nu-1/2}
\;+\;
C_{42}(q)\,e^{-t^2/2}\,\frac{t^4}{\log x},
\end{equation}
where $C_{1}(q,\nu):=\frac{10(q-1)}{\sigma}+1$.

To treat the last term, note that for all real $t$,
\[
e^{-t^2/2}t^4 \le \max_{u\ge 0} 4u^2 e^{-u}=\frac{16}{e^2}.
\]
Hence~\eqref{eq:phi1-final} implies
\begin{equation}\label{eq:phi1-final-simplified}
|\varphi_1(t)-e^{-t^2/2}|
\le
C_{1}(q,\nu)\,|t|\,L^{\nu-1/2}
\;+\;
\frac{16\,C_{42}(q)}{e^2}\cdot\frac{1}{\log x}
\qquad (|t|\le t_{\max}(x)).
\end{equation}

Now convert back to $\alpha$ using~\eqref{eq:alpha-t-change}. Since $t=2\pi\sigma\alpha\sqrt{L}$ and
$L=\log x/\log q$, we have
\[
|t|\,L^{\nu-1/2}=(2\pi\sigma)\,|\alpha|\,L^\nu
=(2\pi\sigma)\,(\log q)^{-\nu}\,|\alpha|(\log x)^\nu.
\]
Combining \eqref{eq:phi1-final-simplified} with~\eqref{eq:sum-to-phi1} yields
\[
\begin{aligned}
&\left|\frac{1}{\pi(x;9,m)}\sum_{\substack{p\le x\\ p\equiv m\ (9)}} e(\alpha s(p))
- e(\alpha\mu L)\,e^{-2\pi^2\sigma^2\alpha^2L}\right| \\
&\qquad \le
C^{(1)}(q,\nu)\,|\alpha|(\log x)^\nu
+
C^{(0)}(q)\,(\log x)^{-1},
\end{aligned}
\]
where
\begin{equation}\label{eq:C1C0-def}
C^{(1)}(q,\nu):=(2\pi\sigma)\,C_{1}(q,\nu)\,(\log q)^{-\nu},
\qquad
C^{(0)}(q):=\frac{16\,C_{42}(q)}{e^2}.
\end{equation}

Finally, on the major arc,
\[
|\alpha|\le(\log x)^{\eta-1/2}
\quad\Longrightarrow\quad
|\alpha|(\log x)^\nu \le (\log x)^{-1/2+\nu+\eta}.
\]
Also $(\log x)^{-1}\le(\log x)^{-1/2+\nu+\eta}$ for $\log x\ge1$, because
$-1<-1/2+\nu+\eta$ when $0<\eta<\nu/2<1/4$.  Therefore
\[
\begin{aligned}
&\left|\frac{1}{\pi(x;9,m)}\sum_{\substack{p\le x\\ p\equiv m\ (9)}} e(\alpha s(p))
- e(\alpha\mu L)\,e^{-2\pi^2\sigma^2\alpha^2L}\right| \\
&\qquad \le
\bigl(C^{(1)}(q,\nu)+C^{(0)}(q)\bigr)\,(\log x)^{-1/2+\nu+\eta}.
\end{aligned}
\]
Hence we may take
\[
C_{\mathrm{maj}}:=C^{(1)}(q,\nu)+C^{(0)}(q),
\]
and
\begin{equation*}
\begin{aligned}
x_{\mathrm{maj}} := \max\Bigl\{\,
  &3,\ x_{41}(q,\nu),\ x_{42}(q,\eta,\nu),\ x_{43}^\ast(q,\eta,\nu,C,c_{43}^\ast),\\
  &x_{45}(q,\nu,C_{\mathrm{DMR}}),\ q^{\lceil(2(1/2-\nu)/(\nu e c_{43}^\ast))^{2/\nu}\rceil}
\Bigr\}.
\end{aligned}
\end{equation*}
This completes the proof.
\end{proof}

\begin{corollary}[Numerical major-arc constant for $q=10$]\label{cor:maj-numeric}
Fix $q=10$, $\eta=0.0545$, $\nu=0.2859$, and let $C_{\mathrm{DMR}}:=102$ be the numerical constant in
Lemma~\ref{lem:DMR43}.  Take
\[
c_{43}^\ast:=\frac1{200}\cdot\frac{c_4}{32}\cdot
\min\!\left\{1,\frac{\nu-2\eta}{\eta+1/2}\right\},\qquad c_4=\frac{\log 10}{6},
\]
and let $x_{\mathrm{maj}}$ be as in Proposition~\ref{prop:maj} with these data. Then
Proposition~\ref{prop:maj} holds for all integral powers $x=10^L\ge x_{\mathrm{maj}}$ with the safe rounded value
\[
C_{\mathrm{maj}}:=510.
\]
The proof below gives $C^{(1)}(10,0.2859)+C^{(0)}(10)<461\le 510$; we round to $510$
for downstream convenience.
In all subsequent applications (including the proof of Theorem~\ref{thm:main}) we fix this value
of $C_{\mathrm{maj}}$ and no additional hypothesis is being made.
\end{corollary}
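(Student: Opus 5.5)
The corollary is a numerical evaluation of the closed forms in \eqref{eq:C1C0-def}. No new analysis is needed beyond confirming that Proposition~\ref{prop:maj} applies with the stated data. First I will check the hypotheses. At $(\eta,\nu)=(0.0545,0.2859)$ one has $0<\eta<\nu/2=0.14295<1/4$ and $\nu+2\eta=0.3949<1/2$. The displayed $c_{43}^\ast$ equals $\tfrac1{200}$ of the upper bound in \eqref{eq:c43-admissible}, so it is strictly admissible. The value $C_{\mathrm{DMR}}=102$ comes from Lemma~\ref{lem:DMR43}. Hence Proposition~\ref{prop:maj} holds for all $x=10^L\ge x_{\mathrm{maj}}$ with $C_{\mathrm{maj}}=C^{(1)}(10,\nu)+C^{(0)}(10)$. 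Since the inequality is monotone in the constant, any larger value, in particular $510$, also works.

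Next I will bound $C^{(1)}$ using outward rounding. Here $\sigma=\sqrt{33/4}\in(2.872,2.873)$, so $10(q-1)/\sigma=90/\sigma<31.34$ and $C_1(10,\nu)=90/\sigma+1<32.34$. Then $2\pi\sigma<18.06$. Also $(\log 10)^{-\nu}=\exp(-0.2859\log\log 10)$ with $\log\log 10\in(0.8340,0.8341)$, which gives $(\log 10)^{-\nu}<0.788$. Multiplying these bounds yields $C^{(1)}<18.06\cdot 32.34\cdot 0.788<460.3$.

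For $C^{(0)}$ I will use $C_{42}(10)=2\log 10\cdot 5651/108900<0.239$. Then $C^{(0)}=16C_{42}/e^2<16\cdot0.239/7.389<0.52$.

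Adding the two bounds gives a total below $461$, and $461\le 510$. The one step that needs care is the product in $C^{(1)}$, because the sum lands only just under $461$. There I will keep every factor rounded upward, using $\sigma<2.873$ in $2\pi\sigma$ and $\sigma>2.872$ in $90/\sigma$. If the margin turns out too thin, I will simply report the looser bound $<470$, which still lies well below $510$.
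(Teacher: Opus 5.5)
Your proposal is correct and takes essentially the same route as the paper. Both invoke Step~5 of Proposition~\ref{prop:maj} for $C_{\mathrm{maj}}=C^{(1)}(10,\nu)+C^{(0)}(10)$ and then evaluate the closed forms numerically; the paper writes $C^{(1)}=\pi(180+\sqrt{33})(\log 10)^{-0.2859}<460$ in one step, and your outward-rounded factorwise bounds all check out.

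One small caveat. Your looser bound $C^{(1)}<460.3$ is enough for the corollary's $C^{(1)}+C^{(0)}<461$, but \S\ref{subsec:solve-major} later quotes $C^{(1)}<460$ from this proof. Slightly tighter rounding of $\sigma$ recovers that, since the true value is about $459.74$.
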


\begin{proof}
By Step~5 in the proof of Proposition~\ref{prop:maj}, once $x\ge x_{\mathrm{maj}}$ one may take
\[
C_{\mathrm{maj}}=C^{(1)}(10,0.2859)+C^{(0)}(10),
\qquad
C^{(0)}(10)=\frac{16\,C_{42}(10)}{e^2},
\]
and note that $C_{\mathrm{DMR}}$ influences only the threshold $x_{\mathrm{maj}}$, not the numerical value of
$C_{\mathrm{maj}}$.

For $q=10$ we have $\sigma=\sqrt{33}/2$, hence
\[
C_{1}(10,0.2859)=\frac{10(q-1)}{\sigma}+1=\frac{180}{\sqrt{33}}+1,
\qquad
2\pi\sigma=\pi\sqrt{33},
\]
so
\begin{align*}
C^{(1)}(10,0.2859)
&=(2\pi\sigma)\,C_{1}(10,0.2859)\,(\log 10)^{-0.2859} \\
&=\pi\,(180+\sqrt{33})\,(\log 10)^{-0.2859} <460.
\end{align*}
Moreover, Step~4 gives $C_{42}(10)=\frac{2\log 10\cdot 5651}{108900}<0.24$, so
\[
C^{(0)}(10)=\frac{16\,C_{42}(10)}{e^2}<0.52.
\]
Therefore $C_{\mathrm{maj}}=C^{(1)}(10,0.2859)+C^{(0)}(10)<460+0.52<461<510$, and the claim follows.
\end{proof}

\begin{remark}[Computational selection and verification protocol]\label{rem:comp-protocol}
The free parameters in the major/minor arc argument are
\[
0<\eta<\nu/2<1/4,\qquad \nu+2\eta<\tfrac{1}{2},
\]
and the working values $(\eta,\nu)=(0.0545,0.2859)$ satisfy them.  For each candidate pair the
code computes all derived constants and checks the finite list of explicit preconditions needed for
Proposition~\ref{prop:maj}, Lemma~\ref{lem:minor-fortieth}, and Lemma~\ref{lem:positivity}.  The proof
of Theorem~\ref{thm:main} uses only the analytical thresholds
from Corollary~\ref{cor:Lstar-upper-bound}, \S\ref{subsec:solve-major}, Lemma~\ref{lem:Cmin}, and
Lemma~\ref{lem:insensitivity}; the sharper bisection values reported by the exploratory script are
diagnostic only.

The verification is carried out in log-scale.  A required bound $Y_{\mathrm{req}}=\log x$ is converted
to a digit-sum threshold by
\[
M \ :=\ \left\lceil \frac{9}{2}\,\frac{Y_{\mathrm{req}}}{\log 10}+\frac92\right\rceil,
\]
since then $m\ge M$ implies
\[
L_m=\left\lfloor\frac{2m}{9}\right\rfloor\ge \frac{Y_{\mathrm{req}}}{\log 10},
\qquad
x_m:=10^{L_m}\ge \exp(Y_{\mathrm{req}}),
\]
the integral-power input needed in the final positivity step.  From the \path{code/} directory,
the Python script \path{run_all.py} runs the theorem certificate, seed checks, and stored-threshold regression.
The theorem-level entry point is \path{certify_theorem.py}; it checks the rounded numerical
inequalities, threshold conversions, bisection diagnostics, and Proth seed certificates, and writes a
machine-readable certificate.  The file \path{CERTIFICATE.md} in the accompanying repository gives
a theorem-by-theorem reproducibility guide, cross-referencing each numerical assertion with the
corresponding certificate entry, verification command, and script.  The code is not a formalization
of the analytic reductions from DMR, MR10, or IK, which are proved in the text and source-localized
in Appendix~\ref{app:import-audit}.
\end{remark}

\begin{remark}[Working constant choice used for Theorem~\ref{thm:main}]
For the numerical bound in Theorem~\ref{thm:main} we fix
\[
q=10,\qquad \eta=0.0545,\qquad \nu=0.2859,
\]
which satisfy $0<\eta<\nu/2<1/4$ and $\nu+2\eta<1/2$.

Recall from~\eqref{eq:t-max-def} that the major-arc condition
$|\alpha|\le (\log x)^{\eta-\tfrac12}$ corresponds to the $t$-range
\[
|t|\le t_{\max}(x)=2\pi\sigma(\log q)^{\eta-\tfrac12}L^{\eta}.
\]
In all subsequent applications we assume $x\ge x_{\mathrm{maj}}$ (as defined in
Proposition~\ref{prop:maj}), so that in particular:

\begin{itemize}
\item the truncation satisfies $L_0\ge L/2$ (Step~2);
\item the characteristic-function comparison holds uniformly for all $|t|\le t_{\max}(x)$
(after the constant-factor range enlargement in the proof of Step~3, with
$C:=2\pi\sigma(\log q)^{\eta-\tfrac12}$);
\item the i.i.d.\ expansion of Step~4 applies throughout the major arc, i.e.
\[
\frac{t_{\max}(x)}{\sigma\sqrt{L_0}}\le \frac{1}{10},
\]
so that~\eqref{eq:phi3-explicit} is valid for all $|t|\le t_{\max}(x)$;
\item the moment/Taylor truncations of~\cite[\S4.3]{DMRcomp2}, as carried out explicitly in
Proposition~\ref{prop:CF-replacement} (which dispenses with DMR's auxiliary parameter $\kappa$),
hold on the same $t$-range $|t|\le t_{\max}(x)$.
\end{itemize}

With these choices, all constants used later (notably the numerical value of $C_{\mathrm{maj}}$ from
Corollary~\ref{cor:maj-numeric}) are fixed, and the dependence on $C_{\mathrm{DMR}}$ enters only
through the threshold $x_{\mathrm{maj}}$.
\end{remark}

\section{Fourier inversion and reduction to a numerical inequality}\label{sec:fourier}

Recall
\[
A_m(x):=\#\{p\le x:\ s(p)=m\}.
\]
Write
\[
S_m(\alpha;x):=\sum_{\substack{p\le x\\ p\equiv m\ (\mathrm{mod}\ 9)}} e(\alpha s(p)).
\]

\begin{lemma}[Fourier inversion on a fundamental interval]\label{lem:fourier-inversion}
For every integer $m\ge 1$ with $\gcd(m,9)=1$ and every $x\ge 2$,
\[
A_m(x)=9\int_{-1/18}^{1/18} S_m(\alpha;x)\,e(-\alpha m)\,d\alpha.
\]
\end{lemma}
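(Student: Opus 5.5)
Expanding $S_m(\alpha;x)$ and swapping the finite sum with the integral gives
\[
9\int_{-1/18}^{1/18} S_m(\alpha;x)\,e(-\alpha m)\,d\alpha
=\sum_{\substack{p\le x\\ p\equiv m\ (\mathrm{mod}\ 9)}} 9\int_{-1/18}^{1/18} e\bigl(\alpha(s(p)-m)\bigr)\,d\alpha ,
\]
so the lemma reduces to evaluating one elementary integral per prime. The key input is Lemma~\ref{lem:mod9}: for every prime $p$ counted in $S_m$ we have $s(p)\equiv p\equiv m\pmod 9$. Hence $k:=s(p)-m$ is always a multiple of $9$, say $k=9j$ with $j\in\mathbb Z$.

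Next I compute the inner integral. Substituting $\beta=9\alpha$ turns it into
\[
9\int_{-1/18}^{1/18} e(9j\alpha)\,d\alpha=\int_{-1/2}^{1/2}e(j\beta)\,d\beta ,
\]
and by orthogonality of characters on $\mathbb R/\mathbb Z$ this equals $1$ if $j=0$ and $0$ otherwise. So each prime $p\le x$ with $p\equiv m\pmod 9$ contributes exactly $\mathbf 1[s(p)=m]$, and the right-hand side equals $\#\{p\le x:\ p\equiv m\ (9),\ s(p)=m\}$.

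Finally I compare this with $A_m(x)$. Every prime with $s(p)=m$ satisfies $p\equiv s(p)=m\pmod 9$ by Lemma~\ref{lem:mod9}, so the congruence restriction removes no prime counted by $A_m(x)$, and the two counts coincide. The hypothesis $\gcd(m,9)=1$ is not actually used; it only matches the admissibility convention (Remark~\ref{rem:admissible}). No step is a real obstacle: the one point requiring care is the rescaling, namely that the interval has length $1/9$ and the factor $9$ normalizes it so that the $j=0$ term gives exactly $1$. The interchange of sum and integral needs no justification, since the sum over primes $p\le x$ is finite.
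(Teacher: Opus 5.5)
Your proof is correct and rests on the same two facts as the paper's: Lemma~\ref{lem:mod9}, which gives $s(p)-m\in 9\mathbb Z$ for every prime counted in $S_m$, and orthogonality of $e(j\beta)$ over a unit interval. The only difference is organizational: the paper first writes $A_m(x)$ as an integral over $[-1/2,1/2]$ and then folds the nine translates $[-1/18,1/18]+\ell/9$ using the $1/9$-periodicity of $S_m(\alpha;x)e(-\alpha m)$, whereas you rescale each per-prime integral directly via $\beta=9\alpha$, which is slightly shorter; you are also right that the hypothesis $\gcd(m,9)=1$ is never used.
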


\begin{proof}
By Lemma~\ref{lem:mod9} (digit sum modulo $9$), for every integer $n$ we have
$s(n)\equiv n\pmod 9$. Hence if $s(p)=m$ then necessarily $p\equiv m\pmod 9$, and therefore
\[
A_m(x)=\sum_{p\le x}\mathbf{1}_{s(p)=m}
=\sum_{\substack{p\le x\\ p\equiv m\ (\mathrm{mod}\ 9)}}\mathbf{1}_{s(p)=m}.
\]
For any integer $k\in\mathbb Z$ one has the orthogonality identity
\[
\int_{-1/2}^{1/2} e(\alpha k)\,d\alpha=
\begin{cases}
1,& k=0,\\
0,& k\neq 0,
\end{cases}
\]
and hence, for each prime $p$,
\[
\mathbf{1}_{s(p)=m}=\int_{-1/2}^{1/2} e(\alpha(s(p)-m))\,d\alpha.
\]
Substituting and interchanging the finite sum and integral gives
\[
A_m(x)=\int_{-1/2}^{1/2}\ \sum_{\substack{p\le x\\ p\equiv m\ (\mathrm{mod}\ 9)}}
e(\alpha s(p))\,e(-\alpha m)\,d\alpha
=\int_{-1/2}^{1/2} S_m(\alpha;x)\,e(-\alpha m)\,d\alpha.
\]

Now write $\alpha=\beta+\ell/9$ with $\beta\in[-1/18,1/18]$ and $\ell\in\{-4,-3,\dots,4\}$,
so that $[-1/2,1/2]$ is the disjoint union of these $9$ translates:
\[
\int_{-1/2}^{1/2} = \sum_{\ell=-4}^{4}\int_{-1/18}^{1/18} (\cdot)\,d\beta.
\]
For primes $p\equiv m\pmod 9$ we have $s(p)\equiv p\equiv m\pmod 9$, hence
$e((\ell/9)s(p))=e((\ell/9)m)$ and therefore
\[
S_m(\beta+\ell/9;x)
=\sum_{\substack{p\le x\\ p\equiv m\ (9)}} e(\beta s(p))\,e((\ell/9)s(p))
= e((\ell/9)m)\,S_m(\beta;x).
\]
Consequently,
\[
S_m(\beta+\ell/9;x)\,e(-(\beta+\ell/9)m)
= S_m(\beta;x)\,e(-\beta m),
\]
so all $9$ subintegrals are equal. Summing over $\ell$ yields
\[
A_m(x)=9\int_{-1/18}^{1/18} S_m(\beta;x)\,e(-\beta m)\,d\beta,
\]
as claimed.
\end{proof}

\subsection{Major/minor arcs}

Fix parameters $\eta,\nu$ with $0<\eta<\nu/2<1/4$ and $\nu+2\eta<1/2$; define the major/minor arcs on $[-1/18,1/18]$ by
\[
\Major:=\{\alpha\in[-1/18,1/18]:\ |\alpha|\le (\log x)^{\eta-1/2}\},\qquad
\Minor:=[-1/18,1/18]\setminus\Major.
\]

\begin{lemma}[Explicit AP lower bound]\label{lem:pi9-lb}
Let $m$ be an integer with $\gcd(m,9)=1$, and set
\[
x_{\mathrm{AP}}:=4050.
\]
Then for all $x\ge x_{\mathrm{AP}}$ we have
\[
\pi(x;9,m)>\frac{x}{6\log x}.
\]
\end{lemma}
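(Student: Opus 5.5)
The plan is to deduce the bound from the explicit prime-counting results of Bennett--Martin--O'Bryant--Rechnitzer~\cite{bennett2018explicit} for the modulus $9$, and to close any remaining finite range by direct computation. Since $\varphi(9)=6$, the target $x/(6\log x)$ is exactly $\pi(x)/\varphi(9)$ at the crude level $\pi(x)\approx x/\log x$. The proof therefore needs a lower bound for $\theta(x;9,m)$ or $\pi(x;9,m)$ that is slightly stronger than the leading term $x/(6\log x)$. The natural route is the BMOR bound: for $q\le 10^5$ and $x$ beyond an explicit $x_0(q)$, it gives $\bigl|\theta(x;q,a)-x/\varphi(q)\bigr|<c_\theta(q)\,x/\log x$, with tabulated $c_\theta(9)$, and it also gives the companion statement $\pi(x;q,a)>x/(\varphi(q)\log x)$ for $x\ge x_2(q)$, with $x_2(q)$ tabulated.

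\textbf{Step 1.} I would first check whether the BMOR tables already give $\pi(x;9,a)>x/(6\log x)$ for all $x\ge x_2(9)$, with $x_2(9)\le 4050$. If so, the lemma follows immediately. If instead the published threshold is larger, I would take the $\theta$-version and run partial summation:
\[
\pi(x;9,m)\ \ge\ \frac{\theta(x;9,m)}{\log x}\ \ge\ \frac{x}{6\log x}\Bigl(1-\frac{6c_\theta(9)}{\log x}\Bigr)+\int_2^x\frac{\theta(t;9,m)}{t\log^2t}\,dt .
\]
The integral contributes roughly $x/(6\log^2 x)$. This more than compensates the $-c_\theta x/\log^2 x$ loss provided $c_\theta(9)$ is smaller than about $1/6$, and the tabulated values for small moduli are indeed of that order or much smaller. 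This gives the inequality for all $x\ge X_1$, where $X_1$ is explicit.

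\textbf{Step 2.} The remaining range $4050\le x<X_1$ (or $4050\le x<x_2(9)$) is covered by exhaustive computation. The function $\pi(x;9,m)$ is a step function that only increases, while $x/(6\log x)$ increases, so it suffices to check the inequality at the left limits just before each prime $p\equiv m \pmod 9$. Concretely, for consecutive primes $p<p'$ in the class, one checks $\pi(p;9,m)>p'/(6\log p')$, for each of the six classes $m\in\{1,2,4,5,7,8\}$. Since the BMOR thresholds are modest (of order $10^{5}$ to $10^{8}$ at most), this is a trivial sieve computation and would be recorded in the certificate script. The specific value $4050$ presumably arises as the point past which all six classes satisfy the strict inequality; the small-$x$ failures come from classes such as $m\equiv 1$, which are initially underpopulated.

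\textbf{Main obstacle.} The main difficulty is bookkeeping rather than analysis: extracting exactly the right constant and threshold for $q=9$ from the BMOR tables, and making sure the strict inequality at the constant $1/6$ (with no slack factor) is what their theorem provides. If the margin in Step~1 turns out to be too thin, I would fall back on their sharper $\operatorname{Li}$-type bound $\pi(x;q,a)>\operatorname{Li}(x)/\varphi(q)-c\,x/\log^2x$. Using $\operatorname{Li}(x)\ge x/\log x+x/\log^2x$ for $x$ large, this again leaves a margin of order $x/\log^2 x$.
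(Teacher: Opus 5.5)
Your Step 1 is exactly the paper's proof: it cites \cite[Corollary~1.6]{bennett2018explicit}, which gives $\pi(x;q,a)>x/(\varphi(q)\log x)$ for all $x\ge 50q^2$, uniformly for $1\le q\le 1200$ and $\gcd(a,q)=1$. With $q=9$ and $\varphi(9)=6$ this is the statement for $x\ge 4050$, so the partial-summation and computational fallbacks are unnecessary; note also that $4050=50\cdot 9^2$ comes from that uniform threshold, not from a per-class numerical crossover as you guessed.
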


\begin{proof}
Bennett, Martin, O'Bryant, and Rechnitzer prove that for every integer $1\le q\le 1200$ and every
$a$ coprime to $q$, one has
\[
\frac{x}{\varphi(q)\log x}<\pi(x;q,a)\qquad\text{for all }x\ge 50q^{2}
\]
(see~\cite[Corollary~1.6]{bennett2018explicit}).
Applying this with $q=9$ and $a\equiv m\pmod 9$ gives, for all $x\ge 50\cdot 9^{2}=4050$,
\[
\pi(x;9,m)>\frac{x}{\varphi(9)\log x}=\frac{x}{6\log x}.
\]
\end{proof}

\begin{lemma}[Minor arc contributes at most $\tfrac{1}{40}$]\label{lem:minor-fortieth}
Assume Lemma~\ref{lem:Cmin} (minor-arc exponential sum bound in base $10$), so that there exists an explicit constant
$C_{\mathrm{min}}$ such that for all $x\ge 3$, all $\alpha\in\mathbb{R}$, and all $m$ with $\gcd(m,9)=1$,
\[
\bigl|S_m(\alpha;x)\bigr|\le C_{\mathrm{min}}(\log x)^3\,x^{\,1-c_1\|9\alpha\|^2},
\]
where $c_1$ is as in Lemma~\ref{lem:c1}.
Let $\eta$ satisfy $0<\eta<1/4$, and let $x\ge x_{\mathrm{AP}}$.
If
\begin{equation}\label{eq:xmin-ineq}
\frac{C_{\mathrm{min}}}{9}(\log x)^{9/2}\exp\!\Bigl(-81c_1(\log x)^{2\eta}\Bigr)
\le
\frac{\sqrt{\log 10}}{240\sqrt{2\pi\sigma^2}},
\end{equation}
then for every $m$ with $\gcd(m,9)=1$ and for $L:=\log_{10}x$,
the minor-arc integral satisfies
\[
\left|\int_\Minor S_m(\alpha;x)\,e(-\alpha m)\,d\alpha\right|
\le
\frac{1}{40}\cdot \frac{\pi(x;9,m)}{\sqrt{2\pi\sigma^2 L}}.
\]
\end{lemma}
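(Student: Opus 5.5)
The plan is a direct size estimate: bound the integrand pointwise on $\Minor$ using Lemma~\ref{lem:Cmin}, multiply by the measure of $\Minor$, and compare with the lower bound for $\pi(x;9,m)$ from Lemma~\ref{lem:pi9-lb}. The factor $240=40\cdot 6$ in \eqref{eq:xmin-ineq} shows this is exactly the intended bookkeeping. No step is a real obstacle; the only care needed is in the two points below.

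\emph{Step 1 (the minor-arc phase).} For $\alpha\in\Minor$ we have $(\log x)^{\eta-1/2}<|\alpha|\le 1/18$. Hence $|9\alpha|\le 1/2$, and therefore $\|9\alpha\|_{\mathbb R/\mathbb Z}=9|\alpha|$. This is the point where the restriction to the fundamental interval $[-1/18,1/18]$ from Lemma~\ref{lem:fourier-inversion} is used. It gives $\|9\alpha\|^2\ge 81(\log x)^{2\eta-1}$, so
\[
x^{-c_1\|9\alpha\|^2}=\exp\bigl(-c_1\|9\alpha\|^2\log x\bigr)\le \exp\bigl(-81c_1(\log x)^{2\eta}\bigr).
\]
Since $x\ge x_{\mathrm{AP}}=4050\ge 3$, Lemma~\ref{lem:Cmin} applies and yields, uniformly on $\Minor$,
\[
|S_m(\alpha;x)|\le C_{\mathrm{min}}(\log x)^3\,x\,\exp\bigl(-81c_1(\log x)^{2\eta}\bigr).
\]

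\emph{Step 2 (integrate).} The set $\Minor$ lies in $[-1/18,1/18]$, so its measure is at most $1/9$, and $|e(-\alpha m)|=1$. Therefore
\[
\left|\int_\Minor S_m(\alpha;x)\,e(-\alpha m)\,d\alpha\right|\le \frac{C_{\mathrm{min}}}{9}(\log x)^3\,x\,\exp\bigl(-81c_1(\log x)^{2\eta}\bigr).
\]
If the major arc exceeds the whole interval, then $\Minor$ is empty and the claim is trivial.

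\emph{Step 3 (compare with the target).} By Lemma~\ref{lem:pi9-lb}, $\pi(x;9,m)>x/(6\log x)$. With $L=\log x/\log 10$ this gives
\[
\frac{1}{40}\cdot\frac{\pi(x;9,m)}{\sqrt{2\pi\sigma^2L}}\ \ge\ \frac{x\sqrt{\log 10}}{240\,(\log x)^{3/2}\sqrt{2\pi\sigma^2}}.
\]
So it suffices that the bound from Step~2 is at most this lower bound. After dividing by $x(\log x)^{-3/2}$, that inequality is exactly \eqref{eq:xmin-ineq}, which completes the argument.
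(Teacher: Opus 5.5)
Your proposal is correct and follows the paper's proof essentially line by line. Both use $\|9\alpha\|=9|\alpha|$ on $[-1/18,1/18]$ to get the pointwise bound on $\Minor$, the measure bound $|\Minor|\le 1/9$, and the comparison with $\pi(x;9,m)>x/(6\log x)$ after writing $\sqrt{L}=\sqrt{\log x}/\sqrt{\log 10}$, which turns the required inequality into exactly \eqref{eq:xmin-ineq}.
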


\begin{proof}
On $[-1/18,1/18]$ we have $\|9\alpha\|=9|\alpha|$.  If $\alpha\in\Minor$, then
$|\alpha|\ge(\log x)^{\eta-1/2}$, hence
\[
x^{1-c_1\|9\alpha\|^2}
= x\exp\!\bigl(-c_1\|9\alpha\|^2\log x\bigr)
\le x\exp\!\Bigl(-81c_1(\log x)^{2\eta}\Bigr).
\]
Therefore, using $|\Minor|\le 1/9$ and the triangle inequality,
\[
\begin{aligned}
\left|\int_\Minor S_m(\alpha;x)\,e(-\alpha m)\,d\alpha\right|
&\le \int_\Minor |S_m(\alpha;x)|\,d\alpha \\
&\le
\frac{C_{\mathrm{min}}}{9}(\log x)^3 x
\exp\!\Bigl(-81c_1(\log x)^{2\eta}\Bigr).
\end{aligned}
\]
By Lemma~\ref{lem:pi9-lb}, for $x\ge x_{\mathrm{AP}}$,
\[
\frac{\pi(x;9,m)}{\sqrt{2\pi\sigma^2 L}}
\ge
\frac{1}{\sqrt{2\pi\sigma^2 L}}\cdot \frac{x}{6\log x}.
\]
Since $L=\log_{10}x=(\log x)/(\log 10)$, we have
$\bigl(\sqrt{L}\bigr)^{-1}=\sqrt{\log 10}/\sqrt{\log x}$, so
\eqref{eq:xmin-ineq} is exactly the rearrangement of
\[
\frac{C_{\mathrm{min}}}{9}(\log x)^3 x e^{-81c_1(\log x)^{2\eta}}
\le
\frac{1}{40}\cdot \frac{1}{\sqrt{2\pi\sigma^2 L}}\cdot \frac{x}{6\log x}.
\]
This proves the claim.
\end{proof}

\begin{lemma}[A sufficient positivity criterion]\label{lem:positivity}
Assume Proposition~\ref{prop:maj} holds for parameters $(\eta,\nu)$ for all integral powers
$x=10^L\ge x_{\mathrm{maj}}$.
Let $C^{(1)}=C^{(1)}(10,\nu)$ and $C^{(0)}=C^{(0)}(10)$ be the explicit constants arising in Step~5 of the proof
of Proposition~\ref{prop:maj}, so that for every integer $L\ge1$ with $x=10^L\ge x_{\mathrm{maj}}$,
all $\gcd(m,9)=1$, and all $|\alpha|\le (\log x)^{\eta-1/2}$,
\[
\Bigl|S_m(\alpha;x)-\pi(x;9,m)\,e(\alpha\mu L)\,e^{-2\pi^2\sigma^2\alpha^2L}\Bigr|
\le
\pi(x;9,m)\left(C^{(1)}|\alpha|(\log x)^\nu+\frac{C^{(0)}}{\log x}\right).
\]
Let $x=10^L\ge \max(x_{\mathrm{maj}},x_{\mathrm{AP}})$ with $L\in\mathbb Z_{\ge1}$.
Let $m$ satisfy $\gcd(m,9)=1$, and put
\[
\Delta_m:=m-\mu L.
\]
Assume $|\Delta_m|\le 4$.
Assume additionally that~\eqref{eq:xmin-ineq} holds (with the same $\eta$ and $c_1$), so that
Lemma~\ref{lem:minor-fortieth} applies.

Define
\[
c_0 \ :=\ \erf\!\Bigl(\frac{\sqrt{2}\,\pi\sigma}{\sqrt{\log 10}}\Bigr),
\]
so that $0<c_0<1$ is an explicit absolute constant. Moreover, with
$t:=\frac{\sqrt{2}\,\pi\sigma}{\sqrt{\log 10}}$ one has, by~\cite[Eq.~7.8.2]{DLMF} applied to Mills'
ratio $\mathsf{M}(t)=e^{t^2}\int_t^\infty e^{-u^2}\,du$ (so that $\erfc(t)=(2/\sqrt\pi)e^{-t^2}\mathsf{M}(t)$):
the upper bound $\mathsf{M}(t)\le 1/(t+\sqrt{t^2+4/\pi})<1/(2t)$ for $t>0$ gives
\[
1-c_0=\erfc(t)\ \le\ \frac{e^{-t^2}}{\sqrt{\pi}\,t},
\]
and in particular (for $\sigma^2=99/12$ in base $10$) one obtains the fully explicit bound
\[
c_0 > 1-1.3\times 10^{-32}.
\]

If the strict inequality
\begin{equation}\label{eq:major-criterion}
C^{(1)}(\log x)^{-1/2+\nu+2\eta}+2C^{(0)}(\log x)^{\eta-1}
\;+\;8\pi(\log x)^{-1/2+2\eta}
\ <\
\frac{(c_0-\tfrac{1}{40})\sqrt{\log 10}}{\sqrt{2\pi\sigma^2}}.
\end{equation}
then $A_m(x)\ge 1$.
\end{lemma}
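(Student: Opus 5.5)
By Lemma~\ref{lem:fourier-inversion} we write $A_m(x)=9\int_{\Major}S_m(\alpha;x)e(-\alpha m)\,d\alpha+9\int_{\Minor}S_m(\alpha;x)e(-\alpha m)\,d\alpha$. The plan is to show that the major-arc integral is at least $c_0\,\pi(x;9,m)/\sqrt{2\pi\sigma^2L}$ minus errors, and that the minor-arc integral is bounded by Lemma~\ref{lem:minor-fortieth}. Then \eqref{eq:major-criterion} forces $A_m(x)>0$. Since $A_m(x)$ is an integer, this gives $A_m(x)\ge1$.

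\textbf{Major-arc main term.} On $\Major$ we substitute the hypothesised approximation. The main term is
\[
\pi(x;9,m)\int_{|\alpha|\le(\log x)^{\eta-1/2}}e(-\alpha\Delta_m)e^{-2\pi^2\sigma^2\alpha^2L}\,d\alpha .
\]
We compare it with the full Gaussian integral $\int_{\mathbb R}e^{-2\pi^2\sigma^2\alpha^2L}\cos(2\pi\alpha\Delta_m)\,d\alpha=e^{-\Delta_m^2/(2\sigma^2L)}/\sqrt{2\pi\sigma^2L}$. It looks cleaner to avoid the full-line tail and bound the truncated integral from below directly. Write $\cos(2\pi\alpha\Delta_m)\ge1-2\pi^2\alpha^2\Delta_m^2$ and use $|\Delta_m|\le4$, so $\Delta_m^2\le16$. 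The piece $\int_{\Major}e^{-2\pi^2\sigma^2\alpha^2L}\,d\alpha$ equals $\erf(\pi\sigma\sqrt{2L}(\log x)^{\eta-1/2})/\sqrt{2\pi\sigma^2L}$. Since $L=\log x/\log10$, the argument is $\sqrt2\pi\sigma(\log x)^{\eta}/\sqrt{\log10}\ge\sqrt2\pi\sigma/\sqrt{\log10}$ once $\log x\ge1$, so this piece is at least $c_0/\sqrt{2\pi\sigma^2L}$. The correction $2\pi^2\Delta_m^2\int_{\Major}\alpha^2e^{-2\pi^2\sigma^2\alpha^2L}\,d\alpha$ is crudely at most $32\pi^2\cdot|\Major|\cdot(\log x)^{2\eta-1}=64\pi^2(\log x)^{3\eta-3/2}$. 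We normalise it against $1/\sqrt{2\pi\sigma^2L}=\sqrt{\log10}/(\sqrt{2\pi\sigma^2}\sqrt{\log x})$; one factor of $\sqrt{\log x}$ is absorbed, leaving a term of order $(\log x)^{-1+3\eta}$. The criterion budgets the gentler $8\pi(\log x)^{-1/2+2\eta}$, so I would tune this crude estimate, possibly by instead using $|1-\cos|\le 2\pi|\alpha||\Delta_m|$ together with $|\Major|$, to land exactly in that form after multiplying through by $\sqrt{2\pi\sigma^2}/\sqrt{\log 10}$.

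\textbf{Major-arc error.} The error integrand is $\pi(x;9,m)\bigl(C^{(1)}|\alpha|(\log x)^\nu+C^{(0)}/\log x\bigr)$. Integrating over $|\alpha|\le(\log x)^{\eta-1/2}$ gives
\[
\pi(x;9,m)\Bigl(C^{(1)}(\log x)^{2\eta-1+\nu}+2C^{(0)}(\log x)^{\eta-3/2}\Bigr).
\]
Dividing by $\pi(x;9,m)/\sqrt{2\pi\sigma^2L}$ multiplies by $\sqrt{2\pi\sigma^2}\sqrt{\log x}/\sqrt{\log10}$. This produces exactly the first two terms of \eqref{eq:major-criterion}, namely $C^{(1)}(\log x)^{-1/2+\nu+2\eta}$ and $2C^{(0)}(\log x)^{\eta-1}$, each times $\sqrt{2\pi\sigma^2}/\sqrt{\log10}$. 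This confirms the normalisation.

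\textbf{Assembly.} Combining, the real part of $A_m(x)/9$ is at least
\[
\frac{\pi(x;9,m)}{\sqrt{2\pi\sigma^2L}}\Bigl(c_0-\tfrac1{40}-\frac{\sqrt{2\pi\sigma^2}}{\sqrt{\log10}}\,E(x)\Bigr),
\]
where $E(x)$ is the left side of \eqref{eq:major-criterion}. This quantity is positive by \eqref{eq:major-criterion}, and $\pi(x;9,m)>0$ by Lemma~\ref{lem:pi9-lb}. The explicit bound $1-c_0\le e^{-t^2}/(\sqrt\pi t)$ is a routine Mills-ratio check: with $t^2=2\pi^2\sigma^2/\log10\approx70.7$, it gives the stated $1.3\times10^{-32}$.

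\textbf{Main obstacle.} I expect the main obstacle to be the bookkeeping of the $\Delta_m$-oscillation term, so that it matches the specific constant $8\pi(\log x)^{-1/2+2\eta}$ in the criterion. Everything else is a direct integration of the hypotheses.
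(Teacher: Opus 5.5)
Your outline is the paper's proof (same Fourier split, same $\erf$ lower bound $c_0/\sqrt{2\pi\sigma^2L}$ for the truncated Gaussian, same integration of the error term, Lemma~\ref{lem:minor-fortieth} for the minor arc), and the step you leave open closes exactly by the alternative you name. Bound $1-\cos(2\pi\alpha\Delta_m)\le 2\pi|\alpha||\Delta_m|\le 8\pi|\alpha|$, drop the Gaussian weight, and integrate $|\alpha|$ exactly: $\int_{-A}^{A}|\alpha|\,d\alpha=A^2$ with $A=(\log x)^{\eta-1/2}$. Using $|\Major|\cdot A$ instead would overshoot by a factor $2$. Since $\sqrt{2\pi\sigma^2L}\,A^2=\frac{\sqrt{2\pi\sigma^2}}{\sqrt{\log 10}}(\log x)^{-1/2+2\eta}$, this gives precisely the third term of \eqref{eq:major-criterion}.

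Your quadratic bound would also suffice, because $x\ge x_{\mathrm{maj}}\ge x_{42}$ forces $2\pi(\log x)^{\eta-1/2}\le 1/10$ and hence $64\pi^2(\log x)^{3\eta-1}\le 8\pi(\log x)^{-1/2+2\eta}$. To use it, you would have to invoke that hypothesis explicitly.
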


\begin{proof}
By Lemma~\ref{lem:fourier-inversion},
\[
A_m(x)=9\!\!\int_{-1/18}^{1/18} S_m(\alpha;x)\,e(-\alpha m)\,d\alpha
=9\!\!\int_{\Major} \cdots\,d\alpha \ +\ 9\!\!\int_{\Minor}\cdots\,d\alpha.
\]
Since $A_m(x)$ is a nonnegative integer and the Fourier integral equals the real number
$A_m(x)/9$, it suffices to show that the real part of the full integral is $>0$.

\smallskip
\noindent\textbf{Major arc.}
On $\Major$, we use the two-term major-arc estimate from the hypothesis of Lemma~\ref{lem:positivity}:
for all $\alpha\in\Major$,
\[
\Bigl|S_m(\alpha;x)-\pi(x;9,m)\,e(\alpha\mu L)\,e^{-2\pi^2\sigma^2\alpha^2L}\Bigr|
\le
\pi(x;9,m)\left(C^{(1)}|\alpha|(\log x)^\nu+\frac{C^{(0)}}{\log x}\right).
\]
Multiplying by $e(-\alpha m)=e(-\alpha\mu L)e(-\alpha\Delta_m)$ gives
\[
\begin{aligned}
&\Bigl|S_m(\alpha;x)e(-\alpha m)
-\pi(x;9,m)\,e^{-2\pi^2\sigma^2\alpha^2L}e(-\alpha\Delta_m)\Bigr| \\
&\qquad\le
\pi(x;9,m)\left(C^{(1)}|\alpha|(\log x)^\nu+\frac{C^{(0)}}{\log x}\right).
\end{aligned}
\]
Write $A:=(\log x)^{\eta-1/2}$ so that $\Major=[-A,A]$.
Integrating over $\Major$ and using $\int_{-A}^{A}|\alpha|\,d\alpha=A^2$ yields
\begin{align*}
\operatorname{Re}\int_{\Major} S_m(\alpha;x)e(-\alpha m)\,d\alpha
&\ge
\pi(x;9,m)\operatorname{Re}\!\int_{\Major} e^{-2\pi^2\sigma^2\alpha^2L}e(-\alpha\Delta_m)\,d\alpha
\\&\quad
-\pi(x;9,m)\left(C^{(1)}(\log x)^\nu A^2+\frac{C^{(0)}}{\log x}\cdot 2A\right).
\end{align*}
The imaginary part of the main integral vanishes by symmetry, and
$\operatorname{Re} e(-\alpha\Delta_m)=\cos(2\pi\alpha\Delta_m)$.  Since
$1-\cos u\le |u|$ and $|\Delta_m|\le4$,
\[
\begin{aligned}
\operatorname{Re}\int_{\Major} e^{-2\pi^2\sigma^2\alpha^2L}e(-\alpha\Delta_m)\,d\alpha
&\ge
\int_{\Major} e^{-2\pi^2\sigma^2\alpha^2L}\,d\alpha
-2\pi|\Delta_m|\int_{-A}^{A}|\alpha|\,d\alpha \\
&\ge
\int_{\Major} e^{-2\pi^2\sigma^2\alpha^2L}\,d\alpha
-8\pi A^2.
\end{aligned}
\]
With $A=(\log x)^{\eta-1/2}$, the substitution $u=2\pi\sigma\sqrt L\,\alpha$ gives the exact value
\[
\int_{\Major} e^{-2\pi^2\sigma^2\alpha^2L}\,d\alpha
=\frac{1}{\sqrt{2\pi\sigma^2L}}\,\erf\!\Bigl(\frac{\sqrt2\,\pi\sigma}{\sqrt{\log 10}}\,(\log x)^\eta\Bigr)
\ge \frac{c_0}{\sqrt{2\pi\sigma^2L}},
\]
the inequality replacing $(\log x)^\eta\ge1$ by $1$ ($\erf$ being increasing), with
$c_0:=\erf(\sqrt2\,\pi\sigma/\sqrt{\log 10})$ as in the statement.
Since $L=(\log x)/(\log 10)$, we obtain
\begin{align*}
\operatorname{Re}\int_{\Major} S_m(\alpha;x)e(-\alpha m)\,d\alpha
&\ge
\frac{\pi(x;9,m)}{\sqrt{2\pi\sigma^2L}}
\Bigl(
c_0
-\frac{\sqrt{2\pi\sigma^2}}{\sqrt{\log 10}}\,
 C^{(1)}(\log x)^{-1/2+\nu+2\eta} \\
&\qquad\qquad
-\frac{2\sqrt{2\pi\sigma^2}}{\sqrt{\log 10}}\,
 C^{(0)}(\log x)^{\eta-1}
-8\pi\sqrt{2\pi\sigma^2 L}\,A^2
\Bigr).
\end{align*}
Since $\sqrt L\,A^2=(\log 10)^{-1/2}(\log x)^{-1/2+2\eta}$, the last term is exactly
$8\pi(\log x)^{-1/2+2\eta}$ after the whole bracket is multiplied by
$\sqrt{\log 10}/\sqrt{2\pi\sigma^2}$, as in~\eqref{eq:major-criterion}.

\smallskip
\noindent\textbf{Minor arc.}
By Lemma~\ref{lem:minor-fortieth},
\[
\left|\int_{\Minor} S_m(\alpha;x)e(-\alpha m)\,d\alpha\right|
\le
\frac{1}{40}\cdot \frac{\pi(x;9,m)}{\sqrt{2\pi\sigma^2L}}.
\]

\smallskip
The strict inequality in~\eqref{eq:major-criterion} makes the major-arc real-part lower bound
strictly larger than the minor-arc absolute-value upper bound.  Combining the two bounds shows
that the real part of the full integral over $[-1/18,1/18]$ is strictly positive, hence
$A_m(x)>0$ and therefore $A_m(x)\ge 1$.
\end{proof}

\section{Derivation of the explicit \texorpdfstring{$M$}{M}}\label{sec:derivation_m}

We now specialize to base $q=10$ and compute a concrete sufficient $M$.

\subsection{Choosing an integral digit length}\label{subsec:choose-x}
For integer $m\ge 1$ define
\[
L_m:=\left\lfloor\frac{2m}{9}\right\rfloor,\qquad x_m:=10^{L_m}.
\]
Then $L_m$ is an integer and $x_m\le 10^{2m/9}$.  Since $m$ is admissible, $m\bmod 9$ is one of
$1,2,4,5,7,8$, and therefore
\[
0<\Delta_m:=m-\mu L_m
=\frac92\left\{\frac{2m}{9}\right\}
\le 4.
\]
Thus the target digit sum is within a bounded distance of the Gaussian center for the integral
digit length $L_m$.

\subsection{Solving the major-arc inequality \texorpdfstring{\eqref{eq:major-criterion}}{(major)}}\label{subsec:solve-major}
Corollary~\ref{cor:maj-numeric} records the rounded proposition-level envelope
$C_{\mathrm{maj}}=510$.  The positivity criterion, however, uses the decomposed constants from
the proof of that corollary, namely $C^{(1)}<460$ and $C^{(0)}<0.52$; these are the constants
used in the threshold below.
Set
\[
Y:=\log x.
\]
With the working parameters $\eta=0.0545$ and $\nu=0.2859$ we have
\[
-\frac12+\nu+2\eta=-\frac12+0.2859+0.109=-0.1051,
\qquad\text{so}\qquad
Y^{-1/2+\nu+2\eta}=Y^{-0.1051}.
\]
Define (as in Lemma~\ref{lem:positivity})
\[
c_0:=\erf\!\Bigl(\frac{\sqrt{2}\pi\sigma}{\sqrt{\log 10}}\Bigr),
\qquad\text{so that}\qquad
c_0>1-1.3\times10^{-32}.
\]

Lemma~\ref{lem:positivity} requires
\begin{equation}\label{eq:Ymaj-ineq}
C^{(1)}\,Y^{-1/2+\nu+2\eta}+2C^{(0)}\,Y^{\eta-1}+8\pi Y^{-1/2+2\eta}
\ <\
R,\qquad R:=\frac{(c_0-\tfrac{1}{40})\sqrt{\log 10}}{\sqrt{2\pi\sigma^2}}.
\end{equation}
The second and third terms decay like $Y^{-0.9455}$ and $Y^{-0.391}$, respectively, and are
negligible in the range of interest, but dropping positive terms would not by itself give a
sufficient condition for~\eqref{eq:Ymaj-ineq}.  To obtain a sufficient threshold we instead
split off a small absolute slack $\delta>0$: define
\begin{equation}\label{eq:Ymaj-def}
Y_{\mathrm{maj}}\ :=\ \left(\frac{C^{(1)}}{R-\delta}\right)^{\!1/0.1051},
\qquad \delta:=10^{-10}.
\end{equation}
For every $Y\ge Y_{\mathrm{maj}}$,
\[
C^{(1)}\,Y^{-0.1051}\ \le\ C^{(1)}\,Y_{\mathrm{maj}}^{-0.1051}\ =\ R-\delta,
\]
so it suffices to show that the two remaining terms satisfy
\[
2C^{(0)}\,Y_{\mathrm{maj}}^{-0.9455}+8\pi Y_{\mathrm{maj}}^{-0.391}\le \delta.
\]
All terms are decreasing in $Y$, so the bound then propagates to all $Y\ge Y_{\mathrm{maj}}$.

\smallskip
\noindent\emph{Verification of the slack terms.}
We use lower bounds for $Y_{\mathrm{maj}}$ here, since $Y\mapsto Y^{-0.9455}$ is decreasing.
From Corollary~\ref{cor:maj-numeric},
\[
C^{(1)}
=\pi(180+\sqrt{33})(\log 10)^{-0.2859}
>3.14159\cdot 185.744\cdot 2.30259^{-0.2859}
>459.
\]
Also $R<0.206$ (using $c_0<1$, $\sigma^2=99/12$, and the elementary decimal bounds for
$\pi$ and $\log 10$).  Since $\delta>0$, $R-\delta<R$, and hence
\[
Y_{\mathrm{maj}}
=\left(\frac{C^{(1)}}{R-\delta}\right)^{1/0.1051}
>\left(\frac{459}{0.206}\right)^{1/0.1051}
>7.15\times 10^{31}.
\]
In the opposite direction, using $c_0>1-1.3\times10^{-32}$, $\log 10>2.30258$,
$\pi<3.14160$, and $\sigma^2=99/12$, we also have
\[
R>\frac{(1-1.3\times10^{-32}-1/40)\sqrt{2.30258}}
        {\sqrt{2\cdot 3.14160\cdot(99/12)}}>0.205.
\]
Together with $C^{(0)}<0.52$, this gives
\[
2C^{(0)}Y_{\mathrm{maj}}^{-0.9455}
<2\cdot 0.52\cdot (7.15\times 10^{31})^{-0.9455}
<10^{-30},
\]
and also
\[
8\pi Y_{\mathrm{maj}}^{-0.391}
<8\pi(7.15\times10^{31})^{-0.391}
<9\times 10^{-12}.
\]
Therefore
$C^{(1)}\,Y^{-0.1051}+2C^{(0)}\,Y^{\eta-1}+8\pi Y^{-0.391}
\le (R-\delta)+9.1\times10^{-12}<R$ for all $Y\ge Y_{\mathrm{maj}}$,
i.e.~\eqref{eq:Ymaj-ineq} is satisfied with no terms dropped.

\smallskip
\noindent\emph{Numerical value.}
At the working parameters, using $1/0.1051<9.515$,
\[
Y_{\mathrm{maj}}\ \le\ \left(\frac{460}{0.205-10^{-10}}\right)^{\!9.515}
\ \le\ \left(\frac{460}{0.205}\right)^{\!9.515}\!\cdot(1+5\times10^{-9})^{9.515}
\ \le\ 7.90\times 10^{31},
\]
where the displayed bound is unchanged at the shown precision.  Using the rigorous
$C^{(1)}<460$ gives the rigorous bound $Y_{\mathrm{maj}}\le 7.90\times 10^{31}$.

(The supplementary script reports the empirical value
$Y_{\mathrm{maj}}^{\mathrm{emp}}\approx 7.44\times 10^{31}$ using the sharper $C^{(1)}\approx459.74$
from the closed-form in Corollary~\ref{cor:maj-numeric}; this empirical value is not used in
the proof of Theorem~\ref{thm:main}.)

\subsection{Solving the minor-arc inequality \texorpdfstring{\eqref{eq:xmin-ineq}}{(minor)}}\label{subsec:solve-minor}
We use the explicit constants
\[
\begin{gathered}
c_1=0.001506288700915\quad\text{(Lemma~\ref{lem:c1})},\qquad
\sigma^2=\frac{99}{12},\\
C_{\mathrm{min}}=4\cdot 10^{12}\quad\text{(Lemma~\ref{lem:Cmin})}.
\end{gathered}
\]
With $\eta=0.0545$, the minor-arc condition~\eqref{eq:xmin-ineq} is an explicit inequality in
$Y=\log x$.  Put
\[
R:=\frac{\sqrt{\log 10}}{240\sqrt{2\pi\sigma^2}}.
\]
Since $Y\mapsto 81c_1Y^{0.109}-(\log(C_{\mathrm{min}}/(9R))+\tfrac92\log Y)$ is increasing for
$Y\ge10^{30}$, it is enough to check the left endpoint.  At
$Y_0:=8.00\times10^{31}$, outward-rounded decimal interval arithmetic gives
\[
73.459<\log Y_0<73.460,\qquad 0.00790<9R<0.00791,\qquad
Y_0^{0.109}>3002.0.
\]
Consequently
\[
81c_1Y_0^{0.109}>366.27,
\]
whereas
\[
\log(C_{\mathrm{min}}/(9R))+\frac92\log Y_0
<
\log(4\cdot10^{12}/0.00790)+4.5\cdot 73.460
<364.43.
\]
Thus \eqref{eq:xmin-ineq} holds at $Y_0$ with margin greater than $1.8$, and hence for all larger
$Y$.  We may therefore take the safe solution threshold
\[
Y\ge Y_{\mathrm{min}}:=8.00\times 10^{31}.
\]
Therefore, for all $x$ with $\log x\ge Y_{\mathrm{min}}$, Lemma~\ref{lem:minor-fortieth} applies.

\subsection{Additional explicit thresholds from the major-arc construction}\label{subsec:aux-thresholds}
In addition to the major/minor inequalities, the major-arc approximation
(Proposition~\ref{prop:maj}) depends on the explicit characteristic-function comparison
(Proposition~\ref{prop:CF-replacement}), which requires $x\ge x_{43}^\ast$, and on the explicit moment comparison
(Lemma~\ref{lem:c4-explicit}), which requires $x\ge x_{45}$.  We write
\begin{equation}\label{eq:Y43-Y45-def}
Y_{43}^\ast:=\log x_{43}^\ast,\qquad Y_{45}:=\log x_{45}.
\end{equation}
Values at the working parameters are tabulated in~\eqref{eq:Y-values} below.  We must also
assume $x\ge x_{\mathrm{AP}}$ for the explicit lower bound on $\pi(x;9,m)$, where
$x_{\mathrm{AP}}=4050$ (Lemma~\ref{lem:pi9-lb}), hence $\log x_{\mathrm{AP}}$ is negligible.

\subsection{Converting \texorpdfstring{$\log x$}{log x} to a threshold on \texorpdfstring{$m$}{m}}\label{subsec:convert-to-m}
Set
\begin{equation}\label{eq:Y41-Y42-Y47-def}
Y_{41}:=\log x_{41},\quad Y_{42}:=\log x_{42},\quad
Y_{47}:=\bigl\lceil\bigl(2(\tfrac12-\nu)/(\nu e\,c_{43}^\ast)\bigr)^{2/\nu}\bigr\rceil\log q,
\end{equation}
where $Y_{47}$ is the logarithm of the absorption threshold from Step~5 of Proposition~\ref{prop:maj}.
At the working parameters $(q,\eta,\nu)=(10,0.0545,0.2859)$, direct evaluation gives
\begin{align}\label{eq:Y-values}
Y_{41}&\approx 42.35, &
Y_{42}&\approx 2.37\times 10^{4}, &
Y_{45}&\approx 3.54\times 10^{9}, \nonumber\\
Y_{47}&\approx 3.61\times 10^{31}, &
Y_{43,\mathrm{bisect}}^\ast&\approx 6.27\times 10^{31}, &
Y_{\mathrm{min}}&\approx 7.59\times 10^{31}, \\[-3pt]
Y_{\mathrm{maj}}&\approx 7.44\times 10^{31}. \nonumber
\end{align}
Here $Y_{43,\mathrm{bisect}}^\ast$ is the supplementary script's empirical (bisected)
value for $Y_{43}^\ast$; the rigorous closed-form bound from
Corollary~\ref{cor:Lstar-upper-bound} is $Y_{43}^\ast\le 9.10\times 10^{31}$, which is
the closed-form value used in the proof of Theorem~\ref{thm:main}.
Among the rigorous upper bounds, the bound for $Y_{43}^\ast$ is the largest;
$Y_{\mathrm{maj}}$, $Y_{\mathrm{min}}$, and the rigorous upper bound for $Y_{43}^\ast$ are within
a factor ${<}1.2$ of each other, and the working parameters are chosen to balance these three.
Since $\log x_m=L_m\log 10$, the combined conditions $x_m\ge x_{\mathrm{maj}}$ and
$\log x_m\ge \max(Y_{\mathrm{maj}},Y_{\mathrm{min}})$ are guaranteed provided
\[
m\ \ge\ \frac{9}{2\log 10}\,Y_\ast+\frac92,
\qquad
Y_\ast:=\max\!\bigl(Y_{41},Y_{42},Y_{43}^\ast,Y_{45},Y_{47},Y_{\mathrm{maj}},Y_{\mathrm{min}},\log x_{\mathrm{AP}}\bigr).
\]
Under the rigorous closed-form $Y_{43}^\ast\le 9.10\times 10^{31}$ of
Corollary~\ref{cor:Lstar-upper-bound}, together with the bounds
$Y_{\mathrm{maj}}<7.90\times10^{31}$ and $Y_{\mathrm{min}}<8.00\times10^{31}$ proved above,
the three dominant thresholds are at most $9.10\times10^{31}$.  The remaining entries in
$Y_\ast$ are much smaller; the theorem certificate records the safe bounds
\[
\begin{gathered}
Y_{41}<43,\qquad Y_{42}<2.4\times10^4,\qquad Y_{45}<3.6\times10^9,\\
Y_{47}<3.7\times10^{31},\qquad \log x_{\mathrm{AP}}<9.
\end{gathered}
\]
Thus every term in the maximum defining $Y_\ast$ is at most $9.10\times10^{31}$.
We therefore define
\[
M:=\left\lceil \frac{9}{2\log 10}\,Y_\ast+\frac92\right\rceil,
\]
and Corollary~\ref{cor:Lstar-upper-bound} gives the rigorous closed-form bound
\[
M\ \le\ 177843590339381623423137292296355\ <\ 1.78\times 10^{32}.
\]
Substituting the supplementary script's empirical bisection value $Y_{43,\mathrm{bisect}}^\ast
\approx 6.27\times 10^{31}$ for $Y_{43}^\ast$ makes $Y_{\mathrm{min}}\approx
7.59\times 10^{31}$ the binding threshold, yielding the numerical observation
$M^{\mathrm{emp}}\approx 1.48\times 10^{32}$ reported by the exploratory constraint checker.  This empirical
value is not used in Theorem~\ref{thm:main}; the rigorous $M<1.78\times 10^{32}$ above is what is
proved.

\begin{proof}[Proof of Theorem~\ref{thm:main}]
Let $m\ge M$ be an integer with $\gcd(m,9)=1$.  Set $L=L_m=\lfloor 2m/9\rfloor$ and
$x=x_m=10^L$.  Then $x\le 10^{2m/9}$ and, by~\S\ref{subsec:choose-x},
$|\Delta_m|=|m-\mu L|\le4$.

By the definition of $M$, $L\log 10\ge Y_\ast$. Hence $\log x\ge Y_{\mathrm{min}}$ and~\eqref{eq:xmin-ineq} holds, so
Lemma~\ref{lem:minor-fortieth} applies.  Also $\log x\ge Y_{\mathrm{maj}}$, so the major-arc positivity condition
\eqref{eq:Ymaj-ineq} holds and Lemma~\ref{lem:positivity} applies, provided the major-arc approximation
(Proposition~\ref{prop:maj}) is available.

Finally, $\log x\ge Y_\ast$ implies $\log x\ge \max(Y_{41},Y_{42},Y_{43}^\ast,Y_{45},Y_{47})$ by the
definition of $Y_\ast$ in~\S\ref{subsec:convert-to-m}, hence $x$ dominates each of $x_{41}, x_{42},
x_{43}^\ast, x_{45}$ and the Step~5 absorption threshold, so $x\ge x_{\mathrm{maj}}$.  Hence
Proposition~\ref{prop:CF-replacement}, Lemma~\ref{lem:c4-explicit}, and the major-arc approximation
in Proposition~\ref{prop:maj} all apply.
Moreover $x\ge x_{\mathrm{AP}}$, so $\pi(x;9,m)\ge 1$ for all $\gcd(m,9)=1$.

Therefore Lemma~\ref{lem:positivity} yields $A_m(x)\ge 1$, and hence there exists a prime
$p\le x\le 10^{2m/9}$ with $s(p)=m$.
\end{proof}

The proof of Lemma~\ref{lem:positivity} actually establishes a stronger \emph{lower bound} on
the integral than merely positivity, and Theorem~\ref{thm:main} can be sharpened accordingly to
a quantitative count of primes with prescribed digit sum.

\begin{corollary}[Quantitative form of Theorem~\ref{thm:main}]\label{cor:quantitative}
With the constants and notation of Theorem~\ref{thm:main}, for every integer $m\ge M$ with
$\gcd(m,9)=1$, the number $A_m(10^{2m/9})$ satisfies
\begin{equation}\label{eq:quantitative}
A_m\!\bigl(10^{2m/9}\bigr)\ \ge\ C_{\mathrm{q}}(m)\,\frac{10^{2m/9}}{m^{3/2}},
\end{equation}
where $L_m=\lfloor2m/9\rfloor$, $\vartheta_m:=\{2m/9\}$, and
\begin{equation}\label{eq:Cq-def}
C_{\mathrm{q}}(m)\ :=\
\frac{3}{2\sqrt{2\pi\sigma^2}\,\log 10}\,
10^{-\vartheta_m}\left(\frac{m}{L_m}\right)^{3/2}
\bigl(c_0-\tfrac1{40}-\varepsilon_m\bigr)
\end{equation}
with $c_0=\mathrm{erf}\bigl(\sqrt2\,\pi\sigma/\sqrt{\log 10}\bigr)>1-1.3\times 10^{-32}$,
$\sigma^2=99/12$, $Y_m:=L_m\log 10$, and the residual major-arc error
\begin{equation}\label{eq:eps-x-def}
\varepsilon_m\ :=\ \frac{\sqrt{2\pi\sigma^2}}{\sqrt{\log 10}}
\Bigl(C^{(1)}Y_m^{-1/2+\nu+2\eta}+2C^{(0)}Y_m^{\eta-1}+8\pi Y_m^{-1/2+2\eta}\Bigr).
\end{equation}
Asymptotically $\varepsilon_m\to 0$ as $m\to\infty$.  Along each admissible residue class
modulo $9$, the fractional part $\vartheta_m$ is fixed, so $C_{\mathrm{q}}(m)$ has a positive
limit; uniformly over all admissible classes its limit inferior is obtained at
\(\vartheta_m=8/9\) and is greater than \(0.108\).
\end{corollary}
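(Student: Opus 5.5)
The plan is to rerun the proof of Lemma~\ref{lem:positivity} at $x=x_m=10^{L_m}$ while keeping the lower bound it produces, instead of only its sign. Write $Y_m=\log x_m=L_m\log 10$. By Lemma~\ref{lem:fourier-inversion}, $A_m(x_m)=9\int_{-1/18}^{1/18}S_m(\alpha;x_m)e(-\alpha m)\,d\alpha$.

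On the major arc, the estimate from the proof of Lemma~\ref{lem:positivity} gives
\[
\operatorname{Re}\int_{\Major}\cdots\ \ge\ \frac{\pi(x_m;9,m)}{\sqrt{2\pi\sigma^2L_m}}\,(c_0-\varepsilon_m),
\]
with $\varepsilon_m$ exactly the bracket in \eqref{eq:eps-x-def}. The factor $\sqrt{2\pi\sigma^2}/\sqrt{\log 10}$ in \eqref{eq:eps-x-def} is just the rescaling already displayed in that proof. On the minor arc, Lemma~\ref{lem:minor-fortieth} bounds the contribution in absolute value by $\frac1{40}\,\pi(x_m;9,m)/\sqrt{2\pi\sigma^2L_m}$. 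Both lemmas apply for $m\ge M$, exactly as in the proof of Theorem~\ref{thm:main}, since $Y_m\ge Y_\ast$. Combining the two arcs,
\[
A_m(x_m)\ \ge\ \frac{9\,\pi(x_m;9,m)}{\sqrt{2\pi\sigma^2L_m}}\bigl(c_0-\tfrac1{40}-\varepsilon_m\bigr).
\]
The last factor is positive by \eqref{eq:Ymaj-ineq}.

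Next I would insert Lemma~\ref{lem:pi9-lb}, $\pi(x_m;9,m)>x_m/(6Y_m)$, which turns the bound into
\[
A_m(x_m)\ \ge\ \frac{3}{2\sqrt{2\pi\sigma^2}\log 10}\cdot\frac{10^{L_m}}{L_m^{3/2}}\bigl(c_0-\tfrac1{40}-\varepsilon_m\bigr).
\]
Two conversions then give \eqref{eq:Cq-def}: write $10^{L_m}=10^{2m/9}\,10^{-\vartheta_m}$ and $L_m^{-3/2}=m^{-3/2}(m/L_m)^{3/2}$. Monotonicity $A_m(10^{2m/9})\ge A_m(x_m)$ then yields \eqref{eq:quantitative}.

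The asymptotic claims follow from the exponents. Since $-1/2+\nu+2\eta<0$, $\eta-1<0$ and $-1/2+2\eta<0$, we get $\varepsilon_m\to0$. Along a residue class $m\equiv a\pmod 9$, $\vartheta_m=\{2a/9\}$ is constant and $m/L_m\to 9/2$. Hence
\[
C_{\mathrm q}(m)\ \to\ \frac{3}{2\sqrt{2\pi\sigma^2}\log10}\,10^{-\vartheta}(9/2)^{3/2}(c_0-\tfrac1{40})>0.
\]
The admissible classes give $\vartheta\in\{2/9,4/9,8/9,1/9,5/9,7/9\}$, so the minimum is at $\vartheta=8/9$. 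Evaluating there gives $\approx 0.1326\cdot 0.1292\cdot 9.546\cdot0.975\approx 0.159$, comfortably above $0.108$.

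The only step I expect to need care is checking that the major-arc lower bound in the proof of Lemma~\ref{lem:positivity} is an inequality on the real part that does not depend on the sign argument. It is: the $\cos$ bound and the $\erf$ bound are one-sided lower bounds. Everything else is bookkeeping of constants.
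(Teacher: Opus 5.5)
Your argument is the paper's proof: keep the lower bound $c_0-\varepsilon_m$ from the major-arc step of Lemma~\ref{lem:positivity} at $x_m=10^{L_m}$, subtract the $\tfrac1{40}$ minor-arc bound, insert $\pi(x_m;9,m)>x_m/(6Y_m)$, and rewrite $10^{L_m}$ and $L_m^{-3/2}$ in terms of $m$. You also note that $c_0-\tfrac1{40}-\varepsilon_m>0$ must be known before substituting the lower bound for $\pi(x_m;9,m)$; the paper leaves this implicit.

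Your numerical evaluation of the limit is wrong, though the conclusion survives. Since $\sqrt{2\pi\sigma^2}\approx7.20$ and $\log10\approx2.3026$, the prefactor is $3/(2\sqrt{2\pi\sigma^2}\log 10)\approx0.0905$, not $0.1326$. The limit at $\vartheta=8/9$ is therefore
\[
0.0905\cdot0.1292\cdot9.546\cdot0.975\approx0.1088.
\]
This is still greater than $0.108$, but only by under $1\%$, not ``comfortably.'' The evaluation should be carried out with enough precision to certify that thin margin, as Lemma~\ref{lem:Cq-finite} does.
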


\begin{proof}
The proof of Lemma~\ref{lem:positivity} (\S\ref{sec:fourier}) shows that, when its hypotheses hold
with $x=x_m=10^{L_m}$,
\[
\operatorname{Re}\int_{\Major} S_m(\alpha;x)\,e(-\alpha m)\,d\alpha
\ \ge\ \frac{\pi(x;9,m)}{\sqrt{2\pi\sigma^2\,L_m}}\bigl(c_0-\varepsilon_m\bigr),
\]
Combining this with the minor-arc bound from Lemma~\ref{lem:minor-fortieth},
\[
\Bigl|\int_{\Minor}S_m(\alpha;x)e(-\alpha m)\,d\alpha\Bigr|\le \frac{\pi(x;9,m)}{40\sqrt{2\pi\sigma^2 L_m}},
\]
and the Fourier identity $A_m(x)=9\int_{-1/18}^{1/18}S_m(\alpha;x)e(-\alpha m)\,d\alpha$ from
Lemma~\ref{lem:fourier-inversion} (whose right-hand side is real), we obtain
\[
A_m(x_m)\ \ge\ \frac{9\,\pi(x_m;9,m)}{\sqrt{2\pi\sigma^2\,L_m}}\bigl(c_0-1/40-\varepsilon_m\bigr).
\]
Substituting Bennett's lower bound $\pi(x;9,m)\ge x/(6\log x)$ (Lemma~\ref{lem:pi9-lb}),
$x_m=10^{2m/9-\vartheta_m}$, and $\log x_m=L_m\log 10$, we get
\[
\frac{x_m}{\log x_m\,\sqrt {L_m}}
=\frac{10^{-\vartheta_m}}{\log 10}\left(\frac{m}{L_m}\right)^{3/2}
\frac{10^{2m/9}}{m^{3/2}}.
\]
Thus
\[
A_m(10^{2m/9})\ge A_m(x_m)
\ \ge\
C_{\mathrm{q}}(m)\,\frac{10^{2m/9}}{m^{3/2}},
\]
and the limiting assertions follow from $m/L_m\to9/2$, $\varepsilon_m\to0$, and the finite list of
admissible fractional parts.
\end{proof}

\begin{lemma}[Finite lower bound for the quantitative constant]\label{lem:Cq-finite}
For every integer $m\ge 10^{100}$ with $\gcd(m,9)=1$, the constant in
Corollary~\ref{cor:quantitative} satisfies
\[
C_{\mathrm{q}}(m)>0.108.
\]
\end{lemma}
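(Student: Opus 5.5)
The plan is to bound each of the three $m$-dependent factors in the definition \eqref{eq:Cq-def} of $C_{\mathrm q}(m)$ from below, uniformly over admissible $m\ge10^{100}$, and then multiply the bounds. We write $C_{\mathrm q}(m)=K\cdot F(m)\cdot(c_0-\tfrac1{40}-\varepsilon_m)$ with $K:=3/(2\sqrt{2\pi\sigma^2}\log 10)$ and $F(m):=10^{-\vartheta_m}(m/L_m)^{3/2}$. With $\sigma^2=99/12$ we have $\sqrt{2\pi\sigma^2}\approx7.200$, so $K\approx 3/(2\cdot7.200\cdot2.30259)\approx0.0905$. An outward-rounded evaluation of this constant is the first step.

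\emph{Handling $F(m)$.} Since $m\equiv1,2,4,5,7,8\pmod 9$, the fractional part $\vartheta_m=\{2m/9\}$ lies in $\{2/9,4/9,8/9,1/9,5/9,7/9\}$, so $\vartheta_m\le 8/9$ and $10^{-\vartheta_m}\ge10^{-8/9}\approx0.1292$. Next, $L_m=2m/9-\vartheta_m<2m/9$, which gives $m/L_m>9/2$ and hence $(m/L_m)^{3/2}>(9/2)^{3/2}\approx9.546$. Both bounds are uniform in $m$ and do not use $m\ge10^{100}$. Multiplying, $K F(m)>0.0905\cdot0.1292\cdot9.546\approx0.1116$. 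This value agrees with the liminf at $\vartheta_m=8/9$ stated in Corollary~\ref{cor:quantitative}.

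\emph{Handling $\varepsilon_m$.} This is the only delicate step, because $C^{(1)}<460$ makes the leading term of \eqref{eq:eps-x-def} decay only like $Y_m^{-0.1051}$. For $m\ge10^{100}$ we have $L_m\ge 2\cdot10^{100}/9-1$, so $Y_m>5.1\times10^{100}$. Then $\log_{10}Y_m^{-0.1051}<-0.1051\cdot100.7\approx-10.58$, which gives $C^{(1)}Y_m^{-0.1051}<460\cdot2.7\times10^{-11}<1.3\times10^{-8}$. The terms $2C^{(0)}Y_m^{-0.9455}$ (with $C^{(0)}<0.52$) and $8\pi Y_m^{-0.391}$ are astronomically smaller. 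Multiplying by the prefactor $\sqrt{2\pi\sigma^2}/\sqrt{\log10}<4.75$ gives $\varepsilon_m<10^{-7}$. All three terms are decreasing in $Y_m$, so it suffices to check the endpoint. We expect this exponent bookkeeping to be the main obstacle, and we will carry it out in $\log_{10}$ with outward rounding. Combining with $c_0>1-1.3\times10^{-32}$ from Lemma~\ref{lem:positivity}, we get $c_0-\tfrac1{40}-\varepsilon_m>0.975-10^{-7}>0.9749$.

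\emph{Conclusion.} Multiplying the bounds gives $C_{\mathrm q}(m)>0.1116\cdot0.9749>0.108$. The margin is about $0.0008$, so each numerical factor should be carried to four significant digits with rounding in the safe direction. Concretely, we will use $\sqrt{2\pi\cdot 8.25}<7.2$, $\log10<2.30259$, $10^{-8/9}>0.1291$, and $(4.5)^{1.5}>9.545$. These give $K>0.09049$ and $KF(m)>0.09049\cdot0.1291\cdot9.545>0.1115$, and therefore $C_{\mathrm q}(m)>0.1115\cdot0.9749>0.1087$, which still exceeds $0.108$.
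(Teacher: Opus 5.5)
Your proposal follows the paper's proof essentially step for step: bound $10^{-\vartheta_m}\ge 10^{-8/9}$ and $(m/L_m)^{3/2}\ge(9/2)^{3/2}$, show $\varepsilon_m$ is negligible because $Y_m$ is enormous, then multiply the three factors. Since the lemma is a purely numerical check, two arithmetic slips need correcting, although neither affects the conclusion.

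First, $L_m\approx \tfrac{2}{9}\cdot 10^{100}\approx 2.22\times 10^{99}$, so $Y_m>5.1\times 10^{99}$, not $5.1\times10^{100}$. Redoing the exponent with the correct value gives $C^{(1)}Y_m^{-0.1051}<1.6\times 10^{-8}$, so you still get $\varepsilon_m<10^{-7}$.

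Second, your stated inputs $\sqrt{2\pi\cdot 8.25}<7.2$ and $\log 10<2.30259$ only give $K>3/33.1573>0.09047$; in fact $K\approx 0.090481<0.09049$. The rounded product is therefore $KF(m)>0.11149$, not $KF(m)>0.1115$. Since $0.11149\cdot 0.9749>0.1086$, the bound $C_{\mathrm q}(m)>0.108$ still holds.
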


\begin{proof}
Since $m\ge10^{100}$, we have $L_m=\lfloor2m/9\rfloor>m/5$ and hence
$Y_m=L_m\log10>10^{99}$.  Using $C^{(1)}<460$ and $C^{(0)}<0.52$, the residual error in
\eqref{eq:eps-x-def} satisfies
\[
\varepsilon_m
<5\bigl(460\cdot10^{-10.098}+1.04\cdot10^{-93}+8\pi\cdot10^{-39}\bigr)
<10^{-6}.
\]
Also $L_m\le2m/9$, so $(m/L_m)^{3/2}\ge(9/2)^{3/2}$, and over the six admissible residue classes
modulo $9$ one has $10^{-\vartheta_m}\ge10^{-8/9}$.  Therefore, using
$c_0>1-1.3\cdot10^{-32}$,
\[
C_{\mathrm{q}}(m)
\ge
\frac{3}{2\sqrt{2\pi(99/12)}\,\log 10}\,
10^{-8/9}\left(\frac92\right)^{3/2}\,0.974998
\ >\ 0.108.
\]
\end{proof}

\begin{remark}
The Gaussian-density form of Drmota--Mauduit--Rivat~\cite[Theorem~1.1]{DMRcomp2} gives, in the
central range $|m-\mu\log_q x|\le C\sqrt{\log_q x}$ for fixed $C$, the asymptotic $A_m(x)\sim
\frac{q-1}{\varphi(q-1)}\pi(x)/\sqrt{2\pi\sigma^2 \log_q x}$.  Inequality~\eqref{eq:quantitative}
is the effective counterpart at the centered value $m=\mu L$: the same rate $x/(m^{3/2})\asymp
\pi(x)/\sqrt{\log_q x}$ (modulo a $\log$-factor from $\pi(x)\le x/\log x$), now with an
explicit numerical constant.
\end{remark}

\section{Application: infinitude of primes with prime iterated digit sums}\label{sec:infinite_prime_digit_sums}

We now record the main motivating consequence of the explicit surjectivity threshold.
Let $s(n)$ denote the decimal sum of digits and $s^{(j)}$ its iterates.
We prove that there are infinitely many primes $p$ such that every term in the chain
$p, s(p), s^{(2)}(p),\ldots$ is prime until the iteration reaches a one-digit prime. Let $\mathcal{A}$ denote the set of such primes.

\begin{remark}
The set of primes with this property appears as sequence A070027 in the OEIS~\cite{OEIS_A070027}.
\end{remark}

\begin{lemma}[Lifting prime chains through digit sums]\label{lem:lift-chain}
Let $p$ be a prime and put $d:=s(p)$. If $d\in\mathcal A$, then $p\in\mathcal A$.
\end{lemma}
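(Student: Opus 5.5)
The plan is to unwind Definition~\ref{def:idsprime} directly, with no analytic input. Let $d=s(p)\in\mathcal A$. Then $d$ is prime, and there is an index $J_d\ge 0$ such that $s^{(J_d)}(d)\in\{2,3,5,7\}$. Moreover every iterate $s^{(j)}(d)$ with $0\le j\le J_d$ is prime, and $J_d$ is the first index at which the chain of $d$ reaches a one-digit prime.

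First I would use the shift identity $s^{(j+1)}(p)=s^{(j)}(s(p))=s^{(j)}(d)$ for all $j\ge0$, which follows from the recursive definition $s^{(k+1)}(n)=s(s^{(k)}(n))$ by a trivial induction. With it, the chain of $p$ is $p$ followed by the chain of $d$.

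Next I would set $J:=J_d+1$. Then $s^{(J)}(p)=s^{(J_d)}(d)\in\{2,3,5,7\}$. Every iterate $s^{(j)}(p)$ with $0\le j\le J$ is prime: for $j=0$ this is the hypothesis that $p$ is prime, and for $1\le j\le J$ it is the primality of $s^{(j-1)}(d)$.

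The only point needing care is the minimality clause, namely that $J$ is the \emph{first} index at which the chain of $p$ reaches a one-digit prime. For $1\le j<J$ this follows because $s^{(j)}(p)=s^{(j-1)}(d)$ with $j-1<J_d$, and by minimality of $J_d$ such an iterate is not a one-digit prime. The remaining case is $j=0$, where we must exclude $p\in\{2,3,5,7\}$. The lemma as stated allows any prime $p$, but if $p$ is one-digit then $s(p)=p$ and $d=p$, so both chains are the constant chain and $p=d\in\mathcal A$ trivially. I would therefore split into two cases. If $p\le 9$, then $p=d\in\mathcal A$. If $p\ge10$, then $p$ is not one-digit, so index $0$ is excluded and $J=J_d+1$ is minimal. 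In either case $p\in\mathcal A$. I expect no real obstacle here beyond stating this edge case explicitly.
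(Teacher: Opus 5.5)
Your proposal is correct and follows the same route as the paper: the chain of $p$ is $p$ followed by the chain of $d=s(p)$, so primality of every later iterate comes from $d\in\mathcal A$, and $p$ itself is prime by hypothesis. Your explicit check of the ``first index'' clause (taking $J=J_d+1$, and treating the one-digit case $p=d$ separately) is a detail the paper's proof leaves implicit, and you handle it correctly.
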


\begin{proof}
Since $p$ is prime and $d=s(p)$, the iterated digit-sum chain of $p$ begins
\[
p,\ s(p)=d,\ s(d),\ s^{(2)}(d),\ \ldots
\]
Thus every iterated digit sum of $p$ after the first step coincides with an iterated digit sum of $d$.
If $d\in\mathcal{A}$ then all of these are prime, and the initial term $p$ is prime by hypothesis, so $p\in\mathcal{A}$.
\end{proof}

\begin{lemma}[A rapid growth bound]\label{lem:growth}
If $n\ge1$ and $s(n)=m$, then $n\ge 10^{\lceil m/9\rceil-1}$. In particular, if $m\ge 10$, then
$n>m$ (indeed $n\ge m+9$).
\end{lemma}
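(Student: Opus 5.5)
The plan is to bound the number of decimal digits of $n$ from below using the fact that each digit is at most $9$. Write $n=\sum_{j=0}^{D-1}d_j10^j$ with $d_{D-1}\ne0$, so that $D$ is the number of decimal digits and $n\ge 10^{D-1}$. Since every digit satisfies $d_j\le 9$, we get
\[
m=s(n)\le 9D,
\]
hence $D\ge m/9$. Because $D$ is an integer, this gives $D\ge\lceil m/9\rceil$, and therefore
\[
n\ge 10^{D-1}\ge 10^{\lceil m/9\rceil-1}.
\]
That is the first assertion.

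For the second assertion, assume $m\ge10$. I would use Lemma~\ref{lem:mod9}, which gives $n\equiv s(n)=m\pmod 9$, so $n-m$ is a nonnegative multiple of $9$ once we know $n\ge m$. It therefore suffices to show $n\ne m$, which reduces to showing $s(m)<m$ for $m\ge 10$.

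To see this, write $m$ with $k\ge2$ digits $e_j$. Then
\[
m-s(m)=\sum_{j\ge1}e_j(10^j-1)\ge 9\,e_{k-1}>0.
\]
Hence $s(n)=m\ne s(m)$, so $n\ne m$. It remains to rule out $n<m$. Since $D\ge2$, the first part gives $n\ge 10^{\lceil m/9\rceil-1}$. For $m\ge10$ the function $10^{\lceil m/9\rceil-1}$ exceeds $m$ except possibly at small $m$. Concretely, $m\le 18$ gives the bound $10$, which is not enough, so that case must be handled directly instead.

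The direct argument works for every $m\ge 10$. Any $n$ with $s(n)=m$ satisfies $n\ge$ the smallest integer of digit sum $m$, which is the number $(m-9\lfloor m/9\rfloor)$ followed by $\lfloor m/9\rfloor$ nines. For $m=9a+b$, $0\le b\le 8$, $a\ge1$, this number is $(b+1)10^{a}-1\ge 10^{a}+9a\cdot0\ldots$; more simply, $(b+1)10^a-1\ge 9a+b=m$ holds since $10^a-1\ge 9a$ and $b\,10^a\ge b$. With $n\ge m$ and $n\ne m$, the congruence modulo $9$ forces $n\ge m+9$.

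The only mildly delicate step is this small-$m$ range, where the crude power-of-ten bound is too weak. That is why I switch to the minimal-number-with-digit-sum-$m$ argument. Everything else is routine.
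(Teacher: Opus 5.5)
Your first part is the same as the paper's. For the second part your argument is correct but more roundabout. The paper applies the identity $n-s(n)=\sum_j d_j(10^j-1)$ directly to $n$. Since $s(n)=m\ge 10$ forces $n$ to have $D\ge 2$ digits, this gives $n-m=n-s(n)\ge d_{D-1}(10^{D-1}-1)\ge 9$ in one line, with no congruence and no small-$m$ case. You apply that same identity to $m$ instead, to get $s(m)<m$ and hence $n\ne m$. You then get $n\ge m$ from the claim that the smallest integer with digit sum $m=9a+b$ is $(b+1)10^a-1$. Finally you use Lemma~\ref{lem:mod9} to upgrade $n>m$ to $n\ge m+9$. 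This chain is valid. However, the minimality claim is asserted rather than proved. It is true, and it follows from your digit-count argument plus an inspection of the leading digit, but you do not need it: $n\ge s(n)=m$ is immediate from $n-s(n)=\sum_j d_j(10^j-1)\ge 0$. Your route adds only the fact that $n-m$ is a multiple of $9$, which the conclusion does not require. The paper's version is shorter and self-contained, using neither the congruence lemma nor any extremal fact about digit sums.
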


\begin{proof}
If $n$ has $D$ decimal digits then $s(n)\le 9D$. Thus $s(n)=m$ forces $D\ge \lceil m/9\rceil$, hence
\[
n\ge 10^{D-1}\ge 10^{\lceil m/9\rceil-1}.
\]
For the final claim, assume $m\ge 10$. Then $n$ has at least two decimal digits, say
$n=\sum_{j=0}^{D-1} d_j 10^j$ with $D\ge 2$ and $d_{D-1}\ge 1$. Then
\[
n-s(n)=\sum_{j=0}^{D-1} d_j(10^j-1)\ \ge\ d_{D-1}(10^{D-1}-1)\ \ge\ 9.
\]
Therefore $n = s(n) + (n-s(n)) \ge m+9 > m$, as claimed.
\end{proof}

\begin{proof}[Proof of Theorem~\ref{thm:a070027}]
Let $M$ be the explicit constant from Theorem~\ref{thm:main}.
We first exhibit a concrete seed prime $q_0\in\mathcal A$ with $q_0\ge M$.

Define
\[
\begin{aligned}
q_0:={}&
121\,767\,334\,956\,703\,826\,188\,105\,215\,725\,011\,807\,003\,818\,007\,039\,279\\
&\,736\,531\,761\,951\,741\,852\,323\,974\,899\,660\,487\,046\,555\,992\,875\,497\\
&\,851\,303\,129\,811\,791\,960\,664\,013\,058\,089\,156\,609.
\end{aligned}
\]
This number has the Proth form
\[
q_0 = k\,2^n+1,\qquad k=43917,\quad n=430,
\]
with $k$ odd and $2^{430}>43917$. Moreover, a direct modular computation shows that
\[
7^{(q_0-1)/2}\equiv -1\pmod{q_0}.
\]
By Proth's theorem~\cite[Ex.~4.10, \S4.1]{CrandallPomerance} this implies that $q_0$ is prime.
A direct digit-sum computation gives
\[
q_0\ \xrightarrow{\ s\ }\ 601\ \xrightarrow{\ s\ }\ 7,
\]
and both $601$ and $7$ are prime. Hence $q_0\in\mathcal A$.  The displayed decimal expansion has $135$ digits, so
$q_0>10^{134}>1.78\times10^{32}>M$.

Now define a sequence of primes $(q_k)_{k\ge 0}$ recursively as follows.
Assume $q_k$ has been chosen. Since $q_k\ge q_0>3$ is prime, $q_k$ is coprime to $3$ and hence
to $9$: $\gcd(q_k,9)=1$.
Thus Theorem~\ref{thm:main} applied with $m=q_k$ produces a prime $q_{k+1}$ such that
\[
s(q_{k+1})=q_k.
\]
By Lemma~\ref{lem:lift-chain} (applicable since $q_k\in\mathcal A$), we obtain $q_{k+1}\in\mathcal A$.

Finally, Lemma~\ref{lem:growth} with $m=q_k\ge q_0\ge 10$ implies $q_{k+1}>q_k$.
Therefore $(q_k)$ is a strictly increasing infinite sequence of distinct primes, all lying in $\mathcal A$.
This proves that $\mathcal A$ (and hence OEIS~A070027) is infinite.
\end{proof}

\begin{remark}[On the seed prime]
Any single prime $q_0\in\mathcal{A}$ with $q_0\ge M$ suffices to start the construction; the particular
numerical choice above merely provides a concrete witness.  As an additional robustness check (and
to guard against any future correction that might enlarge $M$), the larger Proth prime
\[
q_0'\ :=\ 44853\cdot 2^{450}+1
\]
also has digit-sum chain $q_0'\to 601\to 7$ with $601$ prime, satisfies $7^{(q_0'-1)/2}\equiv -1\pmod{q_0'}$
(so $q_0'$ is prime by Proth's theorem), and has $141$ decimal digits, comfortably exceeding $M$.
The script \texttt{verify\_q0.py} in the supplementary code verifies both seeds.
\end{remark}

The quantitative form of Theorem~\ref{thm:main} (Corollary~\ref{cor:quantitative}) sharpens
Theorem~\ref{thm:a070027} from a qualitative infinitude statement to an explicit lower bound on
the count of A070027 primes at explicit lifting scales.

\begin{corollary}[Quantitative form of Theorem~\ref{thm:a070027}]\label{cor:quantitative-A070027}
Let $q_0=43917\cdot 2^{430}+1$ be the seed prime from the proof of Theorem~\ref{thm:a070027}.
Then for every $X\ge 10^{2q_0/9}$,
\[
\#\bigl\{p\in\mathcal A:\,p\le X\bigr\}\ \ge\ C_{\mathrm{q}}(q_0)\,\frac{10^{2q_0/9}}{q_0^{3/2}}
\ \ge\ 0.108\cdot \frac{10^{2q_0/9}}{q_0^{3/2}},
\]
where $C_{\mathrm{q}}(q_0)$ is the constant from Corollary~\ref{cor:quantitative} evaluated at
$m=q_0$.  The displayed numerical lower bound uses Lemma~\ref{lem:Cq-finite}, since
$q_0>10^{100}$.
\end{corollary}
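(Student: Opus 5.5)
The idea is to apply Corollary~\ref{cor:quantitative} once, at $m=q_0$, and to inject the primes it produces into $\mathcal A$ via Lemma~\ref{lem:lift-chain}. The proof of Theorem~\ref{thm:a070027} already shows that $q_0$ is prime, that $q_0\in\mathcal A$ (its chain is $q_0\to601\to7$), and that $q_0>10^{134}>M$. Since $q_0$ is a prime greater than $3$, we have $\gcd(q_0,9)=1$, so $m=q_0$ is admissible.

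Step one: Corollary~\ref{cor:quantitative} with $m=q_0$ gives
\[
A_{q_0}\bigl(10^{2q_0/9}\bigr)\ \ge\ C_{\mathrm q}(q_0)\,\frac{10^{2q_0/9}}{q_0^{3/2}}.
\]
The quantity $A_{q_0}(10^{2q_0/9})$ counts the primes $p\le 10^{2q_0/9}$ with $s(p)=q_0$.

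Step two: each such $p$ lies in $\mathcal A$. This follows from Lemma~\ref{lem:lift-chain} with $d=s(p)=q_0\in\mathcal A$.

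Step three: for any $X\ge 10^{2q_0/9}$, these primes are all $\le X$. They are distinct primes, so
\[
\#\{p\in\mathcal A:\ p\le X\}\ \ge\ A_{q_0}\bigl(10^{2q_0/9}\bigr).
\]
Combining with step one gives the first inequality.

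Step four: the second inequality uses Lemma~\ref{lem:Cq-finite}. Its hypotheses are $m\ge 10^{100}$ and $\gcd(m,9)=1$. Both hold for $m=q_0$, since $q_0$ has $135$ digits. That lemma then gives $C_{\mathrm q}(q_0)>0.108$.

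There is no real obstacle here. The only point to check is that the seed's identification $q_0=43917\cdot2^{430}+1$ agrees with the displayed decimal expansion, so that $q_0\ge M$ and the inputs to Lemma~\ref{lem:Cq-finite} hold; this was already certified in the proof of Theorem~\ref{thm:a070027}. One could strengthen the count further by summing over the lifted primes $q_k$, but this is not needed for the stated bound.
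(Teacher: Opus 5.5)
Your proposal is correct and matches the paper's proof essentially step for step: you apply Corollary~\ref{cor:quantitative} at $m=q_0$, lift each resulting prime into $\mathcal A$ via Lemma~\ref{lem:lift-chain}, pass to any $X\ge 10^{2q_0/9}$ by monotonicity of the count, and invoke Lemma~\ref{lem:Cq-finite} for the constant $0.108$. The paper also notes $p>q_0>3$, which Lemma~\ref{lem:lift-chain} does not require, so leaving it out is harmless.
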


\begin{proof}
By Corollary~\ref{cor:quantitative} applied to $m=q_0$, there are at least
$C_{\mathrm{q}}(q_0)\,10^{2q_0/9}/q_0^{3/2}$ primes $p\le 10^{2q_0/9}$ with $s(p)=q_0$.  Each such prime $p$
has $s(p)=q_0\in\mathcal A$ and $p$ itself prime with $p>q_0>3$, so $p\in\mathcal A$ by
Lemma~\ref{lem:lift-chain}.  For $X\ge 10^{2q_0/9}$, this lower bound is also a lower bound on
$\#\{p\in\mathcal A:\,p\le X\}$.
\end{proof}

\begin{remark}[Iterated counts]
The bound in Corollary~\ref{cor:quantitative-A070027} captures only the first level of the
lifting tower.  Iterating the construction $K$ times produces a sequence $q_0<q_1<\cdots<q_K$
of primes in $\mathcal A$ with $s(q_{k+1})=q_k$ and $q_{k+1}\ge 10^{\lceil q_k/9\rceil-1}$
by Lemma~\ref{lem:growth}, and Corollary~\ref{cor:quantitative} applied at each level
yields $\ge C_{\mathrm{q}}(q_K)\,10^{2q_K/9}/q_K^{3/2}$ primes in $\mathcal A$ with
$p\le 10^{2q_K/9}$, with $C_{\mathrm{q}}(q_K)$ bounded away from $0$ by
Lemma~\ref{lem:Cq-finite}, since $q_K\ge q_0>10^{100}$.
Since $q_K$ grows tower-exponentially with $K$, the resulting explicit lower bound on
$\#\{p\in\mathcal A:\,p\le 10^{2q_K/9}\}$ is unbounded as $K\to\infty$.
\end{remark}

\section{Further applications}\label{sec:further_applications}

In this section we record five further consequences, drawing on the existence
form of Theorem~\ref{thm:main} (\S\S\ref{subsec:smallest-prime-bound}--\ref{subsec:terminal-stratification})
and on its quantitative refinement Corollary~\ref{cor:quantitative}
(\S\S\ref{subsec:digit-sum-goldbach}--\ref{subsec:largest-prime}).  The recurring pattern is to combine
the explicit surjectivity of $s$ on primes with one of: the implicit upper bound on the realizing
prime; primality of the digit sum itself; the residue class modulo $9$ along the digit-sum chain;
or independence of constraints across multiple primes.

\paragraph{Status of these applications.}
To calibrate the contribution: the infinitude of OEIS A070027 (Theorem~\ref{thm:a070027}) and its
stratification by terminal one-digit prime (\S\ref{subsec:terminal-stratification}) do not seem to
be recorded in this explicit-threshold form; the explicit threshold certifies concrete seeds above
the range where the lifting theorem applies.  The infinitude of additive
primes (\S\ref{subsec:additive-primes}) is \emph{not} new non-effectively: Harman's Mertens-type
density~\cite{harman2012counting} already implies it; our contribution is the effective form, with an
explicit lower bound on the count.  The digit-sum Goldbach and Waring results
(\S\ref{subsec:digit-sum-goldbach}) and the smallest- / largest-prime bounds
(\S\ref{subsec:smallest-prime-bound}, \S\ref{subsec:largest-prime}) are corollaries of the
quantitative refinement and are new (in their explicit form) modulo the size of $M$.

\subsection{An effective upper bound on the smallest prime with given digit sum}\label{subsec:smallest-prime-bound}

The proof of Theorem~\ref{thm:main} actually establishes more than mere existence:
it provides an explicit upper bound on the smallest prime realizing a given digit sum.

\begin{corollary}\label{cor:smallest-prime-bound}
For every integer $m\ge M$ with $\gcd(m,9)=1$, the smallest prime $p$ with $s(p)=m$ satisfies
\[
p\ \le\ 10^{2m/9}.
\]
\end{corollary}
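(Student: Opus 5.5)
This corollary is a direct repackaging of what the proof of Theorem~\ref{thm:main} actually shows. I will not invoke the existence statement as a black box. Instead I will re-run its final step and record where the prime lives.

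Fix an integer $m\ge M$ with $\gcd(m,9)=1$, and set $L_m=\lfloor 2m/9\rfloor$ and $x_m=10^{L_m}$ as in \S\ref{subsec:choose-x}. By that subsection, $|\Delta_m|=|m-\mu L_m|\le 4$. By the definition of $M$ in \S\ref{subsec:convert-to-m}, we have $\log x_m=L_m\log 10\ge Y_\ast$. Hence all of the following hold at $x=x_m$:
\begin{itemize}
\item the threshold conditions $x_m\ge x_{\mathrm{maj}}$ and $x_m\ge x_{\mathrm{AP}}$;
\item the minor-arc inequality~\eqref{eq:xmin-ineq}, by \S\ref{subsec:solve-minor};
\item the major-arc criterion~\eqref{eq:major-criterion}, by \S\ref{subsec:solve-major}.
\end{itemize}
Lemma~\ref{lem:positivity} therefore applies and gives $A_m(x_m)\ge1$. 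By the definition of $A_m$, this means there is a prime $p\le x_m$ with $s(p)=m$.

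It remains to compare exponents. Since $L_m=\lfloor 2m/9\rfloor\le 2m/9$, we get $x_m=10^{L_m}\le 10^{2m/9}$. The smallest prime with digit sum $m$ is at most this $p$, so it is at most $10^{2m/9}$.

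There is no genuine obstacle here. The only point needing care is that the proof of Theorem~\ref{thm:main} works at the integral length $L_m$ rather than at $2m/9$ itself. The monotonicity $x_m\le 10^{2m/9}$ handles this, and it is exactly the inequality already recorded at the end of that proof.
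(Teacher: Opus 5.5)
Your proposal is correct and is essentially the paper's own proof. Both take the prime $p\le x_m=10^{L_m}\le 10^{2m/9}$ with $s(p)=m$ from the final step of the proof of Theorem~\ref{thm:main} and note that the smallest such prime is at most $p$; the only difference is that the paper cites that step rather than re-running it.
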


\begin{proof}
The proof of Theorem~\ref{thm:main} in \S\ref{subsec:convert-to-m} gives
$A_m(10^{2m/9})\ge 1$ for every admissible $m\ge M$.  Hence at least one prime $p\le 10^{2m/9}$
satisfies $s(p)=m$.
\end{proof}

\subsection{Infinitude of additive primes}\label{subsec:additive-primes}

A prime $p$ is called an \emph{additive prime} if its decimal digit sum $s(p)$ is itself prime.
This is the OEIS sequence A046704~\cite{OEIS_A046704}.  Note that being additive does not imply membership in A070027:
for example, $67$ is additive ($s(67)=13$ is prime) but the second iterate $s(13)=4$ is composite, so
$67\notin$ A070027.  Harman~\cite{harman2012counting}
proved a Mertens-type density result for the set of additive primes, but his constants are
not made effective.  Theorem~\ref{thm:main} yields the following effective statement.

\begin{theorem}\label{thm:additive-primes-infinite}
There are infinitely many additive primes.  More precisely, for every prime $m\ge M$, the number
of additive primes $p\le 10^{2m/9}$ with $s(p)=m$ satisfies
\[
\#\{p\le 10^{2m/9}:\,p\text{ prime},\ s(p)=m\}
\ \ge\
C_{\mathrm{q}}(m)\,\frac{10^{2m/9}}{m^{3/2}},
\]
where $C_{\mathrm{q}}(m)$ is the positive constant from Corollary~\ref{cor:quantitative}.
In particular, the additive-prime counting function tends to infinity.
\end{theorem}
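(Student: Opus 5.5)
The statement follows directly from Corollary~\ref{cor:quantitative}, applied at a prime value of $m$. Every prime counted there is additive, because its digit sum is the prime $m$.

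\textbf{Step 1 (admissibility).} Fix a prime $m\ge M$. Since $M>1.78\times 10^{31}$, certainly $m>3$. Hence $3\nmid m$, and so $\gcd(m,9)=1$. The hypotheses of Theorem~\ref{thm:main} and Corollary~\ref{cor:quantitative} therefore hold for this $m$.

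\textbf{Step 2 (the count).} Corollary~\ref{cor:quantitative} gives
\[
A_m(10^{2m/9})\ \ge\ C_{\mathrm q}(m)\,10^{2m/9}/m^{3/2}.
\]
Every prime $p$ counted by $A_m(10^{2m/9})$ satisfies $s(p)=m$, and $m$ is prime. So each such $p$ is an additive prime. The set on the left-hand side of the displayed inequality in the theorem is precisely the set counted by $A_m(10^{2m/9})$, which gives the stated bound.

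\textbf{Step 3 (positivity and infinitude).} We must check that $C_{\mathrm q}(m)>0$ for every admissible $m\ge M$, not only asymptotically. From \eqref{eq:Cq-def}, the sign of $C_{\mathrm q}(m)$ is the sign of $c_0-\tfrac1{40}-\varepsilon_m$. Compare \eqref{eq:eps-x-def} with the major-arc criterion \eqref{eq:Ymaj-ineq}: the quantity $\varepsilon_m$ is exactly the left side of \eqref{eq:Ymaj-ineq} rescaled by $\sqrt{2\pi\sigma^2}/\sqrt{\log 10}$. Since $Y_m=L_m\log10\ge Y_\ast\ge Y_{\mathrm{maj}}$, the verification in \S\ref{subsec:solve-major} gives $\varepsilon_m<c_0-\tfrac1{40}$. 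Hence $C_{\mathrm q}(m)>0$, and the count is at least $1$. For $m\ge10^{100}$, Lemma~\ref{lem:Cq-finite} gives the stronger bound $C_{\mathrm q}(m)>0.108$.

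By Euclid, there are infinitely many primes $m\ge M$. Letting $m$ run through them, the lower bound $0.108\cdot 10^{2m/9}/m^{3/2}$ tends to infinity. So the number of additive primes up to $10^{2m/9}$ is unbounded, and in particular there are infinitely many additive primes. For an independent way to produce infinitely many distinct ones, note that distinct primes $m$ yield disjoint sets of primes, since they have distinct digit sums.

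\textbf{Main obstacle.} The only point needing care is the identification in Step~3 of $\varepsilon_m$ with the rescaled left-hand side of \eqref{eq:Ymaj-ineq} at $Y=Y_m$. This is what turns the positivity threshold $Y_{\mathrm{maj}}$ into strict positivity of $C_{\mathrm q}(m)$ for all $m\ge M$. Everything else is a direct invocation of earlier results.
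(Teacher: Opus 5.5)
Your proposal is correct and follows essentially the same route as the paper. Both arguments note that a prime $m>3$ is admissible, apply Corollary~\ref{cor:quantitative} at each prime $m\ge M$, and observe that every prime with $s(p)=m$ is additive. Your Step~3 adds one thing: by identifying $\varepsilon_m$ with the rescaled left side of \eqref{eq:Ymaj-ineq}, you get $C_{\mathrm{q}}(m)>0$ for every $m\ge M$. The paper only uses this positivity tacitly, when it restricts to primes $m\ge M$ with $C_{\mathrm{q}}(m)>0$.
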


\begin{proof}
The set of primes $m\ge M$ is infinite (by the prime number theorem, or by Euclid).  Every
such $m$ is admissible in the sense of Theorem~\ref{thm:main}: any prime $m>3$ satisfies
$\gcd(m,9)=1$, and $M\gg 3$.  Applying
Corollary~\ref{cor:quantitative} to each such~$m$ produces at least
$C_{\mathrm{q}}(m)\,10^{2m/9}/m^{3/2}$ primes $p\le 10^{2m/9}$ with $s(p)=m$.  Since $s(p)=m$ is
prime, every such $p$ is an additive prime.  Different primes $m_1\neq m_2$ contribute
disjoint sets of additive primes (the digit sum of a fixed integer is unique).  Letting $m$
range over the infinite set of primes $\ge M$ with $C_{\mathrm{q}}(m)>0$ gives infinitely
many additive primes.
\end{proof}

\begin{remark}
Harman's Mertens estimate~\cite[Theorem~2]{harman2012counting} gives an asymptotic
$\sum_{p\le X,\, s(p)\text{ prime}} 1/p\sim \tfrac{b-1}{\varphi(b-1)}\,L_3(X)$ in base $b$,
where $L_3(X)=\log\log\log X$.  This is a strictly stronger density statement than
Theorem~\ref{thm:additive-primes-infinite}, but its implied constant is not made effective.
Our result is correspondingly weaker as a density assertion but fully explicit.
\end{remark}

\subsection{Stratification by terminal one-digit prime}\label{subsec:terminal-stratification}

The iterated digit-sum chain of any integer $n\ge 10$ terminates at a one-digit value.
For primes in $\mathcal A$, this terminal value is a one-digit prime; hence it lies in
\[
T:=\{2,3,5,7\}.
\]
For each $t\in T$ define
\[
\mathcal A_t := \{\,p\in\mathcal A : \text{the iterated digit-sum chain of $p$ terminates at $t$}\,\}.
\]
By Lemma~\ref{lem:mod9}, $\mathcal A_t\subseteq\{p\equiv t\pmod 9\}$.

\begin{theorem}\label{thm:terminal-stratification}
$\mathcal A_3=\{3\}$ is finite, and for each $t\in\{2,5,7\}$ the set $\mathcal A_t$ is infinite.
\end{theorem}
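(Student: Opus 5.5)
The plan splits into the finite part ($t=3$) and the infinite parts ($t=2,5,7$).

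For $\mathcal A_3=\{3\}$, I would use Lemma~\ref{lem:mod9}. If $p\in\mathcal A_3$, then the chain $p,s(p),s^{(2)}(p),\dots$ ends at $3$, and every iterate is congruent to $p$ modulo $9$. Hence $p\equiv 3\pmod 9$, so $3\mid p$ and therefore $p=3$. Conversely, $3$ lies in $\mathcal A$ with $J=0$, since its chain reaches the one-digit prime $3$ immediately. So $\mathcal A_3=\{3\}$.

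For $t\in\{2,5,7\}$, I would use the lifting argument from the proof of Theorem~\ref{thm:a070027}, one residue class at a time. The first observation is that lifting preserves the terminal value: if $s(p)=d\in\mathcal A_t$ with $p$ prime, then Lemma~\ref{lem:lift-chain} gives $p\in\mathcal A$. Moreover the chain of $p$ after its first step is the chain of $d$, so it terminates at $t$ and $p\in\mathcal A_t$. The one point to check is that the chain of $p$ does not stop earlier at a one-digit prime. It cannot, because $p>d\ge 10$ by Lemma~\ref{lem:growth}, so $p$ is not one-digit.

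So for each $t$ it suffices to find a single seed prime $q_0^{(t)}\in\mathcal A_t$ with $q_0^{(t)}\ge M$. Then apply Theorem~\ref{thm:main} repeatedly, which is legitimate since $\gcd(q_k,9)=1$ for primes $q_k>3$. This produces primes $q_{k+1}$ with $s(q_{k+1})=q_k$. Each $q_{k+1}$ lies in $\mathcal A_t$ by the observation above, and $q_{k+1}>q_k$ by Lemma~\ref{lem:growth}, so $\mathcal A_t$ is infinite.

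For $t=7$, the seed $q_0$ from the proof of Theorem~\ref{thm:a070027} already works: its chain is $q_0\to 601\to 7$, and $q_0>M$.

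The main obstacle is producing explicit certified seeds for $t=2$ and $t=5$, as there is no way to avoid a computation here. First I would choose a small chain ending at $t$, for example $11\to 2$ or $101\to 2$ for $t=2$, and $23\to 5$ or $113\to 5$ for $t=5$; here $101$ and $113$ are primes whose chains stay prime. Then I would search for Proth primes $k\,2^n+1$ above $10^{33}$ whose decimal digit sum equals the chosen intermediate prime $P$, with $P$ itself having a prime chain down to $t$.

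A cleaner option is to use intermediate sums near $600$, mimicking the $t=7$ seed. For $t=2$, $P=443$ works, since $443$ is prime, $s(443)=11$ is prime, and $s(11)=2$; note $443\equiv 2\pmod 9$. For $t=5$, $P=599$ works, since $599$ is prime, $s(599)=23$ is prime, and $s(23)=5$; note $599\equiv 5\pmod 9$. A $135$-digit number has digit sum around $600$, so these targets are natural. Once such candidates are found, I would certify primality via Proth's theorem, by checking $a^{(p-1)/2}\equiv-1\pmod p$ for some $a$ with Jacobi symbol $-1$. I would also record the digit-sum chains and verify both steps with the supplementary script.
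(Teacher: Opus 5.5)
Your proposal follows the paper's proof: the mod-$9$ argument gives $\mathcal A_3=\{3\}$, and for $t\in\{2,5,7\}$ one lifts via Theorem~\ref{thm:main}, Lemma~\ref{lem:lift-chain} and Lemma~\ref{lem:growth} from a single Proth-certified seed in $\mathcal A_t$ above $M$, reusing $q_0$ for $t=7$. The paper carries out the search you leave open, giving $q_2=3557\cdot 2^{431}+1$ (chain $q_2\to 641\to 11\to 2$) and $q_5=4027\cdot 2^{426}+1$ (chain $q_5\to 599\to 23\to 5$, the same chain you chose), both certified with Proth witness $7$; your target $443$ for $t=2$ is not near $600$ but about five standard deviations below the typical digit sum of a $135$-digit number, so it would have to be sought among roughly $100$-digit candidates rather than at the size of $q_0$.
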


\begin{proof}
\emph{The case $t=3$.}  Any prime $p\equiv 3\pmod 9$ satisfies $3\mid p$, hence $p=3$.
Conversely $3\in\mathcal A_3$ trivially.  Therefore $\mathcal A_3=\{3\}$.

\emph{The case $t\in\{2,5,7\}$.}  It suffices, by the lifting argument used in the proof
of Theorem~\ref{thm:a070027} (Lemma~\ref{lem:lift-chain} together with
Lemma~\ref{lem:growth}), to exhibit a single \emph{seed} prime $q_t\in\mathcal A_t$ with
$q_t\ge M$.  We do so explicitly for each terminal:

\medskip
\noindent\textbf{Seed for $t=7$:}  The prime
$q_7 := 43917\cdot 2^{430}+1$, exhibited in the proof of Theorem~\ref{thm:a070027}
(\S\ref{sec:infinite_prime_digit_sums}), has digit-sum chain $q_7\to 601\to 7$ with $601$ prime.
Hence $q_7\in\mathcal A_7$ and $q_7\ge M$.

\medskip
\noindent\textbf{Seed for $t=2$:}  Define
\[
q_2 := 3557\cdot 2^{431}+1.
\]
Then $3557$ is odd and $2^{431}>3557$, so $q_2$ has Proth form.  A direct modular computation
shows $7^{(q_2-1)/2}\equiv -1\pmod{q_2}$, hence $q_2$ is prime by Proth's theorem.  Its
digit-sum chain is
\[
q_2\ \xrightarrow{\ s\ }\ 641\ \xrightarrow{\ s\ }\ 11\ \xrightarrow{\ s\ }\ 2,
\]
where the intermediate values $641$ and $11$ are prime.  Hence $q_2\in\mathcal A_2$
and $q_2\ge M$ (one verifies $q_2>10^{133}>M$).

\medskip
\noindent\textbf{Seed for $t=5$:}  Define
\[
q_5 := 4027\cdot 2^{426}+1.
\]
Then $4027$ is odd, $2^{426}>4027$, and $7^{(q_5-1)/2}\equiv -1\pmod{q_5}$, so $q_5$ is prime
by Proth's theorem.  Its digit-sum chain is
\[
q_5\ \xrightarrow{\ s\ }\ 599\ \xrightarrow{\ s\ }\ 23\ \xrightarrow{\ s\ }\ 5,
\]
where the intermediate values $599$ and $23$ are prime.  Hence $q_5\in\mathcal A_5$
and $q_5\ge M$ (one verifies $q_5>10^{131}>M$).

\medskip
\noindent
Having exhibited an explicit $q_t\in\mathcal A_t$ with $q_t\ge M$ for each $t\in\{2,5,7\}$,
the lifting construction of Theorem~\ref{thm:a070027} produces a strictly increasing infinite
sequence $q_t=q_t^{(0)}<q_t^{(1)}<q_t^{(2)}<\cdots$ of primes in $\mathcal A_t$.  Hence each
$\mathcal A_t$ is infinite.
\end{proof}

\begin{remark}[Search for the seeds]
The seeds $q_2$ and $q_5$ above were located by scanning Proth-form candidates
$k\cdot 2^n+1$ with odd $k$ in the range $1\le k\le 5000$ and exponent
$n\in [425,445]$, filtering on whether $s(k\cdot 2^n+1)$ initiates an all-prime chain to the
prescribed terminal $t\in\{2,5\}$, and finally verifying primality via the Proth
witness $a=7$.  The first such pairs encountered ($k=3557,\,n=431$ for $t=2$ and
$k=4027,\,n=426$ for $t=5$) are the ones recorded above.  The search examined under $20\,000$
candidate pairs and ran in under a second.
\end{remark}

\begin{corollary}[Quantitative form of Theorem~\ref{thm:terminal-stratification}]
\label{cor:stratification-quantitative}
For each $t\in\{2,5,7\}$, with $q_t$ the seed prime exhibited above,
\[
\#\bigl\{p\in\mathcal A_t:\,p\le 10^{2q_t/9}\bigr\}\ \ge\ C_{\mathrm{q}}(q_t)\,\frac{10^{2q_t/9}}{q_t^{3/2}}
\ \ge\ 0.108\cdot \frac{10^{2q_t/9}}{q_t^{3/2}},
\]
where $C_{\mathrm{q}}(q_t)$ is the constant from Corollary~\ref{cor:quantitative} evaluated at
$m=q_t$ (the displayed numerical lower bound uses Lemma~\ref{lem:Cq-finite}, since each
$q_t>10^{100}$).
\end{corollary}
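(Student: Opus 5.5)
The plan is to copy the proof of Corollary~\ref{cor:quantitative-A070027}, with the stratified set $\mathcal A_t$ in place of $\mathcal A$. Fix $t\in\{2,5,7\}$ and its seed $q_t$ from the proof of Theorem~\ref{thm:terminal-stratification}. We know $q_t$ is prime, lies in $\mathcal A_t$, and satisfies $q_t>10^{100}>M$. Since $q_t>3$ is prime, $\gcd(q_t,9)=1$, so $m=q_t$ is admissible. Corollary~\ref{cor:quantitative} with $m=q_t$ then gives at least $C_{\mathrm q}(q_t)\,10^{2q_t/9}/q_t^{3/2}$ primes $p\le 10^{2q_t/9}$ with $s(p)=q_t$.

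The main step is to check that each such $p$ lies in $\mathcal A_t$, not just in $\mathcal A$.
\begin{enumerate}
\item Lemma~\ref{lem:lift-chain} gives $p\in\mathcal A$, since $s(p)=q_t\in\mathcal A$ and $p$ is prime.
\item By Lemma~\ref{lem:growth}, $p>q_t\ge 10$, so $p$ is not itself a one-digit prime. Hence the chain of $p$ is $p$, then $q_t$, then the chain of $q_t$.
\item Every iterate from $q_t$ onward agrees with the chain of $q_t$, so the first one-digit prime reached is the same for $p$ as for $q_t$. That terminal value is $t$.
\end{enumerate}
Alternatively, Lemma~\ref{lem:mod9} gives $p\equiv s(p)=q_t\equiv t\pmod 9$. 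This is consistent with the above but does not replace the chain argument. Distinct primes $p$ are counted separately, so the count of $\mathcal A_t$ up to $10^{2q_t/9}$ is at least the stated quantity.

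For the final numerical inequality, apply Lemma~\ref{lem:Cq-finite} with $m=q_t$. This is valid because $q_t\ge 10^{131}>10^{100}$ and $\gcd(q_t,9)=1$, and it gives $C_{\mathrm q}(q_t)>0.108$.

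The only step needing care is item 3 above, that the terminal one-digit prime is preserved under lifting. Definition~\ref{def:idsprime} picks $J$ as the first index at which the chain hits a one-digit prime. Because $p$ has at least two digits, the index for $p$ is exactly one more than the index for $q_t$. Nothing else remains beyond citing the earlier results.
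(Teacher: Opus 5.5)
Your proposal is correct and follows the paper's proof. You apply Corollary~\ref{cor:quantitative} at $m=q_t$, show each resulting prime $p$ has chain $p\to q_t\to\cdots\to t$ and so lies in $\mathcal A_t$, and invoke Lemma~\ref{lem:Cq-finite} for the constant $0.108$; your check that the first-hit index for $p$ is one more than that for $q_t$ just spells out the paper's remark that the chain of $p$ extends the chain of $q_t$.
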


\begin{proof}
The same argument as in the proof of Corollary~\ref{cor:quantitative-A070027} applies, replacing
$q_0$ by $q_t$ throughout: Corollary~\ref{cor:quantitative} with $m=q_t$ produces
$\ge C_{\mathrm{q}}(q_t)\,10^{2q_t/9}/q_t^{3/2}$ primes $p\le 10^{2q_t/9}$ with $s(p)=q_t$.  Each such $p$ has
digit-sum chain extending the chain of $q_t$, so terminates at the same $t$, hence
$p\in\mathcal A_t$.
\end{proof}

\subsection{A digit-sum Goldbach-type result}\label{subsec:digit-sum-goldbach}

The quantitative form of Theorem~\ref{thm:main} (Corollary~\ref{cor:quantitative}) combines
multiplicatively across independent digit-sum constraints.  We record the simplest such
consequence.

\begin{theorem}[Digit-sum Goldbach]\label{thm:digit-sum-goldbach}
For every integer $N\ge 2M+2$ there exist primes $p_1, p_2$ with
\[
s(p_1)+s(p_2)\ =\ N.
\]
\end{theorem}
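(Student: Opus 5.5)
The plan is to write $N=m_1+m_2$ with both $m_i\ge M$ and $\gcd(m_i,9)=1$, and then apply Theorem~\ref{thm:main} to each summand separately. That theorem gives primes $p_1,p_2$ with $s(p_i)=m_i$, so $s(p_1)+s(p_2)=N$. No independence between the two primes is needed; in particular $p_1=p_2$ is allowed. The whole problem therefore reduces to an elementary decomposition of $N$.

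For the decomposition, note that residues coprime to $3$ are those $\not\equiv0\pmod 3$, and that $\gcd(m,9)=1$ is equivalent to $3\nmid m$. We need $m_1\not\equiv0$ and $N-m_1\not\equiv0\pmod 3$. Among the three consecutive integers $M,M+1,M+2$, exactly one residue class mod $3$ is forbidden by each of these two conditions, so at least one of the three residues is admissible. Choose $m_1\in\{M,M+1,M+2\}$ accordingly.

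Then $m_2=N-m_1\ge N-M-2\ge M$, using the hypothesis $N\ge 2M+2$. Strictly this leaves a boundary issue: when $m_1=M+2$ we only get $m_2\ge M$ if $N\ge 2M+2$, which is exactly the hypothesis. All other choices give more room.

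The only thing to check carefully is this case $m_1=M+2$, i.e.\ that the stated bound $2M+2$ suffices. There is one further special case. If $N\equiv0\pmod3$, the conditions require $m_1\equiv1$ or $2$, and $m_2$ then has the complementary nonzero residue automatically. If $N\not\equiv0$, exactly two of the three residues remain forbidden-free in the worst case, so the window $\{M,M+1,M+2\}$ always contains an admissible $m_1$. This completes the argument via Theorem~\ref{thm:main}.
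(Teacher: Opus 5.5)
Your proof is correct and is essentially the paper's argument: the three consecutive integers $M,M+1,M+2$ cover every residue mod $3$, at most two residues are excluded, $m_2=N-m_1\ge M$ follows from $N\ge 2M+2$, and Theorem~\ref{thm:main} is then applied to each summand. One harmless slip is in your last paragraph, where the counts are inverted: when $N\not\equiv0\pmod 3$ exactly one residue survives, and when $N\equiv0\pmod 3$ two survive; your earlier counting argument already handles both cases correctly.
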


\begin{proof}
We first show that every $N\ge 2$ admits a decomposition $N=m_1+m_2$ with both summands
\emph{admissible} (i.e.\ $\gcd(m_i,9)=1$).  Admissibility modulo $9$ reduces to admissibility
modulo $3$ (since $\gcd(m,9)=1\iff 3\nmid m$), so we work modulo $3$ throughout.  Modulo $3$
the admissible residues are $\{1,2\}$, so the achievable sums modulo $3$ are
$\{1+1, 1+2, 2+2\}=\{2,0,1\}$, covering every residue class.  Concretely:
\begin{itemize}
  \item if $N\equiv 0\pmod 3$, take any $m_1\equiv 1\pmod 3$ with $m_1$ in the required range;
  \item if $N\equiv 1\pmod 3$, take $m_1\equiv 2\pmod 3$;
  \item if $N\equiv 2\pmod 3$, take $m_1\equiv 1\pmod 3$.
\end{itemize}
For $N\ge 2M+2$, the interval $[M, N-M]$ has length $N-2M\ge 2$ (hence at least $3$ consecutive
integers $M, M+1, M+2$).  Three consecutive integers contain every residue class modulo $3$,
so the prescribed $m_1$ exists in $[M, N-M]$, and $m_2:=N-m_1\in[M,N-M]$ as well.

Applying Theorem~\ref{thm:main} separately to $m_1$ and $m_2$ produces primes $p_1, p_2$ with
$s(p_1)=m_1$ and $s(p_2)=m_2$, hence $s(p_1)+s(p_2)=N$.
\end{proof}

\begin{corollary}[Quantitative form]\label{cor:goldbach-quantitative}
For every $N\ge 2M+2$, the number of ordered prime pairs $(p_1,p_2)$ with
$s(p_1)+s(p_2)=N$, $p_i \le 10^{2 m_i/9}$ for the underlying decomposition
$N=m_1+m_2$, satisfies
\[
\#\bigl\{(p_1,p_2):\ s(p_1)+s(p_2)=N\bigr\}\
\ge\ \!\!\sum_{\substack{m_1+m_2=N\\M\le m_1\le N-M\\ \gcd(m_i,9)=1}}\!\!
C_{\mathrm{q}}(m_1)\,C_{\mathrm{q}}(m_2)\,\frac{10^{2N/9}}{(m_1 m_2)^{3/2}}.
\]
\end{corollary}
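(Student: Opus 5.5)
The plan is to count ordered pairs of primes directly, splitting according to the digit sums. Each ordered pair $(p_1,p_2)$ with $s(p_1)+s(p_2)=N$ determines the decomposition $m_1:=s(p_1)$, $m_2:=s(p_2)=N-m_1$ uniquely. So the sets
\[
\mathcal S(m_1):=\{(p_1,p_2):\ s(p_1)=m_1,\ s(p_2)=N-m_1,\ p_i\le 10^{2m_i/9}\}
\]
are pairwise disjoint for distinct $m_1$. Hence the left-hand side is at least $\sum_{m_1}\#\mathcal S(m_1)$, where the sum runs over the admissible $m_1$ in $[M,N-M]$ with $\gcd(m_1,9)=\gcd(N-m_1,9)=1$.

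For each such $m_1$ the set $\mathcal S(m_1)$ is a Cartesian product:
\[
\#\mathcal S(m_1)=A_{m_1}(10^{2m_1/9})\cdot A_{m_2}(10^{2m_2/9}).
\]
Both $m_1,m_2\ge M$ and both are coprime to $9$, so Corollary~\ref{cor:quantitative} applies to each factor. It gives
\[
\#\mathcal S(m_1)\ \ge\ C_{\mathrm q}(m_1)C_{\mathrm q}(m_2)\,\frac{10^{2m_1/9}\,10^{2m_2/9}}{m_1^{3/2}m_2^{3/2}}.
\]
Since $m_1+m_2=N$, the numerator equals $10^{2N/9}$, which is the summand in the stated bound.

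The lower bound $\#\mathcal S\ge\text{product of lower bounds}$ needs both factor lower bounds to be nonnegative, i.e.\ $C_{\mathrm q}(m_i)\ge 0$. I would get this from the proof of Theorem~\ref{thm:main}. For $m\ge M$ the thresholds give $\varepsilon_m<c_0-1/40$, because \eqref{eq:major-criterion} holds at $x_m$. Then every factor in \eqref{eq:Cq-def} is positive, so $C_{\mathrm q}(m)>0$.

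The one point to verify is that the index set of the sum is nonempty. This follows from the residue argument in the proof of Theorem~\ref{thm:digit-sum-goldbach}. Even if it were empty the inequality would still hold trivially, so I expect no real obstacle here.
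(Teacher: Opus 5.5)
Your proposal is correct and follows the paper's proof: apply Corollary~\ref{cor:quantitative} separately to $m_1$ and $m_2$, multiply the two counts using $m_1+m_2=N$ so that $10^{2m_1/9}10^{2m_2/9}=10^{2N/9}$, and sum over the admissible decompositions. The paper leaves two points implicit, and you handle both correctly: distinct decompositions give disjoint sets of pairs because $s(p_1)$ recovers $m_1$, and $C_{\mathrm q}(m_i)>0$ for $m_i\ge M$, which is needed to multiply the two lower bounds and follows since \eqref{eq:major-criterion} at $x_{m_i}$ is exactly $\varepsilon_{m_i}<c_0-\tfrac{1}{40}$.
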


\begin{proof}
Apply Corollary~\ref{cor:quantitative} to each summand:
$A_{m_i}(10^{2m_i/9})\ge C_{\mathrm{q}}(m_i)10^{2m_i/9}/m_i^{3/2}$.  The product of the two counts
gives the displayed bound since $m_1+m_2=N$.
\end{proof}

\begin{theorem}[Digit-sum Waring]\label{thm:digit-sum-waring}
For every integer $k\ge 2$ and every $N\ge kM+2(k-1)$, there exist primes $p_1,\ldots,p_k$ with
$\sum_{i=1}^k s(p_i)=N$.  Moreover, the number of such ordered $k$-tuples is at least
$10^{2N/9}\sum_{\substack{m_1+\cdots+m_k=N\\ M\le m_i\text{ admissible}}}
\frac{C_{\mathrm{q}}(m_1)\cdots C_{\mathrm{q}}(m_k)}{(m_1\cdots m_k)^{3/2}}$.
\end{theorem}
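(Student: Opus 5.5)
The plan is to generalize the digit-sum Goldbach argument (Theorem~\ref{thm:digit-sum-goldbach}) from $k=2$ to arbitrary $k\ge2$. First I will show that every $N\ge kM+2(k-1)$ splits as $N=m_1+\cdots+m_k$ with each $m_i\ge M$ and $3\nmid m_i$. After that, I apply Theorem~\ref{thm:main} to each $m_i$ separately for existence, and Corollary~\ref{cor:quantitative} to each $m_i$ for the count.

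\emph{Decomposition.} I will choose $m_1,\dots,m_{k-1}$ greedily from $\{M,M+1,M+2\}$ and set $m_k:=N-\sum_{i<k}m_i$.
\begin{itemize}
\item Start from a base choice in which each of $m_1,\dots,m_{k-1}$ is the least element of $\{M,M+1,M+2\}$ congruent to $1\bmod 3$. This makes each of them admissible and at most $M+2$.
\item Then $m_k\ge N-(k-1)(M+2)\ge M$, using the hypothesis $N\ge kM+2(k-1)$.
\item If $m_k\equiv0\pmod3$, I change $m_1$ to the element of $\{M,M+1,M+2\}$ congruent to $2\bmod 3$. This shifts $m_1$ by $+1$ or $-2$, which changes $m_k$ by $-1$ or $+2$, and in both cases makes $m_k\not\equiv0\pmod 3$.
\item The one delicate point is keeping $m_k\ge M$ after a shift of $-1$. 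I plan to handle this by noting that the base choice already puts each $m_i\le M+2$, so that $m_k\ge M$ survives with the available slack. If the slack is too tight in the boundary case, I will instead pick the two choices for $m_1$ from the window $\{M,\dots,M+2\}$ in the order that keeps $m_k$ largest.
\end{itemize}

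\emph{Existence.} With an admissible decomposition in hand, Theorem~\ref{thm:main} gives a prime $p_i$ with $s(p_i)=m_i$ for each $i$. Summing gives $\sum_i s(p_i)=N$.

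\emph{Count.} For each fixed admissible tuple $(m_1,\dots,m_k)$, the tuples $(p_1,\dots,p_k)$ with $s(p_i)=m_i$ form a Cartesian product. Its size is $\prod_i A_{m_i}(10^{2m_i/9})$, and Corollary~\ref{cor:quantitative} bounds each factor below, so the size is at least
\[
\prod_i C_{\mathrm q}(m_i)\,10^{2m_i/9}/m_i^{3/2}=10^{2N/9}\prod_i C_{\mathrm q}(m_i)/m_i^{3/2}.
\]
Different tuples $(m_i)$ give disjoint sets of prime tuples, since each $s(p_i)$ is determined by $p_i$. Summing over all admissible tuples with every $m_i\ge M$ therefore gives the stated bound. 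The main obstacle is only the bookkeeping in the decomposition step; the analytic input is entirely inherited from Theorem~\ref{thm:main} and Corollary~\ref{cor:quantitative}.
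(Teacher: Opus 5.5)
Your proof is correct and is essentially the paper's argument. The only difference is organizational: the paper builds the admissible decomposition by induction on $k$, with base case Theorem~\ref{thm:digit-sum-goldbach}, peeling off one admissible $m_1$ from a window of three consecutive integers at each step, whereas you construct all summands at once. Your counting step is identical to the paper's: a Cartesian product bounded below via Corollary~\ref{cor:quantitative}, with disjointness across decompositions because $s(p_i)$ recovers $m_i$.

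Your ``delicate point'' needs no fallback. The modified $m_1$ still lies in $\{M,M+1,M+2\}$, so $m_k\ge N-(k-1)(M+2)\ge M$ holds verbatim. One small wording fix: the product set has size \emph{at least} $\prod_i A_{m_i}(10^{2m_i/9})$, not exactly that, since you do not restrict $p_i\le 10^{2m_i/9}$.
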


\begin{proof}
We argue by induction on $k$; the base case $k=2$ is Theorem~\ref{thm:digit-sum-goldbach}.
For the inductive step, suppose $N\ge kM+2(k-1)$.  The interval
$[M,\ N-(k-1)M-2(k-2)]$ has length $\ge 2$ (so at least $3$ integers),
hence contains an admissible $m_1$ whose residual $N-m_1$ satisfies $N-m_1\ge (k-1)M+2(k-2)$.
By induction $N-m_1$ admits an admissible decomposition into $k-1$ summands, each $\ge M$.
This proves existence.  For the quantitative lower bound, fix any admissible ordered
decomposition $N=m_1+\cdots+m_k$ with all $m_i\ge M$.  Corollary~\ref{cor:quantitative} gives
independent choices of each prime $p_i$ with $s(p_i)=m_i$, so the number of ordered tuples for this
decomposition is at least the corresponding product.  Distinct ordered decompositions give disjoint
sets of ordered prime tuples, because the digit sums $s(p_i)$ recover the summands $m_i$ coordinate
by coordinate.  Summing over all admissible decompositions gives the displayed count.
\end{proof}

\subsection{A lower bound for the largest prime with prescribed digit sum}\label{subsec:largest-prime}

The quantitative count in Corollary~\ref{cor:quantitative} forces the primes $p\le 10^{2m/9}$ with
$s(p)=m$ not to be concentrated near $0$.  Combined with the explicit upper bound on $\pi$ of
Rosser--Schoenfeld~\cite[Cor.~1, eq.~(3.6)]{RosserSchoenfeld1962}, we obtain a lower bound on the
\emph{largest} such prime.

\begin{theorem}\label{thm:largest-prime}
There exists an explicit threshold $M_{\mathrm{LP}}\le 10^{100}$ such that for every integer
$m\ge M_{\mathrm{LP}}$ with $\gcd(m,9)=1$, the largest prime $p\le 10^{2m/9}$ with $s(p)=m$
satisfies
\[
p\ \ge\ \frac{10^{2m/9}}{24\,\sqrt m}.
\]
Concretely, one may take $M_{\mathrm{LP}}=10^{100}$.  In particular, the bound applies at
all seed primes from \S\ref{sec:infinite_prime_digit_sums}--\S\ref{subsec:terminal-stratification},
which have ${\ge}132$ decimal digits.
\end{theorem}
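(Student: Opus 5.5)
The plan is a pigeonhole argument: compare the lower bound from Corollary~\ref{cor:quantitative} with an explicit upper bound on the number of primes below a cutoff. Put $X:=10^{2m/9}$ and $Z:=X/(24\sqrt m)$. Suppose, for contradiction, that every prime $p\le X$ with $s(p)=m$ satisfies $p<Z$. Then
\[
A_m(X)\le \pi(Z).
\]
Since $M_{\mathrm{LP}}=10^{100}\ge M$, Corollary~\ref{cor:quantitative} applies, and Lemma~\ref{lem:Cq-finite} then gives
\[
A_m(X)\ \ge\ C_{\mathrm q}(m)\,X/m^{3/2}\ >\ 0.108\,X/m^{3/2}.
\]

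For the upper bound we use Rosser--Schoenfeld, $\pi(Z)<1.25506\,Z/\log Z$ for $Z>1$. We need $\log Z$ in terms of $m$. From $\log X=(2m/9)\log 10$ and the size of $m$,
\[
\log Z=\tfrac{2\log 10}{9}\,m-\tfrac12\log m-\log 24\ \ge\ 0.5\,m
\]
for $m\ge10^{100}$, because $2\log10/9\approx0.5117$ and the subtracted terms are negligible at this size. This yields
\[
\pi(Z)\ \le\ \frac{1.26\,X}{24\sqrt m\cdot 0.5\,m}\ =\ 0.105\,\frac{X}{m^{3/2}}.
\]

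Putting the two bounds together gives $0.108\,X/m^{3/2}<0.105\,X/m^{3/2}$, a contradiction. Hence some prime $p\le X$ with $s(p)=m$ satisfies $p\ge Z$, and the largest such prime is at least $Z$. The final claim about the seed primes follows directly: each has at least $132$ digits, so it exceeds $10^{100}=M_{\mathrm{LP}}$.

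The main obstacle is the arithmetic margin. The constants $0.108$ and $0.105$ are close, so the main work is verifying the numerical inequality with outward rounding. In particular, $\log Z\ge0.5m$ is comfortable, but the exact Rosser--Schoenfeld constant $1.25506$ and the lower bound $C_{\mathrm q}>0.108$ must be used precisely rather than rounded up. If the margin turns out to be too thin, I would first try using the sharper $\log Z\ge0.511m$, which lowers the upper bound to about $0.1024$ and clearly leaves room. The factor $24$ in the statement appears to have been chosen to fit exactly this comparison.
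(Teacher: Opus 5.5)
Your proposal is correct and follows essentially the same route as the paper. You assume every realizing prime lies below $10^{2m/9}/(24\sqrt m)$, bound $A_m(10^{2m/9})$ by $\pi$ of that cutoff using Rosser--Schoenfeld, and contradict $C_{\mathrm q}(m)>0.108$ from Corollary~\ref{cor:quantitative} and Lemma~\ref{lem:Cq-finite}. The only difference is bookkeeping: the paper writes $\log Y=(2m/9)\log 10\,(1-\eta_m)$ with $\eta_m<10^{-30}$ and finds that any constant above about $22.7$ works, whereas your cruder $\log Z\ge 0.5\,m$ still gives the valid comparison $1.25506/12<0.1046<0.108$.
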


\begin{proof}
Set $Y:=10^{2m/9}/(c\sqrt m)$ for a parameter $c>0$ to be chosen, and suppose every prime
$p\le 10^{2m/9}$ with $s(p)=m$ satisfies $p\le Y$.  Then $A_m(10^{2m/9})\le \pi(Y)$.

By \cite[Cor.~1, eq.~(3.6)]{RosserSchoenfeld1962}, for all $Y>1$,
\[
\pi(Y)\ <\ 1.25506\,\frac{Y}{\log Y}.
\]
Writing
$\log Y = (2m/9)\log 10 - \delta_m$ with $\delta_m := \log c + \tfrac12\log m\ge 0$,
\[
\frac{Y}{\log Y}\ =\ \frac{Y}{(2m/9)\log 10}\cdot\frac{1}{1-\eta_m},
\qquad \eta_m\ :=\ \frac{\delta_m}{(2m/9)\log 10}.
\]
For $c\le 24$ and $m\ge M_{\mathrm{LP}}=10^{100}$ we bound $\eta_m$ explicitly.  Since $\log m/m$ is
decreasing on $[e,\infty)$, $\eta_m\le\eta_{10^{100}}$.  Using
$\delta_{10^{100}}\le\log 24+\tfrac12\log(10^{100})<119$ while
$(2/9)10^{100}\log 10>5\times10^{99}$ gives
\begin{equation}\label{eq:eta-m-bound}
\eta_m\ \le\ \eta_{M_{\mathrm{LP}}}\ <\ 10^{-30}
\qquad (m\ge M_{\mathrm{LP}},\ c\le 24),
\end{equation}
Consequently $1/(1-\eta_m)\le 1+2\eta_m< 1+2\cdot 10^{-30}$, and
\[
\pi(Y)\ \le\ \frac{1.25506\cdot 9}{2 c\log 10}\cdot\frac{10^{2m/9}}{m^{3/2}}\cdot(1+2\cdot 10^{-30}).
\]
On the other hand, Corollary~\ref{cor:quantitative} gives
$A_m(10^{2m/9})\ge C_{\mathrm{q}}(m)\,10^{2m/9}/m^{3/2}$, and
Lemma~\ref{lem:Cq-finite} gives $C_{\mathrm{q}}(m)>0.108$ for all $m\ge10^{100}$.
The hypothesis $A_m(10^{2m/9})\le \pi(Y)$ therefore forces
\[
c\ \le\ \frac{1.25506\cdot 9}{2\,C_{\mathrm{q}}(m)\,\log 10}\cdot(1+2\cdot 10^{-30})
\ \le\ \frac{1.25506\cdot 9}{2\cdot 0.108\cdot \log 10}\cdot(1+2\cdot 10^{-30})\ <\ 24.
\]
For $c=24$ this is a strict contradiction; hence at least one prime $p\le 10^{2m/9}$ with
$s(p)=m$ satisfies $p>10^{2m/9}/(24\sqrt m)$.
\end{proof}

\begin{remark}
For $m\ge M_{\mathrm{LP}}$, Theorem~\ref{thm:largest-prime} guarantees at least one prime
$p\le 10^{2m/9}$ with $s(p)=m$ in the interval
\[
\left[\,\frac{10^{2m/9}}{24\sqrt m},\,10^{2m/9}\right].
\]
Separately, the smallest realizing prime satisfies the upper bound of
Corollary~\ref{cor:smallest-prime-bound}.  The width of the guaranteed interval for the
large realizing prime is at least
$10^{2m/9}\bigl(1-1/(24\sqrt m)\bigr)$, which exceeds $10^{2m/9}(1-10^{-51})$ for
$m\ge M_{\mathrm{LP}}=10^{100}$.
\end{remark}

\subsection{A note on OEIS sequence A067523}\label{subsec:a067523-note}

OEIS sequence A067523~\cite{OEIS_A067523} is indexed by the \emph{possible} digit sums, not by the digit sum itself:
it is A067180 (``smallest prime with digit sum $n$, or $0$ if no such prime exists'') with the
zero terms removed.  Thus the direct mathematical question is about congruence-admissible digit
sums $d$.  Call $d$ congruence-admissible if either $d=3$ or
$d\not\equiv0\pmod3$: for $d\equiv0\pmod3$ with $d\ne3$, no prime can have
$s(p)=d$ because $s(p)\equiv p\pmod9$ forces $3\mid p$, hence $p=3$ and $d=3$.
This necessary congruence condition has one elementary non-realizable value, namely $d=1$: the
only nonnegative integers with digit sum $1$ are the powers of $10$, none of which is prime.

Theorem~\ref{thm:main} settles all congruence-admissible $d\ge M$ (equivalently, all
$d\ge M$ with $\gcd(d,9)=1$), reducing the
remaining direct verification to the finite range of digit sums
\[
\{d:\ \gcd(d,9)=1,\ 2\le d<M\}\ \cup\ \{3\}.
\]
The point $d=3$ is trivially handled ($p=3$ itself).  While even the empirical
$M^{\mathrm{emp}}\approx 1.48\times 10^{32}$ is far beyond current computational reach, any future
improvement to $M$ proportionally reduces the finite digit-sum range that must be verified.

\appendix

\section{Limits of the present method}\label{sec:limits-detailed}

We expand here on Remark~\ref{rem:limits}.  The bound $M<1.78\times10^{32}$ obtained in
Theorem~\ref{thm:main} is governed in every binding step by the extracted digit-discrepancy
rate $c_4=\log 10/6$ from Lemma~\ref{lem:c4-explicit}, which unrolls the relevant
MR/DMR machinery with explicit constants.  At the
working parameters $(\eta,\nu)=(0.0545,0.2859)$, the empirical values of the three
thresholds equilibrate within a factor ${<}1.3$ (while the rigorous upper bounds below are within
a factor ${<}1.2$):
\begin{center}
\small
\renewcommand{\arraystretch}{1.3}
\begin{tabular}{@{}l p{0.40\linewidth} p{0.28\linewidth}@{}}
Threshold & Driver (schematic) & Value at working params \\\hline
$Y_{43}^\ast$ (CF rep.) & $K_0/(\nu-2\eta)$, with $K_0\propto -\log c_4$
& $\le 9.10\times 10^{31}$ (rig.); $\approx 6.27\times 10^{31}$ (emp.) \\
$Y_{\mathrm{min}}$ (minor arc) & $c_1$ and the choice of $\eta$ in $e^{-81c_1(\log x)^{2\eta}}$
& $\le 8.00\times 10^{31}$ (rig.); $\approx 7.59\times 10^{31}$ (emp.) \\
$Y_{\mathrm{maj}}$ (major arc) & $C_{\mathrm{maj}}^{1/(1/2-\nu-2\eta)}$, with $C_{\mathrm{maj}}$ from CLT-onset
& $\le 7.90\times 10^{31}$ (rig.); $\approx 7.44\times 10^{31}$ (emp.) \\
\end{tabular}
\end{center}
The improved Fej\'er length in Lemma~\ref{lem:c4-explicit} is already used in this table.  The remaining
obstruction is a three-way balance: pushing $\eta$ upward improves the minor arc but worsens the
major-arc exponent, while pushing $\nu$ downward improves the major arc but worsens the
characteristic-function threshold.  Thus small retunings of $(\eta,\nu)$ or bounded improvements in
$C_{\mathrm{II}}, C_{\mathrm{min}}, C_{\mathrm{maj}}$ can move the leading constant, but they do not
change the present exponential scale.

Conditional on a sharper rate $c_4'$, the present framework's $M$ scales roughly as
$\exp(\mathrm{const}/c_4')$ after re-optimizing $(\eta,\nu)$.  A further improvement in the
digit-discrepancy rate would therefore have multiplicative effects on $\log M$, not merely on
the front constant.

\subsection*{Why Berry--Esseen and bandlimited substitutions do not overcome this barrier}

Two natural substitutions suggested by the literature were investigated:

\begin{enumerate}
\item A quantitative \emph{Berry--Esseen} input for the CDF distance between the prime
      digit-sum distribution and the i.i.d.\ digit-sum distribution would in principle replace
      the moment-by-moment Taylor truncation of Proposition~\ref{prop:CF-replacement}.
      However, the binding step in the chain is the prime-side equidistribution
      $|\varphi_2-\varphi_3|$, whose rate is already governed by $c_4=\log 10/6$;
      Esseen-style smoothing back to a CF bound degrades this rate by a factor of two.
      A direct CDF-level bound on the prime digit sum (not factoring through the i.i.d.\ model)
      would itself require a new tool.
\item A \emph{bandlimited-majorant} approach replacing the Fourier integral by
      bandlimited approximations of $\mathbf 1_{s(p)=m}$ would yield sharper constants in the
      major-arc lower bound, but the major-arc error exponent $-1/2+\nu+2\eta$ is unchanged:
      the approximation still controls
      $\int \hat f(\alpha)\cdot|\varphi_1(\alpha)-\Phi(\alpha)|\,d\alpha$ over the same major
      arc with the same CF bound.  The constant gain is bounded; the exponent barrier is not
      affected.
\end{enumerate}

\subsection*{What would overcome the barrier}

Any of the following:
\begin{itemize}
\item A sharpened equidistribution result for prime digit sums (improvement of $c_4$).
      The current rate descends from Mauduit--Rivat's treatment of trigonometric sums over
      primes and reflects the state of the art; any stronger zero-free-region or cancellation
      input for the digit-restricted exponential sums would plug in mechanically.
\item A direct sieve or dispersion-method attack on $\{p\le x:s(p)=m\}$ that does not
      route through the Fourier dichotomy.  Such an approach would change the proof
      architecture entirely.
\end{itemize}

\section{Audit trail for imported estimates}\label{app:import-audit}

This appendix records the numerical audit layer for the imported estimates used in the proof of
Theorem~\ref{thm:main}.  It supplements the table in \S\ref{sec:inputs} by recording the notation
translation, parameter restrictions, rounded loss, and downstream use.  The source locators in the
first column make the comparison against the cited papers equation by equation.  All entries below
are used only through the
cited internal lemma or proposition; the theorem-level certificate then uses the rounded constants
listed here.

\begingroup
\sloppy
\scriptsize
\renewcommand{\arraystretch}{1.18}
\setlength{\tabcolsep}{3pt}
\begin{longtable}{p{0.25\linewidth}p{0.25\linewidth}p{0.24\linewidth}p{0.18\linewidth}}
\textbf{Source locator} & \textbf{Translation used here} & \textbf{Hypotheses and rounded loss} & \textbf{Feeds into} \\
\hline
\endfirsthead
\textbf{Source locator} & \textbf{Translation used here} & \textbf{Hypotheses and rounded loss} & \textbf{Feeds into} \\
\hline
\endhead
DMR Lemma~3.1, (10)--(11); MR10 Lemme~1 proof
& Lemma~\ref{lem:DMR31-self}; $q=10$, $u=x^{\beta_1}$, interval $(x/10,x]$
& $x\ge100$, $0<\beta_1<1/3<1/2<\beta_2<1$; isolated $F$ term vanishes, and
  the factor-$10$ endpoint conditions are supplied in Lemma~\ref{lem:Cmin-local}; $2U+8U+8U$ is rounded
  to $20U$
& Lemma~\ref{lem:Cmin-local}, Step~2 \\

DMR Lemma~3.2, (12)--(14); MR10 Lemme~3, (7)--(10)
& Lemma~\ref{lem:DMR32-self}; dyadic Type-II blocks localized to decimal blocks
& $\beta_1-\delta\le\mu/(\mu+\nu)\le\beta_2+\delta$ and
  $x>x_0$; the source prefactor is rounded to $6(1+\log x)$
& Lemma~\ref{lem:Cmin-local}, Step~1 \\

DMR Prop.~3.1, (15)--(16); MR10 Prop.~2 with Lemmes~7--10
& Lemma~\ref{lem:typeI-explicit}; Type-I hypothesis for $M\le x^{\beta_1}$
& Base $10$, $(9\alpha)\notin\mathbb Z$; explicit lower bound for $1-\gamma_q(\alpha)$
  converts the source exponent to $1-c_1\|9\alpha\|^2$; rounded envelope
  $10^8(\log x)^2x^{1-c_1\|9\alpha\|^2}$
& Lemma~\ref{lem:Cmin-local}, Step~2 \\

DMR (18)--(23), Lemmas~3.3--3.4; DMR (20)--(21)
& Lemma~\ref{lem:MR-vdc-self} and Lemma~\ref{lem:tau-input}; explicit carry term in the
  Type-II proof
& $0\le\rho\le\nu/2$ and strengthened carry endpoint; BBR divisor input supplies
  $E\le C_\tau(\mu+\nu)(\log10)10^{\mu+\nu-\rho}$ with $C_\tau=10^3$
& Prop.~\ref{prop:typeII-explicit} \\

MR10 (24), (26)--(28), (44), (47), (49)--(54), (57)--(64), (105)--(106)
& Lemmas~\ref{lem:MR-S2-G-self}, \ref{lem:MR-G-first-self},
  \ref{lem:MR-struct-self}, and~\ref{lem:S2-input}; explicit $S_2$ structural estimate
& $\lambda=\mu+2\rho$, $\Delta=\lfloor\rho\log10/\log2\rfloor$,
  $1\le |r|<10^\rho$, MR10 admissibility; factor product
  displayed in~\eqref{eq:K10-product}, rounded to $K_{10}\le10^6$ and then to the displayed $10^8$
& Prop.~\ref{prop:typeII-explicit} \\

DMR (24)--(31); MR10 (55), (59)--(63)
& Lemma~\ref{lem:c1}; explicit decimal value of $c_1$
& $\xi_q(\alpha)=\varepsilon_q(\alpha)/14$ and
  $\xi_q(\alpha)\ge2c_1\|(q-1)\alpha\|^2$; for $q=10$ this gives
  $c_1=1.506288700915\cdot10^{-3}$
& Lemmas~\ref{lem:minor-bridge}--\ref{lem:Cmin} and $Y_{\mathrm{min}}$ \\

DMR Lemma~4.3
& Lemma~\ref{lem:DMR43}; rational prime exponential sum
& $\alpha=A/Q$, $(A,Q)=1$, $q^K\le Q\le xq^{-K}$, $K\le2L/5$; IK
  Theorem~13.6 and partial summation give $C_{\mathrm{DMR}}=102$
& Lemma~\ref{lem:c4-explicit}; $Y_{45}$ and $Y_{\mathrm{maj}}$ \\

DMR Lemmas~4.4--4.6, (36); DMR Prop.~4.1
& Lemma~\ref{lem:c4-explicit} and Prop.~\ref{prop:CF-replacement}; digit discrepancy,
  joint digits, moment comparison, and characteristic functions
& Fej\'er length $H=10^{\lfloor r/3\rfloor}$ and $K=\lfloor r/2\rfloor$; denominator
  reduction checked by $Q\ge (q-1)q^r/H$; explicit rate $c_4=\log10/6$
& $Y_{43}^\ast$ and major arcs \\

IK Theorem~13.6 and Lemmas~13.7--13.8
& Lemmas~\ref{lem:IK1345-explicit}--\ref{lem:IK136-explicit} and
  Lemma~\ref{lem:partial-to-primes}
& Vaughan/IK bilinear decomposition with explicit block and partial-summation losses;
  no source constant is hidden after rounding
& Lemma~\ref{lem:DMR43} \\

Kuipers--Niederreiter Theorem~2.5
& Erd\H{o}s--Tur\'an interval discrepancy in the weaker harmonic-sum form
& Constants $(6,4/\pi)$; residue-class projection contributes only the maximum over nine shifts
& Lemmas~\ref{lem:DMR43} and~\ref{lem:c4-explicit} \\

BBR Theorem~1.2
& Explicit short-interval divisor estimate in Lemma~\ref{lem:tau-input}
& Applied only where the interval length is at least $X^{1/3}$; rounded to $C_\tau=10^3$
& Carry-propagation row above \\

Bennett--Martin--O'Bryant--Rechnitzer explicit AP bound
& Lemma~\ref{lem:pi9-lb}
& $\pi(x;9,m)\ge x/(6\log x)$ for $\gcd(m,9)=1$ and $x\ge4050$
& Normalization in Lemma~\ref{lem:c4-explicit} \\
\end{longtable}
\endgroup

\subsection{Expanded MR10 \texorpdfstring{$S_2$}{S2} ledger}\label{app:MR10-S2-ledger}

This paragraph expands the fifth row of the preceding table, since it is the least standard
constant extraction in the proof.  The displayed source numbers are those in the cited paper.
The steps not carrying a nontrivial numerical factor in $K_{10}$ are as follows: Cauchy--Schwarz
MR10~(15) has factor $1$; the van der Corput smoothing powers of $q$ are part of the structural
scale $q^{\mu+\nu-\rho}$; the carry-propagation count MR10 Lemme~5/DMR Lemma~3.4 enters the
separate $E$-term in Proposition~\ref{prop:typeII-explicit}; the H\"older/unitary
$S_4\to S_5$ step MR10~(57)--(60) has factor $1$; the terminal weighted-$F_\lambda$ and geometric
$\delta$-sum prefactors are carried explicitly in~\eqref{eq:S2-pre-adm}; and
$\lambda\le\mu+\nu$ is absorbed into the structural polynomial $(\mu+\nu)^2$.

\begin{enumerate}
\item \textbf{MR10 (24)--(28), Lemme~6 (26).}
The Fourier expansion of $S_2$ and the summation over one full $q^\lambda$ block use
$F_\lambda$ and $G_\lambda(a,d,\alpha)$ exactly as in MR10 (24), (28).  Padding incomplete blocks
and applying the first trigonometric minimum split gives Lemma~\ref{lem:MR-S2-G-self}; the rounded
prefactor is $\le4$, recorded as~\eqref{eq:K10-fourier}.

\item \textbf{MR10 Lemme~6 (26), then (49)--(52).}
The two nonsingular trigonometric minimum expansions have coefficient sum
$(2+2/\pi)^2<6.96$.  For $q=10$, the admissible-divisor envelope contributes $32$, the two
geometric tails contribute $(1-10^{-\omega_{10}})^{-1}<42$ and
$\sum_{\delta\ge0}10^{-(1-\omega_{10})\delta}<1.12$, and the nonsingular split contributes $13/10$.
Thus
\[
32(2+2/\pi)^2\cdot42\cdot1.12\cdot13/10<1.37\cdot10^4,
\]
rounded to $2\cdot10^4$ in~\eqref{eq:K10-trig}.

\item \textbf{MR10 (51)--(54).}
The reduction from $S_3(k,\delta)$ to $S_4(k,\delta')$ uses
$\Delta=\lfloor\rho\log q/\log2\rfloor$ and the reduced frequency $r'=r/(r,kq^\Delta)$.
It produces two nonnegative terms.  Bounding their sum by twice the larger contribution gives
the factor $2$ in~\eqref{eq:K10-S34}, while the $\lambda(\log q)$ term is kept explicitly
in~\eqref{eq:S2-decomp-explicit}.

\item \textbf{MR10 (57), (58), Lemme~20 (105), Lemme~21 (106).}
For $(q-1)\alpha\notin\mathbb Z$, Lemma~\ref{lem:MR-F-weighted-self} gives the terminal bound
\[
S_5(k,\delta')\le
\frac{q^{\lambda-\delta'+\gamma_q(\alpha)\delta'}}
     {\sin(\pi/q)(1-q^{-1/2})}.
\]
No constant is hidden here: for $q=10$, $1/\sin(\pi/10)<3.24$ and
$(1-10^{-1/2})^{-1}<1.47$, and both are carried explicitly into~\eqref{eq:S2-pre-adm}.

\item \textbf{MR10 (59), (62), (63).}
With $\epsilon=\varepsilon_q(\alpha)$ and $\rho/(\mu+\nu)\le\epsilon/14$, the admissibility interval
gives
\[
(1+q^{\nu-\lambda})q^{(2-\epsilon)\lambda+\rho}\le 2q^{\mu+\nu-\rho}.
\]
The only numerical loss is this factor $2$; $\lambda\le\mu+\nu$ is absorbed by the structural
polynomial $(\mu+\nu)^2$ in Lemma~\ref{lem:S2-input}, Step~4.  This is the factor recorded
in~\eqref{eq:K10-adm}.

\item \textbf{MR10 conclusion (64).}
Combining the preceding rows gives
\[
S_2(r,\mu,\nu,\rho)\le
2K_{10}(\mu+\nu)^2q^{\mu+\nu-\rho}
\left(\frac{q}{\sin(\pi/q)(1-q^{-1/2})}+\log q\right).
\]
The factor ledger gives~\eqref{eq:K10-product}.  For $q=10$, the final
parenthesis is $<49.7$, so the full prefactor is $<10^8$, as stated in
Lemma~\ref{lem:S2-input}.
\end{enumerate}

\section*{Acknowledgments}

This paper grew out of a question posed by the author's nephew Tim, then nine years old:
\emph{is there an infinite chain $n, s(n), s(s(n)),\ldots$ of primes?} The author thanks Tim for
his curiosity, which directly motivated Theorem~\ref{thm:a070027}, and his brother Thomas for
bringing the question to his attention.

The author used GPT-5.2, GPT-5.5, and Opus 4.7 LLMs during the preparation of this manuscript
and the accompanying code, in particular for proof support, \LaTeX{} editing, and code review.
The author takes full responsibility for all mathematical statements, computations, and text.

\bibliographystyle{amsplain}
\bibliography{bibliography}

\end{document}